\documentclass[11pt]{article}
\usepackage{amsmath}
\usepackage{amsfonts}
\usepackage{amssymb}
\usepackage{amsthm}
\usepackage{graphicx}
\usepackage{empheq}
\usepackage{cases}
\usepackage{indentfirst}
\usepackage{cite}
\usepackage{mathrsfs}
\usepackage{xcolor}
\usepackage{bm}
\usepackage{makeidx}
\usepackage[T1]{fontenc}
\usepackage{times}
\usepackage{enumitem}

\usepackage{tocloft}
\usepackage[colorlinks=true,linkcolor=blue,citecolor=blue, urlcolor=blue,hypertexnames=false]{hyperref}

\newtheorem{theorem}{\textbf{Theorem}}[section]
\newtheorem{lemma}{\textbf{Lemma}}[section]
\newtheorem{proposition}{\textbf{Proposition}}[section]

\newtheorem{remark}{\textbf{Remark}}[section]
\newtheorem{definition}{\textbf{Definition}}[section]

\makeatletter
\renewenvironment{proof}[1][\proofname]{%
	\par\pushQED{\qed}\normalfont%
	\topsep6\p@\@plus6\p@\relax
	\trivlist\item[\hskip\labelsep\bfseries#1\@addpunct{.}]%
	\ignorespaces
}{%
	\popQED\endtrivlist\@endpefalse
}
\makeatother
\allowdisplaybreaks[4]

\begin{document}

\title{
{Global Quasi-strong Solutions to the Voigt Regularization
of a Diffuse Interface Model for Incompressible Two-phase Flows \\
with Bulk-surface Interaction}
}

\author{
Patrik Knopf \footnotemark[1] \hspace{0pt} \footnotemark[2] \ ,
\and
Maoyin Lv\footnotemark[3] \ ,
\and
Hao Wu\footnotemark[3] \hspace{0pt} \footnotemark[4]
}

\date{}

\maketitle

\renewcommand{\thefootnote}{\fnsymbol{footnote}}

\footnotetext[1]{
Faculty for Mathematics, University of Regensburg, D-93053 Regensburg, Germany. Email: \texttt{\href{mailto:patrik.knopf@ur.de}{patrik.knopf@ur.de}}
}

\footnotetext[2]{
Institute for Analysis, Department of Mathematics, Karlsruhe Institute of Technology (KIT), D-76128 Karlsruhe, Germany.
}

\footnotetext[3]{
School of Mathematical Sciences, Fudan University, Shanghai
200433, P.R. China. \\
$\phantom{xxx}$
Email: \texttt{\href{mailto:mylv22@m.fudan.edu.cn}{mylv22@m.fudan.edu.cn}},\,
\texttt{\href{mailto:haowufd@fudan.edu.cn}{haowufd@fudan.edu.cn}}
}

\footnotetext[4]{
Corresponding author.
}

%
%

\begin{abstract}
\noindent We analyze a thermodynamically consistent diffuse interface model for incompressible two-phase viscous flows
with unmatched densities in a smooth bounded domain $\Omega\subset\mathbb{R}^d$ $(d=2,3)$.
This model consists of the Navier--Stokes--Voigt equations for the velocity field and a convective Cahn--Hilliard equation with a singular potential for the phase-field variable.
The resulting hydrodynamic system is subject to a generalized Navier slip boundary condition for the velocity field and Cahn--Hilliard-type dynamic boundary conditions for the phase-field variable and the chemical potential.
With the aid of the Voigt regularization, we establish the existence of global quasi-strong solutions that satisfy an energy equality. This is achieved through a combination of delicate approximation schemes and a semi-Galerkin method.
\noindent

\medskip \noindent
\textit{Keywords}: Two-phase flow,
Navier--Stokes--Voigt system,
Cahn--Hilliard equation, generalized
Navier boundary condition, dynamic boundary condition, global quasi-strong solution.

\smallskip \noindent
\textit{MSC 2020}: 35A01, 35K55, 35Q35, 76D05.
\end{abstract}

\begin{footnotesize}
\setcounter{tocdepth}{2}
\hypersetup{linkcolor=black}
\tableofcontents
\end{footnotesize}

\newpage


\section{Introduction}
\setcounter{equation}{0}
{The mathematical description of multi-phase flows is a central topic in modern continuum fluid dynamics. Such flows arise in a wide range of natural and industrial processes, including biological systems, chemical engineering, and materials science, and involve the evolution of complex fluid interfaces.
A widely used diffuse-interface model for incompressible binary mixtures of immiscible fluids with unmatched densities}
is the so-called AGG model, which was proposed by Abels, Garcke, and Gr\"un in \cite{AGG}:
\begin{subequations}
\label{AGG}
\begin{alignat}{2}
&\partial_t( \rho(\varphi) \mathbf{v})+ \mathrm{div}(\mathbf{v}\otimes(\rho(\varphi)\mathbf{v}+{\mathbf{J}}))
-\mathrm{div}(2\nu(\varphi) \mathbb{D}\mathbf{v})+\nabla p
=-\mathrm{div}( \nabla \varphi \otimes\nabla \varphi  )
&&\quad\text{in }Q_T,
\\
&\mathrm{div}\,\mathbf{v} =0
&&\quad\text{in }Q_T,
\\
&\partial_t\varphi+{  \mathbf{v}}\cdot\nabla \varphi=\Delta\mu
&&\quad\text{in }Q_T,
\\
&\mu=-\Delta\varphi+F'(\varphi)
&&\quad\text{in }Q_T.
\end{alignat}
\end{subequations}
Here, $T\in(0,+\infty)$ is a given final time, $\Omega\subset\mathbb{R}^d$ $(d=2,3)$ is a bounded domain with smooth boundary $\Gamma \coloneqq \partial\Omega$, and we write $Q_T \coloneqq \Omega\times(0,T)$.
The letter $\mathbf{n}$ denotes the outward unit normal vector on $\partial\Omega$,
and $\partial_\mathbf{n}$ denotes the associated normal derivative on $\partial\Omega$.
The unknowns $\mathbf{v} :\Omega\times(0,T)\to\mathbb{R}^d$,
$\varphi:\Omega\times(0,T)\to[-1,1]$, and $p:\Omega\times(0,T)\to\mathbb{R}$
represent the volume-averaged velocity field,
the order parameter (i.e., the difference $\varphi=\varphi_2-\varphi_1$ of the volume fractions
$\varphi_1$ and $\varphi_2$ of the two fluids),
and the pressure of the fluid mixture, respectively.
The flux term $\mathbf{J}$, the density $\rho$, and the viscosity $\nu$ are given by
\begin{equation}
    \label{DEF:JRN}
    {\mathbf{J}}=-\frac{\rho_1-\rho_2}{2}\nabla\mu,
    \quad\rho (\varphi) =\rho_1\frac{1+\varphi}{2}+\rho_2\frac{1-\varphi}{2},
    \quad\nu (\varphi) =\nu_1\frac{1+\varphi}{2}+\nu_2\frac{1-\varphi}{2},
\end{equation}
where the positive constants $\rho_1$, $\rho_2$ and $\nu_1$, $\nu_2$ represent the homogeneous densities and viscosities of the two fluids, respectively.
Without loss of generality, we assume that $\rho_1>\rho_2>0$.
As $\rho$ is linear in $\varphi$, we write
\[\rho':=\rho'(\varphi)=\frac{\rho_1-\rho_2}{2}\]
for simplicity. Moreover, the symbol $\mathbb{D}\mathbf{v}=\tfrac 12 (\nabla\mathbf{v}+(\nabla\mathbf{v})^{\top})$ denotes the symmetric gradient of $\mathbf{v}$, where the superscript $\top$ denotes the transposition of the matrix.
The nonlinear function $F$ is the homogeneous free energy density, which is assumed to exhibit a double-well structure, and $F'$ is its derivative.
For the description of binary fluids, the physically most relevant choice is the Flory--Huggins potential given by
\begin{align*}
    F_{\text{log}}(r)=\frac{\Theta}{2}[(1+r)\text{ln}(1+r)+(1-r)\text{ln}(1-r)]
    -\frac{\Theta_0}{2}r^2, \quad\text{for all $r\in(-1,1)$}.
\end{align*}
Here, the constant parameters $\Theta$ and $\Theta_0$ are assumed to fulfill the condition $0<\Theta<\Theta_0$, which ensures that $F_{\text{log}}$ is double-well shaped.

In the case of matched densities $\rho_1=\rho_2$, the AGG model \eqref{AGG} reduces to the well-known ``Model H'', which was originally proposed in \cite{HH} and later rigorously derived in \cite{GPV}.
Since then, it has been investigated in numerous works such as \cite{Ab2,GMT} and the references therein.
For the development of thermodynamically consistent diffuse interface models for multi-phase flows with unmatched densities, we refer to \cite{AGG,ADGK,LT,SYW,SSBv,tvS}.
They are based on different choices of the mean velocity of the mixture as well as further constitutive assumptions.
Further discussions on the existing Navier--Stokes--Cahn--Hilliard models can be found in \cite{tvAS}, where a unified framework was developed.

In the literature, the AGG model has mostly been supplemented with the classical no-slip boundary condition for $\mathbf{v}$ and homogeneous Neumann boundary conditions for $\varphi$ and $\mu$:
\begin{subequations}
\begin{align}
    \mathbf{v}=\boldsymbol{0}&\qquad\text{on }\Sigma_T,\label{no-slip}\\
    \partial_\mathbf{n}\varphi=0&\qquad\text{on }\Sigma_T,\label{homo-phi}\\
    \partial_\mathbf{n}\mu=0&\qquad\text{on }\Sigma_T.\label{homo-mu}
\end{align}
\end{subequations}
The AGG model \eqref{AGG}, subject to \eqref{no-slip}--\eqref{homo-mu} and suitable initial conditions, has been extensively studied; we refer to \cite{ADG,ADG1,AW,Gior21,Gior22,AGiG,GarckeLvWu} and the references therein.
Regarding progress in its nonlocal variant and other generalizations, we refer to
\cite{AT,AGP,AGP2,Fri16,Fri21,GGGP,HKP}.
Although the AGG model \eqref{AGG} is a useful model for the study of two-phase flows with unmatched densities,
it inherits certain limitations resulting from the no-slip boundary condition on the velocity field
and the homogeneous Neumann boundary conditions imposed on the underlying convective Cahn--Hilliard subsystem:
\begin{itemize}
     \item [(L1)] As discussed in \cite{QWS}, the no-slip boundary condition \eqref{no-slip} is often inadequate for the description of moving contact line phenomena. This is mainly because it neglects convective effects at the boundary and in its close vicinity.
    \item [(L2)]  The boundary condition \eqref{homo-phi} implies that the diffuse interface intersects the boundary $\Gamma$ at a perfect angle of ninety degrees.
    However, in the context of two-phase flows, the contact angle is expected to deviate from ninety degrees and even evolve dynamically due to the motion of the mixture \cite{QWS}.
    \item [(L3)] The boundary condition \eqref{homo-mu} can be regarded as a no-flux boundary condition,
    as it implies that the normal mass flux $\mathbf{J}\cdot\mathbf{n}$ vanishes at the boundary.
    Therefore, the system \eqref{AGG} can only describe the situation in which the mass of both fluids in the bulk is conserved. However, any transfer of material between bulk and surface (which could, for instance, be caused by absorption or desorption processes or chemical reactions that take place at the boundary) cannot be described (see, e.g., \cite{KLLM}).
\end{itemize}

To overcome the aforementioned limitations {(L1) and (L2), a new class of boundary conditions based on Onsager's principle of maximal energy dissipation was proposed in \cite{QWS}:}
 \begin{subequations}
 \label{BD-QWS}
 \begin{alignat}{2}
    \label{BD-QWS:1}
   &\mathbf{v}\cdot\mathbf{n}=0, \quad\partial_\mathbf{n}\mu=0,
   \quad \psi =\varphi
   &&\quad\text{on }\Sigma_T,
   \\[1mm]
    \label{BD-QWS:2}
   &2\nu(\psi)[\mathbb{D}\mathbf{v}\,\mathbf{n}]_{\boldsymbol{\tau}}+{\beta}\mathbf{v}_{\boldsymbol{\tau}}
   =\mathcal{G}(\psi)\nabla_{\!\boldsymbol{\tau}\,}\psi
   &&\quad\text{on }\Sigma_T,
   \\[1mm]
    \label{BD-QWS:3}
   &\partial_t\psi+\mathbf{v}_{\boldsymbol{\tau}} \cdot{\nabla_{\!\boldsymbol{\tau}\,}}\psi = - {\gamma} \mathcal{G}(\psi)
   &&\quad\text{on }\Sigma_T,
   \\[1mm]
    \label{BD-QWS:4}
   &\mathcal{G}(\psi)=-\kappa\Delta_{\boldsymbol{\tau}}\psi+\varepsilon\partial_\mathbf{n}\varphi+G'(\psi)
   &&\quad\text{on }\Sigma_T.
\end{alignat}
\end{subequations}
It involves a generalized Navier slip boundary condition on the tangential part of the velocity field (see \eqref{BD-QWS:2}) and a second-order (Allen--Cahn type) dynamic boundary condition on the phase field (see \eqref{BD-QWS:3}--\eqref{BD-QWS:4}).
Here, the subscript $\boldsymbol{\tau}$ denotes the tangential component of {any vector field} $\mathbf{w}$,
i.e., $\mathbf{w}_{\boldsymbol{\tau}}=\mathbf{w}-(\mathbf{w}\cdot\mathbf{n})\mathbf{n}$,
$\nabla_{\!\boldsymbol{\tau}\,}$ is the tangential gradient on $\Gamma$
and $\Delta_{\boldsymbol{\tau}}$ denotes the Laplace--Beltrami operator on $\Gamma$.
The phenomenological parameters $\beta$, $\kappa$ and $\gamma$ are positive,
and the function $G$ represents a surface double-well potential.
The ``Model~H'' endowed with the boundary conditions \eqref{BD-QWS} was first studied in \cite{GGM},
where the authors proved the existence of global weak solutions for the cases where both
$F$ and $G$ are polynomial-like potentials, or $F$ is a singular potential and $G$ is a polynomial.
Later, the existence of global weak solutions to \eqref{AGG} under \eqref{BD-QWS} was established in \cite{GGW19}.

To overcome the limitation (L3) of the AGG model \eqref{AGG},
a new class of boundary conditions was derived in \cite{GK23}.
It consists of Cahn--Hilliard-type dynamic boundary conditions for the phase-field variable $\varphi$ and the chemical potential $\mu$, and a generalized Navier slip boundary condition for the velocity field $\mathbf{v}$:
 \begin{subequations}
 \label{BD}
 \begin{alignat}{2}
    \label{BD:1}
    &\mathbf{v}\cdot \mathbf{n}=0
    &&\quad  \mbox{on }\Sigma _{T },
    \\[1mm]
    \label{BD:2}
    &\Big[ 2\nu ( \psi) \mathbb{D}\mathbf{v}\,\mathbf{n}+\beta(\psi)\mathbf{v}\Big]_{\bm{\tau}}
    = \Big[-\psi\nabla_{\!\bm{\tau}\,}\mathcal{L}+\dfrac{1}{2}(\mathbf{J}\cdot\mathbf{n})\mathbf{v}\Big]_{\bm{\tau}}
    &&\quad \mbox{on}\,\Sigma _{T },
    \\[1mm]
    \label{BD:3}
    &\begin{cases}
        L \partial_{\mathbf{n}}\mu =\mathcal{L}-\mu, & \text{if}\ L\in[0,+\infty),\\
        \partial_{\mathbf{n}}\mu=0,& \text{if}\ L=+\infty,
    \end{cases}
    \quad
    \quad \psi =\varphi \quad
    &&\quad\mbox{on }\Sigma _{T },
    \\[1mm]
    \label{BD:4}
    &\partial_{t}\psi +\mathrm{div}_\Gamma(\psi\mathbf{v}_{\bm{\tau}})
    =\Delta_{\bm{\tau}} \mathcal{L}-\partial_{\mathbf{n}}\mu
    &&\quad \mbox{on }\,\Sigma_{T },
    \\[1mm]
    \label{BD:5}
    &\mathcal{L}=- \Delta _{\bm{\tau}}\psi +\partial _{\mathbf{n}}\varphi+G'(\psi )
    &&\quad\mbox{on }\,\Sigma_{T }.
\end{alignat}
\end{subequations}
Here the positive slip parameter $\beta$ may also depend on the surface phase-field $\psi$.
The derivation of system \eqref{AGG} subject to the boundary conditions \eqref{BD} was carried out in \cite{GK23} by means of local mass balance laws, local energy dissipation laws, and the Lagrange multiplier approach.
This model allows for surface diffusion, a dynamic contact angle between the bulk diffuse interface and the solid boundary, and possible mass transfer between bulk and surface.
Given the initial conditions
\begin{equation}
\mathbf{v}|_{t=0}=\mathbf{v}_{0},\quad \varphi |_{t=0}=\varphi _{0}\quad \text{in }\Omega,\qquad
\psi |_{t=0}=\psi _{0}=\mathrm{tr}(\varphi _{0})\quad \text{on}\ \Gamma,
\label{initial}
\end{equation}
the system \eqref{AGG} subject to \eqref{BD}--\eqref{initial} has been studied in \cite{GK23,GLW}.
In \cite{GK23}, the authors considered the case $L=0$ with regular potentials and matched densities, i.e., $\rho_1=\rho_2$.
They showed the existence of global weak solutions in two and three dimensions, as well as the uniqueness of the weak solution in two dimensions.
Later, in \cite{GLW}, the existence of global weak solutions in two and three dimensions was established for singular potentials and unmatched densities when $L\in(0,+\infty]$ or matched densities when $L=0$.
For prescribed velocity fields in the bulk and on the boundary, the associated convective bulk-surface Cahn--Hilliard subsystem was analyzed in \cite{KS2024,KSJDE,GKS,KPSY}.
The non-convective version (i.e., $\mathbf{v}\equiv\mathbf{0}$) has been the subject of extensive investigation in the literature. We refer to \cite{CFW20,FW,GK20,GMS,LW,LvWuIFB,LvWuAA,LvWuJEE,Stange-CHDBC,KLLM,GKY} for a selection of representative contributions.
For a comprehensive overview of the Cahn--Hilliard equation with classical or dynamic boundary conditions,
we refer to the book \cite{Mi19} and the recent review paper \cite{Wu22}.
Finally, we mention a new diffuse interface model
for incompressible, viscous fluid mixtures with bulk-surface interaction, consisting of a Navier--Stokes--Cahn--Hilliard model in the bulk that is coupled to a surface Navier--Stokes--Cahn--Hilliard model on the boundary, which has been proposed and analyzed in \cite{Stange,KS,Stange-NSCH}.

Regarding the regularity of solutions to \eqref{AGG} subject to \eqref{BD}--\eqref{initial},
it seems difficult to go beyond the mere existence of global weak solutions in the case of unmatched densities.
This is due to the low regularity of the trace of the velocity field on the boundary and the fact that $\mathrm{div}_{\Gamma}\mathbf{v}_{\boldsymbol{\tau}}\neq0$ on $\Sigma_T$, which make the nonlinear coupling term $\mathrm{div}_\Gamma(\psi\mathbf{v}_{\boldsymbol{\tau}})$ difficult to handle. Instead, we shall consider an alternative new class of boundary conditions,
which {was recently derived} in \cite{KL2025}
based on {local mass balance laws}, local energy dissipation laws, and the Lagrange multiplier approach:
\begin{subequations}
\label{BD-KL}
\begin{alignat}{2}
    \label{BD-KL:1}
    &\mathbf{v}\cdot \mathbf{n}=0,\quad\mathrm{div}_\Gamma\,\mathbf{v}_{\boldsymbol{\tau}}=0
    &&\quad  \mbox{on }\Sigma _{T },
    \\[1mm]
    \label{BD-KL:2}
    &\begin{cases}
    K \partial_{\mathbf{n}}\varphi =\psi-\varphi, & \text{if}\ K\in[0,+\infty)\\
    \partial_{\mathbf{n}}\varphi=0,& \text{if}\ K=+\infty
    \end{cases}
    &&\quad\mbox{on }\Sigma _{T },
    \\
    \label{BD-KL:3}
    &\Big[ 2\nu ( \psi) \mathbb{D}\mathbf{v}\,\mathbf{n}+\beta ( \psi ) \mathbf{v}\Big]_{\bm{\tau}}
    + \nabla_{\!\boldsymbol{\tau}\,} q
    = \Big[-\psi\nabla_{\!\bm{\tau}\,}\mathcal{L}
    +\dfrac{1}{2}(\mathbf{J}\cdot\mathbf{n})\mathbf{v}\Big]_{\bm{\tau}}
    &&\quad \mbox{on}\,\Sigma _{T },
    \\[1mm]
    \label{BD-KL:4}
    &\begin{cases}
    L \partial_{\mathbf{n}}\mu =\mathcal{L}-\mu, & \text{if}\ L\in[0,+\infty)\\
    \partial_{\mathbf{n}}\mu=0,& \text{if}\ L=+\infty
    \end{cases}
    &&\quad\mbox{on }\Sigma _{T },
    \\[1mm]
    \label{BD-KL:5}
    &\partial _{t}\psi + \mathbf{v}_{\bm{\tau}} \cdot \nabla_{\!\bm{\tau}\,}\psi
    =\Delta_{\bm{\tau}} \mathcal{L}
    -\partial_{\mathbf{n}}\mu
    &&\quad \mbox{on }\,\Sigma_{T },
    \\[1mm]
    \label{BD-KL:6}
    &\mathcal{L}=- \Delta _{\bm{\tau}}\psi +\partial _{\mathbf{n}}\varphi
    +G'(\psi )
    &&\quad\mbox{on }\,\Sigma_{T }.
\end{alignat}
\end{subequations}
In particular, compared to \eqref{BD:2}, the boundary condition \eqref{BD-KL:3} involves an additional surface pressure $q$, which acts as a Lagrange multiplier for the condition $\mathrm{div}_\Gamma\,\mathbf{v}_{\boldsymbol{\tau}}=0$ on $\Sigma_T$.
To our knowledge, no theoretical results are available for the AGG model \eqref{AGG} subject to the initial conditions \eqref{initial} and boundary conditions \eqref{BD-KL}.
However, a strategy similar to that employed in \cite{GLW} can be used to establish the existence of a global weak solution.

In the case of unmatched densities, it remains difficult to obtain results beyond the existence of weak solutions.
Nevertheless, inspired by \cite{GGP}, we will show that the existence of more regular solutions can at least be established for a \textit{Voigt regularization} of the system \eqref{AGG}, subject to \eqref{initial}--\eqref{BD-KL}. From a physical point of view, the Voigt regularization (sometimes also referred to as the Kelvin--Voigt regularization) is mainly used to describe viscoelastic fluids or highly turbulent flows. For mathematical purposes, it has also been extensively employed as a regularization of various incompressible flow models for analytical or numerical purposes.
The Voigt-regularized stress tensor is given by
\begin{align*}
    \mathbf{T}(\mathbf{v})=2\nu(\varphi)\mathbb{D}\mathbf{v}+2\alpha\mathbb{D}\partial_t\mathbf{v}.
\end{align*}
The additional term $2\alpha\mathbb{D}\partial_t\mathbf{v}$ with $\alpha>0$ can be interpreted as a time relaxation (see, e.g., \cite{BS,Layton,Mohan} and references therein).

In summary, for any $\alpha>0$, we consider the following Voigt-regularized variant of the Navier--Stokes--Cahn--Hilliard model \eqref{AGG} subject to \eqref{initial}--\eqref{BD-KL}:
\begin{subequations}
\label{eqmain0new}
\begin{alignat}{2}
&\partial _{t}(\rho (\varphi)\mathbf{v})+\mathrm{div}\big(\mathbf{v}\otimes (\rho (\varphi)\mathbf{v}+\mathbf{J})\big)
-\mathrm{div}\big(2\nu (\varphi )\mathbb{D}\mathbf{v}+{2\alpha \mathbb{D}\partial_t\mathbf{v}}\big)+\nabla p
=\mu \nabla\varphi
&&\quad \mbox{in }\,Q_{T },
\\[1mm]
&\mathrm{div}\,\mathbf{v}=0
&&\quad \mbox{in }\,Q_{T },
\\[1mm]
&\partial _{t}\varphi +\mathbf{v}\cdot \nabla \varphi=\Delta \mu
&&\quad \mbox{in }\,Q_{T },
\\[1mm]
&\mu =-\Delta \varphi +F'(\varphi )
&&\quad \mbox{in }\,Q_{T },
\\[1mm]
&\rho (\varphi )=\displaystyle{\frac{\rho_{1}-\rho_{2}}{2}\varphi +\frac{\rho_{1}+\rho_{2}}{2} },\qquad
\mathbf{J}=-\rho'\nabla \mu
&&\quad \mbox{in }\,Q_{T },%
\end{alignat}
\end{subequations}
subject to the boundary conditions
\begin{subequations}
\label{BD-1}
\begin{alignat}{2}
&\mathbf{v}\cdot \mathbf{n}=0,\quad\mathrm{div}_\Gamma\,\mathbf{v}_{\boldsymbol{\tau}}=0
&&\quad  \mbox{on }\Sigma _{T },
\\[1mm]
&\begin{cases}
K \partial_{\mathbf{n}}\varphi =\psi-\varphi, & \text{if}\ K\in[0,+\infty)\\
\partial_{\mathbf{n}}\varphi=0,& \text{if}\ K=+\infty
\end{cases}
&&\quad {\mbox{on }\Sigma _{T },}
\\[1mm]
&\Big[ \big(2\nu ( \varphi) \mathbb{D}\mathbf{v}+{2\alpha \mathbb{D}\partial_t\mathbf{v}}\big)\mathbf{n}\Big]_{\bm{\tau}}
+\beta ( \psi ) \mathbf{v}_{\bm{\tau}} + \nabla_{\!\boldsymbol{\tau}\,} q = \Big[-\psi\nabla_{\!\bm{\tau}\,}\mathcal{L}
+\dfrac{1}{2}(\mathbf{J}\cdot\mathbf{n})\mathbf{v}\Big]_{\bm{\tau}}
&&\quad \mbox{on}\,\Sigma _{T },
\\[1mm]
&\begin{cases}
L \partial_{\mathbf{n}}\mu =\mathcal{L}-\mu, & \text{if}\ L\in[0,+\infty)\\
\partial_{\mathbf{n}}\mu=0,& \text{if}\ L=+\infty
\end{cases}
&&\quad\mbox{on }\Sigma _{T },
\\[1mm]
&\partial _{t}\psi + \mathbf{v}_{\bm{\tau}}  \cdot \nabla_{\!\bm{\tau}\,}\psi
=\Delta_{\bm{\tau}} \mathcal{L}-\partial_{\mathbf{n}}\mu
&&\quad \mbox{on }\,\Sigma_{T },
\\[1mm]
&\mathcal{L}=- \Delta _{\bm{\tau}}\psi +\partial _{\mathbf{n}}\varphi+G'(\psi )
&&\quad \mbox{on }\,\Sigma_{T },
\end{alignat}
\end{subequations}
and the initial conditions
\begin{equation}
\mathbf{v}|_{t=0}=\mathbf{v}_{0},\quad \varphi |_{t=0}=\varphi _{0}\quad\text{in }\Omega,\qquad
\psi |_{t=0}=\psi _{0}\quad \text{on}\ \Gamma.
\label{icnew}
\end{equation}
The total energy of the system \eqref{eqmain0new}--\eqref{BD-1} is given by
\begin{align}
    E_{\mathrm{tot}}(\mathbf{v},\boldsymbol{\varphi}) \coloneqq
    E_{\text{kin}}(\mathbf{v},\boldsymbol{\varphi})
    +E_{\text{free}}(\boldsymbol{\varphi})\notag
\end{align}
with
\begin{align*}
    E_{\text{kin}}(\mathbf{v},\boldsymbol{\varphi})
    &\coloneqq \frac{1}{2}\int_\Omega \rho(\varphi)|\mathbf{v}|^2\,\mathrm{d}x
    +\alpha\int_\Omega |\mathbb{D}\mathbf{v}|^2\,\mathrm{d}x,
    \\
    E_{\text{free}}(\boldsymbol{\varphi})
    &\coloneqq
    \underbrace{\int_\Omega\Big(\frac{1}{2}|\nabla\varphi|^2+F(\varphi)\Big)\,\mathrm{d}x}_{\text{bulk free energy}}
    +\underbrace{\int_\Gamma \Big(\frac{1}{2}|\nabla_{\!\boldsymbol{\tau}\,}\psi|^2+G(\psi)\Big)\,\mathrm{d}S}_{\text{surface free energy}}
    \,+\, \frac{\chi(K)}{2} \int_\Gamma|\psi-\varphi|^2\,\mathrm{d}S
\end{align*}
where the third term in $E_{\text{free}}$ denotes a penalization of the mismatch between $\phi$ and $\psi$ on $\Gamma$, and
\begin{align}
\label{DEF:CHI}
\chi(r)=\left\{
    \begin{aligned}
        &\frac 1r ,
        &&\quad\text{if }r\in(0,+\infty),\\
        &0,
        &&\quad\text{if }r=0\ \text{or}\ +\infty.
    \end{aligned}
    \right.
\end{align}
Moreover, sufficiently regular solutions to problem \eqref{eqmain0new}--\eqref{icnew} satisfy the mass conservation property
\begin{align*}
    \left\{
    \begin{aligned}
        &\displaystyle\int_\Omega \varphi(t)\,\mathrm{d}x+\int_\Gamma \psi(t)\,\mathrm{d}S
        =\int_\Omega\varphi_0\,\mathrm{d}x+\int_\Gamma \psi_0\,\mathrm{d}S,
        &&\quad\text{for all}\,t\in[0,T]\ \text{and} \ L\in[0,+\infty),
        \\[2mm]
        &\displaystyle\int_\Omega \varphi(t)\,\mathrm{d}x=\int_\Omega\varphi_0\,\mathrm{d}x,
        \quad \int_\Gamma \psi(t)\,\mathrm{d}S=\int_\Gamma \psi_0\,\mathrm{d}S,
        &&\quad\text{for all}\,t\in[0,T]\ \text{and}\ L=+\infty,
    \end{aligned}
    \right.
\end{align*}
and satisfy the energy identity
\begin{align}
    \begin{split}
    \frac{\mathrm{d}}{\mathrm{d}t} E_{\mathrm{tot}}(\mathbf{v}(t),\boldsymbol{\varphi}(t))
    +\int_\Omega 2\nu(\varphi(t))|\mathbb{D}\mathbf{v}(t)|^2\,\mathrm{d}x
    +\int_\Gamma\beta(\psi(t))|\mathbf{v}_{\boldsymbol{\tau}}(t)|^2\,\mathrm{d}S
    &{}
    \\
    \quad+\int_\Omega |\nabla\mu(t)|^2\,\mathrm{d}x
    +\int_\Gamma |\nabla_{\!\boldsymbol{\tau}\,}\mathcal{L}(t)|^2\,\mathrm{d}S
    +\chi(L)\int_\Gamma|\mu(t)-\mathcal{L}(t)|^2\,\mathrm{d}S
    & = 0
    \label{energy-identity}
    \end{split}
\end{align}
for all $t\in(0,T)$, where $\chi$ is defined as in \eqref{DEF:CHI}.

\medskip

\noindent\textbf{Goals and main ideas.}
The goal of this work is to establish the global existence of a more regular (with respect to time) solution
to the system \eqref{eqmain0new}--\eqref{icnew} (see Theorem~\ref{quasi-strong}).
This {so-called quasi-strong solution} satisfies the energy equality in both two and three dimensions.
The proof is based on an appropriate semi-Galerkin approximation scheme. In the following, we outline some features of the problem and key ideas of the proof.
\begin{itemize}
    \item [(1)] \emph{Voigt regularization.}
The parameter $\alpha>0$ plays a significant role in the analysis, as the Voigt term $\mathrm{div}(2\alpha\mathbb{D}\partial_t\mathbf{v})$
provides estimates for $\partial_t\mathbf{v}$ in $\mathbf{H}^1(\Omega)$.
This enables us to improve the regularity of $\partial_t\mathbf{v}$ and to show the existence of a quasi-strong solution. More precisely, to obtain the $\mathbf{H}^1(\Omega)$-regularity of $\partial_t\mathbf{v}$,
we need to take $\mathbf{w}=\partial_t\mathbf{v}$ {as a test function for the velocity equation (cf.~\eqref{quasi-1'})}.
Then, there will {emerge three integrals over the boundary $\Gamma$} involving $\partial_t\mathbf{v}$, that is,
\begin{align*}
    \int_\Gamma \beta(\psi)\mathbf{v}_{\boldsymbol{\tau}}\cdot\partial_t\mathbf{v}_{\boldsymbol{\tau}}\,\mathrm{d}S
    +\int_\Gamma \psi\nabla_{\!\boldsymbol{\tau}\,}\mathcal{L}\cdot\partial_t\mathbf{v}_{\boldsymbol{\tau}}\,\mathrm{d}S
    -\frac{1}{2}\int_\Gamma (\mathbf{J}\cdot\mathbf{n})\mathbf{v}_{\boldsymbol{\tau}}\cdot\partial_t\mathbf{v}_{\boldsymbol{\tau}}\,\mathrm{d}S.
\end{align*}
Since $\alpha>0$, the term $\|\partial_t\mathbf{v}\|_{\mathbf{L}^4(\Gamma)}$ can be estimated using the Sobolev embedding theorem
and eventually be controlled by
$$
\int_\Omega \rho(\varphi)|\partial_t\mathbf{v}|^2\,\mathrm{d}x+2\alpha\int_\Omega |\mathbb{D}\partial_t\mathbf{v}|^2\,\mathrm{d}x.
$$
For more details, we refer to the terms $K_2$, $K_4$ and $K_{11}$ in \eqref{0227-20}.

\item [(2)] \emph{Construction of approximate solutions.} We employ a suitable approximation procedure involving a viscous regularization to the bulk-surface convective Cahn--Hilliard subsystem (see Appendix~\ref{appendix}),
together with an appropriate regularization of the initial data (see Proposition~\ref{approximate-initial-datum}).
Such approximations are crucial to rigorously justify higher-order Sobolev estimates for the corresponding approximate solutions. The most crucial step is to obtain the regularity
\begin{align*}
(\partial_t \mu, \partial_t \mathcal{L})\in L^2(0,T;\mathcal{H}^1),
\end{align*}
which allows us to rigorously compute \emph{a priori} estimates.
However, to derive this regularity property, we need some higher-order regularity of \(\partial_\mathbf{n}\varphi\).
When $K\in(0,+\infty]$, this can be derived via
the Robin/Neumann boundary condition $K\partial_\mathbf{n}\varphi=\psi-\varphi$ on $\Sigma_T$.
Therefore, in Appendix~\ref{appendix}, we shall focus on the case
\begin{align*}
(K,L)\in(0,+\infty]\times(0,+\infty),
\end{align*}
and establish the existence of approximate solutions in this case (see Section~\ref{approximating system for L}).
Then, by deriving suitable estimates that are uniform with respect to the approximate parameters,
we can deduce the existence of a global quasi-strong solution to problem \eqref{eqmain0new}--\eqref{icnew}
with $(K,L)\in(0,+\infty]\times(0,+\infty)$ using a compactness argument (see Section~\ref{existence of quasi}).
For the limit models corresponding to $(K,L)\in (0,+\infty]\times\{0,+\infty\}$,
the existence of a global quasi-strong solution can be obtained
by studying the corresponding asymptotic limits as $L\to0$ and $L\to+\infty$ (see Section~\ref{asy}).

\item [(3)] \emph{Motivation for an additional singular potential.}
For our analysis of the viscous approximation to the bulk-surface convective Cahn--Hilliard system in Appendix~\ref{appendix}, we introduce a further singular potential $F_\sigma$ (see \eqref{F_sigma}), which exhibits its singularities at $\pm(1-\sigma)$. It enforces a so-called strict separation property, which means that the values of the phase-field variables $\varphi$ and $\psi$ cannot exceed the interval $(-1+\sigma,1-\sigma)$. As the singularities of the potentials $F$ and $G$ lie at $\pm 1$, these potentials and their derivatives $f=F'$ and $g=G'$ can be interpreted as regular functions on $(-1+\sigma,1-\sigma)$.

In Appendix~\ref{appendix}, we prove the {aforementioned} strict separation property using a comparison argument.
To this end, we rewrite the convective viscous Cahn--Hilliard system \eqref{convective-viscous} as \eqref{re-1}
and consider the ODE system \eqref{ODE}.
Taking the difference between \eqref{re-1} and \eqref{ODE} and testing the resultant with a suitable test function,
we arrive at \eqref{0303-13}. By the monotonicity of $f_\sigma = F_\sigma'$,
the terms in the second line of \eqref{0303-13} are nonnegative.
If we had not introduced the additional function $f_\sigma$,
we would have obtained the following ODE system instead of \eqref{ODE}:
\begin{align}
    \begin{cases}
        \gamma\partial_t U+ f_0(U)= H,\\
        U(0)=1-\delta_0,
    \end{cases}\quad\quad	
    \begin{cases}
    \gamma\partial_t V+f_0(V)= -H,\\
    V(0)=-1+\delta_0.
    \end{cases}\notag
\end{align}
Then, the second line of \eqref{0303-13} would read as
\begin{align*}
    \underbrace{\int_\Omega (f_0(\varphi)-f_0(U))w^+\,\mathrm{d}x}_{\geq0}
    +\underbrace{\int_\Gamma (g_0(\psi)-f_0(U))w_\Gamma^+\,\mathrm{d}S}_{\text{No sign, unless }g_0=f_0}.
\end{align*}
Here, $f_0$ and $g_0$ denote the singular parts of the functions $f=F'$ and $g=G'$, respectively (see assumption~\ref{ASS:A2}).
Thus, the introduction of the singular potential $f_\sigma$ allows us to avoid the additional assumption $f_0=g_0$.
\end{itemize}

\noindent\textbf{Plan of this paper.}
In Section~\ref{pre}, we present some preliminaries for the analysis and state the main result of this paper.
Sections~\ref{approximating system for L} and~\ref{existence of quasi} are devoted to the proof of Theorem~\ref{quasi-strong} for the case $(K,L)\in(0,+\infty]\times(0,+\infty)$.
More precisely, the existence of approximate solutions is established in Section~\ref{approximating system for L},
while the existence of quasi-strong solutions to system \eqref{eqmain0new}--\eqref{icnew} with $(K,L)\in(0,+\infty]\times(0,+\infty)$ is shown in Section~\ref{existence of quasi}.
In Section~\ref{asy}, we complete the proof of Theorem~\ref{quasi-strong} by investigating the asymptotic limits as $L\to0$ and $L\to+\infty$ with $K\in(0,+\infty]$.
Finally, in Appendix~\ref{appendix}, we establish the well-posedness of the bulk-surface convective Cahn--Hilliard subsystem with viscous regularization.


\section{Preliminaries and Main Result}
\label{pre}
\setcounter{equation}{0}

\subsection{Preliminaries}
We write $\mathbf{a}\otimes \mathbf{b}=(a_{i}b_{j})_{i,j=1}^{d}$
for any vectors $\mathbf{a},\mathbf{b}\in \mathbb{R%
}^{d}$ and $A_{\mathrm{sym}}=\frac{1}{2}(A+A^\top)$ for any matrix $A\in
\mathbb{R}^{d\times d}$.
Let $X$ be a real Banach space.
We denote its norm by $\|\cdot\|_X$, its dual space by $X'$ and the duality pairing by
$\langle \cdot,\cdot\rangle_{X',X}$. If $X$ is a Hilbert space, its inner product will be denoted by $(\cdot,\cdot)_X$.
The notation $X\hookrightarrow Y$ means that $X$ is continuously
embedded in $Y$. We further denote by $\mathbf{X}$ the generic space of vectors or matrices,
with each component belonging to $X$. The space $L^q(0,T;X)$ with $1\leq q\leq +\infty$ denotes the Bochner space consisting of all strongly measurable $q$-integrable functions with values in $X$, or, if $q=+\infty$, essentially bounded
functions. Besides, the space $C([ 0,T] ;X)$ denotes the Banach
space of all bounded and continuous functions $u:[0,T]
\rightarrow X$ equipped with the supremum norm. For $q\in [1,+\infty]$, $W^{1,q}(0,T;X)$ denotes the space of functions $f$
such that $f\in L^q(0,T;X)$ with $\partial_t f\in L^q(0,T;X)$,
where $\partial_t$ denotes the vector-valued distributional derivative of $f$ with respect to $t$.
For $q=2$, we set $H^1(0,T;X)=W^{1,2}(0,T;X)$.


Let $\Omega \subset \mathbb{R}^{d}$ ($d\in\{2,3\}$) be a bounded domain with smooth boundary $\Gamma =\partial \Omega$. The Lebesgue spaces in $\Omega$ and on $\Gamma$ are denoted by $L^{q}(\Omega )$,
$L^{q}(\Gamma )$ ($1\leq q\leq+ \infty$) with norms $\|\cdot \|_{L^{q}(\Omega)}$
and $\|\cdot \|_{L^{q}( \Gamma) }$,
respectively. For $s\geq 0$ and $q\in [1,+\infty )$,
we denote by $H^{s,q}(\Omega )$ the Bessel-potential
spaces and by $W^{s,q}(\Omega )$ the Slobodeckij spaces. It holds $H^{s,2}(\Omega )=W^{s,2}(\Omega )$ for all $s$, but for $q\neq 2$ the identity $H^{s,q}(\Omega )=W^{s,q}(\Omega )$ is only true if $s\in \mathbb{N}$. For clarity, we denote by $\mathbb{N}$ the set of natural numbers including zero.
If $s\in \mathbb{N}$, then $H^{s,q}(\Omega )$ and $W^{s,q}(\Omega
)$ coincide with the usual Sobolev spaces. The corresponding function spaces over the boundary $\Gamma$ are defined via local charts. Let $\Upsilon_{i}:U_{i}\subset \mathbb{R}^{d-1}\rightarrow \Gamma $ be a finite family of parametrizations such that $\bigcup_{i}\Upsilon _{i}(U_{i})$ covers $\Gamma $, and let $\{\psi _{i}\}$ be a partition of unity for $%
\Gamma $ subordinate to this cover. Then for $s\geq 0$, we have
\begin{equation*}
H^{s,q}(\Gamma )=\big \{u\in L^{q}(\Gamma )\,:\,(\psi _{i}u)\circ \Upsilon
_{i}\in H^{s,q}(\mathbb{R}^{d-1})\text{ for all $i$}\big\},
\end{equation*}
with an equivalent norm given by $\| u\|_{H^{s,q}(\Gamma
)}=\sum_{i}\Vert (\psi _{i}u)\circ \Upsilon _{i}\Vert _{H^{s,q}(\mathbb{R}%
^{d-1})}.$ The spaces $W^{s,q}(\Gamma )$ are defined similarly.
The properties of the spaces over $\Omega$ described above
can easily be transferred to the spaces over $\Gamma$.


If $q=2$, $H^{s,2}(\Omega)=W^{s,2}(\Omega )$ for all $s$ and these spaces are Hilbert spaces. Throughout this paper, we use the notation $H^s(\Omega)=H^{s,2}(\Omega)=W^{s,2}(\Omega )$ and $H^0(\Omega)$ is identified with $L^2(\Omega)$.
The Lebesgue spaces, Sobolev spaces and Slobodeckij spaces on the boundary $\Gamma$ can be defined analogously,
provided that $\Gamma$ is sufficiently regular.
We write $H^s(\Gamma)=H^{s,2}(\Gamma)=W^{s,2}(\Gamma)$ and identify $H^0(\Gamma)$ with $L^2(\Gamma)$.
Hereafter, the following shortcuts will be applied:
\begin{align*}
  H \coloneqq L^{2}(\Omega),\quad H_{\Gamma} \coloneqq L^{2}(\Gamma),\quad V \coloneqq H^{1}(\Omega),\quad V_{\Gamma} \coloneqq H^{1}(\Gamma).
\end{align*}
For every $y\in V'$, we denote by $\langle y\rangle_\Omega=|\Omega|^{-1}\langle
y,1\rangle_{V',V}$ its generalized mean
value over $\Omega$. If in addition $y\in L^1(\Omega)$, its spatial mean can be expressed as $\langle y\rangle_\Omega=|\Omega|^{-1}\int_\Omega y \,\mathrm{d}x$.
The spatial mean for {any $y_\Gamma \in V_\Gamma'$} on $\Gamma$, denoted by $\langle
y_\Gamma\rangle_\Gamma$, is defined analogously.
Furthermore, we introduce the following subspaces of functions with zero mean:
\begin{alignat*}{2}
&V_{0} \coloneqq \big\{y\in V:\;\langle y\rangle_\Omega =0\big\},
&&\qquad V_{0}^* \coloneqq \big\{y^*\in V':\;\langle y^*\rangle_\Omega =0\big\},
 \\
&V_{\Gamma,0} \coloneqq \big\{y_{\Gamma}\in V_{\Gamma}:\; \langle y_\Gamma\rangle_\Gamma =0\big\},
&&\qquad V_{\Gamma,0}^* \coloneqq \big\{y_\Gamma^* \in V_{\Gamma}':\;\langle y_\Gamma^*\rangle_\Gamma =0\big\}.
\end{alignat*}
We further recall the Poincar\'{e}--Wirtinger inequalities in $\Omega$ and on $\Gamma$
(see, e.g., \cite[Theorem 2.12]{DE13} for the case on $\Gamma$):
\begin{alignat*}{2}
\|u-\langle u\rangle_\Omega\|_{H} &\leq C_\Omega\|\nabla u\|_{\mathbf{L}^2(\Omega)}
&&\quad \text{for all $u\in V$,}
\\
\|u_\Gamma-\langle u_\Gamma\rangle_\Gamma\|_{H_\Gamma}
&\leq C_\Gamma\|{\nabla_{\!\boldsymbol{\tau}\,}} u_\Gamma\|_{\mathbf{L}^2(\Gamma)}
&&\quad\text{for all $u_\Gamma\in V_\Gamma$.}
\end{alignat*}
Here, $C_\Omega$ (resp. $C_\Gamma$) is a positive constant depending only on $\Omega$ (resp. $\Gamma$).
Moreover, the following Poincar\'e-type inequality (see, e.g., \cite[Chapter II, Section 1.4]{Te97}) holds:
\begin{align}
    \|u\|_H\leq \widetilde{C}_\mathrm{P}(\|\nabla u\|_{\mathbf{L}^2(\Omega)}+\|u\|_{H_\Gamma})
    \quad\text{for all $u\in V$}.\label{bsP}
\end{align}
Here, the constant $\widetilde{C}_\mathrm{P}>0$ depends only on $\Omega$.

Let us now consider the Poisson--Neumann problem
\begin{align}
\left\{
\begin{aligned}
    -\Delta u &= y &&\quad \text{in} \ \Omega,\\
    \partial_{\mathbf{n}} u &=0 &&\quad \text{on}\ \Gamma.
\end{aligned}
\right.
\label{P-n}
\end{align}
By the Lax--Milgram theorem, for every $y\in V_0^*$,
problem \eqref{P-n} admits a unique weak solution $u\in V_{0}$. {This means that $u$ satisfies}
\begin{align}
\int_{\Omega}\nabla u\cdot\nabla \zeta\,\mathrm{d}x
= \langle y,\zeta \rangle_{V',V}\quad\text{for all $\zeta\in V$}.
\notag
\end{align}
Thus, we can define the solution operator
$\mathcal{N}_{\Omega}: V_{0}^*\rightarrow V_{0}$ such that $u=\mathcal{N}_{\Omega}y$.
Furthermore, we consider the surface Poisson equation
\begin{align}
    {-\Delta_{\boldsymbol{\tau}}} u_\Gamma = y_\Gamma\quad \text{on}\ \ \Gamma,
\label{P-s}
\end{align}
where $\Delta_{\boldsymbol{\tau}} = \mathrm{div}_{\Gamma} \nabla_{\!\boldsymbol{\tau}\,}$ is the Laplace--Beltrami operator on $\Gamma$.
For every $y_\Gamma\in V_{\Gamma,0}^*$, problem \eqref{P-s} admits a unique weak solution $u_\Gamma\in V_{\Gamma,0}$ satisfying
\begin{align}
\int_{\Gamma}{\nabla_{\!\boldsymbol{\tau}}\,} u_{\Gamma}\cdot{\nabla_{\!\boldsymbol{\tau}\,}} \zeta_{\Gamma}\,\mathrm{d}S
= \langle y_{\Gamma},\zeta_{\Gamma} \rangle_{V_{\Gamma}',V_{\Gamma}}
\quad\text{for all $\zeta_{\Gamma}\in V_{\Gamma}$.}\notag
\end{align}
Hence, we can define the solution operator $\mathcal{N}_{\Gamma}:V_{\Gamma,0}^*\rightarrow V_{\Gamma,0}$
such that $u_{\Gamma}=\mathcal{N}_{\Gamma}y_{\Gamma}$.
By virtue of these definitions, we introduce the following equivalent norms:
\begin{alignat*}{2}
\Vert y\Vert_{V_{0}^*}
&\coloneqq \Big(\int_{\Omega}|\nabla\mathcal{N}_{\Omega}y|^{2}\,\mathrm{d}x\Big)^{1/2}
&&\quad\text{for all $y\in V_{0}^*$},
\\
\Vert y\Vert_{V'}
&\coloneqq \Big(\Vert y - \langle y\rangle_\Omega \Vert_{V_{0}^*}^2+ |\langle y\rangle_\Omega|^2\Big)^{1/2}
&&\quad\text{for all $y\in V'$},
\\
\Vert y_{\Gamma}\Vert_{V_{\Gamma,0}^*}
&\coloneqq \Big(\int_{\Gamma}|{\nabla_{\!\boldsymbol{\tau}\,}}\mathcal{N}_{\Gamma}y_{\Gamma}|^{2}\,\mathrm{d}S\Big)^{1/2}
&&\quad\text{for all $y_{\Gamma}\in V_{\Gamma,0}^*$},
\\
\Vert y_{\Gamma}\Vert_{V_{\Gamma}'}
&\coloneqq \Big(\Vert y_{\Gamma}- \langle y_{\Gamma}\rangle_\Gamma \Vert_{V_{\Gamma,0}^*}^2
+ |\langle y_{\Gamma}\rangle_\Gamma|^2\Big)^{1/2}
&&\quad\text{for all $y_{\Gamma}\in V_{\Gamma}'$}.
\end{alignat*}


Next, for $q\in [1,+\infty]$ and $k\in \mathbb{N}$, we introduce the product spaces
\begin{align*}
    \mathcal{L}^{q} \coloneqq L^{q}(\Omega)\times L^{q}(\Gamma)\quad\mathrm{and}
    \quad\mathcal{W}^{k,q} \coloneqq W^{k,q}(\Omega)\times W^{k,q}(\Gamma).
\end{align*}
In particular, we use the notation
\begin{align*}
    \mathcal{H}^k \coloneqq H^k(\Omega)\times H^k(\Gamma)=\mathcal{W}^{k,2}.
\end{align*}
As before, we identify  $\mathcal{H}^{0}$ with $\mathcal{L}^{2}$.
For any $k\in \mathbb{N}$, $\mathcal{H}^{k}$ is a Hilbert space endowed with the standard inner product
\begin{align*}
  ((y,y_{\Gamma}),(z,z_{\Gamma}))_{\mathcal{H}^{k}} \coloneqq (y,z)_{H^{k}(\Omega)}+(y_{\Gamma},z_{\Gamma})_{H^{k}(\Gamma)}
  \quad\text{for all $(y,y_{\Gamma}), (z,z_{\Gamma})\in\mathcal{H}^{k}$,}
\end{align*}
and the induced norm $\Vert\cdot\Vert_{\mathcal{H}^{k}} \coloneqq (\cdot,\cdot)_{\mathcal{H}^{k}}^{1/2}$.
We further introduce the bilinear form
\begin{align*}
  \langle (y,y_\Gamma),(\zeta, \zeta_\Gamma)\rangle_{(\mathcal{H}^1)',\mathcal{H}^1}
  \coloneqq (y,\zeta)_{L^2(\Omega)}+ (y_\Gamma, \zeta_\Gamma)_{L^2(\Gamma)}
  \quad \text{for all $(y,y_\Gamma)\in \mathcal{L}^2,\ (\zeta, \zeta_\Gamma)\in \mathcal{H}^1$.}
\end{align*}
By the Riesz representation theorem, this product
can be extended to a duality pairing on $(\mathcal{H}^1)'\times \mathcal{H}^1$.


For any $k\in\mathbb{N}^+$, we introduce the Hilbert space
\begin{align*}
  \mathcal{V}^{k} \coloneqq \big\{(y,y_{\Gamma})\in\mathcal{H}^{k}\;:\;\text{tr}(y)=y_{\Gamma}\ \ \text{a.e.~on }\Gamma\big\},
\end{align*}
endowed with the inner product $(\cdot,\cdot)_{\mathcal{V}^{k}} \coloneqq (\cdot,\cdot)_{\mathcal{H}^{k}}$
and the associated norm $\Vert\cdot\Vert_{\mathcal{V}^{k}} \coloneqq \Vert\cdot\Vert_{\mathcal{H}^{k}}$.
Here, $\text{tr}(y)$ stands for the trace of $y\in H^k(\Omega)$ on the boundary $\Gamma$,
which makes sense for $k\in \mathbb{N}^+$.
The duality pairing on $(\mathcal{V}^1)'\times \mathcal{V}^1$ can be defined analogously to the one on $(\mathcal{H}^1)'\times \mathcal{H}^1$.
Besides, for $K\in[0,+\infty]$, we introduce the space
\begin{align}
    \label{DEF:W2K}
    \mathcal{W}_{K}^2 \coloneqq
    \begin{cases}
        \mathcal{V}^2
        &\text{if }K=0,
        \\
        \{(y,y_\Gamma)\in\mathcal{H}^2:K\partial_\mathbf{n}y=y_\Gamma-y\}
        &\text{if }K\in(0,+\infty),
        \\
        \{(y,y_\Gamma)\in\mathcal{H}^2:\partial_\mathbf{n}y=0\}
        &\text{if }K=+\infty.
    \end{cases}
\end{align}


For any given $m\in\mathbb{R}$, we set
\begin{align*}
 \mathcal{L}^{2}_{(m)} \coloneqq \big\{(y,y_{\Gamma})\in\mathcal{L}^{2}\;:\;\overline{m}(y,y_\Gamma)=m\big\},
\end{align*}
where the generalized bulk-surface mean is defined by
\begin{align}
     \overline{m}(y,y_\Gamma) \coloneqq \frac{|\Omega|\langle y\rangle_\Omega
     +|\Gamma|\langle y_\Gamma\rangle_\Gamma}{|\Omega|+|\Gamma|}.\label{generalized-mean}
\end{align}
Furthermore, we define the projection operator
\begin{align}
    \label{DEF:PRO:L0}
    \mathbb{P}:\mathcal{L}^2\to\mathcal{L}_{(0)}^2 \,, \quad
    \mathbb{P}(y,y_\Gamma)=\big( y-\overline{m}(y,y_\Gamma),y_\Gamma-\overline{m}(y,y_\Gamma) \big).
\end{align}
For any $k\in \mathbb{N}^+$, the closed linear subspaces
\begin{align*}
    \mathcal{H}_{(0)}^k
    \coloneqq \mathcal{H}^{k}\cap\mathcal{L}_{(0)}^{2},
    \qquad
    \mathcal{V}_{(0)}^k
    \coloneqq \mathcal{V}^{k}\cap\mathcal{L}_{(0)}^{2},
\end{align*}
both endowed with the inner product $(\cdot,\cdot)_{\mathcal{H}^{k}}$
and the induced norm $\Vert\cdot\Vert_{\mathcal{H}^{k}}$, are Hilbert spaces. For $L\in [0,+\infty)$ and $k\in \mathbb{N}^+$,  we introduce the function spaces
\begin{align*}
  \mathcal{H}^{k}_{L} \coloneqq
  \begin{cases}
  \mathcal{H}^k,& \text{if}\ L\in (0,+\infty),\\
  \mathcal{V}^{k},&  \text{if}\ L=0,
  \end{cases}\qquad
  \mathcal{H}^{k}_{L,0} \coloneqq
  \begin{cases}
  \mathcal{H}_{(0)}^k,& \text{if}\ L\in (0,+\infty),\\
  \mathcal{V}^{k}_{(0)},&  \text{if}\ L=0.
  \end{cases}
\end{align*}
We further consider the bilinear form
\begin{align}
  &a_{L}((y,y_{\Gamma}),(z,z_{\Gamma}))  \coloneqq
  \int_{\Omega}\nabla y\cdot\nabla z \,\mathrm{d}x
  +\int_{\Gamma}{\nabla_{\!\boldsymbol{\tau}\,}}y_{\Gamma}\cdot{\nabla_{\!\boldsymbol{\tau}\,}}z_{\Gamma}\,\mathrm{d}S
  +\chi(L)\int_{\Gamma}(y-y_{\Gamma})(z-z_{\Gamma})\,\mathrm{d}S,\notag
\end{align}
for all $(y,y_{\Gamma}), (z,z_{\Gamma})\in \mathcal{H}^{1}$.
For $L\in [0,+\infty)$, we define the inner product on $\mathcal{H}_{L,0}^1$
by $(\cdot,\cdot)_{\mathcal{H}_{L,0}^1}=a_L(\cdot,\cdot)$,
and for any $(y,y_{\Gamma})\in \mathcal{H}^{1}_{L,0}$, we define the norm
\begin{align}
  \Vert(y,y_{\Gamma})\Vert_{\mathcal{H}^{1}_{L,0}} \coloneqq ((y,y_{\Gamma}),(y,y_{\Gamma}))_{\mathcal{H}^{1}_{L,0}}^{1/2}
  = \big[ a_{L}((y,y_{\Gamma}),(y,y_{\Gamma})) \big]^{1/2}.
  \label{norm-hL}
\end{align}
For $(y,y_{\Gamma})\in \mathcal{V}_{(0)}^1\subseteq\mathcal{H}^{1}_{L,0}$,
$\Vert(y,y_{\Gamma})\Vert_{\mathcal{H}^{1}_{L,0}}$ does not depend on $L$,
since the third term in $a_L$ vanishes.
In addition, the following Poincar\'{e}-type inequality has been proved in \cite[Lemma A.1]{KL}:
\begin{lemma}
\label{generalized-Poin}
There exists a constant $C_\mathrm{P}>0$ depending only on $L\in [0,+\infty)$ and $\Omega$ such that
\begin{align}
  \|(y,y_{\Gamma})\|_{\mathcal{L}^2}
  \leq C_\mathrm{P} \Vert(y,y_{\Gamma})\Vert_{\mathcal{H}^{1}_{L,0}}
  \quad\text{for all $(y,y_{\Gamma})\in \mathcal{H}^{1}_{L,0}$}.
  \notag
\end{align}
\end{lemma}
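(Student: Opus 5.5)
We argue by contradiction, using a compactness argument, and treat the cases $L\in(0,+\infty)$ and $L=0$ together. The two ingredients are that $\mathcal{H}^1$ embeds compactly into $\mathcal{L}^2$ (Rellich--Kondrachov in $\Omega$ and on the compact manifold $\Gamma$), and that the trace operator $V\to H_\Gamma$ is compact.

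Suppose the inequality fails. Then there is a sequence $(y_n,y_{\Gamma,n})\in\mathcal{H}^1_{L,0}$ with $\|(y_n,y_{\Gamma,n})\|_{\mathcal{L}^2}=1$ and $a_L((y_n,y_{\Gamma,n}),(y_n,y_{\Gamma,n}))\to0$. In particular $\|\nabla y_n\|_{\mathbf{L}^2(\Omega)}\to0$ and $\|\nabla_{\!\boldsymbol{\tau}\,}y_{\Gamma,n}\|_{\mathbf{L}^2(\Gamma)}\to0$, so the sequence is bounded in $\mathcal{H}^1$. Passing to a subsequence, we get weak convergence in $\mathcal{H}^1$ and strong convergence in $\mathcal{L}^2$ to some $(y,y_\Gamma)$. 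We also get strong convergence of $\mathrm{tr}\,y_n$ to $\mathrm{tr}\,y$ in $H_\Gamma$. By weak lower semicontinuity of the gradient norms, $\nabla y=0$ and $\nabla_{\!\boldsymbol{\tau}\,}y_\Gamma=0$.

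Since $\Omega$ is a domain, $y\equiv c$ is constant. To conclude that $y_\Gamma$ is constant we need $\Gamma$ to be connected. If $\Gamma$ has several components, $y_\Gamma$ is only locally constant, and this is where the third term of $a_L$, or the trace constraint, is needed.

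\emph{Case $L\in(0,+\infty)$.} Here $\chi(L)=1/L>0$ and $\int_\Gamma|y_n-y_{\Gamma,n}|^2\,\mathrm{d}S\to0$. Strong convergence of the traces in $H_\Gamma$ then gives $y_\Gamma=\mathrm{tr}\,y=c$.

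\emph{Case $L=0$.} Here $\mathrm{tr}\,y_n=y_{\Gamma,n}$ for every $n$, so again $y_\Gamma=\mathrm{tr}\,y=c$.

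In both cases, the mean condition $\overline{m}(y_n,y_{\Gamma,n})=0$ passes to the limit by $\mathcal{L}^2$ convergence. This gives $c(|\Omega|+|\Gamma|)=0$, hence $c=0$, which contradicts $\|(y,y_\Gamma)\|_{\mathcal{L}^2}=\lim_n\|(y_n,y_{\Gamma,n})\|_{\mathcal{L}^2}=1$.

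The main obstacle is identifying the limit on the boundary with the bulk constant. This requires using either the coupling term $\chi(L)\int_\Gamma|y-y_\Gamma|^2\,\mathrm{d}S$ or the trace constraint in $\mathcal{V}^1$, combined with compactness of the trace map $H^1(\Omega)\to L^2(\Gamma)$. This coupling is also what makes the resulting constant $C_\mathrm{P}$ depend on $L$; for large $L$ the coupling weakens. As an alternative to the contradiction argument, one could derive an explicit constant from \eqref{bsP} and the Poincar\'e--Wirtinger inequality on $\Gamma$. However, the compactness route handles a possibly disconnected boundary without extra work, so that is the one we follow.
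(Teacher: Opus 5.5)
Your argument is correct, and it needs no connectedness assumption on $\Gamma$: you normalize in $\mathcal{L}^2$, extract a limit via Rellich--Kondrachov and the compactness of the trace, identify the limit as a constant pair through the coupling term (for $L>0$) or the trace constraint (for $L=0$), and conclude with $\overline{m}=0$. The paper does not reprove the lemma but quotes \cite[Lemma A.1]{KL}, which, to my recollection, uses this same contradiction--compactness argument.
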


\noindent
This shows that for every $L\in [0,+\infty)$, $\mathcal{H}^{1}_{L,0}$ is a Hilbert space
with the inner product $(\cdot,\cdot)_{\mathcal{H}^{1}_{L,0}}$.
The induced norm $\Vert\cdot\Vert_{\mathcal{H}^{1}_{L,0}}$ introduced in \eqref{norm-hL} is equivalent to the standard one $\Vert\cdot\Vert_{\mathcal{H}^{1}}$ on $\mathcal{H}^{1}_{L,0}$.

For $L\in[0,+\infty)$, let us now consider the following elliptic boundary value problem
\begin{align}
    \left\{
    \begin{aligned}
        -\Delta u &= y
        &&\quad \text{in }\Omega,\\
        - \Delta_{\boldsymbol{\tau}}u_\Gamma +\partial_{\mathbf{n}}u&= y_{\Gamma}
        &&\quad \text{on }\Gamma,\\
        L\partial_{\mathbf{n}}u &=u_\Gamma-u
        &&\quad \mathrm{on\;}\Gamma.
    \end{aligned}
    \right.\label{2.2}
\end{align}
We define the space
\begin{align*}
     \mathcal{H}_{L,0}^{-1}=
     \begin{cases}
         \mathcal{H}_{(0)}^{-1} \coloneqq \{(y,y_\Gamma)\in(\mathcal{H}^1)':\,\overline{m}(y,y_\Gamma)=0\},&\text{if }L\in(0,+\infty),\\
         \mathcal{V}_{(0)}^{-1} \coloneqq \{(y,y_\Gamma)\in(\mathcal{V}^1)':\,\overline{m}(y,y_\Gamma)=0\},&\text{if }L=0,
     \end{cases}
\end{align*}
where $\overline{m}$ is given by \eqref{generalized-mean} if $L\in(0,+\infty)$ and for $L=0$, it is defined as
\begin{align*}
\overline{m}(y,y_\Gamma)
\coloneqq \frac{\langle(y,y_\Gamma),(1,1)\rangle_{(\mathcal{V}^1)',\mathcal{V}^1}}{|\Omega|+|\Gamma|}.
\end{align*}
In particular, we have the chain of inclusions
\begin{align*}
   \mathcal{H}^{1}_{L,0}
   \subseteq \mathcal{L}_{(0)}^2
   \subseteq \mathcal{H}_{L,0}^{-1}
   \subseteq (\mathcal{H}_L^1)'.
\end{align*}
It has been shown in \cite[Theorem 3.3]{KL} that for every $(y,y_{\Gamma})\in\mathcal{H}_{(0)}^{-1}$,
problem \eqref{2.2} admits a unique weak solution $(u,u_\Gamma)\in\mathcal{H}_{L,0}^{1}$
satisfying the weak formulation
\begin{align}
      a_L\big((u,u_{\Gamma}),(\zeta,\zeta_{\Gamma})\big)
      = \big\langle(y,y_{\Gamma}),(\zeta,\zeta_{\Gamma})\big\rangle_{(\mathcal{H}_L^{1})',\mathcal{H}_L^{1}}
      \quad \text{for all $(\zeta,\zeta_{\Gamma})\in\mathcal{H}_L^{1}$},\notag
\end{align}
and the estimate
\begin{align}
      \|(u,u_{\Gamma})\|_{\mathcal{H}^1}\leq C\|(y,y_{\Gamma})\|_{(\mathcal{H}^{1}_L)'},\notag
\end{align}
for some constant $C>0$ depending only on $L$ and $\Omega$.
Furthermore, if the domain $\Omega$ is of class $C^{k+2}$ and $(y,y_{\Gamma})\in \mathcal{H}_{L,0}^{k}$ for some $k\in \mathbb{N}$, it was further shown in \cite[Theorem 3.3]{KL} that
then $(u,u_\Gamma)\in \mathcal{H}_{L}^{k+2}$ and the following regularity estimate holds
\begin{align}
      \|(u,u_{\Gamma})\|_{\mathcal{H}^{k+2}}\leq C\|(y,y_{\Gamma})\|_{\mathcal{H}^{k}}.
      \label{Hk-regularity}
\end{align}
The above facts enable us to define the solution operator
\begin{align*}
\mathfrak{S}^{L}:\mathcal{H}_{(0)}^{-1}\rightarrow\mathcal{H}_{L,0}^{1},
\quad(y,y_{\Gamma})\mapsto \mathfrak{S}^{L}(y,y_{\Gamma})
= \big(\mathfrak{S}^{L}_{\Omega}(y,y_{\Gamma}),\mathfrak{S}^{L}_{\Gamma}(y,y_{\Gamma})\big)
= (u,u_\Gamma).
\end{align*}
For the case $L=0$, similar results have been established in \cite{CF15}.
A direct calculation yields
\begin{align*}
      \big( (u,u_{\Gamma}), (z,z_\Gamma) \big)_{\mathcal{L}^2}
      =\big( (u,u_{\Gamma}), \mathfrak{S}^{L}(z,z_\Gamma) \big)_{\mathcal{H}^1_{L,0}}
      \quad \text{for all $(u,u_{\Gamma})\in \mathcal{H}_{(0)}^1,\ (z,z_\Gamma)\in \mathcal{L}^2_{(0)}$.}
\end{align*}
Thanks to \cite[Corollary 3.5]{KL}, the bilinear form
\begin{align}
      \big( (y,y_{\Gamma}),(z,z_{\Gamma}) \big)_{\mathcal{H}_{L,0}^{-1}}&
       \coloneqq \big( \mathfrak{S}^{L}(y,y_{\Gamma}),\mathfrak{S}^{L}(z,z_{\Gamma}) \big)_{\mathcal{H}^{1}_{L,0}}
      \quad \text{for all $(y,y_{\Gamma}), (z,z_{\Gamma})\in \mathcal{H}_{L,0}^{-1}$}
      \notag
\end{align}
defines an inner product on $\mathcal{H}_{L,0}^{-1}$.
The associated norm $\Vert(y,y_{\Gamma})\Vert_{\mathcal{H}_{L,0}^{-1}}  \coloneqq \big((y,y_{\Gamma}),(y,y_{\Gamma})\big)_{\mathcal{H}_{L,0}^{-1}}^{1/2}$
is equivalent to the standard dual norm {$\|\cdot\|_{(\mathcal{H}_{L,0}^1)'} = \|\cdot\|_{(\mathcal{H}_{L}^1)'}$} on $\mathcal{H}_{L,0}^{-1}$.
Then it follows that
\begin{align}
      \|(y,y_{\Gamma})\|_{*}& \coloneqq \Big(\Vert(y,y_{\Gamma})-\overline{m}(y,y_{\Gamma}) \mathbf{1}\Vert_{\mathcal{H}_{L,0}^{-1}}^2
      + |\overline{m}(y,y_{\Gamma})|^2\Big)^{1/2}
      \quad \text{for all $(y,y_{\Gamma})\in (\mathcal{H}^{1})'$,}
      \notag
\end{align}
is equivalent to the usual dual norm $\|\cdot\|_{(\mathcal{H}_L^1)'}$ on $(\mathcal{H}^{1})'$.

Concerning the velocity fields, we introduce the space
$\boldsymbol{\mathcal{L}}_{\mathrm{div}}^2=\mathbf{L}_{\mathrm{div}}^2(\Omega)\times\mathbf{L}_{\mathrm{div}}^2(\Gamma)$,
where
\begin{align*}
    &\mathbf{L}_{\mathrm{div}}^2(\Omega) \coloneqq \{\mathbf{v}\in \mathbf{L}^2(\Omega):
    \,\mathrm{div}\,\mathbf{v}=0\ \text{in }\Omega,\ \mathbf{v}\cdot\mathbf{n}=0\ \text{on }\Gamma\},\\
    &\mathbf{L}_{\mathrm{div}}^2(\Gamma) \coloneqq \{\mathbf{w}\in \mathbf{L}^2(\Gamma):
    \, \mathrm{div}_{\Gamma}\,\mathbf{w}=0,\ \mathbf{w}\cdot\mathbf{n}=0\ \text{on }\Gamma\}.
\end{align*}
Then, for $s\geq0$ and $p\in[1,+\infty]$, we define
$\mathbf{W}^{s,p}_{\mathrm{div}}(\Omega)=\mathbf{W}^{s,p}(\Omega)\cap \mathbf{L}_{\mathrm{div}}^2(\Omega)$.
Analogously, we define
$\mathbf{W}^{s,p}_{\mathrm{div}}(\Gamma)=\mathbf{W}^{s,p}(\Gamma)\cap \mathbf{L}_{\mathrm{div}}^2(\Gamma)$,
and then set $\boldsymbol{\mathcal{W}}^{s,p} =\mathbf{W}^{s,p} (\Omega)\times\mathbf{W}^{s,p}(\Gamma)$,
$\boldsymbol{\mathcal{W}}^{s,p}_{\mathrm{div}}=\mathbf{W}^{s,p}_{\mathrm{div}}(\Omega)\times\mathbf{W}^{s,p}_{\mathrm{div}}(\Gamma)$.
Furthermore, {for $s> 1/p$}, we set
\begin{align*}
    & \boldsymbol{\mathcal{W}}^{s,p}_0
    \coloneqq \big\{(\mathbf{v},\mathbf{w})\in \boldsymbol{\mathcal{W}}^{s,p}:
    \, \mathbf{v}\cdot\mathbf{n}=0,\ \mathbf{v}|_\Gamma=\mathbf{w}\ \text{on }\Gamma
    \big\},
    \quad \boldsymbol{\mathcal{W}}^{s,p}_{0,\mathrm{div}}
 \coloneqq \boldsymbol{\mathcal{W}}^{s,p}_0\cap \boldsymbol{\mathcal{W}}^{s,p}_{\mathrm{div}}.
\end{align*}
As before, we use the notation $\mathbf{H}^s_{\mathrm{div}}(\Omega)= \mathbf{W}^{s,2}_{\mathrm{div}}(\Omega)$,
$\boldsymbol{\mathcal{H}}^s = \boldsymbol{\mathcal{W}}^{s,2}$,
$\boldsymbol{\mathcal{H}}^s_{0,\mathrm{div}} =
\boldsymbol{\mathcal{W}}_{0,\mathrm{div}}^{s,2}$.

Next, we consider the Helmholtz projection
\begin{align*}
    \mathbf{P}_{\mathrm{div}}^{\Omega}:\mathbf{L}^2(\Omega)\to\mathbf{L}_{\mathrm{div}}^2(\Omega),
    \quad
    \mathbf{P}_{\mathrm{div}}^\Omega(\mathbf{v})=\mathbf{v}-\nabla p ,
\end{align*}
where $p\in V_0$ is the unique mean-free solution of the Poisson--Neumann problem
\begin{align*}
    \int_\Omega \nabla p\cdot\nabla\zeta\,\mathrm{d}x
    =\int_\Omega \mathbf{v}\cdot \nabla\zeta\,\mathrm{d}x\quad \text{for all}\ \zeta\in V.
\end{align*}
In a similar fashion, we define the Helmholtz projection
\begin{align*}
    \mathbf{P}_{\mathrm{div}}^{\Gamma}:\mathbf{L}^2(\Gamma)\to\mathbf{L}_{\mathrm{div}}^2(\Gamma),
    \quad
    \mathbf{P}_{\mathrm{div}}^\Gamma(\mathbf{w})=\mathbf{w}-\nabla_{\!\boldsymbol{\tau}\,} q
\end{align*}
where $q\in V_{\Gamma,0}$ is the unique mean-free solution of
\begin{align*}
\int_\Gamma \nabla_{\!\boldsymbol{\tau}\,} q\cdot\nabla_{\!\boldsymbol{\tau}\,}\xi\,\mathrm{d}S
=\int_\Gamma \mathbf{w}\cdot\nabla_{\!\boldsymbol{\tau}\,}\xi\,\mathrm{d}S\quad \text{for all}\ \xi\in V_\Gamma.
\end{align*}
We also set
$\mathcal{\boldsymbol{P}}_{\mathrm{div}}:\boldsymbol{\mathcal{L}}^2\to\boldsymbol{\mathcal{L}}^2_{\mathrm{div}}$,
$\mathcal{\boldsymbol{P}}_{\mathrm{div}} \coloneqq (\mathbf{P}_{\mathrm{div}}^{\Omega},\mathbf{P}_{\mathrm{div}}^{\Gamma})$. For more details, see \cite{KS}.

Furthermore, for sufficiently regular vector fields $\mathbf{v}:\Omega \to \mathbb R^d$ and
$\mathbf{w}:\Gamma \to \mathbb R^d$,
we denote by
\begin{align*}
    \mathbb{D}\mathbf{v}=\frac{\nabla\mathbf{v}+(\nabla\mathbf{v})^{\top}}{2},
    \qquad\mathbb{D}_{\Gamma}\mathbf{w}=\frac{{\nabla_{\!\boldsymbol{\tau}\,}}\mathbf{w}
    +({\nabla_{\!\boldsymbol{\tau}\,}}\mathbf{w})^{\top}}{2}
\end{align*}
the symmetric gradient of $\mathbf{v}$ and the symmetric surface gradient of $\mathbf{w}$, respectively. Then we report the following Korn-type inequality, which can be found in \cite[Appendix]{Ab3} (see also \cite{HWW2014} for a variant version):
\begin{align}
    \|\nabla\mathbf{v}\|_{\mathbf{L}^2(\Omega)}
    \leq \widetilde{C}_\mathrm{K}
    \big( \|\mathbb{D}\mathbf{v}\|_{\mathbf{L}^2(\Omega)}
        + \|\mathbf{v}\|_{\mathbf{L}^2(\Gamma)} \big)
    \quad\text{for all }\mathbf{v}\in \mathbf{H}_{\mathrm{div}}^1(\Omega),\label{Korn}
\end{align}
where the constant $\widetilde{C}_\mathrm{K} >0$ depends only on $\Omega$.
The following lemma is a corollary of \eqref{Korn} and will be frequently used in the subsequent analysis.
\begin{lemma}
\label{Korn-lemma}
    The exists a positive constant $\widetilde{C}_0$ depending only on $\Omega$ such that for all $\mathbf{v}\in \mathbf{H}^1_{\mathrm{div}}(\Omega)$, we have     \begin{align}
        &\|\mathbf{v}\|_{\mathbf{L}^2(\Gamma)}\leq \widetilde{C}_0(\|\mathbb{D}\mathbf{v}\|_{\mathbf{L}^2(\Omega)}+\|\mathbf{v}\|_{\mathbf{L}^2(\Omega)}),
        \label{v-trace}\\
        &\|\mathbf{v}\|_{\mathbf{H}^1(\Omega)}\leq (1+\widetilde{C}_{\mathrm{K}}+\widetilde{C}_{\mathrm{K}}\widetilde{C}_0) \big( \|\mathbb{D}\mathbf{v}\|_{\mathbf{L}^2(\Omega)}
        + \|\mathbf{v}\|_{\mathbf{L}^2(\Omega)} \big).\label{Korn-cor}
    \end{align}
\end{lemma}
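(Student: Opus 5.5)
The proof proceeds in two steps. It establishes the trace bound \eqref{v-trace} first, by a compactness (contradiction) argument, and then combines it with the Korn-type inequality \eqref{Korn} to obtain \eqref{Korn-cor}.

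\emph{Step 1: proof of \eqref{v-trace}.} Suppose \eqref{v-trace} fails for every constant. Then there is a sequence $\mathbf{v}_k\in\mathbf{H}^1_{\mathrm{div}}(\Omega)$ with $\|\mathbf{v}_k\|_{\mathbf{L}^2(\Gamma)}=1$ and $\|\mathbb{D}\mathbf{v}_k\|_{\mathbf{L}^2(\Omega)}+\|\mathbf{v}_k\|_{\mathbf{L}^2(\Omega)}\to0$.

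By \eqref{Korn}, $\|\nabla\mathbf{v}_k\|_{\mathbf{L}^2(\Omega)}\le \widetilde C_{\mathrm K}(\|\mathbb{D}\mathbf{v}_k\|_{\mathbf{L}^2(\Omega)}+1)$. Hence $(\mathbf{v}_k)$ is bounded in $\mathbf{H}^1(\Omega)$. Up to a subsequence, $\mathbf{v}_k\rightharpoonup\mathbf{v}$ weakly in $\mathbf{H}^1(\Omega)$. Since the trace map $\mathbf{H}^1(\Omega)\to\mathbf{L}^2(\Gamma)$ is compact, $\mathbf{v}_k|_\Gamma\to\mathbf{v}|_\Gamma$ strongly in $\mathbf{L}^2(\Gamma)$.

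On the other hand, $\mathbf{v}_k\to0$ in $\mathbf{L}^2(\Omega)$, so $\mathbf{v}=\mathbf{0}$. This gives $\|\mathbf{v}|_\Gamma\|_{\mathbf{L}^2(\Gamma)}=0$, contradicting $\|\mathbf{v}_k\|_{\mathbf{L}^2(\Gamma)}=1$ in the limit.

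Note that this argument does not even need the limit of $\mathbb{D}\mathbf{v}_k$. It only uses boundedness in $\mathbf{H}^1$, which follows from \eqref{Korn} together with the normalization.

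\emph{Alternative direct route for Step 1.} One can also avoid contradiction. The trace interpolation inequality $\|\mathbf{v}\|_{\mathbf{L}^2(\Gamma)}^2\le C\|\mathbf{v}\|_{\mathbf{L}^2(\Omega)}\|\mathbf{v}\|_{\mathbf{H}^1(\Omega)}$ combines with \eqref{Korn} to give
$$\|\mathbf{v}\|_{\mathbf{L}^2(\Gamma)}^2\le C\|\mathbf{v}\|_{\mathbf{L}^2(\Omega)}\big(\|\mathbb{D}\mathbf{v}\|_{\mathbf{L}^2(\Omega)}+\|\mathbf{v}\|_{\mathbf{L}^2(\Gamma)}+\|\mathbf{v}\|_{\mathbf{L}^2(\Omega)}\big).$$
Young's inequality then absorbs $\|\mathbf{v}\|_{\mathbf{L}^2(\Gamma)}$ into the left-hand side, which yields \eqref{v-trace} constructively. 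Either route works.

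\emph{Step 2: proof of \eqref{Korn-cor}.} First write $\|\mathbf{v}\|_{\mathbf{H}^1(\Omega)}\le\|\mathbf{v}\|_{\mathbf{L}^2(\Omega)}+\|\nabla\mathbf{v}\|_{\mathbf{L}^2(\Omega)}$. Apply \eqref{Korn} to the gradient term and then \eqref{v-trace} to the boundary term:
$$\|\nabla\mathbf{v}\|_{\mathbf{L}^2(\Omega)}\le\widetilde C_{\mathrm K}\|\mathbb{D}\mathbf{v}\|_{\mathbf{L}^2(\Omega)}+\widetilde C_{\mathrm K}\widetilde C_0\big(\|\mathbb{D}\mathbf{v}\|_{\mathbf{L}^2(\Omega)}+\|\mathbf{v}\|_{\mathbf{L}^2(\Omega)}\big).$$
Collecting terms gives exactly the constant $1+\widetilde C_{\mathrm K}+\widetilde C_{\mathrm K}\widetilde C_0$.

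The only nontrivial step is \eqref{v-trace}, specifically ensuring $\mathbf{H}^1$-boundedness of the sequence so that compactness of the trace applies. This is handled by \eqref{Korn} together with the normalization $\|\mathbf{v}_k\|_{\mathbf{L}^2(\Gamma)}=1$.
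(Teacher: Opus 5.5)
Your proof is correct. Step 2 is exactly the paper's argument. Your ``alternative direct route'' for \eqref{v-trace} is also what the paper does. It invokes the multiplicative trace inequality $\|u\|_{H_\Gamma}\le C_{\mathrm{in}}\|u\|_H^{1/2}\|u\|_V^{1/2}$ and inserts \eqref{Korn} for $\|\nabla\mathbf{v}\|_{\mathbf{L}^2(\Omega)}$. It then absorbs the resulting $\|\mathbf{v}\|_{\mathbf{L}^2(\Gamma)}$ term via Young's inequality, which gives the explicit value $\widetilde{C}_0^2=2C_{\mathrm{in}}^2+2C_{\mathrm{in}}^2\widetilde{C}_{\mathrm{K}}+C_{\mathrm{in}}^4\widetilde{C}_{\mathrm{K}}^2$.

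Your primary route, the contradiction argument, is genuinely different and also valid. It replaces the quantitative interpolation form of the trace estimate with the mere compactness of the trace map $\mathbf{H}^1(\Omega)\to\mathbf{L}^2(\Gamma)$, which is essentially an Ehrling-type argument. The $\mathbf{H}^1$-bound comes from \eqref{Korn} together with the normalization $\|\mathbf{v}_k\|_{\mathbf{L}^2(\Gamma)}=1$. That normalization is legitimate by homogeneity, since the negation of \eqref{v-trace} forces $\|\mathbf{v}_k\|_{\mathbf{L}^2(\Gamma)}>0$.

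The trade-off is this. Your compactness argument needs a weaker input, since compactness of the trace does not by itself give the $L^2$--$H^1$ multiplicative inequality. In return, it only yields existence of $\widetilde{C}_0$, with no expression in terms of $C_{\mathrm{in}}$ and $\widetilde{C}_{\mathrm{K}}$. The paper's computation is constructive. Since the lemma only asserts existence of a constant depending on $\Omega$, either route suffices.
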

\begin{proof}
Invoking a trace estimate presented in \cite[Theorem~II.4.1]{Galdi}, we have
\begin{align}
    \|u\|_{H_\Gamma}\leq C_{\mathrm{in}}\|u\|_{H}^\frac{1}{2} \|u\|_{V}^{\frac{1}{2}}\quad\text{for all}\  u\in V,\notag
\end{align}
for some constant $C_{\mathrm{in}}>0$ depending only on $\Omega$. Using this estimate together with \eqref{Korn}, we infer that
\begin{align}
    \|\mathbf{v}\|_{\mathbf{L}^2(\Gamma)}^2&\leq C_{\mathrm{in}}^2
    \|\mathbf{v}\|_{\mathbf{L}^2(\Omega)} \|\mathbf{v}\|_{\mathbf{H}^1(\Omega)}\notag\\
    &\leq  C_{\mathrm{in}}^2\|\mathbf{v}\|_{\mathbf{L}^2(\Omega)}^2 +C_{\mathrm{in}}^2
    \|\mathbf{v}\|_{\mathbf{L}^2(\Omega)} \|\nabla\mathbf{v}\|_{\mathbf{L}^2(\Omega)}  \notag\\
    &\leq C_{\mathrm{in}}^2\|\mathbf{v}\|_{\mathbf{L}^2(\Omega)}^2+C_{\mathrm{in}}^2\widetilde{C}_{\mathrm{K}}
    \|\mathbf{v}\|_{\mathbf{L}^2(\Omega)}(\|\mathbb{D}\mathbf{v}\|_{\mathbf{L}^2(\Omega)}+ \|\mathbf{v}\|_{\mathbf{L}^2(\Gamma)}) \notag\\
    &\leq \frac{1}{2} \|\mathbf{v}\|_{\mathbf{L}^2(\Gamma)}^2+ \frac{1}{2}C_{\mathrm{in}}^2\widetilde{C}_{\mathrm{K}}\|\mathbb{D}\mathbf{v}\|_{\mathbf{L}^2(\Omega)}^2+\Big(C_{\mathrm{in}}^2+\frac{1}{2}C_{\mathrm{in}}^2\widetilde{C}_{\mathrm{K}}+\frac{1}{2}C_{\mathrm{in}}^4\widetilde{C}_{\mathrm{K}}^2\Big)\|\mathbf{v}\|_{\mathbf{L}^2(\Omega)}^2.\notag
    \end{align}
    As a consequence, it holds
    \begin{align}
     \|\mathbf{v}\|_{\mathbf{L}^2(\Gamma)}^2&\leq C_{\mathrm{in}}^2\widetilde{C}_{\mathrm{K}}\|\mathbb{D}\mathbf{v}\|_{\mathbf{L}^2(\Omega)}^2+\Big(2C_{\mathrm{in}}^2+C_{\mathrm{in}}^2\widetilde{C}_{\mathrm{K}}+C_{\mathrm{in}}^4\widetilde{C}_{\mathrm{K}}^2\Big)\|\mathbf{v}\|_{\mathbf{L}^2(\Omega)}^2\notag\\
     &\leq \widetilde{C}_0^2\big( \|\mathbb{D}\mathbf{v}\|_{\mathbf{L}^2(\Omega)}
        + \|\mathbf{v}\|_{\mathbf{L}^2(\Omega)}\big)^2, \notag
     \end{align}
     where $\widetilde{C}_0^2:=2C_{\mathrm{in}}^2+2C_{\mathrm{in}}^2\widetilde{C}_{\mathrm{K}}+C_{\mathrm{in}}^4\widetilde{C}_{\mathrm{K}}^2$ depends only on $\Omega$. This verifies \eqref{v-trace}.
     Using Korn's inequality \eqref{Korn} once more, we find that
     \begin{align}
         \|\mathbf{v}\|_{\mathbf{H}^1(\Omega)}&\leq \|\mathbf{v}\|_{\mathbf{L}^2(\Omega)}+\|\nabla\mathbf{v}\|_{\mathbf{L}^2(\Omega)}\notag\\
         &\leq  \|\mathbf{v}\|_{\mathbf{L}^2(\Omega)}+\widetilde{C}_\mathrm{K}
    \big( \|\mathbb{D}\mathbf{v}\|_{\mathbf{L}^2(\Omega)}
        + \|\mathbf{v}\|_{\mathbf{L}^2(\Gamma)} \big)
        \notag\\
        &\leq (1+\widetilde{C}_{\mathrm{K}}+\widetilde{C}_{\mathrm{K}}\widetilde{C}_0) \big( \|\mathbb{D}\mathbf{v}\|_{\mathbf{L}^2(\Omega)}
        + \|\mathbf{v}\|_{\mathbf{L}^2(\Omega)} \big),\notag
     \end{align}
     which implies \eqref{Korn-cor} and completes the proof of Lemma \ref{Korn-lemma}.
     \end{proof}
Finally, we report a variant of Gronwall's lemma (see \cite[Chapter IV, Lemma 4.1]{R.E.S}),
which will be useful in the subsequent analysis.
\begin{lemma}\label{LEM:SGW}
    Let $T>0$, $0\le \alpha<1$, and $a,b\in L^1(0,T)$ with $b\ge 0$ a.e.~in $(0,T)$. Moreover, suppose that $v:[0,T] \to [0,+\infty)$ is absolutely continuous and satisfies
    \begin{align*}
        (1-\alpha)\, v'(t) \le a(t)\, v(t) + b(t)\, v(t)^\alpha
        \quad\text{for almost all $t\in [0,T]$.}
    \end{align*}
    Then it holds that
    \begin{align*}
        v^{1-\alpha}(t)
        \le v^{1-\alpha}(0)\, \exp\left(\int_0^t a(\tau) \,\mathrm d\tau \right)
            + \int_0^t \exp\left(\int_s^t a(\tau) \,\mathrm d\tau \right)\, b(s) \,\mathrm ds\quad \text{for all}\ t\in [0,T].
    \end{align*}
\end{lemma}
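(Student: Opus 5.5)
The plan is to reduce the inequality to a linear differential inequality via the change of variables $w:=v^{1-\alpha}$ and then apply the classical integrating-factor argument. The case $\alpha=0$ is the standard linear Gronwall lemma, and the argument below covers it directly. For $0<\alpha<1$, the map $r\mapsto r^{1-\alpha}$ is not Lipschitz at $0$, so $w$ need not be absolutely continuous where $v$ vanishes. I will therefore first work with the regularization $v_\varepsilon:=v+\varepsilon$ for $\varepsilon>0$. Since $v_\varepsilon\ge\varepsilon>0$ and $r\mapsto r^{1-\alpha}$ is smooth on $[\varepsilon,\infty)$, the function $w_\varepsilon:=v_\varepsilon^{1-\alpha}$ is absolutely continuous. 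By the chain rule,
\[
w_\varepsilon'=(1-\alpha)v_\varepsilon^{-\alpha}v'\quad\text{a.e.}
\]

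Next I insert the hypothesis. Since $v_\varepsilon^{-\alpha}>0$, we get a.e.
\[
w_\varepsilon'\le v_\varepsilon^{-\alpha}\big(a v+b v^\alpha\big).
\]
I then write $av=av_\varepsilon-\varepsilon a$. Since $b\ge0$ and $v\le v_\varepsilon$, we have $b v^\alpha v_\varepsilon^{-\alpha}\le b$. Also $|\varepsilon a v_\varepsilon^{-\alpha}|\le \varepsilon^{1-\alpha}|a|$. Together these give
\[
w_\varepsilon'\le a\,w_\varepsilon+b+\varepsilon^{1-\alpha}|a|\quad\text{a.e. in }(0,T).
\]

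This is linear, so I multiply by the integrating factor $\exp(-\int_0^t a)$. That factor is absolutely continuous since $a\in L^1$, and the product of two absolutely continuous functions on $[0,T]$ is again absolutely continuous. Integrating from $0$ to $t$ then yields
\[
w_\varepsilon(t)\le w_\varepsilon(0)e^{\int_0^t a}+\int_0^t e^{\int_s^t a}\big(b(s)+\varepsilon^{1-\alpha}|a(s)|\big)\,ds.
\]

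Finally I let $\varepsilon\to0$. The term $w_\varepsilon(t)$ tends to $v(t)^{1-\alpha}$ and $w_\varepsilon(0)$ tends to $v(0)^{1-\alpha}$. The error term is bounded by $\varepsilon^{1-\alpha}e^{\|a\|_{L^1}}\|a\|_{L^1}$, which tends to $0$ because $1-\alpha>0$. This gives the claimed bound.

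The main obstacle is the lack of differentiability of $v^{1-\alpha}$ at zeros of $v$, and the $\varepsilon$-shift above is what handles it. The one point needing care is that the hypothesis involves $v^\alpha$ rather than $v_\varepsilon^\alpha$. This is exactly why the sign condition $b\ge0$ is used, together with the monotonicity bound $v^\alpha\le v_\varepsilon^\alpha$.
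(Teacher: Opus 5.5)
Your proof is correct and complete. The paper gives no proof of this lemma; it only quotes it from \cite[Chapter IV, Lemma 4.1]{R.E.S}. Your argument is therefore a self-contained justification of the cited result rather than an alternative to anything in the paper. In it, the shift $v_\varepsilon=v+\varepsilon$ makes $w_\varepsilon=v_\varepsilon^{1-\alpha}$ absolutely continuous, and the bound $b\,v^\alpha v_\varepsilon^{-\alpha}\le b$ is exactly where $b\ge 0$ is used. The bound $\varepsilon|a|v_\varepsilon^{-\alpha}\le \varepsilon^{1-\alpha}|a|$ is where $\alpha<1$ is used, and it ensures the error term $\varepsilon^{1-\alpha}e^{\|a\|_{L^1(0,T)}}\|a\|_{L^1(0,T)}$ vanishes as $\varepsilon\to 0$.
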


\subsection{Main result}

Let us introduce some assumptions that are used in the subsequent analysis.
\begin{enumerate}[label=\textnormal{\bfseries(A\arabic*)}, leftmargin=*, topsep=0.5ex]

\item \label{ASS:A1}
The coefficients satisfy $\nu,\, \beta \in C^{0,1}_{\mathrm{loc}}(\mathbb{R})$ and
\begin{equation*}
0<\nu_\ast\leq \nu(r)\leq \nu^\ast,
\quad\, 0<\beta_\ast\leq \beta(r) \leq \beta^\ast
\quad \text{for all $r\in \mathbb{R}$.}
\end{equation*}

\item \label{ASS:A2}
The bulk free energy density is given by
\begin{equation*}
    F =F_0 +F_1,
\end{equation*}
where $F_0\in C([ -1,1] )\cap C^{2}( -1,1)$ with $F_0( 0) =0$.
For simplicity, we extend $F_0$ by defining $F_0(r) = +\infty$ for all $r\in \mathbb R\setminus [-1,1]$.
For the derivative $f_0 \coloneqq F_0'$, we assume that $f_0(0)=0$, $f_0'(r)\geq \kappa>0$ for all $r\in (-1,1)$, and
\begin{equation*}
\lim_{r\rightarrow \pm 1}f_0\left( r\right) =\pm \infty ,\quad \quad
\lim_{r\rightarrow \pm 1}f_0^{\prime }\left( r\right) =+\infty .
\end{equation*}
In particular, this entails that $F_0$ is strongly convex on $[-1,1]$.
Moreover, $F_1\in {C^2}(\mathbb{R})$ and $f_1 \coloneqq F_1'$
is Lipschitz continuous with Lipschitz constant $C_{\text{Lip}}$.
Furthermore, for the surface singular potential $G$, we {assume an analogous decomposition $G = G_0 + G_1$,}
where $G_0$ and $G_1$ have the same {properties} as $F_0$ and $F_1$, respectively.
We also write $g_0 \coloneqq G'_0$ and $g_1 \coloneqq G_1'$.

\item \label{ASS:A3}
There exist constants $\varrho>0$ and $c_{0}>0$ such that
\begin{align*}
|f_0(r)|\leq\varrho|g_{0}(r)|+c_{0} \quad \text{for all $r\in (-1,1)$.}
\end{align*}

\end{enumerate}

We proceed by introducing the notion of a quasi-strong solution.

\begin{definition}\rm
    \label{quasi}
    Let $\Omega\subset\mathbb{R}^d$ with $d\in\{2,3\}$ be a bounded domain
    with smooth boundary $\Gamma=\partial\Omega$ and let $T>0$ be a prescribed final time. Fix $K\in(0,+\infty]$ and $L\in[0,+\infty]$.
    Assume that \ref{ASS:A1}--\ref{ASS:A3} hold
    and let $(\mathbf{v}_0,\mathbf{v}_{0,\boldsymbol{\tau}})\in \boldsymbol{\mathcal{H}}_{0,\mathrm{div}}^1$ and
    $\boldsymbol{\varphi}_0=(\varphi_0,\psi_0)\in\mathcal{W}_K^2$ be prescribed initial data
    with
    $\|\boldsymbol{\varphi}_0\|_{\mathcal{L}^\infty}\leq1$ and
    \begin{align*}
        \begin{cases}
            |\overline{m}(\boldsymbol{\varphi}_0)|<1,&\text{if }L\in[0,+\infty),\\
            |\langle \varphi_0\rangle_\Omega|<1,\ \ |\langle\psi_0\rangle_\Gamma|<1,&\text{if }L=+\infty.
        \end{cases}
    \end{align*}
    If $L=0$, we further require $\rho_1=\rho_2=\mathrm{const.}>0$.
    A triplet $(\mathbf{v},\boldsymbol{\varphi},\boldsymbol{\mu})$ is called a \emph{quasi-strong solution} to problem \eqref{eqmain0new}--\eqref{icnew}
    if the following properties hold:
    \begin{enumerate}[label=\textnormal{(\roman*)}, leftmargin=*, topsep=0.5ex]
        \item $\mathbf{v}:Q_T \to \mathbb{R}^d$ is a vector field and $\boldsymbol{\varphi} = (\varphi,\psi)$, $\boldsymbol{\mu} = (\mu,\mathcal{L})$ are pairs of scalar functions $\varphi,\mu: Q_T \to \mathbb{R}$ and $\psi,\mathcal{L}:\Sigma_T\to\mathbb{R}$, which have the regularities
        \begin{align*}
                &\mathbf{v}\in L^{\infty}(0,T;\mathbf{H}_{\mathrm{div}}^1(\Omega)),
                \quad\partial_t\mathbf{v}\in L^{2}(0,T;\mathbf{H}_{\mathrm{div}}^1(\Omega)),\\
                &\boldsymbol{\varphi}\in L^\infty(0,T;\mathcal{H}^2),
                \quad\partial_t\boldsymbol{\varphi}\in L^\infty(0,T;(\mathcal{H}_{L,0}^1)')\cap L^2(0,T;\mathcal{H}^1),\\
                &\varphi\in L^\infty(Q_T)\;\; \text{ with }
                \;\; |\varphi(x,t)|<1 \;\; \text{ a.e.~in } Q_T,\\
                &\psi\in L^\infty(\Sigma_T)\;\; \text{ with }
                \;\; |\psi(x,t)|<1 \;\; \text{ a.e.~on } \Sigma_T,\\
                &\boldsymbol{\mu}\in L^\infty(0,T;\mathcal{H}_L^1)\cap L^2(0,T;\mathcal{H}^2),
                \quad \big(f_0(\varphi),\,g_0(\psi)\big)\in L^\infty(0,T;\mathcal{L}^2).
        \end{align*}

        \item The weak formulation
        \begin{align}
        	& \int_\Omega \rho(\varphi)\partial_{t}\mathbf{v}\cdot\mathbf{w}\,\mathrm{d}x
            +\int_\Omega2\alpha\, \mathbb{D}\partial_t\mathbf{v}:\mathbb{D}\mathbf{w}\,\mathrm{d}x
            +\int_\Omega 2\nu(\varphi )\mathbb{D}\mathbf{v}:\mathbb{D}\mathbf{w}\,\mathrm{d}x
        	+\int_\Gamma \beta(\psi)\mathbf{v}_{\bm{\tau}}\cdot\mathbf{w}_{\bm{\tau}}\,\mathrm{d}S
            \notag\\[1mm]
        	&\begin{aligned}
            &\quad =\int_\Omega \mu \nabla \varphi\cdot\mathbf{w}\,\mathrm{d}x
        	-\int_\Gamma \psi\nabla_{\!\bm{\tau}\,}\mathcal{L}\cdot \mathbf{w}_{\bm{\tau}}\,\mathrm{d}S
            -\int_\Omega \rho(\varphi)(\mathbf{v}\cdot\nabla)\mathbf{v}\cdot\mathbf{w}\,\mathrm{d}x
            \\[1mm]
            &\qquad+\rho'\int_\Omega (\nabla\mu\cdot\nabla)\mathbf{v}\cdot\mathbf{w}\,\mathrm{d}x
            -\frac{\rho'}{2}\int_\Gamma (\nabla\mu\cdot\mathbf{n})\mathbf{v}_{\bm{\tau}}\cdot\mathbf{w}_{\bm{\tau}}\,\mathrm{d}S,
            \end{aligned}
        	\label{quasi-1'}
        \end{align}
        is satisfied for all $(\mathbf{w},\mathbf{w}_{\boldsymbol{\tau}})\in \boldsymbol{\mathcal{H}}^2_{0,\mathrm{div}}$ and almost all $t\in[0,T]$, and it further holds that
        \begin{subequations}
        \label{quasi-2'}
        \begin{alignat}{2}
        	&\partial_{t}\varphi+\mathbf{v} \cdot\nabla\varphi=\Delta\mu
            &&\quad \mbox{a.e.~in } Q_{T},
        	\label{quasi-1} \\
        	& \mu =-\Delta \varphi +f(\varphi )
            &&\quad \mbox{a.e.~in } Q_{T},
        	\label{quasi-2} \\
            & {\begin{cases}
        	K\partial_{\mathbf{n}}\varphi=\psi-\varphi,&\text{if } K\in(0,+\infty) \\
        	\partial_{\mathbf{n}}\varphi=0,&\text{if }K=+\infty
        	\end{cases}}
            &&\quad {\mbox{a.e.~on }
        	\Sigma _{T},}
        	\label{quasi-3'} \\[1ex]
        	&\begin{cases}
        	L\partial_{\mathbf{n}}\mu=\mathcal{L}-\mu,&\text{if }L\in[0,+\infty)\\
        	\partial_{\mathbf{n}}\mu=0,&\text{if }L=+\infty
        	\end{cases}
            &&\quad\mbox{a.e.~on }
        	\Sigma _{T},
        	\label{quasi-3} \\
        	&\partial_t\psi +\mathbf{v}_{\bm{\tau}} \cdot\nabla_{\!\bm{\tau}\,}\psi
            =\Delta_{\bm{\tau}}\mathcal{L}-\partial_{\mathbf{n}}\mu
            &&\quad \mbox{a.e.~on }
        	\Sigma _{T},
        	\label{quasi-4} \\
        	& \mathcal{L}  = - \Delta _{\bm{\tau}}\psi +\partial _{%
        		\mathbf{n}}\varphi+ g(\psi)
            &&\quad \mbox{a.e.~on }
        	\Sigma _{T}.
        	\label{quasi-5}
        \end{alignat}
        \end{subequations}

        \item The functions $\mathbf{v}$ and $\boldsymbol{\varphi}$ satisfy the initial conditions
        \begin{align}
        	\mathbf{v}|_{t=0}=\mathbf{v}_0\quad \text{a.e. in }\Omega,\quad(\varphi,\psi)|_{t=0}=(\varphi_{0},\psi_0)\quad\text{in }\Omega\times\Gamma.\label{0821-initial-condition}
        \end{align}

    \end{enumerate}
\end{definition}

\begin{remark}\rm
   The regularity $\boldsymbol{\mu}\in L^2(0,T;\mathcal{H}^2)$ ensures that $\nabla\mu\cdot\mathbf{n}\in L^2(0,T;H^{\frac{1}{2}}(\Gamma))$, and thus the last term on the right-hand side of \eqref{quasi-1'} is well-defined. For $L\in (0,+\infty)$, this term can be equivalently written as $-\frac{\rho'}{2L}\int_\Gamma (\mathcal{L}- \mu)\mathbf{v}_{\bm{\tau}}\cdot\mathbf{w}_{\bm{\tau}}\,\mathrm{d}S$ (cf. \cite{GLW,KS}), while for $L=0$ (with $\rho_1=\rho_2$) or $L=+\infty$, this term simply vanish. Moreover, it follows from the regularities in Definition \ref{quasi} that
   $$\mathbf{v}\in H^1(0,T;\mathbf{H}_{\mathrm{div}}^{1}(\Omega))\hookrightarrow C([0,T];\mathbf{H}_{\mathrm{div}}^1(\Omega)).$$
   As $d\in\{2,3\}$, the Aubin--Lions--Simon lemma, together with the Sobolev embedding theorem, gives
   $$\boldsymbol{\varphi}\in C([0,T];\mathcal{H}^{2-\epsilon})\hookrightarrow C([0,T];C(\overline{\Omega})\times C(\Gamma))$$
   for every $\epsilon\in(0,1/2)$. Therefore, the initial conditions in \eqref{0821-initial-condition} make sense.
\end{remark}

We are now in a position to state the main result of this work.

\begin{theorem}[Existence of global quasi-strong solutions]
\label{quasi-strong}
Let $\Omega\subset\mathbb{R}^d$ with $d\in\{2,3\}$ be a bounded domain
with smooth boundary $\Gamma=\partial\Omega$ and let $T>0$ be a prescribed final time. Fix $K\in(0,+\infty]$ and $L\in[0,+\infty]$. Assume that \ref{ASS:A1}--\ref{ASS:A3} hold
and let $(\mathbf{v}_0, \mathbf{v}_{0,\boldsymbol{\tau}})\in \boldsymbol{\mathcal{H}}_{0,\mathrm{div}}^1$ and
$\boldsymbol{\varphi}_0=(\varphi_0,\psi_0)\in\mathcal{W}_K^2$ be prescribed initial data with
$\|\boldsymbol{\varphi}_0\|_{\mathcal{L}^\infty}\leq1$ and \begin{align}
        \begin{cases}
            |\overline{m}(\boldsymbol{\varphi}_0)|<1,&\text{if }L\in[0,+\infty),\\
            |\langle \varphi_0\rangle_\Omega|<1,\ \ |\langle\psi_0\rangle_\Gamma|<1,&\text{if }L=+\infty.
        \end{cases}\label{initial-mean-value}
    \end{align}
Moreover, assume that the following additional assumption holds:
\begin{enumerate}[label=\textnormal{\bfseries(A\arabic*)}, leftmargin=*, topsep=0.5ex, start=4]
    \item \label{ASS:A4}
    $
    (-\Delta\varphi_0+f_0(\varphi_0),
    -\Delta_{\boldsymbol{\tau}}\psi_0+\partial_{\mathbf{n}}\varphi_0+g_0(\psi_0))
    \in \mathcal{H}^1.
    $
\end{enumerate}
In the case $L=0$, we further require $\rho_1=\rho_2=\mathrm{const.}>0$ and assume the compatibility condition:
\begin{enumerate}[label=\textnormal{\bfseries(C)}, leftmargin=*, topsep=0.5ex]
    \item \label{ASS:C}
    $
    -\Delta\varphi_0+f(\varphi_0)
    =-\Delta_{\boldsymbol{\tau}}\psi_0+\partial_{\mathbf{n}}\varphi_0+g(\psi_0)
    \quad\text{a.e.~on}\ \Gamma
    $.
\end{enumerate}
Then problem \eqref{eqmain0new}--\eqref{icnew} admits a quasi-strong solution $(\mathbf{v},\boldsymbol{\varphi},{\boldsymbol{\mu}})$
on $[0,T]$ in the sense of Definition~\ref{quasi}.
Furthermore, the energy {identity}
\begin{align}
    &E_{\mathrm{tot}}(\mathbf{v}(t),\boldsymbol{\varphi}(t))
    +\int_0^t\int_\Omega 2\nu(\varphi(s))|\mathbb{D}\mathbf{v}(s)|^2\,\mathrm{d}x\,\mathrm{d}s
    +\int_0^t\int_\Gamma\beta(\psi(s))|\mathbf{v}_{\boldsymbol{\tau}}(s)|^2\,\mathrm{d}S\,\mathrm{d}s\notag\\
    &\qquad+\int_0^t\int_\Omega |\nabla\mu(s)|^2\,\mathrm{d}x\,\mathrm{d}s
    +\int_0^t\int_\Gamma |\nabla_{\!\boldsymbol{\tau}\,}\mathcal{L}(s)|^2\,\mathrm{d}S\,\mathrm{d}s
    +\chi(L)\int_0^t\int_\Gamma|\mu(s)-\mathcal{L}(s)|^2\,\mathrm{d}S\,\mathrm{d}s
    \notag\\[1ex]
    &\quad= E_{\mathrm{tot}}(\mathbf{v}_0,\boldsymbol{\varphi}_0)\label{energy-equality}
\end{align}
holds for almost all $t\in[0,T]$.
\end{theorem}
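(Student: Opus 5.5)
We follow the strategy outlined in the introduction and first treat the case $(K,L)\in(0,+\infty]\times(0,+\infty)$, recovering $L\in\{0,+\infty\}$ afterwards as asymptotic limits.

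\textbf{Approximation.} The first step is a semi-Galerkin scheme. We discretize the velocity in a Galerkin basis of $\boldsymbol{\mathcal{H}}^2_{0,\mathrm{div}}$, built from eigenfunctions of a Stokes-type operator with the boundary conditions $\mathbf{v}\cdot\mathbf{n}=0$ and $\mathrm{div}_\Gamma\mathbf{v}_{\boldsymbol\tau}=0$. For a given discrete velocity, the bulk-surface convective Cahn--Hilliard subsystem is solved exactly, with three modifications:
\begin{itemize}
\item a viscous term $\varepsilon\partial_t\boldsymbol{\varphi}$ is added to the chemical potentials;
\item the auxiliary singular potential $F_\sigma$ is added, which enforces $|\varphi|,|\psi|\le 1-\sigma$ by the comparison/ODE argument;
\item the initial data are regularized so that $(\partial_t\mu,\partial_t\mathcal{L})\in L^2(0,T;\mathcal{H}^1)$ holds at the approximate level.
\end{itemize}
The Robin/Neumann condition $K\partial_\mathbf{n}\varphi=\psi-\varphi$ with $K>0$ is what supplies the extra regularity of $\partial_\mathbf{n}\varphi$ needed here. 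A fixed-point argument, Schauder applied to the ODE system for the Galerkin coefficients, then yields local approximate solutions. Uniform estimates extend them globally.

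\textbf{A priori estimates, uniform in the Galerkin index, $\varepsilon$ and $\sigma$.}
\begin{itemize}
\item \emph{Energy level.} Test the momentum equation with $\mathbf{v}$ and the phase equations with $\boldsymbol{\mu}$ and $\partial_t\boldsymbol{\varphi}$. This gives the energy inequality and hence bounds on $\mathbf{v}\in L^\infty(0,T;\mathbf{H}^1)$ (the Voigt term and Lemma~\ref{Korn-lemma}), on $\nabla\mu,\nabla_{\!\boldsymbol\tau}\mathcal{L}\in L^2(L^2)$, and on $\boldsymbol{\varphi}\in L^\infty(\mathcal{H}^1)$.
\item \emph{Means and $\mathcal{H}^1$ bound on $\boldsymbol{\mu}$.} The mean-value condition \eqref{initial-mean-value}, combined with the standard Miranville--Zelik-type argument and \ref{ASS:A3}, controls the means of $\boldsymbol{\mu}$. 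Lemma~\ref{generalized-Poin} then gives $\boldsymbol{\mu}\in L^2(\mathcal{H}^1)$.
\item \emph{Time derivatives.} This is the key level. We differentiate the Cahn--Hilliard subsystem in time and test with $\mathfrak{S}^L(\partial_t\boldsymbol{\varphi})$, and simultaneously test the momentum equation with $\partial_t\mathbf{v}$. The resulting differential inequality controls $\|\partial_t\boldsymbol{\varphi}\|_{*}^2+\|\partial_t\mathbf{v}\|^2_{L^2}+\alpha\|\mathbb{D}\partial_t\mathbf{v}\|^2$ together with the dissipation $\|\partial_t\boldsymbol{\varphi}\|^2_{\mathcal{H}^1}$.
\begin{itemize}
\item The boundary terms $\int_\Gamma\beta\mathbf{v}_{\boldsymbol\tau}\cdot\partial_t\mathbf{v}_{\boldsymbol\tau}$, $\int_\Gamma\psi\nabla_{\!\boldsymbol\tau}\mathcal{L}\cdot\partial_t\mathbf{v}_{\boldsymbol\tau}$ and $\int_\Gamma(\mathbf{J}\cdot\mathbf{n})\mathbf{v}_{\boldsymbol\tau}\cdot\partial_t\mathbf{v}_{\boldsymbol\tau}$ are handled with the trace embedding $\mathbf{H}^1(\Omega)\hookrightarrow\mathbf{L}^4(\Gamma)$. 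For the last one, write $\mathbf{J}\cdot\mathbf{n}=-\rho'(\mathcal{L}-\mu)/L$.
\item The convective terms in the phase equations are estimated using $\mathbf{v}\in L^\infty(\mathbf{H}^1)$.
\item Lemma~\ref{LEM:SGW} (or plain Gronwall) closes the estimate. The initial value is finite by \ref{ASS:A4} and the regularized data.
\end{itemize}
\item \emph{Elliptic regularity.} From the time-derivative bounds we get $\boldsymbol{\mu}\in L^\infty(\mathcal{H}^1)$. Elliptic regularity for the bulk-surface problem with singular potentials, via \eqref{Hk-regularity} and tests against $f_0(\varphi)$ and $g_0(\psi)$, then gives $\boldsymbol{\varphi}\in L^\infty(\mathcal{H}^2)$ and $(f_0(\varphi),g_0(\psi))\in L^\infty(\mathcal{L}^2)$. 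Finally, $\boldsymbol{\mu}\in L^2(\mathcal{H}^2)$ follows from the equation $\partial_t\boldsymbol{\varphi}+\text{convection}=\Delta$-system.
\end{itemize}

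\textbf{Passage to the limit and energy identity.} Aubin--Lions compactness lets us pass to the limit in all nonlinearities. The uniform $L^\infty$ bounds on $f_0$ and $g_0$ keep $|\varphi|,|\psi|<1$ a.e. The energy equality \eqref{energy-equality} follows because the solution is regular enough to use $\mathbf{w}=\mathbf{v}$ (after density from $\boldsymbol{\mathcal{H}}^2_{0,\mathrm{div}}$, using $\partial_t\mathbf{v}\in L^2(\mathbf{H}^1)$) and $\boldsymbol{\mu},\partial_t\boldsymbol{\varphi}$ as test functions, and then integrate in time.

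\textbf{Limits $L\to0$ and $L\to\infty$.} We redo the estimates with constants independent of $L$. The danger is the boundary flux term $\frac{\rho'}{2L}\int_\Gamma(\mathcal{L}-\mu)\mathbf{v}_{\boldsymbol\tau}\cdot\partial_t\mathbf{v}_{\boldsymbol\tau}$.
\begin{itemize}
\item For $L\to\infty$ it is controlled by $\chi(L)^{1/2}\|\mu-\mathcal{L}\|$ times a vanishing factor.
\item For $L\to0$ it is absent because $\rho_1=\rho_2$. The compatibility condition \ref{ASS:C} keeps the initial value of $\|\partial_t\boldsymbol{\varphi}\|_*$ bounded uniformly in $L$.
\end{itemize}

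\textbf{Main obstacle.} We expect the hardest part to be the time-derivative estimate. Controlling the boundary terms in $\partial_t\mathbf{v}$ and the coupling $\int_\Omega\mu\nabla\varphi\cdot\partial_t\mathbf{v}$ requires bounds on $\boldsymbol{\mu}$ in $\mathcal{H}^1$ that themselves depend on $\partial_t\boldsymbol{\varphi}$. The first thing we would try is to close both estimates jointly in one Gronwall-type inequality, using the Voigt dissipation $\alpha\|\mathbb{D}\partial_t\mathbf{v}\|^2$ to absorb these terms.
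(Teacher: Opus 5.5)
Your fixed-$L$ construction and estimates follow the paper's scheme. Your higher-order step (time-differentiating the Cahn--Hilliard subsystem and testing with $\mathfrak{S}^L(\partial_t\boldsymbol\varphi)$) is the dual form of the paper's test of \eqref{eqd2}, \eqref{eqd3} with $(\partial_t\mu,\partial_t\mathcal{L})$, because $\mathfrak{S}^L(\partial_t\boldsymbol\varphi)=-\mathbb{P}\boldsymbol\mu-\mathfrak{S}^L(\mathbf{v}\cdot\nabla\varphi,\mathbf{v}_{\boldsymbol\tau}\cdot\nabla_{\!\boldsymbol\tau\,}\psi)$.

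The genuine gap is the limit $L\to+\infty$. You say it suffices to redo the estimates with $L$-independent constants, and you single out the flux term as the danger. But the step ``Miranville--Zelik with respect to $\overline m$ plus Lemma~\ref{generalized-Poin}'' is itself not uniform in $L$. The Poincar\'e constant there grows like $\sqrt L$: the constant pair $(a,b)$ with $|\Omega|a+|\Gamma|b=0$ has $\mathcal H^1_{L,0}$-norm $L^{-1/2}|\Gamma|^{1/2}|a-b|$. The duality variant degenerates too, since $\|\boldsymbol\varphi-\overline m_0\boldsymbol{1}\|_{\mathcal H^{-1}_{L,0}}\ge c\sqrt L\,|\langle\varphi\rangle_\Omega-\overline m_0|$. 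Consequently your bounds on $\|f_0(\varphi)\|_{L^2(0,T;L^1(\Omega))}$, on $\overline m(\boldsymbol\mu)$ and on $\|\boldsymbol\mu\|_{L^2(0,T;\mathcal H^1)}$ blow up as $L\to\infty$. The $L^\infty(0,T;\mathcal H^1)$-bound on $\boldsymbol\mu$ and the $\mathcal H^2$-bound on $\boldsymbol\varphi$ inherit this.

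The missing idea (Lemma~\ref{refine4}) is to work with the separate means:
\begin{itemize}
\item test \eqref{eqd2} with $\mathcal N_\Omega(\varphi-\langle\varphi\rangle_\Omega)$ and \eqref{eqd3} with $\mathcal N_\Gamma(\psi-\langle\psi\rangle_\Gamma)$;
\item apply Miranville--Zelik relative to $\langle\varphi_0\rangle_\Omega$ and $\langle\psi_0\rangle_\Gamma$ separately. This is exactly where the hypothesis $|\langle\varphi_0\rangle_\Omega|,|\langle\psi_0\rangle_\Gamma|<1$ in \eqref{initial-mean-value} enters, which your outline never explains;
\item control the drift $|\langle\varphi(t)\rangle_\Omega-\langle\varphi(0)\rangle_\Omega|\le(|\Omega|L)^{-1}\int_0^t\|\mu-\mathcal L\|_{L^1(\Gamma)}\,\mathrm ds\le CL^{-1/2}$ (similarly on $\Gamma$) by the dissipation.
\end{itemize}
This drift error can be absorbed only for $L\ge L_0$. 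After that, $\langle\mu\rangle_\Omega$ and $\langle\mathcal L\rangle_\Gamma$ are bounded separately.

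Three smaller points also need care.
\begin{itemize}
\item For $L\to0$, condition \ref{ASS:C} is the right ingredient, but regularized data will in general not satisfy it exactly (the paper's do not). So at the approximate level the initial value of your higher-order functional contains $\frac1L\|\mu(0)-\mathcal L(0)\|^2_{H_\Gamma}$, bounded only by $C(1+L^{-1}\mathcal U_{\sigma,k}^2)$ with $\mathcal U_{\sigma,k}\to0$. Uniformity in $L$ is available only after $m\to\infty$, $\sigma\to0$, $k\to\infty$ have been taken at fixed $L$.
\item $\|\boldsymbol\mu\|_{L^2(0,T;\mathcal H^2)}$ is not uniform as $L\to0$, since the elliptic estimate carries a factor $1/L$. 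Hence $\boldsymbol\mu^0\in L^2(0,T;\mathcal V^2)$ must be recovered a posteriori from the limit problem with $\mu^0=\mathcal L^0$ on $\Gamma$.
\item In the time-derivative estimate, the term that forces the joint closure is $\rho'\int_\Omega(\nabla\mu\cdot\nabla)\mathbf v\cdot\partial_t\mathbf v$. It is not $\int_\Omega\mu\nabla\varphi\cdot\partial_t\mathbf v=-\int_\Omega\varphi\nabla\mu\cdot\partial_t\mathbf v$, which the energy dissipation already controls. The $\rho'$-term needs $\nabla\mu\in\mathbf L^3(\Omega)$, i.e.\ the bound $\|\mathbb P\boldsymbol\mu\|_{\mathcal H^2}\le C(\|\partial_t\boldsymbol\varphi\|_{\mathcal L^2}+\|\mathbf v\cdot\nabla\varphi\|_H+\|\mathbf v_{\boldsymbol\tau}\cdot\nabla_{\!\boldsymbol\tau\,}\psi\|_{H_\Gamma})$. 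This bound must be used inside the Gronwall argument and interpolated against the $\mathcal H^1$-dissipation of $\partial_t\boldsymbol\varphi$, not applied as post-processing.
\end{itemize}
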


\begin{remark}\rm
For $(K,L)\in(0,+\infty]\times(0,+\infty)$, the assertion of the theorem will be established in Section~\ref{existence of quasi}. The proof is based on an approximation scheme presented in Section~\ref{approximating system for L}. In the limiting cases $(K,L)\in(0,+\infty]\times\{0\}$ and $(K,L)\in(0,+\infty]\times\{+\infty\}$, the conclusion will be verified in Section~\ref{asy}. We note that the case
$(K,L)\in\{0\}\times[0,+\infty]$ is not covered in the present work. One possible approach to establish the existence of global quasi-strong solutions for $(K,L)\in\{0\}\times[0,+\infty]$ is to pass to
the limit as $K\to0$ with $L$ fixed. In this case, however, the initial datum have to satisfy $\boldsymbol{\varphi}_0\in\mathcal{W}_0^2=\mathcal{V}^2$.
At present, it is unclear how to approximate such an initial datum by a sequence satisfying the assumptions of Theorem~\ref{A1}. Hence, the approximation scheme developed in Section~\ref{approximating system for L} cannot be directly applied to construct approximate solutions for the case $K=0$.
\end{remark}

\begin{remark}\rm
The matched-density assumption, i.e., $\rho_1=\rho_2>0$, is required in
our approach to the limit as $L\to0$. In the unmatched-density case, the lack of
a uniform bound for
$\|\boldsymbol{\mu}^L\|_{L^2(0,T;\mathcal{H}^2)}$ with respect to $L\in(0,1]$
(see Remark~\ref{no-bound}) prevents us from passing to the limit in the
boundary term
$\rho'\int_\Gamma
(\nabla\mu^L\cdot\mathbf n)
\mathbf v_{\boldsymbol\tau}^L\cdot
\mathbf w_{\boldsymbol\tau}\,\mathrm dS$. By contrast, under the matched-density assumption, the density is constant
and hence $\rho'\equiv0$. Therefore, the above boundary term vanishes identically, thereby avoiding this difficulty.
In contrast to the case $L\to0$, no matched-density assumption is
required when passing to the limit as $L\to+\infty$. Indeed, the
uniform $L^\infty(0,T;\mathcal{H}^1)$-estimate for
$\boldsymbol{\mu}^L$ with respect to $L\geq L_0$ (cf. \eqref{uniform-0318-6}), together with the trace theorem and the boundary condition, yields
\[
\|\partial_{\mathbf n}\mu^L\|_{L^\infty(0,T;H_\Gamma)}
=\frac{1}{L}
\|\mathcal{L}^L-\mu^L\|_{L^\infty(0,T;H_\Gamma)}
\leq\frac{C}{L}\to 0,\quad\text{as }L\to+\infty.
\]
This, together with the uniform $L^\infty(0,T;\mathbf{H}_{\mathrm{div}}^1(\Omega))$-estimate for $\mathbf{v}^L$ (cf. \eqref{m-uni-7}), allows us to pass to the limit in the boundary term.
\end{remark}

\begin{remark}\rm
    The assumption $(\mathbf{v}_0,\mathbf{v}_{0,\boldsymbol{\tau}})\in \boldsymbol{\mathcal{H}}_{0,\mathrm{div}}^1$ is a technical requirement that arises from our proof and is needed to derive uniform estimates of approximate solutions. More precisely, we introduce an approximate system whose solvability is established via a semi-Galerkin approximation scheme combined with a fixed-point argument inspired by \cite{Gior22}. In the semi-Galerkin scheme, the initial datum for the approximate velocity field $\mathbf{v}_m$ is chosen as $\mathbf{v}_m(0)=\mathbb{P}_m^\Omega(\mathbf{v}_0,\mathbf{v}_{0,\boldsymbol{\tau}})$; see Section \ref{SEC:3.2} for the definition of the projection operator $\mathbb{P}_m$.
    To derive estimates that are uniform with respect to the Galerkin parameter $m\in \mathbb{N}$, we need to control the term $\alpha\int_\Omega |\mathbb{D}\mathbb{P}_m^\Omega(\mathbf{v}_0,\mathbf{v}_{0,\boldsymbol{\tau}})|^2\,\mathrm{d}x$ appearing in \eqref{0328-energy-id}. By the assumption $(\mathbf{v}_0,\mathbf{v}_{0,\boldsymbol{\tau}})\in \boldsymbol{\mathcal{H}}_{0,\mathrm{div}}^1$ and the stability property \eqref{Pm-stability} of $\mathbb{P}_m$, we derive the estimate
    \begin{align}
        &\alpha\int_\Omega |\mathbb{D}\mathbb{P}^\Omega_m(\mathbf{v}_0,\mathbf{v}_{0,\boldsymbol{\tau}})|^2\,\mathrm{d}x\notag\\
        &\quad\leq \alpha\int_\Omega |\mathbb{D}\mathbb{P}^\Omega_m(\mathbf{v}_0,\mathbf{v}_{0,\boldsymbol{\tau}})|^2\,\mathrm{d}x+\alpha\int_\Gamma |\mathbb{D}_{\boldsymbol{\tau}}\mathbb{P}^\Gamma_m(\mathbf{v}_0,\mathbf{v}_{0,\boldsymbol{\tau}})|^2\,\mathrm{d}S+\alpha\int_\Gamma |\mathbb{P}_m^\Gamma(\mathbf{v}_0,\mathbf{v}_{0,\boldsymbol{\tau}})|^2\,\mathrm{d}S
        \notag\\
        &\quad=\alpha\|\mathbb{P}_m(\mathbf{v}_0, \mathbf{v}_{0,\boldsymbol{\tau}})\|_{\boldsymbol{\mathcal{H}}_{0,\mathrm{div}}^1}^2
        \notag\\
        &\quad\leq \alpha\|(\mathbf{v}_0,\mathbf{v}_{0,\boldsymbol{\tau}})\|_{\boldsymbol{\mathcal{H}}_{0,\mathrm{div}}^1}^2,
        \notag
        \end{align}
        which is independent of $m$. This explains why the assumption $(\mathbf{v}_0,\mathbf{v}_{0,\boldsymbol{\tau}})\in \boldsymbol{\mathcal{H}}_{0,\mathrm{div}}^1$ is required in our argument.
\end{remark}


\section{An Approximate Problem in the Case \texorpdfstring{$(K,L)\in(0,+\infty]\times(0,+\infty)$}
{(K,L) ∈ (0,+∞] × (0,+∞)}}
\label{approximating system for L}
\setcounter{equation}{0}
%


\subsection{Approximation of the initial data}
For any $\sigma\in \big[ 0,1/2\big)$, we first introduce an auxiliary singular potential
\begin{align}
    f_\sigma(r)=f_0\Big(\frac{r}{1-\sigma}\Big)
    \quad\text{for all $r\in(-1+\sigma,1-\sigma)$}.
    \label{f_sigma}
\end{align}
Due to \ref{ASS:A2}, we have $f_\sigma\in C^1(-1+\sigma,1-\sigma)$, $f_\sigma(0)=0$ and
\begin{align}
    &f_\sigma'(r)\geq \frac{\kappa}{1-\sigma}>0\qquad\text{for all $r\in(-1+\sigma,1-\sigma)$},\label{f_lambda_p1}
    \\
    &\lim_{r\to\pm(1-\sigma)}f_\sigma(r)=\pm\infty,\quad \lim_{r\to\pm(1-\sigma)}f_\sigma'(r)=+\infty,\label{f_lambda_p2}
    \\
    &f_\sigma(r)\,f_0(r)\geq0,\quad f_\sigma(r)g_0(r)\geq0
    \qquad\text{for all $r\in(-1+\sigma,1-\sigma)$}.\label{f_lambda_p3}
\end{align}
It is easy to see that $f_\sigma$ is the derivative of $F_\sigma$, where
\begin{align}
    \label{F_sigma}
    F_\sigma(r)=(1-\sigma)F_0\Big(\frac{r}{1-\sigma}\Big)
    \quad\text{for all $r\in[-1+\sigma,1-\sigma]$.}
\end{align}
Let $\boldsymbol{\varphi}_0=(\varphi_0,\psi_0)\in\mathcal{W}_K^2$ with
$\|\boldsymbol{\varphi}_0\|_{\mathcal{L}^\infty}\leq1$ and $|\overline{m}(\boldsymbol{\varphi}_0)|<1$ be any prescribed initial data. In addition, we denote
\begin{align*}
    \widetilde{\mu}_0=-\Delta\varphi_0+f_0(\varphi_0)
    \quad\text{and}\quad
    \widetilde{\mathcal{L}}_0=-\Delta_{\boldsymbol{\tau}}\psi_0+\partial_{\mathbf{n}}\varphi_0+g_0(\psi_0).
\end{align*}
For $k\in\mathbb{N}^+$, let $h_k$ be a globally Lipschitz continuous truncation function
\begin{align}
    h_k:\mathbb{R}\to\mathbb{R}, \quad
    h_k(r)=
    \begin{cases}
        -k,&\text{if }r<-k,\\
        r,&\text{if }-k\leq r\leq k,\\
        k,&\text{if }r>k.
    \end{cases}\label{h_k}
\end{align}
Then we define
$$
\widetilde{\mu}_{0,k}=h_k\circ\widetilde{\mu}_0,\quad  \widetilde{\mathcal{L}}_{0,k}=h_k\circ\widetilde{\mathcal{L}}_{0}.
$$
The assumption \ref{ASS:A4} implies that
$(\widetilde{\mu}_0,\widetilde{\mathcal{L}}_0) \in \mathcal{H}^1$, and thus $\widetilde{\mu}_{0,k}\in V\cap L^\infty(\Omega)$,
$\widetilde{\mathcal{L}}_{0,k}\in V_\Gamma\cap L^\infty(\Gamma)$
satisfying
\begin{align}
    \|\widetilde{\mu}_{0,k}\|_{V}
    \leq \|\widetilde{\mu}_0\|_{V}
    \quad\text{and}\quad
    \|\widetilde{\mathcal{L}}_{0,k}\|_{V_\Gamma}
    \leq \|\widetilde{\mathcal{L}}_0\|_{V_\Gamma}.
    \label{superposition}
\end{align}
Now, for $\sigma\in[0,1/2)$ and $k\in\mathbb{N}^+$, we approximate the initial data $\boldsymbol{\varphi}_0$ using the following bulk-surface elliptic problem
\begin{subequations}
\label{initial-elliptic}
\begin{empheq}[left=(S_{\sigma,k})\left\{, right=\right.]{align}
    &-\Delta\varphi_{0,\sigma,k}+f_0(\varphi_{0,\sigma,k})
    +k^{-1}f_\sigma(\varphi_{0,\sigma,k})
    =\widetilde{\mu}_{0,k}
    &&\text{in }\Omega,\label{initial-elliptic-1}\\
    &
    \begin{cases}
        K\partial_\mathbf{n}\varphi_{0,\sigma,k} =\psi_{0,\sigma,k}-\varphi_{0,\sigma,k}, &{K\in(0,+\infty)} \\
        \partial_\mathbf{n}\varphi_{0,\sigma,k}=0, &K=+\infty
    \end{cases}
    &&{\text{on }\Gamma,}\label{initial-elliptic-2}
    \\
    &-\Delta_{\boldsymbol{\tau}}\psi_{0,\sigma,k}+\partial_{\mathbf{n}}\varphi_{0,\sigma,k}
    +g_0(\psi_{0,\sigma,k}) +k^{-1}f_\sigma(\psi_{0,\sigma,k})=\widetilde{\mathcal{L}}_{0,k}
    &&\text{on }\Gamma. \label{initial-elliptic-3}
\end{empheq}
\end{subequations}
Below we show that the problem $(S_{\sigma,k})$
admits a unique strong solution that satisfies the strict separation property.
\begin{proposition}
\label{approximate-initial-datum}
For any $\sigma\in [ 0,1/2)$ and $k\in\mathbb{N}^+$,
the bulk-surface elliptic problem $(S_{\sigma,k})$ admits a unique strong solution
$\boldsymbol{\varphi}_{0,\sigma,k}=(\varphi_{0,\sigma,k},\psi_{0,\sigma,k})\in{\mathcal{W}_K^2}$ that satisfies
\begin{align}
       \|\boldsymbol{\varphi}_{0,\sigma,k}\|_{\mathcal{H}^2}
       +\|f_{0}(\varphi_{0,\sigma,k})\|_{H}
       +\|g_{0}(\psi_{0,\sigma,k})\|_{H_\Gamma}
       &\leq  C(1+\|\widetilde{\mu}_0\|_{V}+\|\widetilde{\mathcal{L}}_0\|_{V_\Gamma}),\label{0315-0}\\
       k^{-1}\|f_{\sigma}(\varphi_{0,\sigma,k})\|_{H}
       +k^{-1}\|f_{\sigma}(\psi_{0,\sigma,k})\|_{H_\Gamma}
       &\leq  C(1+\|\widetilde{\mu}_0\|_{V}+\|\widetilde{\mathcal{L}}_0\|_{V_\Gamma}),\label{0315-0'}\\
      \|-\Delta\varphi_{0,\sigma,k}+f_0(\varphi_{0,\sigma,k})
      +k^{-1}f_\sigma(\varphi_{0,\sigma,k})\|_{V}
      &\leq \|\widetilde{\mu}_0\|_{V},\label{0315-1}\\
      \|-\Delta_{\boldsymbol{\tau}}\psi_{0,\sigma,k}
      +\partial_{\mathbf{n}}\varphi_{0,\sigma,k}
      +g_0(\psi_{0,\sigma,k})+k^{-1}f_\sigma(\psi_{0,\sigma,k}))\|_{V_\Gamma}
      &\leq \|\widetilde{\mathcal{L}}_0\|_{V_\Gamma},\label{0315-2}
\end{align}
for a constant $C>0$ independent of $\sigma$ and $k$,
and there exists $\delta(k)\in(0,1)$ such that
\begin{align}
      \|\varphi_{0,\sigma,k}\|_{L^\infty(\Omega)}\leq (1-\sigma)\big(1-\delta(k)\big),
      \quad\|\psi_{0,\sigma,k}\|_{L^\infty(\Gamma)}\leq (1-\sigma)\big(1-\delta(k)\big).
      \label{initial-separation}
\end{align}
Furthermore, we have
\begin{align}
      &\boldsymbol{\varphi}_{0,\sigma,k} \to\boldsymbol{\varphi}_{0,k}\quad\text{in }{\mathcal{H}^1}\ \text{as }\sigma\to0
      \quad\text{and}\quad\boldsymbol{\varphi}_{0,k} \to\boldsymbol{\varphi}_0
      \quad\text{in }{\mathcal{H}^1}\ \text{as }k\to+\infty,
      \label{initial-convergence}
\end{align}
where $\boldsymbol{\varphi}_{0,k} \coloneqq \boldsymbol{\varphi}_{0,0,k}$ denotes the unique strong solution to problem $(S_{0,k})$.
\end{proposition}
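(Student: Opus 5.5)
We view $(S_{\sigma,k})$ as the Euler--Lagrange system of a strictly convex functional and argue in four steps: existence and uniqueness by convex minimization, strict separation, uniform estimates, and convergence.

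\textbf{Existence and uniqueness.} Take $K\in(0,+\infty)$; the case $K=+\infty$ is the same with the coupling term dropped. On $\mathcal{H}^1$ consider
\begin{align*}
J(\varphi,\psi)=\int_\Omega\Big(\tfrac12|\nabla\varphi|^2+F_0(\varphi)+k^{-1}F_\sigma(\varphi)-\widetilde\mu_{0,k}\varphi\Big)\,\mathrm{d}x
+\int_\Gamma\Big(\tfrac12|\nabla_{\!\boldsymbol{\tau}\,}\psi|^2+G_0(\psi)+k^{-1}F_\sigma(\psi)-\widetilde{\mathcal L}_{0,k}\psi\Big)\,\mathrm{d}S
+\tfrac{1}{2K}\int_\Gamma|\psi-\varphi|^2\,\mathrm{d}S .
\end{align*}
\begin{itemize}
\item $J$ is proper: the pair $(0,0)$ lies in the domain.
\item $J$ is lower semicontinuous.
\item $J$ is coercive, because $F_0,G_0,F_\sigma$ are bounded from below and strongly convex.
\item $J$ is strictly convex.
\end{itemize}
Hence $J$ has a unique minimizer, and $|\varphi|,|\psi|\le 1-\sigma$ a.e. To pass from the minimizer to the equations, we do not compute variations of $J$ directly. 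Instead, we use the standard monotone-operator approach: approximate $f_0,g_0,f_\sigma$ by their Yosida approximations, solve the resulting regular elliptic problems, and pass to the limit. The $H^2$ regularity follows from the bulk-surface elliptic theory for the Robin ($K<\infty$) or Neumann problem, once the nonlinear terms are known to lie in $L^2$.

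\textbf{Strict separation \eqref{initial-separation}.} Since $|\widetilde\mu_{0,k}|\le k$ and $|\widetilde{\mathcal L}_{0,k}|\le k$, we use a comparison (truncation) argument. Take $M<1-\sigma$ such that
\begin{equation*}
\min\{f_0(M),\,g_0(M)\}+k^{-1}f_\sigma(M)\ge k+1 .
\end{equation*}
Such $M$ exists because $f_\sigma(r)\to+\infty$ as $r\to1-\sigma$, while $f_0,g_0\ge0$ on $[0,1-\sigma)$. Test the bulk equation with $(\varphi-M)^+$ and the surface equation with $(\psi-M)^+$, and add. The coupling terms combine as
\begin{equation*}
\tfrac1K\int_\Gamma(\psi-\varphi)\big((\psi-M)^+-(\varphi-M)^+\big)\,\mathrm{d}S\ge0 .
\end{equation*}
The gradient terms are nonnegative. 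On the set where $\varphi>M$ (resp.\ $\psi>M$), monotonicity gives $f_0(\varphi)+k^{-1}f_\sigma(\varphi)-\widetilde\mu_{0,k}\ge1$, and likewise on $\Gamma$. Therefore $(\varphi-M)^+=0$ and $(\psi-M)^+=0$. The lower bound is obtained symmetrically. Setting $1-\delta(k)=M/(1-\sigma)$ gives \eqref{initial-separation}.

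\textbf{Estimates \eqref{0315-1}--\eqref{0315-2}.} These are immediate: the left-hand sides equal $\widetilde\mu_{0,k}$ and $\widetilde{\mathcal L}_{0,k}$, which satisfy \eqref{superposition}.

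\textbf{Estimates \eqref{0315-0}--\eqref{0315-0'}.} This is the main obstacle, since the bounds must be uniform in $\sigma$ and $k$. The plan is as follows.
\begin{itemize}
\item Test the bulk equation with $f_0(\varphi)$ and the surface equation with $g_0(\psi)$. This is legitimate because, by the separation step, $f_0(\varphi),g_0(\psi)$ are bounded and lie in $H^1$.
\item The gradient terms are nonnegative since $f_0'>0$.
\item The cross term $k^{-1}f_\sigma f_0\ge0$ by \eqref{f_lambda_p3}.
\item The right-hand side is bounded by $\|\widetilde\mu_0\|_H\|f_0(\varphi)\|_H$, and similarly on $\Gamma$.
\item Assumption \ref{ASS:A3} handles mismatched boundary couplings.
\item The coupling term is $\frac1K\int_\Gamma(\psi-\varphi)\big(g_0(\psi)-f_0(\varphi)\big)\,\mathrm{d}S$. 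It has no sign, and I expect this to be the delicate point. First try: write $f_0(\varphi)$ on $\Gamma$ via \ref{ASS:A3}, compare it with $g_0$, and absorb the boundary trace of $f_0(\varphi)$ using the trace inequality together with the gradient term $\int_\Omega f_0'(\varphi)|\nabla\varphi|^2\,\mathrm{d}x$. Failing that, test with $f_0(\varphi)$ only in the bulk, and recover $g_0$ from the surface equation using the $H^2$ bound.
\end{itemize}
The $k^{-1}f_\sigma$ bounds follow analogously by testing with $f_\sigma(\varphi)$ and $f_\sigma(\psi)$, where the cross terms again have the right sign. Elliptic regularity then yields the $\mathcal{H}^2$ bound.

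\textbf{Convergence \eqref{initial-convergence}.}
\begin{itemize}
\item As $\sigma\to0$: the uniform $\mathcal H^2$ bounds give compactness. Identify the limit by monotonicity (Minty's trick), noting that $f_\sigma\to f_0$ locally uniformly on $(-1,1)$. Uniqueness for $(S_{0,k})$ then upgrades subsequence convergence to convergence of the whole family.
\item As $k\to\infty$: we have $\widetilde\mu_{0,k}\to\widetilde\mu_0$ in $V$ and $\widetilde{\mathcal L}_{0,k}\to\widetilde{\mathcal L}_0$ in $V_\Gamma$, and the term $k^{-1}f_0(\varphi)$ tends to $0$ in $L^2$ by \eqref{0315-0'}. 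So the limit solves the problem solved by $\boldsymbol\varphi_0$. Uniqueness for that problem, a monotone elliptic system with data in $\mathcal W_K^2$, identifies the limit as $\boldsymbol\varphi_0$.
\end{itemize}
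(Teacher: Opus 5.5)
Your proof has a genuine gap exactly where you expected one. The problem is the $(\sigma,k)$-uniform bound on $f_0(\varphi_{0,\sigma,k})$ and $g_0(\psi_{0,\sigma,k})$ in \eqref{0315-0} for $K\in(0,+\infty)$. Your uniform $\mathcal{H}^2$ bound and both compactness-based convergences in \eqref{initial-convergence} rest on it. With the test pair $(f_0(\varphi),g_0(\psi))$, the coupling term $\frac1K\int_\Gamma(\psi-\varphi)(g_0(\psi)-f_0(\varphi))\,\mathrm{d}S$ has no sign, and its bad part contains the trace of $f_0(\varphi)$ on $\Gamma$. Neither of your fallbacks controls this trace:
\begin{itemize}
\item \ref{ASS:A3} compares $f_0$ and $g_0$ only at the same argument, so it says nothing about $f_0(\varphi)$ versus $g_0(\psi)$.
\item The only gradient information is $\int_\Omega f_0'(\varphi)|\nabla\varphi|^2\,\mathrm{d}x$, which does not control the trace of $f_0(\varphi)$. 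A $W^{1,1}$-trace bound would also need $\int_\Omega f_0'(\varphi)\,\mathrm{d}x$, which is not available (e.g.\ for the logarithmic potential).
\item Testing only the bulk leaves the same trace term. Handling it "via the $\mathcal{H}^2$ bound" is circular, because that bound comes from elliptic regularity only after $f_0(\varphi)$ and $g_0(\psi)$ are known to be bounded in $L^2$.
\end{itemize}
For $K=+\infty$ the coupling vanishes and your test pair works as written.

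The missing idea is the one you already use for $f_\sigma$: test both equations with the same monotone function. Test the bulk equation with $f_0(\varphi)$ and the surface equation with $f_0(\psi)$, not $g_0(\psi)$.
\begin{itemize}
\item The coupling becomes $\frac1K\int_\Gamma(\psi-\varphi)\big(f_0(\psi)-f_0(\varphi)\big)\,\mathrm{d}S\ge0$.
\item The terms $\int_\Gamma f_0'(\psi)|\nabla_{\!\boldsymbol{\tau}\,}\psi|^2\,\mathrm{d}S$ and $k^{-1}\int_\Gamma f_\sigma(\psi)f_0(\psi)\,\mathrm{d}S$ are nonnegative.
\item Since $f_0$ and $g_0$ have the same sign, \ref{ASS:A3} gives $f_0(\psi)g_0(\psi)\ge\frac{1}{2\varrho}|f_0(\psi)|^2-\frac{c_0^2}{2\varrho}$.
\end{itemize}
This yields $\|f_0(\varphi)\|_H+\|f_0(\psi)\|_{H_\Gamma}\le C(1+\|\widetilde\mu_0\|_H+\|\widetilde{\mathcal L}_0\|_{H_\Gamma})$. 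Next, test the surface equation alone with $g_0(\psi)$, using $|\partial_{\mathbf n}\varphi|=|\psi-\varphi|/K\le 2/K$, to bound $g_0(\psi)$. Elliptic regularity then gives the $\mathcal{H}^2$ part. This is the scheme of Lemmas \ref{f-L2L2} and \ref{H2}; in the proposition itself the paper imports \eqref{0315-0}--\eqref{0315-0'} from the literature.

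Equivalently, you could reverse your fallback order. First bound $g_0(\psi)$ from the surface equation. Then handle the bulk boundary term using $(\varphi-\psi)f_0(\varphi)\ge(\varphi-\psi)f_0(\psi)$ together with \ref{ASS:A3}.

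Two smaller points:
\begin{itemize}
\item As $k\to+\infty$, the convergence $k^{-1}f_0(\varphi_{0,k})\to0$ in $H$ follows from \eqref{0315-0} (and from \ref{ASS:A3} on $\Gamma$), not from \eqref{0315-0'}, which only gives boundedness.
\item In your truncation argument, take $M=(1-\sigma)f_0^{-1}(k(k+1))$, so that $\delta(k)=1-f_0^{-1}(k(k+1))$ is independent of $\sigma$.
\end{itemize}
With these repairs, your Stampacchia-type separation proof is a valid and simpler alternative to the paper's $L^p$-iteration. Your compactness/Minty argument for the convergences then replaces the paper's quantitative $O(\sigma)$ estimate and direct energy estimate in $k$.
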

\begin{proof}
Since
$\widetilde{\mu}_{0,k}\in V\cap L^\infty(\Omega)$ and
$\widetilde{\mathcal{L}}_{0,k}\in V_\Gamma\cap L^\infty(\Gamma)$,
the existence and uniqueness of a strong solution $\boldsymbol{\varphi}_{0,\sigma,k}=(\varphi_{0,\sigma,k},\psi_{0,\sigma,k})\in\mathcal{H}^2$ to problem $(S_{\sigma,k})$ as well as the estimates \eqref{0315-0} and \eqref{0315-0'}
are a direct consequence of \cite[Propositions 5.1 and 5.2]{GKS}. Moreover, combining \eqref{superposition} and \eqref{initial-elliptic}, we easily obtain \eqref{0315-1} and \eqref{0315-2}. In what follows, we prove the strict separation property and the convergence results.
\smallskip

\noindent\textbf{\itshape Separation property.}
The strict separation property \eqref{initial-separation} can be established
in a manner similar to that in \cite[Proposition 5.3]{GKS}.
For completeness, we provide a brief sketch of the proof.
To begin with, for any $l \in \mathbb{N}$ with $l \ge 2$,
we introduce the globally Lipschitz continuous truncation
\begin{align}
    j_l:\mathbb{R}\to\mathbb{R}, \quad
    j_l(r)=
    \begin{cases}
        -1+\sigma+\frac{1}{l},&\text{if }r< -1+\sigma+\frac{1}{l},\\
        r,&\text{if }r\in \big[-1+\sigma+\frac{1}{l},1-\sigma-\frac{1}{l} \big],\\
        1-\sigma-\frac{1}{l},&\text{if }r>1-\sigma-\frac{1}{l},
    \end{cases}\notag
\end{align}
and define
$\varphi_{0,\sigma,k}^l=j_l\circ\varphi_{0,\sigma,k}$ and
$\psi_{0,\sigma,k}^l=j_l\circ\psi_{0,\sigma,k}$.
It easily follows that $(\varphi_{0,\sigma,k}^l,\psi_{0,\sigma,k}^l)\in\mathcal{H}^1$
with
\begin{align*}
  \nabla\varphi_{0,\sigma,k}^l
  =\chi_{[-1+\sigma+\frac{1}{l},1-\sigma-\frac{1}{l}]}\nabla\varphi_{0,\sigma,k}
  \quad\text{and}\quad
  \nabla_{\!\boldsymbol{\tau}\,}\psi_{0,\sigma,k}^l
  =\chi_{[-1+\sigma+\frac{1}{l},1-\sigma-\frac{1}{l}]}\nabla_{\!\boldsymbol{\tau}\,}\psi_{0,\sigma,k}.
\end{align*}
For any $p\geq2$, testing \eqref{initial-elliptic-1} by
$|f_\sigma(\varphi_{0,\sigma,k}^l)|^{p-2}f_\sigma(\varphi_{0,\sigma,k}^l)$
and \eqref{initial-elliptic-3} by
$|f_\sigma(\psi_{0,\sigma,k}^l)|^{p-2}f_\sigma(\psi_{0,\sigma,k}^l)$,
we find
\begin{align}
    &k^{-1}\int_\Omega|f_\sigma(\varphi_{0,\sigma,k}^l)|^{p-2}f_\sigma(\varphi_{0,\sigma,k}^l)f_\sigma(\varphi_{0,\sigma,k})\,\mathrm{d}x
    +k^{-1}\int_\Gamma|f_\sigma(\psi_{0,\sigma,k}^l)|^{p-2}f_\sigma(\psi_{0,\sigma,k}^l)f_{\sigma}(\psi_{0,\sigma,k})\,\mathrm{d}S\notag\\
    &\qquad+(p-1)\int_\Omega|f_\sigma(\varphi_{0,\sigma,k}^l)|^{p-2}f_\sigma'(\varphi_{0,\sigma,k}^l)|\nabla\varphi_{0,\sigma,k}^l|^2\,\mathrm{d}x\notag\\
    &\qquad+(p-1)\int_\Gamma|f_\sigma(\psi_{0,\sigma,k}^l)|^{p-2}f_\sigma'(\psi_{0,\sigma,k}^l)|\nabla_{\!\boldsymbol{\tau}\,}\psi_{0,\sigma,k}^l|^2\,\mathrm{d}S\notag\\
    &\qquad+\int_\Omega|f_\sigma(\varphi_{0,\sigma,k}^l)|^{p-2}f_\sigma(\varphi_{0,\sigma,k}^l)f_0(\varphi_{0,\sigma,k})\,\mathrm{d}x
    +\int_\Gamma|f_\sigma(\psi_{0,\sigma,k}^l)|^{p-2}f_\sigma(\psi_{0,\sigma,k}^l)g_0(\psi_{0,\sigma,k})\,\mathrm{d}S \notag\\
    &\quad=\int_\Omega \widetilde{\mu}_{0,k}|f_\sigma(\varphi_{0,\sigma,k}^l)|^{p-2}f_\sigma(\varphi_{0,\sigma,k}^l)\,\mathrm{d}x
    +\int_\Gamma \widetilde{\mathcal{L}}_{0,k}|f_\sigma(\psi_{0,\sigma,k}^l)|^{p-2}f_\sigma(\psi_{0,\sigma,k}^l)\,\mathrm{d}S\notag\\
     &\qquad+\int_\Gamma\partial_\mathbf{n}\varphi_{0,\sigma,k}(|f_\sigma(\varphi_{0,\sigma,k}^l)|^{p-2}f_\sigma(\varphi_{0,\sigma,k}^l)
    -|f_\sigma(\psi_{0,\sigma,k}^l)|^{p-2}f_\sigma(\psi_{0,\sigma,k}^l))\,\mathrm{d}S\notag\\
    &\quad\leq \frac{k^{-1}}{2}\int_\Omega|f_\sigma(\varphi_{0,\sigma,k}^l)|^p\,\mathrm{d}x
    +\frac{k^{-1}}{2}\int_\Gamma|f_\sigma(\psi_{0,\sigma,k}^l)|^p\,\mathrm{d}S
    +C_k\Big(\int_\Omega|\widetilde{\mu}_{0,k}|^p \,\mathrm{d}x+\int_\Gamma |\widetilde{\mathcal{L}}_{0,k}|^p\,\mathrm{d}S\Big).
    \label{20260301-1}
\end{align}
Here, we have used the estimate
\begin{align*}
    &\int_\Gamma\partial_\mathbf{n}\varphi_{0,\sigma,k}(|f_\sigma(\varphi_{0,\sigma,k}^l)|^{p-2}f_\sigma(\varphi_{0,\sigma,k}^l)
    -|f_\sigma(\psi_{0,\sigma,k}^l)|^{p-2}f_\sigma(\psi_{0,\sigma,k}^l))\,\mathrm{d}S
    \\
    &\quad=-\chi(K)\int_\Gamma(\varphi_{0,\sigma,k}-\psi_{0,\sigma,k})(|f_\sigma(\varphi_{0,\sigma,k}^l)|^{p-2}f_\sigma(\varphi_{0,\sigma,k}^l)
    -|f_\sigma(\psi_{0,\sigma,k}^l)|^{p-2}f_\sigma(\psi_{0,\sigma,k}^l))\,\mathrm{d}S\leq0,
\end{align*}
which was shown, based on \eqref{initial-elliptic-2}, in \cite[Proof of Lemma 3.4]{LvWuAA}.

Thanks to \ref{ASS:A2}, we know that the expressions
$f_0(r)$, $g_0(r)$, $f_\sigma(r)$, and $f_\sigma(j_l(r))$
have the same sign for all
$r\in{(-1+\sigma,1-\sigma)}$.
Consequently, due to the monotonicity of $f_\sigma$, we have
\begin{alignat*}{2}
    f_\sigma(\varphi^l_{0,\sigma,k}) f_\sigma(\varphi_{0,\sigma,k})
    &\geq |f_\sigma(\varphi_{0,\sigma,k}^l)|^2
    &&\quad\text{a.e.~in }\Omega,
    \\
    \quad f_\sigma(\psi^l_{0,\sigma,k})f_\sigma(\psi_{0,\sigma,k})
    &\geq |f_\sigma(\psi_{0,\sigma,k}^l)|^2
    &&\quad\text{a.e.~on }\Gamma,
    \\
    f_\sigma(\varphi_{0,\sigma,k}^l)f_0(\varphi_{0,\sigma,k})
    &\geq0
    &&\quad\text{a.e.~in }\Omega,
    \\
    f_\sigma(\psi_{0,\sigma,k}^l)g_0(\psi_{0,\sigma,k})
    &\geq0
    &&\quad\text{a.e.~on }\Gamma.
\end{alignat*}
Thus, we infer from \eqref{20260301-1} that
\begin{align}
    \|f_\sigma(\varphi_{0,\sigma,k}^l)\|_{L^p(\Omega)}
    +\|f_\sigma(\psi_{0,\sigma,k}^l)\|_{L^p(\Gamma)}
    \leq C_k(\|\widetilde{\mu}_{0,k}\|_{L^p(\Omega)}
    +\|\widetilde{\mathcal{L}}_{0,k}\|_{L^p(\Gamma)})
    \leq C(k)
    \label{0317-2}
\end{align}
for all $p\ge 2$. Since the constant $C(k)$ in \eqref{0317-2} is independent of $p$,
we get
\begin{align}
    \|f_\sigma(\varphi_{0,\sigma,k}^l)\|_{L^\infty(\Omega)}
    +\|f_\sigma(\psi_{0,\sigma,k}^l)\|_{L^\infty(\Gamma)}
    \leq C(k).
    \label{0317-3}
\end{align}
According to Fatou's lemma, we can pass to the limit as $ l\to+\infty$ in \eqref{0317-3}
and conclude that
\begin{align}
    \|f_\sigma(\varphi_{0,\sigma,k})\|_{L^\infty(\Omega)}
    +\|f_\sigma(\psi_{0,\sigma,k})\|_{L^\infty(\Gamma)}
    \leq C(k).\notag
\end{align}
Choosing $\delta(k)=1-\max\{f_0^{-1}(C(k)),-f_0^{-1}(-C(k))\}\in(0,1)$,
this allows us to obtain
\begin{align*}
    \|\varphi_{0,\sigma,k}\|_{L^\infty(\Omega)}\leq (1-\sigma)(1-\delta(k)),
    \quad\|\psi_{0,\sigma,k}\|_{L^\infty(\Gamma)}\leq (1-\sigma)(1-\delta(k)).
\end{align*}

\noindent\textbf{\itshape Convergence results.}
To investigate the convergence
$\boldsymbol{\varphi}_{0,\sigma,k}\to\boldsymbol{\varphi}_{0,k}$ in $\mathcal{H}^1$,
we introduce the notation
\begin{align*}
    \boldsymbol{\varphi}^\sharp_{0,\sigma,k}
    = (\varphi^\sharp_{0,\sigma,k} , \psi^\sharp_{0,\sigma,k})
    \coloneqq \boldsymbol{\varphi}_{0,\sigma,k}-\boldsymbol{\varphi}_{0,k}
    \in \mathcal{H}^2.
\end{align*}
In view of \eqref{initial-elliptic}, we have
\begin{subequations}
    \begin{align}
          &-\Delta\varphi^\sharp_{0,\sigma,k}
    +f_0(\varphi_{0,\sigma,k})-f_0(\varphi_{0,k})
    +k^{-1}f_\sigma(\varphi_{0,\sigma,k})-k^{-1}f_0(\varphi_{0,k})
    =0&&\text{a.e.~in }\Omega, \label{diff-20260301-2-1}\\
    &
        \begin{cases}
            K\partial_\mathbf{n}\varphi_{0,\sigma,k}^\sharp =\psi_{0,\sigma,k}^\sharp-\varphi_{0,\sigma,k}^\sharp, & K\in(0,+\infty)\\
            \partial_\mathbf{n}\varphi_{0,\sigma,k}^\sharp=0, &K=+\infty
        \end{cases}&& \text{a.e.~on }\Gamma, \label{diff-20260301-2-2}
        \\
    &-\Delta_{\boldsymbol{\tau}}\psi^\sharp_{0,\sigma,k}
    +\partial_{\mathbf{n}}\varphi_{0,\sigma,k}^\sharp +g_0(\psi_{0,\sigma,k})-g_{0}(\psi_{0,k})
    +k^{-1}f_\sigma(\psi_{0,\sigma,k})-k^{-1}f_0(\psi_{0,k})=0 &&\text{a.e.~on }\Gamma.
    \label{diff-20260301-2-3}
    \end{align}
\end{subequations}
Testing \eqref{diff-20260301-2-1} by $ \frac{\varphi_{0,\sigma,k}}{1-\sigma}-\varphi_{0,k}$,
\eqref{diff-20260301-2-3} by $\frac{\psi_{0,\sigma,k}}{1-\sigma}-\psi_{0,k}$, respectively, using \eqref{diff-20260301-2-2}, we obtain
\begin{align}
    0&= \int_\Omega\nabla\varphi_{0,\sigma,k}^\sharp\cdot\nabla\Big(\frac{\varphi_{0,\sigma,k}}{1-\sigma}-\varphi_{0,k}\Big)\,\mathrm{d}x
    +\int_\Gamma\nabla_{\!\boldsymbol{\tau}\,}\psi_{0,\sigma,k}^\sharp
    \cdot\nabla_{\!\boldsymbol{\tau}\,} \Big(\frac{\psi_{0,\sigma,k}}{1-\sigma}-\psi_{0,k}\Big)\,\mathrm{d}S
    \notag\\
    &\quad+\int_\Omega\Big(f_0(\varphi_{0,\sigma,k})-f_0(\varphi_{0,k})\Big)\Big(\frac{\varphi_{0,\sigma,k}}{1-\sigma}-\varphi_{0,k}\Big)\,\mathrm{d}x
    +\int_\Gamma\Big(g_0(\psi_{0,\sigma,k})-g_0(\psi_{0,k})\Big)\Big(\frac{\psi_{0,\sigma,k}}{1-\sigma}-\psi_{0,k}\Big)\,\mathrm{d}S
    \notag\\
    &\quad+k^{-1}\int_\Omega \Big(f_0\Big(\frac{\varphi_{0,\sigma,k}}{1-\sigma}\Big)-f_0(\varphi_{0,k})\Big)\Big(\frac{\varphi_{0,\sigma,k}}{1-\sigma}-\varphi_{0,k}\Big)\,\mathrm{d}x
    \notag\\
    &\quad+k^{-1}\int_\Gamma \Big(f_0\Big(\frac{\psi_{0,\sigma,k}}{1-\sigma}\Big)-f_0(\psi_{0,k})\Big)\Big(\frac{\psi_{0,\sigma,k}}{1-\sigma}-\psi_{0,k}\Big)\,\mathrm{d}S
    \notag\\
    &\quad {-\int_\Gamma \partial_\mathbf{n}\varphi_{0,\sigma,k}^\sharp\Big(\frac{\varphi_{0,\sigma,k}-\psi_{0,\sigma,k}}{1-\sigma}-\varphi_{0,k}+\psi_{0,k}\Big)\,\mathrm{d}S}
    \notag\\
    &\geq\int_\Omega |\nabla\varphi_{0,\sigma,k}^\sharp|^2\,\mathrm{d}x
    +\int_\Gamma|\nabla_{\!\boldsymbol{\tau}\,} \psi_{0,\sigma,k}^\sharp|^2\,\mathrm{d}S {+\chi(K)\int_\Gamma|\psi_{0,\sigma,k}^\sharp-\varphi_{0,\sigma,k}^\sharp|^2\,\mathrm{d}S}\notag\\
    &\quad+\kappa\|\boldsymbol{\varphi}^\sharp_{0,\sigma,k} \|_{\mathcal{L}^2}^2
    +k^{-1}\kappa\Big\|\frac{\boldsymbol{\varphi}_{0,\sigma,k}}{1-\sigma}-\boldsymbol{\varphi}_{0,k}\Big\|_{\mathcal{L}^2}^2
    {+\chi(K)\frac{\sigma}{1-\sigma}\int_\Gamma(\varphi_{0,\sigma,k}^\sharp-\psi_{0,\sigma,k}^\sharp)(\varphi_{0,\sigma,k}-\psi_{0,\sigma,k})\,\mathrm{d}S}
    \notag\\
    &\quad+\frac{\sigma}{1-\sigma}\int_\Omega\nabla\varphi_{0,\sigma,k}\cdot\nabla(\varphi_{0,\sigma,k}-\varphi_{0,k})\,\mathrm{d}x
    +\frac{\sigma}{1-\sigma}\int_\Gamma\nabla_{\!\boldsymbol{\tau}\,}\psi_{0,\sigma,k}\cdot\nabla_{\!\boldsymbol{\tau}\,}(\psi_{0,\sigma,k}-\psi_{0,k})\,\mathrm{d}S
    \notag\\
    &\quad+\frac{\sigma}{1-\sigma}\int_\Omega (f_0(\varphi_{0,\sigma,k})-f_0(\varphi_{0,k}))\varphi_{0,\sigma,k}\,\mathrm{d}x
    +\frac{\sigma}{1-\sigma}\int_\Gamma (g_0(\psi_{0,\sigma,k})-g_0(\psi_{0,k}))\psi_{0,\sigma,k}\,\mathrm{d}S.\label{20260301-2}
\end{align}
Here, $\kappa>0$ is the constant from \ref{ASS:A2}.
Using the estimates \eqref{0315-0} and \eqref{0315-0'}, we infer from \eqref{20260301-2} that
\begin{align}
    &\kappa \|\boldsymbol{\varphi}^\sharp_{0,\sigma,k}\|_{\mathcal{L}^2}^2
        + \|\nabla \varphi^\sharp_{0,\sigma,k}\|_{\mathbf{L}^2(\Omega)}^2
        + \|\nabla_{\!\boldsymbol{\tau}\,} \psi^\sharp_{0,\sigma,k}\|_{\mathbf{L}^2(\Gamma)}^2+\chi(K)\|\varphi^\sharp_{0,\sigma,k}-\psi^\sharp_{0,\sigma,k}\|_{H_\Gamma}^2\notag\\
     &\quad\leq  C\sigma\big(\|\boldsymbol{\varphi}_{0,\sigma,k}\|_{\mathcal{H}^1}^2+\|\boldsymbol{\varphi}_{0,k}\|_{\mathcal{H}^1}^2+\|f_0(\varphi_{0,\sigma,k})\|_{H}^2
     \notag\\
     &\qquad\qquad
     +\|f_{0}(\varphi_{0,k})\|_H^2+\|g_0(\psi_{0,\sigma,k})\|_{H_\Gamma}^2+\|g_{0}(\psi_{0,k})\|_{H_\Gamma}^2\big)\notag \\
    &\quad\leq  C_*\sigma(1+\|\widetilde{\mu}_0\|_{V}^2+\|\widetilde{\mathcal{L}}_0\|_{V_\Gamma}^2)\label{error}
\end{align}
for some constant $C_*>0$ independent of $\sigma$ and $k$. Hence, passing to the limit $\sigma\to0$ in \eqref{error},
we conclude the first convergence in \eqref{initial-convergence}.

To establish the convergence $\boldsymbol{\varphi}_{0,k}\to\boldsymbol{\varphi}_{0}$ in $\mathcal{H}^1$,
we use the notation
$\boldsymbol{\varphi}^\sharp_{0,k} = (\varphi^\sharp_{0,k},\psi^\sharp_{0,k})
\coloneqq \boldsymbol{\varphi}_{0,k}-\boldsymbol{\varphi}_0
\in \mathcal{H}^2$.
Recalling the definition of $\widetilde{\mu}_{0,k}$ and $\widetilde{\mathcal{L}}_{0,k}$,
we use \eqref{initial-elliptic} to deduce that
\begin{subequations}
    \begin{align}
        &-\Delta\varphi_{0,k}^\sharp+(1+k^{-1})(f_0(\varphi_{0,k})-f_0(\varphi_{0}))
    =h_k\circ\widetilde{\mu}_0-\widetilde{\mu}_0 -k^{-1}f_0(\varphi_0)
    &&\text{a.e.~in }\Omega,\label{diff-0317-2-1}\\
     & {
        \begin{cases}
            K\partial_\mathbf{n}\varphi_{0,k}^\sharp =\psi_{0,k}^\sharp-\varphi_{0,k}^\sharp, & K\in(0,+\infty)\\
            \partial_\mathbf{n}\varphi_{0,k}^\sharp=0,&K=+\infty
        \end{cases}} && {\text{a.e.~on }\Gamma,}\label{diff-0317-2-2}
        \\
    &-\Delta_{\boldsymbol{\tau}}\psi^\sharp_{0,k} +\partial_{\mathbf{n}}\varphi^\sharp_{0,k}
    +g_0(\psi_{0,k})-g_0(\psi_0)+k^{-1}(f_0(\psi_{0,k})-f_0(\psi_0))&&\\
    &\qquad=h_k\circ\widetilde{\mathcal{L}}_{0}-\widetilde{\mathcal{L}}_0 -k^{-1}f_0(\psi_0)
    &&\text{a.e.~on }\Gamma. \label{diff-0317-2-3}
    \end{align}
\end{subequations}
Testing \eqref{diff-0317-2-1} by $\varphi_{0,k}^\sharp$
and \eqref{diff-0317-2-3} by $\psi_{0,k}^\sharp$, using \eqref{diff-0317-2-2}, we have
\begin{align}
    &\int_\Omega|\nabla\varphi_{0,k}^\sharp|^2\,\mathrm{d}x
    +\int_\Gamma|\nabla_{\!\boldsymbol{\tau}\,}\psi^\sharp_{0,k}|^2\,\mathrm{d}S {+\chi(K)\int_\Gamma|\varphi_{0,k}^\sharp-\psi_{0,k}^\sharp|^2\,\mathrm{d}S}\notag\\
    &\qquad+(1+k^{-1})\kappa\Big(\int_\Omega|\varphi_{0,k}^\sharp|^2\,\mathrm{d}x
    +\int_\Gamma|\psi_{0,k}^\sharp|^2\,\mathrm{d}S\Big)\notag\\
    &\quad\leq \int_\Omega(h_k\circ\widetilde{\mu}_0-\widetilde{\mu}_0)\varphi_{0,k}^\sharp\,\mathrm{d}x
    +\int_\Gamma(h_k\circ\widetilde{\mathcal{L}}_{0}-\widetilde{\mathcal{L}}_0)\psi^\sharp_{0,k}\,\mathrm{d}S\notag\\
    &\qquad-k^{-1}\int_\Omega f_0(\varphi_0)\varphi_{0,k}^\sharp\,\mathrm{d}x-k^{-1}\int_\Gamma f_0(\psi_0)\psi_{0,k}^\sharp\,\mathrm{d}S\notag\\
    &\quad\leq \frac{\kappa}{2}\Big(\int_\Omega|\varphi_{0,k}^\sharp|^2\,\mathrm{d}x
    +\int_\Gamma|\psi_{0,k}^\sharp|^2\,\mathrm{d}S\Big)
    +\frac{1}{\kappa}\int_\Omega|h_k\circ\widetilde{\mu}_0-\widetilde{\mu}_0|^2\,\mathrm{d}x
    +\frac{1}{\kappa}\int_\Gamma|h_k\circ\widetilde{\mathcal{L}}_{0}-\widetilde{\mathcal{L}}_0|^2\,\mathrm{d}S\notag\\
    &\qquad+\frac{k^{-2}}{\kappa}\int_\Omega|f_0(\varphi_0)|^2\,\mathrm{d}x+\frac{k^{-2}}{\kappa}\int_\Gamma|f_0(\psi_0)|^2\,\mathrm{d}S.\label{0317-1}
\end{align}
By \ref{ASS:A3}, \ref{ASS:A4} and the Lebesgue dominated convergence theorem, we obtain
\begin{align*}
        &\frac{1}{2\kappa}\int_\Omega|h_k\circ\widetilde{\mu}_0-\widetilde{\mu}_0|^2\,\mathrm{d}x
        +\frac{1}{2\kappa}\int_\Gamma|h_k\circ\widetilde{\mathcal{L}}_{0}-\widetilde{\mathcal{L}}_0|^2\,\mathrm{d}S\\
        &\qquad+\frac{k^{-2}}{\kappa}\int_\Omega|f_0(\varphi_0)|^2\,\mathrm{d}x+\frac{k^{-2}}{\kappa}\int_\Gamma|f_0(\psi_0)|^2\,\mathrm{d}S
        \to0\quad\text{as }k\to+\infty.
\end{align*}
The above convergence result together with \eqref{0317-1} allows us to pass to the limit $k\to+\infty$ in
the right-hand side of \eqref{0317-1}. Consequently, we can conclude that
$\boldsymbol{\varphi}_{0,k}\to\boldsymbol{\varphi}_{0}$ in $\mathcal{H}^1$ as $k\to+\infty$.
This verifies the second convergence in \eqref{initial-convergence} and thus the proof of Proposition~\ref{approximate-initial-datum} is complete.
\end{proof}

\subsection{The approximate problem}
\label{SEC:3.2}
Let us consider the following bulk-surface Stokes operator, which was first introduced and analyzed in \cite[Section~5]{KS}:
\begin{align*}
    \boldsymbol{\mathcal{{A}}}:
    \,D(\boldsymbol{\mathcal{A}})\subset \boldsymbol{\mathcal{{L}}}_{\mathrm{div}}^2
    &\to\boldsymbol{ \mathcal{{L}}}_{\mathrm{div}}^2,\\
    (\mathbf{v},\mathbf{w})
    &\mapsto\big(\mathbf{P}^\Omega_{\mathrm{div}}(-\Delta\mathbf{v}),
    \mathbf{P}_{\mathrm{div}}^\Gamma( -\Delta_{\boldsymbol{\tau}}\mathbf{w}
    +2 [\mathbb{D}\mathbf{v}\,\mathbf{n}]_{\boldsymbol{\tau}}+\mathbf{w})\big),
\end{align*}
where
$D(\boldsymbol{\mathcal{{A}}})=  \boldsymbol{\mathcal{{H}}}_{0,\mathrm{div}}^2$. The operator $\boldsymbol{\mathcal{A}}$ is a {positive
self-adjoint} operator on $\boldsymbol{\mathcal{L}}_{\mathrm{div}}^2$.
We note that $\boldsymbol{\mathcal{A}}$ is bounded from above as
\begin{align*}
(\mathbf{v},\mathbf{w})\mapsto \|(\mathbf{v},\mathbf{w})\|_{\boldsymbol{\mathcal{H}}_{0,\mathrm{div}}^1}^2:=
\int_\Omega|\mathbb{D}\mathbf{v}|^2\,\mathrm{d}x
+\int_\Gamma |\mathbb{D}_{\boldsymbol{\tau}}\mathbf{w}|^2\,\mathrm{d}S
+\int_\Gamma|\mathbf{w}|^2\,\mathrm{d}S
\end{align*}
defines a norm on $D(\boldsymbol{\mathcal{A}})$
that is equivalent to the $\boldsymbol{\mathcal{H}}^1$-norm (see \cite[Lemma~5.1]{KS}).
In \cite[Theorem 5.9]{KS}, it was shown that
there exists a sequence of countable many positive eigenvalues $\{\lambda_k\}_{k\in\mathbb{N}}$
and the corresponding eigenfunctions
$
\{(\widetilde{\mathbf{v}}_k, \widetilde{\mathbf{w}}_k)\}_{k\in\mathbb{N}}\subset \boldsymbol{\mathcal{H}}_{0,\mathrm{div}}^2$, which satisfy
\begin{align*}
    &\int_\Omega \mathbb{D}\widetilde{\mathbf{v}}_k:\mathbb{D}\widehat{\mathbf{v}}\,\mathrm{d}x
    +\int_\Gamma \mathbb{D}_{\boldsymbol{\tau}}\widetilde{\mathbf{w}}_k:\mathbb{D}_{\boldsymbol{\tau}}\widehat{\mathbf{w}}\,\mathrm{d}S
    +\int_\Gamma \widetilde{\mathbf{w}}_k\cdot\widehat{\mathbf{w}}\,\mathrm{d}S\\
    &\quad=\lambda_k\int_\Omega \widetilde{\mathbf{v}}_k\cdot\widehat{\mathbf{v}}\,\mathrm{d}x
    +\lambda_k\int_\Gamma \widetilde{\mathbf{w}}_k\cdot\widehat{\mathbf{w}}\,\mathrm{d}S
    \qquad\text{for all $(\widehat{\mathbf{v}},\widehat{\mathbf{w}})\in \boldsymbol{\mathcal{H}}_{0,\mathrm{div}}^1\,$,}
\end{align*}
can be chosen in such a way that they form an orthonormal basis of $\boldsymbol{\mathcal{L}}_{\mathrm{div}}^2$.
We also note that
$(\widetilde{\mathbf{v}}_k,\widetilde{\mathbf{w}}_k)\in \boldsymbol{\mathcal{W}}^{2,r}_{0,\mathrm{div}}$
for all $r\in(1,+\infty)$ (see \cite[Theorem 5.7]{KS}) and the functions $\{(\widetilde{\mathbf{v}}_k, \widetilde{\mathbf{w}}_k)\}_{k\in\mathbb{N}}$ form a complete orthogonal system of $\boldsymbol{\mathcal{H}}^2_{0,\mathrm{div}}$ (see \cite[Corollary 5.11]{KS}). By definition, it holds $\widetilde{\mathbf{v}}_k =\widetilde{\mathbf{v}}_{k,\boldsymbol{\tau}}= \widetilde{\mathbf{w}}_k$ almost everywhere on $\Gamma$. For any $m\in\mathbb{N}^+$, we define the finite-dimensional subspace
\begin{align*}
\mathcal{\boldsymbol{V}}_m
=\text{span}\big\{(\widetilde{\mathbf{v}}_k,\widetilde{\mathbf{w}}_k)\big\}_{k=1}^m
\subset\boldsymbol{\mathcal{H}}_{0,\mathrm{div}}^1
\end{align*}
and the $\boldsymbol{\mathcal{L}}_{\mathrm{div}}^2$-orthogonal projection
onto $\mathcal{\boldsymbol{V}}_m$ is denoted by $\mathbb{P}_m=(\mathbb{P}_m^\Omega,\mathbb{P}_m^\Gamma)$, which is uniformly bounded on $\boldsymbol{\mathcal{L}}^2_{\mathrm{div}}$ and $\boldsymbol{\mathcal{H}}^1_{0,\mathrm{div}}$ in the following way:
\begin{align}
    \left\{\;
    \begin{aligned}
        \|\mathbb{P}_m(\mathbf{v},\mathbf{w})\|_{\boldsymbol{{\mathcal{L}}}^2}
        &\leq \|(\mathbf{v},\mathbf{w})\|_{\boldsymbol{{\mathcal{L}}}^2}
        &&\quad\text{for all $(\mathbf{v},\mathbf{w})\in \boldsymbol{\mathcal{L}}^2_{\mathrm{div}}$,}
        \\
        \|\mathbb{P}_m(\mathbf{v},\mathbf{w})\|_{\boldsymbol{\mathcal{H}}^1_{0,\mathrm{div}}}
        &\leq \|(\mathbf{v},\mathbf{w})\|_{\boldsymbol{\mathcal{H}}^1_{0,\mathrm{div}}}
        &&\quad\text{for all $(\mathbf{v},\mathbf{w})\in \boldsymbol{\mathcal{H}}^1_{0,\mathrm{div}}$.}
    \end{aligned}
    \right.\label{Pm-stability}
\end{align}
Moreover, as a direct consequence of the regularity estimate in \cite[Theorem 5.7]{KS},
the following inverse Sobolev embedding inequalities
hold for all $( \mathbf{v},\mathbf{w})\in \mathcal{V}_m$:
\begin{align}
    {\|( \mathbf{v},\mathbf{w} )\|_{\boldsymbol{\mathcal{H}}^1}
    \leq C_m \|( \mathbf{v},\mathbf{w} )\|_{\boldsymbol{\mathcal{L}}^2},
    \quad \|( \mathbf{v},\mathbf{w} )\|_{\boldsymbol{\mathcal{H}}^2}
    \leq C_m \|( \mathbf{v},\mathbf{w} )\|_{\boldsymbol{\mathcal{L}}^2}.}\label{inverse-So}
\end{align}


Now, let us fix $T>0$. For any $m,\, k\in\mathbb{N}^+$, $\gamma>0$ and $\sigma \in (0,1/2)$,
we intend to construct an approximate solution $(\mathbf{v}_{m},\mathbf{v}_{m,\boldsymbol{\tau}}, \boldsymbol{\varphi}_{m},\boldsymbol{\mu}_m)$
to the system \eqref{eqmain0new}--\eqref{icnew} with the regularities
\begin{align}
    \label{REG:APPROX}
    \left\{\,
    \begin{aligned}
	&( \mathbf{v}_m,\mathbf{v}_{m,\boldsymbol{\tau}} )\in C^{1}([0,T];\mathcal{V}_{m}),
    \\
	&\bm{\varphi}_{m}=(\varphi_m,\psi_m)\in L^{\infty}(0,T;\mathcal{H}^{3}),
    \\
    &\partial_{t}\bm{\varphi}_{m}\in L^{\infty}(0,T;\mathcal{H}^{1})\cap L^{2}(0,T;\mathcal{H}^2) \cap H^1(0,T;\mathcal{L}^2),
    \\
	&\varphi_{m}\in L^{\infty}(Q_T)
    \;\; \text{ with }
    \;\;\,|\varphi_{m}(x,t)|\leq (1-\sigma)(1-\delta )\ \
    \text{ a.e.~in }Q_T,
    \\
	&\psi_{m}\in L^{\infty}(\Sigma_T)
    \;\; \text{ with }
    \;\;\,|\psi_{m}(x,t)|\leq (1-\sigma)(1-\delta) \ \
    \text{ a.e.~on }\Sigma_T,
    \\
	&\bm{\mu}_m=(\mu_m,\mathcal{L}_m)\in L^\infty(0,T;\mathcal{H}^2) \cap H^1(0,T;\mathcal{H}^1),
    \end{aligned}
    \right.
\end{align}
for some $\delta= \delta(\sigma,k,\gamma)\in(0,1)$,
which satisfies the following weak formulation
\begin{align}
	& \int_\Omega \rho(\varphi_m)\partial_{t}\mathbf{v}_{m}\cdot\mathbf{w}\,\mathrm{d}x
    +\int_\Omega 2\alpha \mathbb{D}\partial_t\mathbf{v}_m:\mathbb{D}\mathbf{w}\,\mathrm{d}x
    \notag\\
    &\qquad +\int_\Omega 2\nu(\varphi_m )\mathbb{D}\mathbf{v}_m:\mathbb{D}\mathbf{w}\,\mathrm{d}x
	+\int_\Gamma \beta(\psi_m )\mathbf{v}_{m,\bm{\tau}}\cdot\mathbf{w}_{\bm{\tau}}\,\mathrm{d}S
	\notag \\[1mm]
	& \quad =\int_\Omega \mu_m \nabla \varphi_m\cdot\mathbf{w}\,\mathrm{d}x
	-\int_\Gamma  \psi_m\nabla_{\!\bm{\tau}\,}\mathcal{L}_m\cdot\mathbf{w}_{\bm{\tau}}\,\mathrm{d}S
    -\int_\Omega \rho(\varphi_m)(\mathbf{v}_m\cdot\nabla)\mathbf{v}_m\cdot\mathbf{w}\,\mathrm{d}x
    \notag\\
    &\qquad+\rho'\int_\Omega (\nabla\mu_m\cdot\nabla)\mathbf{v}_m\cdot\mathbf{w} \,\mathrm{d}x
    -\frac{\rho'}{2}\int_\Gamma (\nabla\mu_m\cdot\mathbf{n})\mathbf{v}_{m,\bm{\tau}}\cdot\mathbf{w}_{\bm{\tau}}\,\mathrm{d}S,
	\label{eqd1}
\end{align}
almost everywhere in $[0,T]$ for all $(\mathbf{w},\mathbf{w}_{\boldsymbol{\tau}})\in \mathcal{V}_{m}$, the equations
\begin{subequations}
    \label{approximating-solution}
    \begin{align}
	&\partial_{t}\varphi_m+\mathbf{v}_m \cdot\nabla\varphi_m=\Delta\mu_m & & \mbox{a.e.~in } Q_{T},
	\label{eqd2} \\
	& \mu_m =\gamma\partial_t \varphi_m-\Delta \varphi _m+f(\varphi _m)+k^{-1} f_\sigma(\varphi_m) & & \mbox{a.e.~in } Q_{T},
	\label{eqd4} \\
    &
    \begin{cases}
        K\partial_\mathbf{n}\varphi_m=\psi_m-\varphi_m,&K\in(0,+\infty)\\
        \partial_\mathbf{n}\varphi_m=0,&K=+\infty
    \end{cases}
    && {\mbox{a.e.~on }
	\Sigma _{T},}
	\label{eqd3''} \\
	&
	L\partial_{\mathbf{n}}\mu_m=\mathcal{L}_m-\mu_m,\quad L\in(0,+\infty)
    &&\mbox{a.e.~on }
	\Sigma _{T},
	\label{eqd3'} \\
	&\partial_t\psi_m+{\mathbf{v}_{m,\bm{\tau}}\cdot\nabla_{\!\bm{\tau}\,}\psi_m}
    =\Delta_{\bm{\tau}}\mathcal{L}_m-\partial_{\mathbf{n}}\mu_m && \mbox{a.e.~on }
	\Sigma _{T},
	\label{eqd3} \\
	& \mathcal{L} _m = \gamma\partial_t \psi_m- \Delta _{\bm{\tau}}\psi_m +\partial _{\mathbf{n}}\varphi_m + g(\psi_m)+k^{-1} f_\sigma(\psi_m)
    && \mbox{a.e.~on }
	\Sigma _{T},
	\label{eqd5}
\end{align}
\end{subequations}
and the initial conditions
\begin{align}
	\mathbf{v}_m|_{t=0}= \mathbb{P}^\Omega_m(\mathbf{v}_0,\mathbf{v}_{0,\boldsymbol{\tau}}) \quad\text{a.e.~in }\Omega,
    \quad(\varphi_m,\psi_m)|_{t=0}=(\varphi_{0,\sigma,k},\psi_{0,\sigma,k})\quad\text{a.e.~in }\Omega\times\Gamma.
    \label{approinitial}
\end{align}

\subsection{Existence of approximate solutions} \label{SECT:EXAP}
We establish the existence of approximate solutions to problem \eqref{eqd1}--\eqref{approinitial} on $[0,T]$ by a fixed-point argument (cf. \cite{Gior22}). To this aim, we fix $( \mathbf{v},\mathbf{v}_{\boldsymbol{\tau}} )\in H^1(0,T;\mathcal{V}_m)$
and consider the following convective viscous Cahn--Hilliard equation with dynamic boundary conditions
\begin{subequations}
\label{0321-eq-1}
    \begin{align}
    	&\partial_{t}\varphi_m+\mathbf{v} \cdot\nabla\varphi_m=\Delta\mu_m
        && \mbox{a.e.~in } Q_{T},\label{0321-eq-1-1}
	 \\
	 &\mu_m =\gamma\partial_t\varphi_m -\Delta \varphi _m+f(\varphi _m)+k^{-1} f_\sigma(\varphi_m)
     && \mbox{a.e.~in } Q_{T},\label{0321-eq-1-2}
	 \\
        &
    \begin{cases}
        K\partial_\mathbf{n}\varphi_m=\psi_m-\varphi_m,&K\in(0,+\infty) \\
        \partial_\mathbf{n}\varphi_m=0,&K=+\infty
    \end{cases}
    && \mbox{a.e.~on } \Sigma _{T},
	\label{0321-eq-1-3} \\
    &	L\partial_{\mathbf{n}}\mu_m=\mathcal{L}_m-\mu_m,\quad L\in(0,+\infty)
    &&\mbox{a.e.~on }
	\Sigma _{T},\label{0321-eq-1-4}
	 \\
	&\partial_t\psi_m+{\mathbf{v}_{\bm{\tau}}\cdot\nabla_{\!\bm{\tau}\,}\psi_m}=\Delta_{\bm{\tau}}\mathcal{L}_m-\partial_{\mathbf{n}}\mu_m
    && \mbox{a.e.~on }
	\Sigma _{T},\label{0321-eq-1-5}
	 \\
	 &\mathcal{L} _m = \gamma\partial_t\psi_m- \Delta _{\bm{\tau}}\psi_m +\partial _{%
		\mathbf{n}}\varphi_m+g(\psi_m)+k^{-1} f_\sigma(\psi_m)
        && \mbox{a.e.~on }
	\Sigma _{T},\label{0321-eq-1-6}
    \end{align}
\end{subequations}
subject to the initial conditions
\begin{align}
	(\varphi_m,\psi_m)|_{t=0}=(\varphi_{0,\sigma,k},\psi_{0,\sigma,k})\quad \text{ in }\Omega\times\Gamma.
    \label{appini}
\end{align}
According to Theorem~\ref{A1}, there exists a unique solution $(\boldsymbol{\varphi}_m,\boldsymbol{\mu}_m)$ to \eqref{0321-eq-1}--\eqref{appini}
satisfying
\begin{align}
	\begin{cases}
       \boldsymbol{\varphi}_m = (\varphi_m,\psi_m)\in L^{\infty}(0,T;\mathcal{H}^{3})\cap \big(C(\overline{Q_{T}})\times C(\Sigma_{T})\big),\\[1mm]
       \partial_t\boldsymbol{\varphi}_m \in L^{\infty}(0,T;\mathcal{H}^{1})\cap L^{2}(0,T;\mathcal{H}^2)\cap H^1(0,T;\mathcal{L}^2),\\[1mm]
       \varphi_m\in L^\infty(Q_T)\;\;\text{with}\;\;
       |\varphi_m(x,t)|\leq (1-\sigma)(1-\widetilde{\delta})\ \ \text{a.e.~in }Q_T,\\[1mm]
       \psi_m\in L^\infty(\Sigma_T)\;\;\text{with}\;\;
       |\psi_m(x,t)|\leq (1-\sigma)(1-\widetilde{\delta})\ \ \text{a.e.~on }\Sigma_T,\\[1mm]
       \boldsymbol{\mu}_m=(\mu_m,\mathcal{L}_m) \in L^{\infty}(0,T;\mathcal{H}^2)\cap H^{1}(0,T;\mathcal{H}^1),
	\end{cases}\label{strong-regu}
\end{align}
for some constant $\widetilde{\delta}= \widetilde{\delta}(\sigma,k,\gamma) \in (0,1)$.
It is straightforward to check that this weak solution also satisfies the energy estimate
\begin{align}
	&\sup_{s\in[0,t]}E^{\sigma,k}_{\text{free}}(\boldsymbol{\varphi}_m(s))
    +\frac{1}{2}\int_0^t \int_\Omega |\nabla\mu_m(s)|^2\,\mathrm{d}x\,\mathrm{d}s
    +\frac{1}{2}\int_0^t \int_\Gamma |\nabla_{\!\bm{\tau}\,}\mathcal{L}_m(s)|^2\,\mathrm{d}S \,\mathrm{d}s
    \notag\\
	&\qquad+\frac{1}{L}\int_{0}^t \int_\Gamma (\mathcal{L}_m(s)-\mu_m(s))^2\,\mathrm{d}S\,\mathrm{d}s
    +\gamma\int_0^t\int_\Omega|\partial_t\varphi_m(s)|^2 \,\mathrm{d}x\,\mathrm{d}s
    +\gamma\int_0^t\int_\Gamma|\partial_t\psi_m(s)|^2 \,\mathrm{d}S\,\mathrm{d}s
    \notag\\
	&\quad\leq E_{\text{free}}^{\sigma,k}(\boldsymbol{\varphi}_{0,\sigma,k})
    +\frac{1}{2}\int_0^T\|\mathbf{v}(s)\|_{\mathbf{L}^2(\Omega)}^2\,\mathrm{d}s
    +\frac{1}{2}\int_0^T\|\mathbf{v}_{\bm{\tau}}(s)\|_{\mathbf{L}^2(\Gamma)}^2\,\mathrm{d}s,
    \label{approenergy}
\end{align}
for all $t\in[0,T]$. Here, the free energy functional $E_{\text{free}}^{\sigma,k}$
associated with system \eqref{0321-eq-1}--\eqref{appini} is given by
\begin{align*}
E_{\text{free}}^{\sigma,k}(\boldsymbol{\varphi}_m)=
E_{\text{free}}(\boldsymbol{\varphi}_m)
+k^{-1}\int_\Omega F_\sigma(\varphi_m)\,\mathrm{d}x
+k^{-1}\int_\Gamma F_\sigma(\psi_m)\,\mathrm{d}S.
\end{align*}
The assumption \ref{ASS:A2} implies that there exists a positive constant $C_\mathrm{b}$
such that $E_{\text{free}}^{\sigma,k}(\boldsymbol{\varphi}_m)\geq -C_\text{b}$.

Next, we look for a spatially discrete solution
$$
\mathbf{v}_m (x,t)=\sum_{i=1}^{m}a_{i}^{m}(t)\widetilde{\mathbf{v}}_i (x),
\quad\text{with}\ \ \ \mathbf{v}_{m,\boldsymbol{\tau}} (x,t)=\sum_{i=1}^{m}a_{i}^{m}(t)\widetilde{\mathbf{w}}_i (x)
=\sum_{i=1}^{m}a_{i}^{m}(t)\widetilde{\mathbf{v}}_{i,\boldsymbol{\tau}} (x)
$$
to the following weak formulation
\begin{align}
	& \int_\Omega \rho(\varphi_m)\partial_{t}\mathbf{v}_{m}\cdot\widetilde{\mathbf{v}}_j\,\mathrm{d}x
    +\int_\Omega 2\alpha \mathbb{D}\partial_t\mathbf{v}_m:\mathbb{D}\widetilde{\mathbf{v}}_j\,\mathrm{d}x\notag\\
    &\qquad+\int_\Omega 2\nu(\varphi_m )\mathbb{D}\mathbf{v}_m:\mathbb{D}\widetilde{\mathbf{v}}_j\,\mathrm{d}x
	+\int_\Gamma \beta(\psi_m )\mathbf{v}_{m,\bm{\tau}}\cdot\widetilde{\mathbf{v}}_{j,\boldsymbol{\tau}}\,\mathrm{d}S
	\notag \\[1mm]
	& \quad =\int_\Omega \mu_m \nabla \varphi_m\cdot\widetilde{\mathbf{v}}_j\,\mathrm{d}x
	-\int_\Gamma  \psi_m\nabla_{\!\bm{\tau}\,}\mathcal{L}_m\cdot\widetilde{\mathbf{v}}_{j,\boldsymbol{\tau}}\,\mathrm{d}S
    -\int_\Omega \rho(\varphi_m)(\mathbf{v}\cdot\nabla)\mathbf{v}_m\cdot\widetilde{\mathbf{v}}_j\,\mathrm{d}x
    \notag\\
    &\qquad+\rho'\int_\Omega (\nabla\mu_m\cdot\nabla)\mathbf{v}_m\cdot\widetilde{\mathbf{v}}_j\,\mathrm{d}x
     -\frac{\rho'}{2}\int_\Gamma (\nabla\mu_m\cdot\mathbf{n})\mathbf{v}_{m,\bm{\tau}}\cdot\widetilde{\mathbf{v}}_{j,\boldsymbol{\tau}}\,\mathrm{d}S
	\label{galerkineq1}
\end{align}
for $j\in\{1,...,m\}$. Moreover, $\mathbf{v}_m$ satisfies the initial condition
$$
\mathbf{v}_m|_{t=0}= \mathbb{P}_m^\Omega(\mathbf{v}_{0}, \mathbf{v}_{0,\boldsymbol{\tau}})\quad\text{a.e. in }\Omega.
$$
Defining $\mathbf{A}^m (t)=(a_{1}^m (t),...,a_m^m (t))^{\top}$, we can rewrite \eqref{galerkineq1} into the system of ordinary differential equations
\begin{align}
     \mathbf{M}^m (t)\frac{\mathrm{d}}{\mathrm{d}t}\mathbf{A}^m(t)
     =\mathbf{L}^m (t)\mathbf{A}^m (t)+\mathbf{G}^m (t),
     \label{ode}
\end{align}
where the matrices $\mathbf{M}^m (t)$, $\mathbf{L}^m (t)$ and the vector $\mathbf{G}^m (t)$ are given by
\begin{align*}
	(\mathbf{M}^m (t))_{i,j}&=\int_\Omega\rho(\varphi_m) \widetilde{\mathbf{v}}_i \cdot \widetilde{\mathbf{v}}_j\,\mathrm{d}x
    +2\alpha\int_\Omega \mathbb{D}\widetilde{\mathbf{v}}_i: \mathbb{D}\widetilde{\mathbf{v}}_j\,\mathrm{d}x,
    \\
	(\mathbf{L}^m (t))_{i,j}&=-\int_\Gamma\beta(\psi_m)\widetilde{\mathbf{v}}_{i,\bm{\tau}}\cdot\widetilde{\mathbf{v}}_{j,\bm{\tau}}\,\mathrm{d}S
    -2\int_\Omega \nu(\varphi_m)\mathbb{D}\widetilde{\mathbf{v}}_i:\mathbb{D}\widetilde{\mathbf{v}}_j\,\mathrm{d}x
    \\
	&\quad-\int_\Omega (\rho(\varphi_m)\mathbf{v}\cdot\nabla)\widetilde{\mathbf{v}}_i\cdot\widetilde{\mathbf{v}}_j\,\mathrm{d}x
    +\rho'\int_\Omega(\nabla\mu_m\cdot\nabla)\widetilde{\mathbf{v}}_i\cdot\widetilde{\mathbf{v}}_j\,\mathrm{d}x
    \\
    &\quad -\frac{\rho'}{2}\int_\Gamma (\nabla\mu_m\cdot\mathbf{n})\widetilde{\mathbf{v}}_{i,\bm{\tau}}\cdot\widetilde{\mathbf{v}}_{j,\bm{\tau}}\,\mathrm{d}S,
    \\
	(\mathbf{G}^{m}(t))_i&=\int_{\Omega}\mu_{m}\nabla\varphi_m\cdot\widetilde{\mathbf{v}}_i\,\mathrm{d}x
    -\int_\Gamma \psi_m\nabla_{\!\bm{\tau}\,}\mathcal{L}_m \cdot\widetilde{\mathbf{v}}_{i,\bm{\tau}}\,\mathrm{d}S,
\end{align*}
and the initial condition reduces to
\begin{align}
\mathbf{A}^m(0)= \big( (\mathbb{P}_m (\mathbf{v}_0,\mathbf{v}_{0,\boldsymbol{\tau}}),(\widetilde{\mathbf{v}}_1,\widetilde{\mathbf{w}}_1) )_{\boldsymbol{\mathcal{{L}}}_{\mathrm{div}}^2},
..., ( (\mathbb{P}_m (\mathbf{v}_0,\mathbf{v}_{0,\boldsymbol{\tau}}),(\widetilde{\mathbf{v}}_m,\widetilde{\mathbf{w}}_m))_{\boldsymbol{\mathcal{{L}}}_{\mathrm{div}}^2}\big)^\top.
\label{ode-ini}
\end{align}
Thanks to \eqref{strong-regu}, it follows that
$\varphi_m\in C([0,T];W^{1,4}(\Omega))$, $\psi_m\in  C([0,T];W^{1,4}(\Gamma))$.
This, in turn, implies that
$$
\rho(\varphi_m),\,\nu(\varphi_m)\in C(\overline{\Omega}\times[0,T]),
\quad \beta(\psi_m)\in C(\Gamma\times[0,T]).
$$
Using \eqref{strong-regu}, we find  $\boldsymbol{\mu}_m \in C([0,T];\mathcal{H}^1)$. Then from the definition of $\mathbf J$ (see \eqref{DEF:JRN}) and \eqref{0321-eq-1-4}, we infer that $\mathbf{J}\cdot\mathbf{n} \in C([0,T];H_\Gamma)$.
We also recall the fact that
$(\mathbf{v},\mathbf{v}_{\boldsymbol{\tau}})\in C([0,T];\mathcal{V}_m)$. Consequently, it follows that $\mathbf{M}^m$ and $\mathbf{L}^m$ belong to $C([0,T];\mathbb{R}^{m\times m})$
and $\mathbf{G}^m \in C([0,T];\mathbb{R}^m)$.
Furthermore, like in \cite[Appendix A]{GT},
we can show that the matrix $\mathbf{M}^m(\cdot)$ is positive definite on $[0,T]$. Indeed, for any fixed $t\in[0,T]$ and a vector $\mathbf{a}\in\mathbb{R}^m\setminus\{\boldsymbol{0}\}$, from \eqref{v-trace} we infer that
 \begin{align*}	\mathbf{a}^\top\mathbf{M}^m(t)\mathbf{a} &=\sum_{i=1}^m\sum_{j=1}^m \Big(\int_\Omega\rho(\varphi_m)a_i\widetilde{\mathbf{v}}_i\cdot a_j\widetilde{\mathbf{v}}_j\,\mathrm{d}x
 +\alpha\int_\Omega a_i \mathbb{D}\widetilde{\mathbf{v}}_i: a_j \mathbb{D}\widetilde{\mathbf{v}}_j\,\mathrm{d}x\Big)
 \\	&=\int_\Omega\rho(\varphi_m(t))|\boldsymbol{\xi}|^2\,\mathrm{d}x+\alpha\int_\Omega|\mathbb{D}\boldsymbol{\xi}|^2\,\mathrm{d}x
 \\	&\geq\rho_2\|\boldsymbol{\xi}\|_{\mathbf{L}^2(\Omega)}^2 +\alpha\|\mathbb{D}\boldsymbol{\xi}\|_{\mathbf{L}^2(\Omega)}^2
 \\
 &\geq C(\rho_2,\alpha,\Omega) \|( \boldsymbol{\xi},\boldsymbol{\xi}_{\boldsymbol{\tau}})\|_{\boldsymbol{\mathcal{L}}^2_{\mathrm{div}}}^2= C(\rho_2,\alpha,\Omega)|\mathbf{a}|_{\mathbb{R}^m}^2>0,
\end{align*}
where $\boldsymbol{\xi}=\sum_{i=1}^m a_i\widetilde{\mathbf{v}}_i$ with $\boldsymbol{\xi}_{\boldsymbol{\tau}}=\sum_{i=1}^m a_i\widetilde{\mathbf{v}}_{i,\boldsymbol{\tau}}$.
As a consequence, $\mathbf{M}^m(t)$ is positive definite on $[0,T]$. By the continuity of $\rho(\varphi_m)$, the inverse matrix further satisfies $(\mathbf{M}^{m})^{-1}\in C([0,T];\mathbb{R}^{m\times m})$.
Hence, by the classical existence and uniqueness theorem for systems of linear ODEs, there exists a unique vector $\mathbf{A}^m \in C^{1}([0,T];\mathbb{R}^m)$ that solves the initial value problem \eqref{ode}--\eqref{ode-ini} on $[0,T]$.
This yields that problem \eqref{galerkineq1} admits a unique solution $(\mathbf{v}_m,\mathbf{v}_{m,\boldsymbol{\tau}}) \in C^1([0,T];\mathcal{V}_m)$.

Multiplying \eqref{galerkineq1} by $a_{j}^m$ and summing over $j$, we find
\begin{align}
&\frac{1}{2}\frac{\mathrm{d}}{\mathrm{d}t}\Big(\int_\Omega \rho(\varphi_m)|\mathbf{v}_m|^2\,\mathrm{d}x
+2\alpha\int_\Omega|\mathbb{D}\mathbf{v}_m|^2\,\mathrm{d}x\Big)
+\int_\Omega 2\nu(\varphi_m)|\mathbb{D}\mathbf{v}_m|^2\,\mathrm{d}x
+\int_\Gamma\beta(\psi_m)|\mathbf{v}_{m,\bm{\tau}}|^2\,\mathrm{d}S
\notag\\
&\quad=-\int_\Omega\varphi_m\nabla\mu_m\cdot\mathbf{v}_m\,\mathrm{d}x
-\int_\Gamma \psi_m\nabla_{\!\bm{\tau}\,}\mathcal{L}_m\cdot\mathbf{v}_{m,\bm{\tau}}\,\mathrm{d}S
-\int_\Omega (\rho(\varphi_m)\mathbf{v}\cdot\nabla)\mathbf{v}_m\cdot\mathbf{v}_m\,\mathrm{d}x
\notag\\
&\qquad+\rho'\int_\Omega(\nabla\mu_m\cdot\nabla)\mathbf{v}_m\cdot\mathbf{v}_m\,\mathrm{d}x
 -\frac{\rho'}{2}\int_\Gamma (\nabla\mu_m\cdot\mathbf{n})|\mathbf{v}_{m,\bm{\tau}}|^2\,\mathrm{d}S
 +\frac{\rho'}{2}\int_\Omega \partial_t\varphi_m|\mathbf{v}_m|^2 \,\mathrm{d}x.
 \notag
\end{align}
Similar to \cite[Formula~(6.18)]{KS}, we further obtain the identity
\begin{align}
&\frac{1}{2}\frac{\mathrm{d}}{\mathrm{d}t}\Big(\int_\Omega \rho(\varphi_m)|\mathbf{v}_m|^2\,\mathrm{d}x
+2\alpha\int_\Omega|\mathbb{D}\mathbf{v}_m|^2\,\mathrm{d}x\Big)
+\int_\Omega 2\nu(\varphi_m)|\mathbb{D}\mathbf{v}_m|^2\,\mathrm{d}x
+\int_\Gamma\beta(\psi_m)|\mathbf{v}_{m,\bm{\tau}}|^2\,\mathrm{d}S
\notag\\
&\quad=-\int_\Omega\varphi_m\nabla\mu_m\cdot\mathbf{v}_m\,\mathrm{d}x
-\int_\Gamma \psi_m\nabla_{\!\bm{\tau}\,}\mathcal{L}_m\cdot\mathbf{v}_{m,\bm{\tau}}\,\mathrm{d}S.
\label{uni-ga-1}
\end{align}
Due to \ref{ASS:A1}, it holds
\begin{align}
	\int_\Omega 2\nu(\varphi_m)|\mathbb{D}\mathbf{v}_m|^2\,\mathrm{d}x
    +\int_\Gamma\beta(\psi_m) |\mathbf{v}_{m,\bm{\tau}}|^2\,\mathrm{d}S
    &\geq 2\nu_\ast \|\mathbb{D}\mathbf{v}_m\|_{\mathbf{L}^2(\Omega)}^2
    +\beta_\ast\|\mathbf{v}_{m,\bm{\tau}} \|_{\mathbf{L}^2(\Gamma)}^{2}.
    \label{uni-ga-4}
\end{align}
For terms on the right-hand side of \eqref{uni-ga-1},
we use the Poincar\'e inequality \eqref{bsP}, Korn's inequality \eqref{Korn}, and Young's inequality to derive the following estimate
\begin{align}
    &-\int_\Omega \varphi_m\nabla\mu_m\cdot\mathbf{v}_m\,\mathrm{d}x
    -\int_\Gamma \psi_m\nabla_{\!\bm{\tau}\,}\mathcal{L}_m\cdot\mathbf{v}_{m,\bm{\tau}}\,\mathrm{d}S
    \notag\\
    &\quad\leq \|\varphi_m\|_{L^\infty(\Omega)}\|\nabla\mu_m\|_{\mathbf{L}^2(\Omega)}
    \|\mathbf{v}_m\|_{\mathbf{L}^2(\Omega)}
    +\|\psi_m\|_{L^\infty(\Gamma)} \|\nabla_{\!\bm{\tau}\,}\mathcal{L}_m\|_{\mathbf{L}^2(\Gamma)}
    \|\mathbf{v}_{m,\bm{\tau}}\|_{\mathbf{L}^2(\Gamma)}
    \notag\\[1mm]
    &\quad\leq \widetilde{C}_\text{P}\|\nabla\mu_m\|_{\mathbf{L}^2(\Omega)}
    (\|\nabla\mathbf{v}_m\|_{\mathbf{L}^2(\Omega)}
    +\|\mathbf{v}_{m,\bm{\tau}}\|_{\mathbf{L}^2(\Gamma)})
    +\|\nabla_{\!\bm{\tau}\,} \mathcal{L}_m\|_{\mathbf{L}^2(\Gamma)}
    \|\mathbf{v}_{m,\bm{\tau}}\|_{\mathbf{L}^2(\Gamma)}
    \notag\\
    &\quad\leq \widetilde{C}_\text{P}\widetilde{C}_\text{K}
        \|\nabla\mu_m\|_{\mathbf{L}^2(\Omega)}
        \|\mathbb{D}\mathbf{v}_m\|_{\mathbf{L}^2(\Omega)}
    \notag\\
    &\qquad
    + \Big(\widetilde{C}_\text{P}\widetilde{C}_\text{K}
        \|\nabla\mu_m\|_{\mathbf{L}^2(\Omega)}
        + \widetilde{C}_\text{P}\|\nabla\mu_m\|_{\mathbf{L}^2(\Omega)}
    +\|\nabla_{\!\bm{\tau}\,} \mathcal{L}_m\|_{\mathbf{L}^2(\Gamma)}
    \Big)
    \|\mathbf{v}_{m,\bm{\tau}}\|_{\mathbf{L}^2(\Gamma)}
    \notag\\
    &\quad\leq \frac{\min\{2\nu_\ast,\beta_\ast\}}{2} \big( \|\mathbb{D}\mathbf{v}_m\|_{\mathbf{L}^2(\Omega)}
    + \|\mathbf{v}_{m,\bm{\tau}}\|_{\mathbf{L}^2(\Gamma)} \big)^2
    + C_* \Big(\|\nabla\mu_m\|_{\mathbf{L}^2(\Omega)}^2
        +\|\nabla_{\!\bm{\tau}\,} \mathcal{L}_m\|_{\mathbf{L}^2(\Gamma)}^2
    \Big)
    \notag\\
    &\quad\leq \nu_* \|\mathbb{D}\mathbf{v}_m\|_{\mathbf{L}^2(\Omega)}^2
    + \frac{\beta_*}{2}\|\mathbf{v}_{m,\bm{\tau}}\|_{\mathbf{L}^2(\Gamma)}^2
    + C_* \Big(\|\nabla\mu_m\|_{\mathbf{L}^2(\Omega)}^2
        +\|\nabla_{\!\bm{\tau}\,} \mathcal{L}_m\|_{\mathbf{L}^2(\Gamma)}^2
    \Big),
    \label{0227-4}
\end{align}
where the constant $C_*>0$ depends only on $\widetilde{C}_\text{P}$, $\widetilde{C}_\text{K}$,
$\nu_*$ and $\beta_*$, but is independent of $m$.
Combining \eqref{uni-ga-1}, \eqref{uni-ga-4} and \eqref{0227-4}, we obtain
\begin{align}
    &\frac{1}{2}\frac{\mathrm{d}}{\mathrm{d}t}\Big(\int_\Omega\rho(\varphi_m)|\mathbf{v}_m|^2\,\mathrm{d}x
    +2\alpha\int_\Omega|\mathbb{D}\mathbf{v}_m|^2 \,\mathrm{d}x\Big)
    +\nu_\ast\|\mathbb{D}\mathbf{v}_m\|_{\mathbf{L}^2(\Omega)}^2
    +\frac{\beta_\ast}{2}\|\mathbf{v}_{m,\bm{\tau}}\|_{\mathbf{L}^2(\Gamma)}^{2}
    \notag\\[1mm]
    &\quad\leq C_* \Big(\|\nabla\mu_m\|_{\mathbf{L}^2(\Omega)}^2
        +\|\nabla_{\!\bm{\tau}\,} \mathcal{L}_m\|_{\mathbf{L}^2(\Gamma)}^2
    \Big).\label{0227-5}
\end{align}
From \eqref{approenergy} and \eqref{0227-5}, we infer that
\begin{align}
    &\sup_{s\in[0,t]}\Big(\int_\Omega \rho(\varphi_m(s))|\mathbf{v}_m(s)|^2\,\mathrm{d}x
    +2\alpha\int_\Omega|\mathbb{D} \mathbf{v}_m(s)|^2\,\mathrm{d}x\Big)
    \notag\\
    &\qquad+2\nu_\ast\int_0^t \|\mathbb{D}\mathbf{v}_m(s)\|_{\mathbf{L}^2(\Omega)}^2 \,\mathrm{d}s
    +\beta_\ast\int_0^t \|\mathbf{v}_{m,\bm{\tau}}(s)\|_{\mathbf{L}^2(\Gamma)}^2\,\mathrm{d}s
    \notag\\
    &\quad\leq \int_\Omega \rho(\varphi_{0,\sigma,k})|\mathbb{P}_m^\Omega(\mathbf{v}_0,\mathbf{v}_{0,\boldsymbol{\tau}})|^2\,\mathrm{d}x
    +2\alpha\int_\Omega|\mathbb{D} \mathbb{P}_m^\Omega(\mathbf{v}_0,\mathbf{v}_{0,\boldsymbol{\tau}})|^2\,\mathrm{d}x
    \notag\\
    &\qquad+4C_*
    \left(C_{\text{b}}+E_{\text{free}}^{\sigma,k}(\boldsymbol{\varphi}_{0,\sigma,k})
    +\int_0^t\|(\mathbf{v}(s),\mathbf{v}_{\boldsymbol{\tau}}(s)\|_{\boldsymbol{\mathcal{L}}^2}^2\,\mathrm{d}s
    \right),\label{uni-ga-5'}
\end{align}
which, together with \eqref{Pm-stability}, implies that
\begin{align}
	\|\mathbf{v}_m(t)\|_{\mathbf{L}^2(\Omega)}^2
    &\leq \frac{\rho_1}{\rho_2}\|(\mathbf{v}_0,\mathbf{v}_{0,\boldsymbol{\tau}})\|_{\boldsymbol{\mathcal{L}}^2}^2
    +\frac{2\alpha}{\rho_2}\|(\mathbf{v}_0,\mathbf{v}_{0,\boldsymbol{\tau}})\|_{\boldsymbol{\mathcal{H}}^1_{0,\mathrm{div}}}^2
    \notag\\
    &\quad+\frac{4C_*}{\rho_2}\Big(C_\text{b}+E_{\text{free}}^{\sigma,k}
    (\boldsymbol{\varphi}_{0,\sigma,k})
    +\|(\mathbf{v},\mathbf{v}_{\boldsymbol{\tau}})\|_{L^2(0,t;\bm{\mathcal{L}}^2_\mathrm{div})}^2\Big),
    \label{0305-1}\\
    \|\mathbb{D}\mathbf{v}_m(t)\|_{\mathbf{L}^2(\Omega)}^2
    &\leq  \frac{\rho_1}{2\alpha}\|(\mathbf{v}_0,\mathbf{v}_{0,\boldsymbol{\tau}})\|_{\boldsymbol{\mathcal{L}}^2}^2
    +\|(\mathbf{v}_0,\mathbf{v}_{0,\boldsymbol{\tau}})\|_{\boldsymbol{\mathcal{H}}^1_{0,\mathrm{div}}}^2
    \notag\\
    &\quad+\frac{2C_*}{\alpha}
    \Big(C_\text{b}+E_{\text{free}}^{\sigma,k}(\boldsymbol{\varphi}_{0,\sigma,k})
    +\|(\mathbf{v}, \mathbf{v}_{\boldsymbol{\tau}})\|_{L^2(0,t;\bm{\mathcal{L}}^2_\mathrm{div})}^2\Big).
    \label{0305-2}
\end{align}
Recalling \eqref{v-trace}, we infer from \eqref{0305-1} and \eqref{0305-2} that
    \begin{align}
     \|\mathbf{v}_{m,\bm{\tau}}(t)\|_{\mathbf{L}^2(\Gamma)}^2&\leq 2\widetilde{C}_0^2(\|\mathbb{D}\mathbf{v}_m(t)\|_{\mathbf{L}^2(\Omega)}^2+\|\mathbf{v}_m(t)\|_{\mathbf{L}^2(\Omega)}^2)
     \notag\\
    &\leq  \widetilde{C}_1\Big(\|(\mathbf{v}_0,\mathbf{v}_{0,\boldsymbol{\tau}})\|_{\boldsymbol{\mathcal{L}}^2}^2
    +\|(\mathbf{v}_0,\mathbf{v}_{0,\boldsymbol{\tau}})\|_{{\boldsymbol{\mathcal{H}}^1_{0,\mathrm{div}}}}^2\Big)
    \notag\\
    &\quad+ \widetilde{C}_2\Big(C_\text{b}+E_{\text{free}}^{\sigma,k}
    (\boldsymbol{\varphi}_{0,\sigma,k})+\|(\mathbf{v},\mathbf{v}_{\boldsymbol{\tau}}) \|_{L^2(0,t;\bm{\mathcal{L}}^2_\mathrm{div})}^2\Big),
    \label{0305-3}
\end{align}
where
\begin{align*}
    &\widetilde{C}_1 \coloneqq 2(\widetilde{C}_0^2+1) \max\Big\{\frac{\rho_1}{\rho_2}+\frac{\rho_1}{2\alpha},\frac{2\alpha}{\rho_2}+1\Big\},\qquad\widetilde{C}_2 \coloneqq 4(\widetilde{C}_0^2+1)C_\ast\Big(\frac{2}{\rho_2}+\frac{1}{\alpha}\Big).
\end{align*}
At this point, we set
\begin{align*}
&C_1=2\widetilde{C}_1\big(\|(\mathbf{v}_0,\mathbf{v}_{0,\boldsymbol{\tau}})\|_{\boldsymbol{\mathcal{L}}^2}^2
    +\|(\mathbf{v}_0,\mathbf{v}_{0,\boldsymbol{\tau}})\|_{{\boldsymbol{\mathcal{H}}^1_{0,\mathrm{div}}}}^2\big)
+2\widetilde{C}_2 \big(C_\text{b}+E_{\text{free}}^{\sigma,k}(\boldsymbol{\varphi}_{0,\sigma,k})\big),
\quad C_2=2\widetilde{C}_2,
\end{align*}
and assuming
\begin{align}
    \|(\mathbf{v},\mathbf{v}_{\boldsymbol{\tau}})\|_{L^2(0,t;\bm{\mathcal{L}}^2_\mathrm{div})}^2\leq C_3 e^{C_2t},\quad\text{for all}\, t\in[0,T],\label{assume1}
\end{align}
where $C_3=C_1 T$. We deduce from \eqref{0305-1}, \eqref{0305-3} and \eqref{assume1} that
\begin{align}
    \int_0^t \|(\mathbf{v}_{m}(s),\mathbf{v}_{m,\boldsymbol{\tau}}(s))\|_{\bm{\mathcal{L}}^2_{\mathrm{div}}}^2\,\mathrm{d}s
    \leq C_3+C_2\int_0^t\int_0^s\|(\mathbf{v},\mathbf{v}_{\boldsymbol{\tau}})\|_{\bm{\mathcal{L}}^2_{\mathrm{div}}}^2\,\mathrm{d}\tau\,\mathrm{d}s
    \leq C_3 e^{C_2t}\label{0305-4}
\end{align}
for all $t\in[0,T]$.
Furthermore, thanks to \eqref{0305-1} and \eqref{0305-2}, we also infer that
\begin{align}
    \sup_{t\in[0,T]}(\|\mathbf{v}_m(t)\|_{\mathbf{L}^2(\Omega)}^2
    +\|\mathbb{D}\mathbf{v}_m(t)\|_{\mathbf{L}^2(\Omega)}^2)
    \leq 2C_1+C_2C_3e^{C_2T} \eqqcolon K_0.
    \label{0305-5}
\end{align}
Next, we estimate the time derivative of $\mathbf{v}_m$.
To this end, multiplying \eqref{galerkineq1} by $\frac{\mathrm{d}}{\mathrm{d}t}a_{j}^m$ and summing over $j$,
we obtain
\begin{align}
&\int_\Omega\rho(\varphi_m)|\partial_t\mathbf{v}_m|^2\,\mathrm{d}x
+2\alpha\int_\Omega|\mathbb{D}\partial_t\mathbf{v}_m|^2\,\mathrm{d}x
\notag\\
&\quad=-2\int_\Omega\nu(\varphi_m)\mathbb{D}\mathbf{v}_m: \mathbb{D}\partial_t \mathbf{v}_m\,\mathrm{d}x
-\int_\Omega \varphi_m\nabla\mu_m \cdot\partial_t\mathbf{v}_m\,\mathrm{d}x-\int_\Omega (\rho(\varphi_m)\mathbf{v}\cdot\nabla)\mathbf{v}_m\cdot\partial_t \mathbf{v}_m\,\mathrm{d}x
\notag\\
&\qquad+\rho'\int_\Omega(\nabla\mu_m\cdot\nabla)\mathbf{v}_m\cdot\partial_t\mathbf{v}_m\,\mathrm{d}x -\frac{\rho'}{2}\int_\Gamma (\nabla\mu_m\cdot\mathbf{n})\mathbf{v}_{m,\bm{\tau}}\cdot\partial_t\mathbf{v}_{m,\bm{\tau}}\,\mathrm{d}S
\notag\\
&\qquad-\int_\Gamma \beta(\psi_m)\mathbf{v}_{m,\bm{\tau}}\cdot\partial_t \mathbf{v}_{m,\bm{\tau}}\,\mathrm{d}S-\int_\Gamma\psi_m\nabla_{\!\bm{\tau}\,}\mathcal{L}_m\cdot\partial_t\mathbf{v}_{m,\bm{\tau}}\,\mathrm{d}S
\notag\\
&\quad \eqqcolon \sum_{j=1}^7 I_j.
\label{uni-ti-1}
\end{align}
By H\"older's inequality and \eqref{inverse-So},
the terms $I_j$ $(1\leq j\leq 5)$ can be estimated as follows:
\begin{align}
I_1
&\leq\frac{\alpha}{4}\int_\Omega|\mathbb{D}\partial_t\mathbf{v}_m|^2\,\mathrm{d}x
+\frac{4(\nu^\ast)^2}{\alpha}\int_\Omega|\mathbb{D}\mathbf{v}_m|^2\,\mathrm{d}x,\notag\\[1ex]
I_2
&\leq \| \varphi_m\|_{L^\infty(\Omega)}\|\nabla\mu_m\|_{\mathbf{L}^2(\Omega)} \|\partial_t\mathbf{v}_m\|_{\mathbf{L}^2(\Omega)}\notag\\
&\leq \frac{\rho_2}{8}\int_\Omega|\partial_t\mathbf{v}_m|^2\,\mathrm{d}x+\frac{2}{\rho_2}\int_\Omega|\nabla\mu_m|^2\,\mathrm{d}x,\notag\\[1ex]
I_3
&\leq \|\rho(\varphi_m)\|_{L^\infty(\Omega)}\|\mathbf{v}\|_{\mathbf{L}^2(\Omega)}
\|\nabla\mathbf{v}_m\|_{\mathbf{L}^6(\Omega)}\|\partial_t\mathbf{v}_m\|_{\mathbf{L}^3(\Omega)}\notag\\
&\leq C_m\|\mathbf{v}\|_{\mathbf{L}^2(\Omega)}\|\mathbf{v}_m\|_{\mathbf{L}^2(\Omega)}
\|\partial_t\mathbf{v}_m\|_{\mathbf{L}^2(\Omega)}\notag\\
&\leq\frac{\rho_2}{8}\int_\Omega|\partial_t\mathbf{v}_m|^2\,\mathrm{d}x
+\frac{2C_m^2}{\rho_2}\|\mathbf{v}\|_{\mathbf{L}^2(\Omega)}^2\|\mathbf{v}_m\|_{\mathbf{L}^2(\Omega)}^2,\notag\\[1ex]
I_4
&\leq\rho'\|\nabla\mu_m\|_{\mathbf{L}^2(\Omega)}
\|\nabla\mathbf{v}_m\|_{\mathbf{L}^6(\Omega)}\|\partial_t\mathbf{v}_m\|_{\mathbf{L}^3(\Omega)}\notag\\
&\leq C_m\|\nabla\mu_m\|_{\mathbf{L}^2(\Omega)}\|\mathbf{v}_m\|_{\mathbf{L}^2(\Omega)}\|\partial_t\mathbf{v}_m\|_{\mathbf{L}^2(\Omega)}\notag\\
&\leq\frac{\rho_2}{8}\int_\Omega|\partial_t\mathbf{v}_m|^2\,\mathrm{d}x
+\frac{2C_m^2}{\rho_2}\|\nabla\mu_m\|_{\mathbf{L}^2(\Omega)}^2\|\mathbf{v}_m\|_{\mathbf{L}^2(\Omega)}^2,
\notag\\[1ex]
I_5
&\leq \frac{\rho'}{2L}\|\mu_m-\mathcal{L}_m\|_{H_\Gamma}\|\mathbf{v}_{m,\bm{\tau}}\|_{\mathbf{L}^2(\Gamma)}
\|\partial_t\mathbf{v}_{m,\bm{\tau}}\|_{\mathbf{L}^2(\Gamma)}\notag\\
&\leq \frac{C_m}{L}\|\mu_m-\mathcal{L}_m\|_{H_\Gamma}\|\mathbf{v}_{m}\|_{\mathbf{L}^2(\Omega)}
\|\partial_t\mathbf{v}_{m}\|_{\mathbf{L}^2(\Omega)}\notag\\
&\leq\frac{\rho_2}{8}\int_\Omega|\partial_t\mathbf{v}_m|^2\,\mathrm{d}x
+\frac{2C_m^2}{\rho_2 L}\|\mu_m-\mathcal{L}_m\|_{H_\Gamma}^2\|\mathbf{v}_{m}\|_{\mathbf{L}^2(\Omega)}^2.\notag
\end{align}
Moreover, using \eqref{v-trace}, the terms $I_6$ and $I_7$ can be estimated as
\begin{align*}
   I_6
&\leq
\beta^\ast\|\mathbf{v}_{m,\bm{\tau}}\|_{\mathbf{L}^2(\Gamma)}
\|\partial_t \mathbf{v}_{m,\bm{\tau}}\|_{\mathbf{L}^2(\Gamma)}\notag\\
&\leq \beta^\ast \widetilde{C}_0\|\mathbf{v}_{m,\bm{\tau}}\|_{\mathbf{L}^2(\Gamma)}
(\|\partial_t\mathbf{v}_{m}\|_{\mathbf{L}^2(\Omega)}+\|\mathbb{D}\partial_t\mathbf{v}_{m}\|_{\mathbf{L}^2(\Omega)})\notag\\
&\leq\frac{\rho_2}{8}\int_\Omega|\partial_t\mathbf{v}_m|^2\,\mathrm{d}x
+\frac{\alpha}{4}\int_\Omega|\mathbb{D}\partial_t\mathbf{v}_m|^2\,\mathrm{d}x
+\frac{(\beta^\ast)^2 \widetilde{C}_0^2(2\alpha+\rho_2)}{\alpha\rho_2}\int_\Gamma|\mathbf{v}_{m,\bm{\tau}}|^2\,\mathrm{d}S,\notag\\[1ex]
I_7
&\leq\|\psi_m\|_{L^\infty(\Gamma)}\|\nabla_{\!\bm{\tau}\,}\mathcal{L}_m\|_{\mathbf{L}^2(\Gamma)}
\|\partial_t\mathbf{v}_{m,\bm{\tau}}\|_{\mathbf{L}^2(\Gamma)}\notag\\
&\leq \widetilde{C}_0\|\psi_m\|_{L^\infty(\Gamma)}\|\nabla_{\!\bm{\tau}\,}\mathcal{L}_m\|_{\mathbf{L}^2(\Gamma)}
(\|\partial_t\mathbf{v}_m\|_{\mathbf{L}^2(\Omega)}+\|\mathbb{D}\partial_t\mathbf{v}_m\|_{\mathbf{L}^2(\Omega)})\notag\\
&\leq \frac{\rho_2}{8}\int_\Omega|\partial_t\mathbf{v}_m|^2\,\mathrm{d}x
+\frac{\alpha}{4}\int_\Omega|\mathbb{D}\partial_t\mathbf{v}_m|^2\,\mathrm{d}x
+\frac{\widetilde{C}_0^2(2\alpha+\rho_2)}{\alpha\rho_2}\int_\Gamma|\nabla_{\!\bm{\tau}\,}\mathcal{L}_m|^2\,\mathrm{d}S.\notag
\end{align*}
Therefore, combining \eqref{uni-ti-1} with the estimates for $I_j$ $(1\leq j\leq7)$, we conclude that
\begin{align}
&\frac{\rho_2}{4}\int_\Omega|\partial_t\mathbf{v}_m|^2\,\mathrm{d}x
+\alpha\int_\Omega|\mathbb{D}\partial_t\mathbf{v}_m|^2\,\mathrm{d}x
\notag\\
&\quad\leq \frac{4(\nu^\ast)^2}{\alpha}\int_\Omega|\mathbb{D}\mathbf{v}_m|^2 \,\mathrm{d}x
+\frac{2}{\rho_2}\int_\Omega|\nabla\mu_m|^2\,\mathrm{d}x
+\frac{2C_m^2}{\rho_2}\|\mathbf{v}\|_{\mathbf{L}^2(\Omega)}^2\|\mathbf{v}_m\|_{\mathbf{L}^2(\Omega)}^2
\notag\\
&\qquad+\frac{2C_m^2}{\rho_2}\|\nabla\mu_m\|_{\mathbf{L}^2(\Omega)}^2\|\mathbf{v}_m\|_{\mathbf{L}^2(\Omega)}^2
+\frac{2C_m^2}{\rho_2 L}\|\mu_m-\mathcal{L}_m\|_{H_\Gamma}^2\|\mathbf{v}_{m}\|_{\mathbf{L}^2(\Omega)}^2
\notag\\
&\qquad+\frac{(\beta^\ast)^2\widetilde{C}_0^2(2\alpha+\rho_2)}{\alpha\rho_2}\int_\Gamma|\mathbf{v}_{m,\bm{\tau}}|^2\,\mathrm{d}S+\frac{\widetilde{C}_0^2(2\alpha+\rho_2)}{\alpha\rho_2}\int_\Gamma|\nabla_{\!\bm{\tau}\,}\mathcal{L}_m|^2\,\mathrm{d}S.
\label{0227-6}
\end{align}
Next, we set $\gamma_0=\min\{\rho_2/4,\alpha\}$.
Then, by \eqref{v-trace}, \eqref{approenergy}, \eqref{uni-ga-5'} and \eqref{0305-5}, we infer from \eqref{0227-6} that
\begin{align}
&\int_0^T(\|\partial_t\mathbf{v}_m(t)\|_{\mathbf{L}^2(\Omega)}^2
+\|\partial_t\mathbf{v}_{m,\bm{\tau}}(t)\|_{\mathbf{L}^2(\Gamma)}^2)\,\mathrm{d}t
\notag\\
&\quad\leq 2(1+\widetilde{C}_0^2)\int_0^T(\|\partial_t\mathbf{v}_m(t)\|_{\mathbf{L}^2(\Omega)}^2
+\|\mathbb{D}\partial_t\mathbf{v}_{m}(t)\|_{\mathbf{L}^2(\Omega)}^2)\,\mathrm{d}t
\notag\\
&\quad\leq  \frac{8(\nu^\ast)^2(1+\widetilde{C}_0^2)}{\alpha\gamma_0}\int_0^T\int_\Omega|\mathbb{D}\mathbf{v}_m(t)|^2\,\mathrm{d}x\,\mathrm{d}t
+\frac{2(\beta^\ast)^2\widetilde{C}_0^2(2\alpha+\rho_2)(1+\widetilde{C}_0^2)}{\alpha\rho_2\gamma_0}
\int_0^T\int_\Gamma|\mathbf{v}_{m,\bm{\tau}}(t)|^2\,\mathrm{d}S\,\mathrm{d}t
\notag\\
&\qquad+\frac{4(1+\widetilde{C}_0^2)}{\rho_2\gamma_0}\int_0^T\int_\Omega|\nabla\mu_m(t)|^2\,\mathrm{d}x\,\mathrm{d}t
+\frac{4\widetilde{C}_0^2(\alpha+\rho_2)(1+\widetilde{C}_0^2)}{\alpha\rho_2\gamma_0}
\int_0^T\int_\Gamma|\nabla_{\!\bm{\tau}\,}\mathcal{L}_m(t)|^2\,\mathrm{d}S\,\mathrm{d}t
\notag\\
&\qquad+\frac{4C_m^2(1+\widetilde{C}_0^2)}{\rho_2\gamma_0}
\int_0^T\|\mathbf{v}(t)\|_{\mathbf{L}^2(\Omega)}^2\|\mathbf{v}_m(t)\|_{\mathbf{L}^2(\Omega)}^2\,\mathrm{d}t
\notag\\
&\qquad+\frac{4C_m^2(1+\widetilde{C}_0^2)}{\rho_2\gamma_0}
\int_0^T\|\nabla\mu_m(t)\|_{\mathbf{L}^2(\Omega)}^2\|\mathbf{v}_m(t)\|_{\mathbf{L}^2(\Omega)}^2\,\mathrm{d}t
\notag\\
&\qquad+\frac{4C_m^2(1+\widetilde{C}_0^2)\chi(L)}{\rho_2\gamma_0}
\int_0^T\|\mu_m(t)-\mathcal{L}_m(t)\|_{H_\Gamma}^2\|\mathbf{v}_{m}(t)\|_{\mathbf{L}^2(\Omega)}^2\,\mathrm{d}t
\notag\\
&\quad\leq  \frac{8(\nu^\ast)^2(1+\widetilde{C}_0^2)TK_0}{\alpha\gamma_0}+\frac{2(\beta^\ast)^2\widetilde{C}_0^4(2\alpha+\rho_2)(1+\widetilde{C}_0^2)T K_0}{\alpha\rho_2\gamma_0}
\notag\\
&\qquad+\Big(\frac{8(1+\widetilde{C}_0^2)}{\rho_2\gamma_0}+\frac{8\widetilde{C}_0^2(2\alpha+\rho_2)(1+\widetilde{C}_0^2)}{\alpha\rho_2\gamma_0}\Big)
\Big(E_{\text{free}}^{\sigma,k}(\bm{\varphi}_{0,\sigma,k})+C_\mathrm{b}+\frac{1}{2}C_3 e^{C_2 T}\Big)
\notag\\
&\qquad+\frac{4C_m^2(1+\widetilde{C}_0^2)}{\rho_2\gamma_0}K_0 C_3 e^{C_2T}
+\frac{8C_m^2(1+\widetilde{C}_0^2)K_0}{\rho_2\gamma_0}\Big(E_{\text{free}}^{\sigma,k}(\bm{\varphi}_{0,\sigma,k})
+C_\mathrm{b}+\frac{1}{2}C_3 e^{C_2 T}\Big)
\notag\\
&\qquad+\frac{4C_m^2(1+\widetilde{C}_0^2)K_0}{\rho_2\gamma_0}
\Big(E_{\text{free}}^{\sigma,k}(\bm{\varphi}_{0,\sigma,k})+C_\mathrm{b}+\frac{1}{2}C_3 e^{C_2 T}\Big)
\notag\\
&\quad \eqqcolon  K_1^2.
\label{0305-6}
\end{align}

We are now in a position to manage the setting for the fixed-point argument.
Define
\begin{align*}
    S=\left\{(\mathbf{v},\mathbf{v}_{\boldsymbol{\tau}})\in H^1(0,T;\mathcal{V}_m)
    \,\middle|\,
    \begin{aligned}
    &\|(\mathbf{v},\mathbf{v}_{\boldsymbol{\tau}})\|_{L^{2}(0,t;\bm{\mathcal{L}}_{\mathrm{div}}^2)}^2\leq C_3 e^{C_2t}
    \;\;\text{for all $t\in[0,T]$},
    \\
    &\|(\partial_t\mathbf{v},\partial_t\mathbf{v}_{\boldsymbol{\tau}})\|_{L^{2}(0,T;\bm{\mathcal{L}}_{\mathrm{div}}^2)}\leq K_{1}
    \end{aligned}
    \right\}.
\end{align*}
Then $S$ is a closed, convex subset of $L^2(0,T;\mathcal{V}_m)$. In addition, since $\mathcal{V}_m$ is finite-dimensional, $S$ is a compact subset of  $L^{2}(0,T;\mathcal{V}_m)$. Next, we define the operator
$$
\Lambda:S\to L^{2}(0,T;\mathcal{V}_m),\quad \Lambda(\mathbf{v},\mathbf{v}_{\boldsymbol{\tau}})=(\mathbf{v}_m,\mathbf{v}_{m,\boldsymbol{\tau}}),
$$
where, for any $(\mathbf{v},\mathbf{v}_{\boldsymbol{\tau}})\in S$, $(\mathbf{v}_m,\mathbf{v}_{m,\boldsymbol{\tau}})$ is the solution to the system \eqref{galerkineq1} constructed above. In light of the estimates \eqref{0305-4} and \eqref{0305-6}, we find that $\Lambda(S) \subseteq S$.

In order to apply Schauder's fixed point theorem,
it remains to prove that $\Lambda$ is continuous.
To this end, we consider a sequence $\{(\mathbf{v}^n, \mathbf{v}^n_{\boldsymbol{\tau}})\}_{n\geq1}\subset S$
such that $(\mathbf{v}^n,\mathbf{v}^n_{\boldsymbol{\tau}})\to(\widetilde{\mathbf{v}},\widetilde{\mathbf{v}}_{\boldsymbol{\tau}})$ in $L^2(0,T;\mathcal{V}_m)$ as $n\to+\infty$.
Arguing as above, we find a sequence $\{(\boldsymbol{\varphi}_m^n,\boldsymbol{\mu}_m^n) = (\varphi^n_m,\psi_{m}^n,\mu^n_m,\mathcal{L}^n_{m})\}_{n\geq1}$
and a pair $(\widetilde{\boldsymbol{\varphi}}_m,\widetilde{\boldsymbol{\mu}}_m)
= (\widetilde{\varphi}_m,\widetilde{\psi}_m, \widetilde{\mu}_m,\widetilde{\mathcal{L}}_m)$,
which solve the convective Cahn--Hilliard system \eqref{0321-eq-1}--\eqref{appini},
where $(\mathbf{v},\mathbf{v}_{\boldsymbol{\tau}})$ is replaced by $(\mathbf{v}^n,\mathbf{v}^n_{\boldsymbol{\tau}}) $ and $(\widetilde{\mathbf{v}}, \widetilde{\mathbf{v}}_{\boldsymbol{\tau}})$, respectively.
By repeating the uniqueness argument developed in the proof of Theorem~\ref{A1}, we have
\begin{align}
    &\frac{1}{2}\frac{\mathrm{d}}{\mathrm{d}t} \Big(\|\bm{\varphi}^n_m-\widetilde{\boldsymbol{\varphi}}_m\|_{\mathcal{H}_{L,0}^{-1}}^2
    +\gamma \|\bm{\varphi}^n_m -\widetilde{\boldsymbol{\varphi}}_m\|_{\mathcal{L}^2}^2\Big)
    \notag\\
    &\qquad+\|\nabla(\varphi^n_m -\widetilde{\varphi}_m)\|_{\mathbf{L}^2(\Omega)}^2
    +\|\nabla_{\!\bm{\tau}\,}(\psi^n_{m}-\widetilde{\psi}_m)\|_{\mathbf{L}^2(\Gamma)}^2
    +\chi(K)\int_\Gamma|\varphi^n_m-\widetilde{\varphi}_m -(\psi^n_{m}-\widetilde{\psi}_m)|^2\,\mathrm{d}S
    \notag\\
    &\quad\leq C_{\mathrm{Lip}} \|\bm{\varphi}^n_m -\widetilde{\boldsymbol{\varphi}}_m\|_{\mathcal{L}^2}^2
    +\int_\Omega \varphi_m^n (\mathbf{v}^n-\widetilde{\mathbf{v}})\cdot\nabla \mathfrak{S}^L_\Omega(\bm{\varphi}^n_m
    -\widetilde{\bm{\varphi}}_m)\,\mathrm{d}x
    +\int_\Omega(\varphi^n_m-\widetilde{\varphi}_m)\widetilde{\mathbf{v}} \cdot\nabla \mathfrak{S}^L_\Omega(\bm{\varphi}^n_m
    -\widetilde{\bm{\varphi}}_m)\,\mathrm{d}x
    \notag\\
    &\qquad +\int_\Gamma \psi^n_{m}(\mathbf{v}^n_{\bm{\tau}}-\widetilde{\mathbf{v}}_{\bm{\tau}})
    \cdot\nabla_{\!\bm{\tau}\,} \mathfrak{S}_\Gamma^L (\bm{\varphi}^n_m -\widetilde{\bm{\varphi}}_m) \,\mathrm{d}S
    +\int_\Gamma (\psi^n_{m}-\widetilde{\psi}_m) \widetilde{\mathbf{v}}_{\bm{\tau}}
    \cdot\nabla_{\!\bm{\tau}\,} \mathfrak{S}_\Gamma^L (\bm{\varphi}^n_m-\widetilde{\bm{\varphi}}_m) \,\mathrm{d}S.
    \notag
\end{align}
Since $(\mathbf{v}^n,\mathbf{v}^n_{\boldsymbol{\tau}})$, $(\widetilde{\mathbf{v}},\widetilde{\mathbf{v}}_{\boldsymbol{\tau}}) \in S$, we can deduce that
\begin{align*}
    &\frac{1}{2}\frac{\mathrm{d}}{\mathrm{d}t}\mathcal{Q}
    +\frac{1}{2}\|\nabla(\varphi^n_m
    -\widetilde{\varphi}_m)\|_{\mathbf{L}^2(\Omega)}^2
    +\frac{1}{2}\|\nabla_{\!\bm{\tau}\,}(\psi^n_{m}
    -\widetilde{\psi}_m)\|_{\mathbf{L}^2(\Gamma)}^2
    +\frac{\chi(K)}{2}\int_\Gamma|\varphi^n_m
    -\widetilde{\varphi}_m -(\psi^n_{m}
    -\widetilde{\psi}_m)|^2 \,\mathrm{d}S
     \\
    &\quad\leq C\mathcal{Q}(t)
    +\|\mathbf{v}^n
    -\widetilde{\mathbf{v}}\|_{\mathbf{L}^2(\Omega)}^2
    +\|\mathbf{v}^n_{\bm{\tau}}-\widetilde{\mathbf{v}}_{\bm{\tau}}\|_{\mathbf{L}^2(\Gamma)}^2\quad \text{a.e.~in}\ (0,T),
\end{align*}
 where
$
\mathcal{Q}(t) \coloneqq
\|\bm{\varphi}^n_m(t) -\widetilde{\boldsymbol{\varphi}}_m(t)\|_{\mathcal{H}_{L,0}^{-1}}^2
+\gamma\|\bm{\varphi}^n_m(t)-\widetilde{\boldsymbol{\varphi}}_m(t)\|_{\mathcal{L}^2}^2
$. Observing that $\bm{\varphi}^n_m(0)=\widetilde{\bm{\varphi}}_m(0)$, applying Gronwall's lemma, we obtain
\begin{align}
    &\|\bm{\varphi}^n_m-\widetilde{\boldsymbol{\varphi}}_m\|_{L^\infty(0,T;\mathcal{L}^2)\cap L^2(0,T;\mathcal{H}^1)}
    \notag\\
    &\quad\leq e^{CT}\int_0^T(\|\mathbf{v}^n(t)-\widetilde{\mathbf{v}}(t)\|_{\mathbf{L}^2(\Omega)}^2
    +\|\mathbf{v}^n_{\bm{\tau}}(t)-\widetilde{\mathbf{v}}_{\bm{\tau}}(t)\|_{\mathbf{L}^2(\Gamma)}^2)\,\mathrm{d}t \to 0
    \quad \text{as}\ n\to+\infty.
    \label{0305-11}
\end{align}
Furthermore,
we can use the continuous embedding
$H^1(0,T;\mathcal{V}_m)\hookrightarrow C([0,T];\mathcal{V}_m)$,
the properties of $\boldsymbol{\varphi}_{0,\sigma,k}\,$ presented in Proposition~\ref{approximate-initial-datum}, and Theorem~\ref{A1} to deduce that
\begin{subequations}
\begin{align}
	& \|\partial_t\boldsymbol{\varphi}^n_m\|_{L^{\infty}(0,T;\mathcal{H}^{1})}
    +\|\partial_t\widetilde{\boldsymbol{\varphi}}_m \|_{L^{\infty}(0,T;\mathcal{H}^{1})}
    \leq C,\label{continuous2}
    \\
	& \|\partial_t\boldsymbol{\varphi}^n_m\|_{L^{2}(0,T;\mathcal{H}^{2})}
    +\|\partial_t\widetilde{\boldsymbol{\varphi}}_m \|_{L^{2}(0,T;\mathcal{H}^{2})}
    \leq C,\label{continuous3}
    \\
	& \|\boldsymbol{\mu}^n_m\|_{L^\infty (0,T;\mathcal{H}^2)}
    +\|\widetilde{\boldsymbol{\mu}}_m\|_{L^\infty (0,T;\mathcal{H}^2)}
    \leq C,\label{0305-12}
    \\
	& \|\boldsymbol{\varphi}^n_m\|_{L^{\infty}(0,T;\mathcal{H}^{3})}
    +\|\widetilde{\boldsymbol{\varphi}}_m\|_{L^{\infty}(0,T;\mathcal{H}^{3})}
    \leq C,\label{0305-13}
    \\
	& \|\partial_t\bm{\mu}^n_m\|_{L^2(0,T;\mathcal{H}^1)}
    +\|\partial_t\widetilde{\bm{\mu}}_m \|_{L^2(0,T;\mathcal{H}^1)}
    \leq C ,\label{0305-14}
    \\
    & \|\bm{\varphi}^n_m\|_{\mathcal{L}^\infty}
    \leq (1-\sigma)(1-\delta^\ast),
    \label{0305-15}
    \\
    & \|\widetilde{\bm{\varphi}}_m\|_{\mathcal{L}^\infty}
    \leq (1-\sigma)(1-\delta^\ast),
    \label{0305-16}
\end{align}
\end{subequations}
for some generic constant $C>0$ and $\delta^\ast\in(0,1)$. Taking a closer look at the proofs of Proposition~\ref{approximate-initial-datum} and Theorem~\ref{A1}, we find that $C$ and $\delta^*$ can be chosen independent of $n$.
By interpolation, we deduce from \eqref{0305-11} and \eqref{0305-13} that
\begin{align}
		\|\boldsymbol{\varphi}^n_m-\widetilde{\boldsymbol{\varphi}}_m\|_{L^{\infty}(0,T;\mathcal{H}^2)}
        \to0
        \quad\text{as }n\to+\infty.
        \label{conti11}
\end{align}
Next, by \eqref{0305-12}, \eqref{0305-14} and the Aubin--Lions--Simon lemma,
there exists a subsequence $\{\boldsymbol{\mu}^{n_j}_m\}_{j\geq1}$
such that
$\boldsymbol{\mu}^{n_j}_m-\widetilde{\boldsymbol{\mu}}_m\to \boldsymbol{\mu}_m^\ast$ in $L^\infty(0,T;\mathcal{H}^1)$.
Consider the equations
\begin{align}
    \label{SYS:THEPSI}
    \left\{
    \begin{aligned}
        \mu_m^n-\widetilde{\mu}_m
        &=
        \gamma\partial_t(\varphi^n_m-\widetilde{\varphi}_m)
        -\Delta(\varphi^n_m-\widetilde{\varphi}_m)
        \\
        &\quad+f(\varphi^n_m)-f(\widetilde{\varphi}_m)
        +k^{-1} f_\sigma(\varphi^n_m)-k^{-1} f_\sigma(\widetilde{\varphi}_m)
        &&\quad\text{a.e.~in }Q_T,
        \\[1ex]
        \mathcal{L}^n_{m}-\widetilde{\mathcal{L}}_m
        &=\gamma\partial_t(\psi^n_{m}-\widetilde{\psi}_m)
        -\Delta_{\boldsymbol{\tau}}(\psi^n_{m}-\widetilde{\psi}_m)
        +\partial_\mathbf{n}(\varphi^n_m-\widetilde{\varphi}_m)
        \\
        &\quad+g(\psi^n_{m})-g(\widetilde{\psi}_m)
        +k^{-1} f_\sigma(\psi^n_{m})-k^{-1} f_\sigma(\widetilde{\psi}_m)
        &&\quad\text{a.e.~on }\Sigma_T.
    \end{aligned}
    \right.
\end{align}
By \eqref{0305-15}, \eqref{0305-16} and \eqref{conti11}, we obtain
\begin{alignat*}{2}
    \|f(\varphi^n_m)-f(\widetilde{\varphi}_m)\|_{L^\infty(0,T;H)}
    +\|g(\psi^n_{m})-g(\widetilde{\psi}_m)\|_{L^\infty(0,T;H_\Gamma)}
    &\to 0
    &&\quad\text{as }\ n\to+\infty,\\
    k^{-1}\|f_\sigma(\varphi^n_m)-f_\sigma(\widetilde{\varphi}_m)\|_{L^\infty(0,T;H)}
    +k^{-1}\|f_\sigma(\psi^n_{m})-f_\sigma(\widetilde{\psi}_m)\|_{L^\infty(0,T;H_\Gamma)}
    &\to0
    &&\quad\text{as }\ n\to+\infty.
\end{alignat*}
Moreover, it follows from \eqref{0305-11}, \eqref{continuous2} and \eqref{continuous3}
that (up to a subsequence)
$\partial_t(\boldsymbol{\varphi}^{n_j}_m-\widetilde{\boldsymbol{\varphi}}_m)\to0$
weakly in $L^2(0,T;\mathcal{H}^2)$.
Combining these convergence results, we can pass to the limit as $n\to+\infty$ on the right-hand side of \eqref{SYS:THEPSI} to deduce that $\boldsymbol{\mu}_m^\ast=(0,0)$.
Finally, due to the uniqueness of the (weak) limit, we arrive at
\begin{align}	
\|\boldsymbol{\mu}^n_m-\widetilde{\boldsymbol{\mu}}_m\|_{L^{\infty}(0,T;\mathcal{H}^1)}
\to0\quad\text{as }n\to+\infty.\label{conti12}
\end{align}


Define  $(\mathbf{v}^n_m,\mathbf{v}^n_{m,\boldsymbol{\tau}})\coloneqq \Lambda(\mathbf{v}^n,\mathbf{v}^n_{\boldsymbol{\tau}})\in S$ for any $n\in\mathbb{N}$
and $(\widetilde{\mathbf{v}}_m,\widetilde{\mathbf{v}}_{m,\boldsymbol{\tau}})\coloneqq \Lambda(\widetilde{\mathbf{v}},\widetilde{\mathbf{v}}_{\boldsymbol{\tau}})\in S$.
Consider the differences
$$\widehat{\mathbf{v}} \coloneqq \mathbf{v}^n-\widetilde{\mathbf{v}},\ \ \widehat{\mathbf{v}}_m\coloneqq \mathbf{v}^n_m-\widetilde{\mathbf{v}}_m, \ \
\widehat{\boldsymbol{\varphi}}_m=(\widehat{\varphi}_m,\widehat{\psi}_m) \coloneqq \boldsymbol{\varphi}^n_m-\widetilde{\boldsymbol{\varphi}}_m,\ \
\widehat{\boldsymbol{\mu}}_m = (\widehat{\mu}_m,\widehat{\mathcal{L}}_m) \coloneqq \boldsymbol{\mu}^n_m-\widetilde{\boldsymbol{\mu}}_m.
$$
We find that
\begin{align}
	&\int_\Omega \rho(\varphi^n_m)\partial_t \widehat{\mathbf{v}}_m\cdot\mathbf{w}\,\mathrm{d}x
    +\int_\Omega 2\alpha\mathbb{D}\partial_t\widehat{\mathbf{v}}_m:\mathbb{D}\mathbf{w} \,\mathrm{d}x
    +\int_\Omega 2\nu(\varphi^n_m)\mathbb{D}\widehat{\mathbf{v}}_m:\mathbb{D}\mathbf{w} \,\mathrm{d}x
    +\int_\Gamma \beta(\psi^n_{m})\widehat{\mathbf{v}}_{m,\bm{\tau}}\cdot\mathbf{w}_{\bm{\tau}} \,\mathrm{d}S
    \notag\\
    &\quad=-\int_\Omega 2(\nu(\varphi^n_m)-\nu(\widetilde{\varphi}_m))\mathbb{D} \widetilde{\mathbf{v}}_m:\mathbb{D}\mathbf{w} \,\mathrm{d}x
    -\int_\Gamma (\beta(\psi^n_{m})-\beta(\widetilde{\psi}_m))\widetilde{\mathbf{v}}_{m,\bm{\tau}}\cdot\mathbf{w}_{\bm{\tau}} \,\mathrm{d}S
    \notag\\
    &\qquad-\int_\Omega (\rho(\varphi^n_m)-\rho(\widetilde{\varphi}_m)) \partial_t\widetilde{\mathbf{v}}_m \cdot\mathbf{w} \,\mathrm{d}x
    -\int_\Omega \widehat{\varphi}_m \nabla\mu^n_m\cdot \mathbf{w} \,\mathrm{d}x
    -\int_\Omega \widetilde{\varphi}_m \nabla\widehat{\mu}_m \cdot\mathbf{w} \,\mathrm{d}x
    \notag\\
    &\qquad-\int_\Gamma \widehat{\psi}_m \nabla_{\!\bm{\tau}\,}\mathcal{L}^n_{m}\cdot\mathbf{w}_{\bm{\tau}} \,\mathrm{d}S
    -\int_\Gamma \widetilde{\psi}_m \nabla_{\!\bm{\tau}\,} \widehat{\mathcal{L}}_m\cdot\mathbf{w}_{\bm{\tau}} \,\mathrm{d}S
    \notag\\
	&\qquad-\int_\Omega \rho(\varphi^n_m)(\mathbf{v}^n\cdot\nabla)\mathbf{v}^n_m\cdot\mathbf{w} \,\mathrm{d}x
    +\int_\Omega \rho(\widetilde{\varphi}_m)(\widetilde{\mathbf{v}}\cdot\nabla) \widetilde{\mathbf{v}}_m\cdot\mathbf{w} \,\mathrm{d}x
    \notag\\
    &\qquad+\rho'\int_\Omega (\nabla\widehat{\mu}_m \cdot\nabla)\mathbf{v}^n_m \cdot\mathbf{w} \,\mathrm{d}x
    +\rho'\int_\Omega (\nabla\widetilde{\mu}_m\cdot\nabla)\widehat{\mathbf{v}}_m\cdot\mathbf{w} \,\mathrm{d}x
    \notag\\
    &\qquad-\frac{\rho'}{2}\int_\Gamma (\nabla\widehat{\mu}_m \cdot\mathbf{n})\mathbf{v}^n_{m,\bm{\tau}}\cdot \mathbf{w}_{\bm{\tau}} \,\mathrm{d}S
    -\frac{\rho'}{2}\int_\Gamma (\nabla\widetilde{\mu}_m\cdot\mathbf{n})\widehat{\mathbf{v}}_{m,\bm{\tau}}\cdot\mathbf{w}_{\bm{\tau}} \,\mathrm{d}S,
    \label{d-1}
\end{align}
for all $(\mathbf{w},\mathbf{w}_{\boldsymbol{\tau}})\in \mathcal{V}_m$ and for all $t\in[0,T]$.
Taking $(\mathbf{w},\mathbf{w}_{\boldsymbol{\tau}})=(\widehat{\mathbf{v}}_m,\widehat{\mathbf{v}}_{m,\boldsymbol{\tau}})$ in \eqref{d-1}, we obtain
\begin{align}
	&\frac{1}{2}\frac{\mathrm{d}}{\mathrm{d}t} \Big(\int_\Omega \rho(\varphi^n_m)|\widehat{\mathbf{v}}_m|^2 \,\mathrm{d}x
    +2\alpha\int_\Omega |\mathbb{D}\widehat{\mathbf{v}}_m|^2\,\mathrm{d}x\Big)
    +2\int_\Omega \nu(\varphi^n_m)|\mathbb{D}\widehat{\mathbf{v}}_m|^2\,\mathrm{d}x
    +\int_\Gamma \beta(\psi^n_{m})|\widehat{\mathbf{v}}_{m,\bm{\tau}}|^2\,\mathrm{d}S\notag\\
	&\quad =-2\int_\Omega(\nu(\varphi^n_m)-\nu(\widetilde{\varphi}_m))\mathbb{D}\widetilde{\mathbf{v}}_m:\mathbb{D}\widehat{\mathbf{v}}_m\,\mathrm{d}x
    -\int_\Gamma (\beta(\psi^n_{m})-\beta(\widetilde{\psi}_m))\widetilde{\mathbf{v}}_{m,\bm{\tau}}\cdot\widehat{\mathbf{v}}_{m,\bm{\tau}}\,\mathrm{d}S
    \notag\\
	&\qquad-\int_\Omega\widehat{\varphi}_m \nabla\mu_m^n\cdot\widehat{\mathbf{v}}_m \,\mathrm{d}x
    -\int_\Omega\widetilde{\varphi}_m\nabla\widehat{\mu}_m \cdot\widehat{\mathbf{v}}_m \,\mathrm{d}x
    -\int_\Gamma \widehat{\psi}_m \nabla_{\!\bm{\tau}\,}\mathcal{L}^n_{m}\cdot \widehat{\mathbf{v}}_{m,\bm{\tau}}\,\mathrm{d}S
    -\int_\Gamma \widetilde{\psi}_m \nabla_{\!\bm{\tau}\,}\widehat{\mathcal{L}}_m \cdot\widehat{\mathbf{v}}_{m,\bm{\tau}} \,\mathrm{d}S
    \notag\\
	&\qquad
    +\rho' \int_\Omega(\nabla\widehat{\mu}_m \cdot\nabla)\mathbf{v}^n_m\cdot \widehat{\mathbf{v}}_m \,\mathrm{d}x
    +\rho'\int_\Omega(\nabla\widetilde{\mu}_m \cdot\nabla)\widehat{\mathbf{v}}_m \cdot\widehat{\mathbf{v}}_m \,\mathrm{d}x
    \notag \\
    &\qquad
    -\frac{\rho'}{2} \int_\Gamma\partial_{\mathbf{n}}\widehat{\mu}_m\,\mathbf{v}^n_{m,\bm{\tau}} \cdot\widehat{\mathbf{v}}_{m,\bm{\tau}} \,\mathrm{d}S
    -\frac{\rho'}{2}\int_\Gamma\partial_{\mathbf{n}}\widetilde{\mu}_m\,|\widehat{\mathbf{v}}_{m,\bm{\tau}}|^2 \,\mathrm{d}S
    \notag\\
    &\qquad
    -\rho'\int_\Omega \widehat{\varphi}_m \partial_t\widetilde{\mathbf{v}}_m \cdot\widehat{\mathbf{v}}_m \,\mathrm{d}x
    +\frac{\rho'}{2}\int_\Omega \partial_t\varphi^n_m|\widehat{\mathbf{v}}_m|^2 \,\mathrm{d}x
    \notag\\
    &\qquad-\int_\Omega(\rho(\varphi^n_m)\mathbf{v}^n\cdot\nabla)\mathbf{v}^n_m\cdot\widehat{\mathbf{v}}_m\,\mathrm{d}x
    +\int_\Omega (\rho(\widetilde{\varphi}_m) \widetilde{\mathbf{v}} \cdot\nabla)\widetilde{\mathbf{v}}_m \cdot\widehat{\mathbf{v}}_m\,\mathrm{d}x
    \notag\\
    &\quad \eqqcolon \sum_{k=1}^{14}J_k.
    \label{difference}
\end{align}
Now, we estimate the terms on the right-hand side of \eqref{difference}. Recalling that $\rho(s)\geq \rho_2$ for all $s\in[-1,1]$ and $\alpha>0$, we use H\"older's inequality, \eqref{v-trace} and \eqref{Korn-cor}, to deduce that
\begin{align}
	J_1
    &\leq \frac{\nu_\ast}{2}\|\mathbb{D}\widehat{\mathbf{v}}_m\|_{\mathbf{L}^2(\Omega)}^2
    +C\|\widehat{\varphi}_m\|_{L^\infty(\Omega)}^2 \|\mathbb{D}\widetilde{\mathbf{v}}_m\|_{\mathbf{L}^2(\Omega)}^2,\notag\\
	J_2
    &\leq \frac{\beta_\ast}{8}\|\widehat{\mathbf{v}}_{m,\bm{\tau}}\|_{\mathbf{L}^2(\Gamma)}^2
    +C\|\widehat{\psi}_m\|_{L^\infty(\Gamma)}^2 \|\widetilde{\mathbf{v}}_{m,\bm{\tau}}\|_{\mathbf{L}^2(\Gamma)}^2,
    \notag\\
	J_3
    &\leq\|\sqrt{\rho(\varphi^n_m)} \,\widehat{\mathbf{v}}_m \|_{\mathbf{L}^2(\Omega)}^2
    +\frac{1}{4\rho_2} \|\widehat{\varphi}_m\|_{L^\infty(\Omega)}^2 \|\nabla\mu^n_m\|_{\mathbf{L}^2(\Omega)}^2,
    \notag\\
	J_4
    &=\int_\Omega \widehat{\mu}_m \nabla\widetilde{\varphi}_m\cdot\widehat{\mathbf{v}}_m \,\mathrm{d}x
    \leq\|\sqrt{\rho(\varphi^n_m)} \,\widehat{\mathbf{v}}_m \|_{\mathbf{L}^2(\Omega)}^2
    +\frac{1}{4\rho_2}\|\widehat{\mu}_m\|_{H}^2\|\widetilde{\varphi}_m\|_{H^3(\Omega)}^2,
    \notag\\
	J_5
    &\leq \frac{\beta_\ast}{8}\|\widehat{\mathbf{v}}_{m,\bm{\tau}}\|_{\mathbf{L}^2(\Gamma)}^2
    +\frac{2}{\beta_\ast}\|\widehat{\psi}_m\|_{L^\infty(\Gamma)}^2\|\nabla_{\!\bm{\tau}\,}\mathcal{L}^n_{m}\|_{\mathbf{L}^2(\Gamma)}^2,
    \notag\\
    J_6
    &=\int_\Gamma \widehat{\mathcal{L}}_m \nabla_{\!\bm{\tau}\,}\widetilde{\psi}_m\cdot \widehat{\mathbf{v}}_{m,\bm{\tau}} \,\mathrm{d}S
    \leq \frac{\beta_\ast}{8}\|\widehat{\mathbf{v}}_{m,\bm{\tau}}\|_{\mathbf{L}^2(\Gamma)}^2
    +\frac{2}{\beta_\ast}\|\widehat{\mathcal{L}}_m\|_{H_\Gamma}^2\|\widetilde{\psi}_m\|_{H^3(\Gamma)}^2,\notag\\
    J_7
    &\leq C\|\nabla\widehat{\mu}_m\|_{\mathbf{L}^2(\Omega)} \|\nabla\mathbf{v}^n_m\|_{\mathbf{L}^6(\Omega)} \|\widehat{\mathbf{v}}_m\|_{\mathbf{L}^3(\Omega)} \notag\\
    &\leq C_m\|\widehat{\mu}_m\|_{V} \|\widehat{\mathbf{v}}_m\|_{\mathbf{H}^1(\Omega)} \notag\\
    &\leq C_m(\|\sqrt{\rho(\varphi^n_m)}\widehat{\mathbf{v}}_m\|_{\mathbf{L}^2(\Omega)}^2
    +2\alpha\|\mathbb{D}\widehat{\mathbf{v}}_m \|_{\mathbf{L}^2(\Omega)}^2) +C_m\|\widehat{\mu}_m\|_{V}^2,
    \notag\\
    J_8
    &\leq C\|\nabla \widetilde{\mu}_m\|_{\mathbf{L}^6(\Omega)} \|\nabla\widehat{\mathbf{v}}_m\|_{\mathbf{L}^2(\Omega)}\|\widehat{\mathbf{v}}_m\|_{\mathbf{L}^3(\Omega)} \leq C\|\widehat{\mathbf{v}}_m\|_{\mathbf{H}^1(\Omega)}^2\notag\\
    &\leq C(\|\sqrt{\rho(\varphi^n_m)}\widehat{\mathbf{v}}_m\|_{\mathbf{L}^2(\Omega)}^2
    +2\alpha\|\mathbb{D}\widehat{\mathbf{v}}_m \|_{\mathbf{L}^2(\Omega)}^2),
    \notag\\
    J_9
    &\leq\frac{\beta_\ast}{8}\|\widehat{\mathbf{v}}_{m,\bm{\tau}}\|_{\mathbf{L}^2(\Gamma)}^2
    +\frac{C_m}{L}\|\widehat{\mu}_m-\widehat{\mathcal{L}}_m\|_{H_\Gamma}^2,
    \notag\\
    J_{10}&{=-\frac{\rho'}{2L}\int_\Gamma(\widetilde{\mathcal{L}}_m-\widetilde{\mu}_m)|\widehat{\mathbf{v}}_{m,\boldsymbol{\tau}}|^2\,\mathrm{d}S}
    \notag \\
    & \leq C(\|\widetilde{\mathcal{L}}_m\|_{H_\Gamma} + \|\widetilde{\mu}_m\|_{H_\Gamma)})   \|\widehat{\mathbf{v}}_{m,\boldsymbol{\tau}}\|_{\mathbf{L}^4(\Gamma)}^2
    \notag\\
    & \leq C(\|\sqrt{\rho(\varphi^n_m)}\widehat{\mathbf{v}}_m\|_{\mathbf{L}^2(\Omega)}^2
    +2\alpha\|\mathbb{D}\widehat{\mathbf{v}}_m \|_{\mathbf{L}^2(\Omega)}^2),
    \notag\\
    J_{11}
    &\leq C\|\sqrt{\rho(\varphi^n_m)}\widehat{\mathbf{v}}_m\|_{\mathbf{L}^2(\Omega)}^2
    +C\|\partial_t\widetilde{\mathbf{v}}_m \|_{\mathbf{L}^2(\Omega)}^2 \|\widehat{\varphi}_m\|_{H^2(\Omega)}^2,
    \notag\\
    J_{12}
    &\leq C\|\partial_t\varphi^n_m\|_{L^6(\Omega)}\|\widehat{\mathbf{v}}_m \|_{\mathbf{L}^2(\Omega)} \|\widehat{\mathbf{v}}_m \|_{\mathbf{L}^3(\Omega)}
    \leq C\|\widehat{\mathbf{v}}_m \|_{\mathbf{H}^1(\Omega)}^2
    \notag\\
   &\leq C(\|\sqrt{\rho(\varphi^n_m)}\widehat{\mathbf{v}}_m\|_{\mathbf{L}^2(\Omega)}^2
   +2\alpha\|\mathbb{D}\widehat{\mathbf{v}}_m \|_{\mathbf{L}^2(\Omega)}^2).
   \notag
\end{align}
Noticing that $\mathbf{v}^n$, $\widetilde{\mathbf{v}}$, $\mathbf{v}^n_m$, $\widetilde{\mathbf{v}}_m\in S$, we find that
\begin{align*}
    J_{13}+J_{14}
    &=-\frac{\rho_1-\rho_2}{2}\int_\Omega\widehat{\varphi}_m((\mathbf{v}^n\cdot\nabla) \mathbf{v}^n_m) \cdot \widehat{\mathbf{v}}_m \,\mathrm{d}x
    -\int_\Omega\rho(\widetilde{\varphi}_m)((\widehat{\mathbf{v}}\cdot\nabla)\mathbf{v}^n_m) \cdot\widehat{\mathbf{v}}_m \,\mathrm{d}x
    \\
    &\quad-\int_\Omega\rho(\widetilde{\varphi}_m)((\widetilde{\mathbf{v}}\cdot\nabla)\widehat{\mathbf{v}}_m)\cdot\widehat{\mathbf{v}}_m
    \,\mathrm{d}x
    \\
    &\leq C\|\widehat{\varphi}_m\|_{L^\infty(\Omega)} \|\mathbf{v}^n\|_{\mathbf{L}^\infty(\Omega)} \|\nabla\mathbf{v}^n\|_{\mathbf{L}^2(\Omega)} \|\widehat{\mathbf{v}}_m\|_{\mathbf{L}^2(\Omega)}
    \notag \\
    &\quad +C\|\widehat{\mathbf{v}}\|_{\mathbf{L}^2(\Omega)} \|\nabla\mathbf{v}^n_m\|_{\mathbf{L}^3(\Omega)} \|\widehat{\mathbf{v}}_m\|_{\mathbf{L}^6(\Omega)}
    \\[1mm]
    &\quad+C\|\widetilde{\mathbf{v}} \|_{\mathbf{L}^\infty(\Omega)} \|\nabla\widehat{\mathbf{v}}_m\|_{\mathbf{L}^2(\Omega)}\|\widehat{\mathbf{v}}_m\|_{\mathbf{L}^2(\Omega)}\\[1mm]
    &\leq C_m\|\widehat{\mathbf{v}}_m\|_{\mathbf{H}^1(\Omega)}^2
    +C_m(\|\widehat{\varphi}_m\|_{H^2(\Omega)}^2 +\|\widehat{\mathbf{v}}\|_{\mathbf{L}^2(\Omega)}^2)
    \\
     &\leq C_m(\|\sqrt{\rho(\varphi^n_m)}\widehat{\mathbf{v}}_m\|_{\mathbf{L}^2(\Omega)}^2
     +2\alpha\|\mathbb{D}\widehat{\mathbf{v}}_m \|_{\mathbf{L}^2(\Omega)}^2)
     +C_m(\|\widehat{\varphi}_m\|_{H^2(\Omega)}^2 +\|\widehat{\mathbf{v}} \|_{\mathbf{L}^2(\Omega)}^2).
\end{align*}
Collecting the above estimates, we obtain
\begin{align}
	& \frac{\mathrm{d}}{\mathrm{d}t} \Big(\int_\Omega \rho(\varphi^n_m)|\widehat{\mathbf{v}}_m|^2\,\mathrm{d}x
    +2\alpha\int_\Omega |\mathbb{D}\widehat{\mathbf{v}}_m|^2\,\mathrm{d}x\Big)
    \notag \\
    &\quad
    \leq C_m\Big(\int_\Omega \rho(\varphi^n_m)|\widehat{\mathbf{v}}_m|^2\,\mathrm{d}x
    +2\alpha\int_\Omega |\mathbb{D}\widehat{\mathbf{v}}_m|^2\,\mathrm{d}x\Big)+h(t),\label{diff2}
\end{align}
where
\begin{align*}
	h(t) \coloneqq C_m\Big(\|\widehat{\bm{\varphi}}_m(t)\|_{\mathcal{H}^2}^2 +\|\partial_t\widetilde{\mathbf{v}}_m(t)\|_{\mathbf{L}^2(\Omega)}^2 \|\widehat{\bm{\varphi}}_m(t)\|_{\mathcal{H}^2}^2 +\|\widehat{\bm{\mu}}_m(t)\|_{\mathcal{H}^1}^2 +\|\widehat{\mathbf{v}}(t)\|_{\mathbf{L}^2(\Omega)}^2\Big).
\end{align*}
Applying Gronwall's lemma to \eqref{diff2} and using \eqref{conti11} and \eqref{conti12}, we infer that
\begin{align*}
	\sup_{t\in[0,T]} \|\widehat{\mathbf{v}}_m(t)\|_{\mathbf{H}^{1}(\Omega)}^2
    \leq e^{C_mT}\int_0^T h(t)\,\mathrm{d}t\to0,\quad\text{as }n\to+\infty.
\end{align*}
This shows that the map $\Lambda$ is continuous on $S$.

Hence, applying Schauder's fixed point theorem, we conclude that the map $\Lambda$ admits a fixed point in $S$. This implies the existence of an approximate solution $(\mathbf{v}_m,\boldsymbol{\varphi}_m,\boldsymbol{\mu}_m)$ on $[0,T]$ for any $m\in\mathbb{N}^+$, which has the regularities \eqref{REG:APPROX} and satisfies \eqref{eqd1}--\eqref{approinitial}. This completes the proof.


\section{Existence of Quasi-strong Solutions in the Case \texorpdfstring{$(K,L)\in(0,+\infty]\times(0,+\infty)$}
{(K,L) ∈ (0,+∞] × (0,+∞)}}
\label{existence of quasi}
\setcounter{equation}{0}

In this section, we establish our main result Theorem~\ref{quasi-strong} in the case $(K,L)\in(0,+\infty]\times(0,+\infty)$.
To this end, we assume that the premises of Theorem~\ref{quasi-strong} hold and we take
\begin{align}
    \label{assump-0323-1}
    m,\,k\in\mathbb{N}^+,\qquad 0<\sigma\leq \frac{c_1}{2(1+c_1)} < \frac 12,\qquad0<\gamma<1
\end{align}
for a constant $c_1 > 0$ (independent of $m$, $k$, $\sigma$ and $\gamma$), which will be specified later in \eqref{MZ-ieq-1}.

We first derive uniform \emph{a priori} estimates
for the approximate solutions
$(\mathbf{v}_m,\boldsymbol{\varphi}_m,\boldsymbol{\mu}_m)$ obtained in Section~\ref{approximating system for L}. Then, by means of compactness arguments, we pass to the limits as
$m \to +\infty$, $\sigma \to 0$, $\gamma=k^{-1} \to 0$ to conclude the existence of a global quasi-strong solution $[0,T]$.


\subsection{Uniform estimates for the approximate solutions}
First of all, we have the following mass conservation law.
\begin{lemma}
\label{mass-conservation-law}
Let $L\in(0,+\infty)$. Then for all $t\in[0,T]$, it holds that
\begin{align}
	\int_\Omega\varphi_m(t)\,\mathrm{d}x+\int_\Gamma \psi_m(t)\,\mathrm{d}S
    = \int_\Omega\varphi_{0,\sigma,k}\,\mathrm{d}x+\int_\Gamma \psi_{0,\sigma,k}\,\mathrm{d}S.
    \label{0321-mass-L}
\end{align}
\end{lemma}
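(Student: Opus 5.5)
The plan is to integrate the bulk equation \eqref{eqd2} over $\Omega$ and the surface equation \eqref{eqd3} over $\Gamma$, add them, and show that the right-hand sides cancel. The regularity in \eqref{REG:APPROX} makes this rigorous. We have $\partial_t\boldsymbol{\varphi}_m\in L^2(0,T;\mathcal{H}^2)$, $\boldsymbol{\mu}_m\in L^\infty(0,T;\mathcal{H}^2)$, and $(\mathbf{v}_m,\mathbf{v}_{m,\boldsymbol{\tau}})\in C^1([0,T];\mathcal{V}_m)$. So all terms are integrable, and it suffices to prove that the mass derivative vanishes for a.e.\ $t$. The identity then follows because the map $t\mapsto \int_\Omega\varphi_m\,\mathrm{d}x+\int_\Gamma\psi_m\,\mathrm{d}S$ is absolutely continuous, since $\boldsymbol{\varphi}_m\in H^1(0,T;\mathcal{L}^2)$, and the initial conditions \eqref{approinitial} fix its value at $t=0$.

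\emph{Bulk part.} Integrating \eqref{eqd2} over $\Omega$ gives
\begin{align*}
\frac{\mathrm{d}}{\mathrm{d}t}\int_\Omega\varphi_m\,\mathrm{d}x+\int_\Omega \mathbf{v}_m\cdot\nabla\varphi_m\,\mathrm{d}x=\int_\Gamma\partial_\mathbf{n}\mu_m\,\mathrm{d}S.
\end{align*}
The convective term vanishes: integrating by parts,
\begin{align*}
\int_\Omega \mathbf{v}_m\cdot\nabla\varphi_m\,\mathrm{d}x=-\int_\Omega(\mathrm{div}\,\mathbf{v}_m)\varphi_m\,\mathrm{d}x+\int_\Gamma(\mathbf{v}_m\cdot\mathbf{n})\varphi_m\,\mathrm{d}S=0,
\end{align*}
because $\mathcal{V}_m\subset\boldsymbol{\mathcal{H}}^1_{0,\mathrm{div}}$ gives $\mathrm{div}\,\mathbf{v}_m=0$ in $\Omega$ and $\mathbf{v}_m\cdot\mathbf{n}=0$ on $\Gamma$.

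\emph{Surface part.} Integrating \eqref{eqd3} over the closed manifold $\Gamma$, the Laplace--Beltrami term integrates to zero, so
\begin{align*}
\frac{\mathrm{d}}{\mathrm{d}t}\int_\Gamma\psi_m\,\mathrm{d}S+\int_\Gamma\mathbf{v}_{m,\boldsymbol{\tau}}\cdot\nabla_{\!\boldsymbol{\tau}\,}\psi_m\,\mathrm{d}S=-\int_\Gamma\partial_\mathbf{n}\mu_m\,\mathrm{d}S.
\end{align*}
The surface convective term also vanishes:
\begin{align*}
\int_\Gamma\mathbf{v}_{m,\boldsymbol{\tau}}\cdot\nabla_{\!\boldsymbol{\tau}\,}\psi_m\,\mathrm{d}S=-\int_\Gamma(\mathrm{div}_\Gamma\mathbf{v}_{m,\boldsymbol{\tau}})\psi_m\,\mathrm{d}S=0.
\end{align*}
This uses that $\mathbf{v}_{m,\boldsymbol{\tau}}$ is tangential and that $\mathrm{div}_\Gamma\mathbf{v}_{m,\boldsymbol{\tau}}=0$, which holds because the second component of each element of $\mathcal{V}_m$ lies in $\mathbf{L}^2_{\mathrm{div}}(\Gamma)$.

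Adding the two identities, the flux terms $\pm\int_\Gamma\partial_\mathbf{n}\mu_m\,\mathrm{d}S$ cancel, and the time derivative of the total mass is zero a.e. This cancellation is exactly why the combined bulk-surface mass is conserved for $L\in(0,+\infty)$, while the separate masses need not be. Integrating in time from $0$ to $t$ yields \eqref{0321-mass-L}.

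The only delicate point, which is mild, is justifying the surface integration by parts with $\mathrm{div}_\Gamma\mathbf{v}_{m,\boldsymbol{\tau}}=0$ understood in the weak sense. This is handled by the very definition of $\mathbf{L}^2_{\mathrm{div}}(\Gamma)$, namely orthogonality to tangential gradients, tested against $\psi_m\in H^1(\Gamma)$.
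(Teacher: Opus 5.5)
Your proposal is correct and follows essentially the same route as the paper. You integrate \eqref{eqd2} over $\Omega$ and \eqref{eqd3} over $\Gamma$, remove the convective terms using $\mathrm{div}\,\mathbf{v}_m=0$, $\mathbf{v}_m\cdot\mathbf{n}=0$ and $\mathrm{div}_\Gamma\mathbf{v}_{m,\boldsymbol{\tau}}=0$, cancel the flux terms $\pm\int_\Gamma\partial_\mathbf{n}\mu_m\,\mathrm{d}S$, and integrate in time; your remark on absolute continuity of the total mass, via $\boldsymbol{\varphi}_m\in H^1(0,T;\mathcal{L}^2)$, only makes explicit what the paper leaves implicit.
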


\begin{proof}
Integrating \eqref{eqd2} over $\Omega$ and \eqref{eqd3} over $\Gamma$,
adding the resultants together, and using integration by parts, we obtain
\begin{align}
        \frac{\mathrm{d}}{\mathrm{d}t}\Big(\int_\Omega \varphi_m(t)\,\mathrm{d}x+\int_\Gamma \psi_m(t)\,\mathrm{d}S\Big)
        &=-\int_\Omega \mathbf{v}_m\cdot\nabla\varphi_m\,\mathrm{d}x+\int_\Omega\Delta\mu_m\,\mathrm{d}x
        \notag\\
        &\quad-\int_\Gamma\mathbf{v}_{m,\boldsymbol{\tau}}\cdot\nabla_{\!\boldsymbol{\tau}\,}\psi_m\,\mathrm{d}S
        +\int_\Gamma \Delta_{\boldsymbol{\tau}}\mathcal{L}_m\,\mathrm{d}S
        -\int_\Gamma\partial_{\mathbf{n}}\mu_m\,\mathrm{d}S
        \notag\\
        &=-\int_\Omega\mathrm{div}(\varphi_m\mathbf{v}_m)\,\mathrm{d}x
        +\int_\Omega\varphi_m \,\mathrm{div}\,\mathbf{v}_m\,\mathrm{d}x
        +\int_\Gamma\partial_{\mathbf{n}}\mu_m \,\mathrm{d}S
        \notag\\
        &\quad-\int_\Gamma  \mathrm{div}_{\Gamma}(\psi_m\mathbf{v}_{m,\boldsymbol{\tau}})\,\mathrm{d}S
        +\int_\Gamma \psi_m \, \mathrm{div}_{\Gamma} \mathbf{v}_{m,\boldsymbol{\tau}}\,\mathrm{d}S
        -\int_\Gamma\partial_{\mathbf{n}}\mu_m\,\mathrm{d}S
        \notag\\
        &=0,
        \label{0321-mass-1}
\end{align}
where we have used the identities
$\mathrm{div}\,\mathbf{v}_m=0$ in $\Omega$,
and $\mathrm{div}_{\Gamma}\mathbf{v}_{m,\boldsymbol{\tau}}=0$, $\mathbf{v}_m\cdot\mathbf{n}=0$ on $\Gamma$. Then, integrating \eqref{0321-mass-1} from $0$ to $t$, we obtain \eqref{0321-mass-L}. Hence, we complete the proof of Lemma~\ref{mass-conservation-law}.
\end{proof}

The following lemma provides uniform \textit{a priori} bounds on the approximate solutions.

\begin{lemma}
    \label{energy-estimates}
There exists a positive constant $C$, independent of $m$, $\gamma$, $\sigma$, $k$ {and $L$},
such that
\begin{align}
	\|\mathbf{v}_m\|_{L^\infty(0,T;\mathbf{H}^1_{\mathrm{div}}(\Omega))}
    +\|\boldsymbol{\varphi}_m\|_{L^{\infty}(0,T;\mathcal{H}^1)}
    &\leq C,
    \label{m-uni-7}
    \\
	\|\nabla\mu_m\|_{L^2(0,T;\mathbf{L}^2(\Omega))}
    +\|\nabla_{\!\bm{\tau}\,}\mathcal{L}_m\|_{L^2(0,T;\mathbf{L}^2(\Gamma))}
    +\frac{1}{\sqrt{L}}\|\mathcal{L}_m-\mu_m\|_{L^2(0,T;H_\Gamma)}
    &\leq C,\label{m-uni-9}
    \\
    \sqrt{\gamma}\|\partial_t\bm{\varphi}_m\|_{L^2(0,T;\mathcal{L}^2)}
    &\leq C.\label{m-uni-10}
\end{align}
\end{lemma}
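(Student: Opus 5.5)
The plan is to derive the total energy identity for the approximate solutions and integrate it in time. Since the approximate solution is a fixed point of $\Lambda$, we may take $\mathbf{w}=\mathbf{v}_m$ in \eqref{eqd1}, exactly as in the derivation of \eqref{uni-ga-1}, but now with $\mathbf{v}=\mathbf{v}_m$. This gives the kinetic energy balance: the time derivative of $\frac12\int_\Omega\rho(\varphi_m)|\mathbf{v}_m|^2+\alpha\int_\Omega|\mathbb{D}\mathbf{v}_m|^2$, plus the viscous dissipation and the slip dissipation, equals
\[
-\int_\Omega\varphi_m\nabla\mu_m\cdot\mathbf{v}_m\,\mathrm{d}x-\int_\Gamma\psi_m\nabla_{\!\boldsymbol{\tau}\,}\mathcal{L}_m\cdot\mathbf{v}_{m,\boldsymbol{\tau}}\,\mathrm{d}S .
\]
The $\rho'$-terms cancel against $\partial_t\rho(\varphi_m)$ via \eqref{eqd2}, as in \cite[(6.18)]{KS}.

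Next I test \eqref{eqd2} by $\mu_m$ and \eqref{eqd3} by $\mathcal{L}_m$. The regularity \eqref{REG:APPROX} makes all of these manipulations rigorous. I also use \eqref{eqd4}, \eqref{eqd5} and the Robin condition \eqref{eqd3''}. This yields
\[
\frac{\mathrm{d}}{\mathrm{d}t}E^{\sigma,k}_{\mathrm{free}}(\boldsymbol{\varphi}_m)+\gamma\|\partial_t\boldsymbol{\varphi}_m\|_{\mathcal{L}^2}^2+\|\nabla\mu_m\|^2+\|\nabla_{\!\boldsymbol{\tau}\,}\mathcal{L}_m\|^2+\tfrac1L\|\mathcal{L}_m-\mu_m\|_{H_\Gamma}^2
\]
equal to the convective terms $-\int_\Omega(\mathbf{v}_m\cdot\nabla\varphi_m)\mu_m-\int_\Gamma(\mathbf{v}_{m,\boldsymbol{\tau}}\cdot\nabla_{\!\boldsymbol{\tau}\,}\psi_m)\mathcal{L}_m$. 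Integrating these by parts, using $\mathrm{div}\,\mathbf{v}_m=0$, $\mathbf{v}_m\cdot\mathbf{n}=0$ and $\mathrm{div}_\Gamma\mathbf{v}_{m,\boldsymbol{\tau}}=0$, turns them into $\int_\Omega\varphi_m\mathbf{v}_m\cdot\nabla\mu_m+\int_\Gamma\psi_m\mathbf{v}_{m,\boldsymbol{\tau}}\cdot\nabla_{\!\boldsymbol{\tau}\,}\mathcal{L}_m$. These cancel exactly with the right-hand side of the kinetic balance. Adding the two identities therefore gives an exact energy identity, the approximate analogue of \eqref{energy-identity} with the extra $\gamma$-dissipation and the $k^{-1}F_\sigma$ terms.

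Integrating over $(0,t)$, the right-hand side becomes the initial total energy. The kinetic part is bounded by $\rho_1\|(\mathbf{v}_0,\mathbf{v}_{0,\boldsymbol{\tau}})\|^2_{\boldsymbol{\mathcal{L}}^2}+\alpha\|(\mathbf{v}_0,\mathbf{v}_{0,\boldsymbol{\tau}})\|^2_{\boldsymbol{\mathcal{H}}^1_{0,\mathrm{div}}}$ by \eqref{Pm-stability}. The key step, and the main obstacle, is a bound on $E^{\sigma,k}_{\mathrm{free}}(\boldsymbol{\varphi}_{0,\sigma,k})$ that is uniform in $\sigma$ and $k$. The gradient parts and $\chi(K)\|\psi-\varphi\|^2$ are controlled by \eqref{0315-0}. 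For $F_0$ I use convexity with $F_0(0)=0$, which gives $F_0(r)\le f_0(r)r\le|f_0(r)|$, and then \eqref{0315-0} again; $G_0$ is handled the same way. For $k^{-1}F_\sigma$ I use $F_\sigma(r)\le f_\sigma(r)r\le|f_\sigma(r)|$ together with \eqref{0315-0'}. $F_1$ and $G_1$ have quadratic growth on $[-1,1]$. Thus the initial energy is bounded in terms of $\|\widetilde\mu_0\|_V$ and $\|\widetilde{\mathcal{L}}_0\|_{V_\Gamma}$. On the lower side, $F_0,G_0,F_\sigma\ge0$ and $F_1,G_1$ are bounded on $[-1,1]$, so $E^{\sigma,k}_{\mathrm{free}}\ge-C_{\mathrm b}$. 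None of these constants depends on $L$.

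It remains to read off the bounds. This gives \eqref{m-uni-9}, \eqref{m-uni-10}, and $\sup_t(\|\nabla\varphi_m\|^2+\|\nabla_{\!\boldsymbol{\tau}\,}\psi_m\|^2+\|\mathbf{v}_m\|^2+\alpha\|\mathbb{D}\mathbf{v}_m\|^2)\le C$, using $\rho\ge\rho_2$. The $L^2$ parts of $\boldsymbol{\varphi}_m$ follow from $|\varphi_m|,|\psi_m|\le1$ from \eqref{REG:APPROX}, which completes the $\mathcal{H}^1$ bound in \eqref{m-uni-7}. Finally, \eqref{Korn-cor} upgrades the velocity bound to $L^\infty(0,T;\mathbf{H}^1_{\mathrm{div}}(\Omega))$.
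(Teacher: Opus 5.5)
Your proposal is correct and follows the paper's proof. At the fixed point you combine the kinetic balance \eqref{uni-ga-1} with the tests of \eqref{eqd2} by $\mu_m$ and \eqref{eqd3} by $\mathcal{L}_m$ to obtain the exact identity \eqref{m-uni-4}, and then integrate in time using \eqref{Pm-stability}, exactly as the paper does. Your convexity argument ($F_0(r)\le f_0(r)r$ and $F_\sigma(r)\le f_\sigma(r)r$, combined with \eqref{0315-0}--\eqref{0315-0'}) supplies the bound on $E^{\sigma,k}_{\mathrm{free}}(\boldsymbol{\varphi}_{0,\sigma,k})$, uniform in $\sigma$ and $k$, which the paper uses without spelling out.
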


\begin{proof}
Multiplying \eqref{eqd2} by $\mu_m$ and integrating by parts,
taking \eqref{eqd4} into account, we obtain
\begin{align}
	&\frac{\mathrm{d}}{\mathrm{d}t}\Big(\int_\Omega\frac{1}{2}|\nabla\varphi_m|^2\,\mathrm{d}x
    +\int_\Omega F(\varphi_m)\,\mathrm{d}x+k^{-1}\int_\Omega F_\sigma(\varphi_m)\,\mathrm{d}x\Big)\notag\\
    &\qquad+\int_\Omega|\nabla\mu_m|^2\,\mathrm{d}x+\gamma\int_\Omega|\partial_t\varphi_m|^2\,\mathrm{d}x\notag\\
    &\quad=-\int_\Omega \mu_m\nabla\varphi_m\cdot\mathbf{v}_m\,\mathrm{d}x
    +\int_\Gamma \partial_t\varphi_m\partial_\mathbf{n}\varphi_m\,\mathrm{d}S
    +\int_\Gamma \partial_{\mathbf{n}}\mu_m\,\mu_m\,\mathrm{d}S.\label{0227-7}
\end{align}
Similarly, testing \eqref{eqd3} by $\mathcal{L}_m$ and using the boundary condition \eqref{eqd3'},
we obtain
\begin{align}
	&\frac{\mathrm{d}}{\mathrm{d}t}\Big(\int_\Gamma\frac{1}{2}|\nabla_{\!\bm{\tau}\,}\psi_m|^2\,\mathrm{d}S
    +\int_\Gamma G(\psi_m)\,\mathrm{d}S+k^{-1}\int_\Gamma F_\sigma(\psi_m)\,\mathrm{d}S\Big)
    \notag\\
    &\qquad
    +\int_\Gamma|\nabla_{\!\bm{\tau}\,}\mathcal{L}_m|^2\,\mathrm{d}S
    +\gamma\int_\Gamma|\partial_t\psi_m|^2\,\mathrm{d}S\notag\\
    &\quad=\int_\Gamma\psi_m\nabla_{\!\bm{\tau}\,}\mathcal{L}_m\cdot\mathbf{v}_{m,\bm{\tau}}\,\mathrm{d}S
    -\int_\Gamma \partial_t\psi_m\partial_\mathbf{n}\varphi_m\,\mathrm{d}S
    -\int_\Gamma\partial_{\mathbf{n}}\mu_m\,\mathcal{L}_m\,\mathrm{d}S.\label{0227-8}
\end{align}
Combining \eqref{uni-ga-1}, \eqref{0227-7} and \eqref{0227-8}, and
using the boundary condition \eqref{eqd3'}, we deduce that
\begin{align}
&\frac{\mathrm{d}}{\mathrm{d}t}E_{\text{tot}}^{\sigma,k}(\mathbf{v}_m,\boldsymbol{\varphi}_m)
+\int_\Omega2\nu(\varphi_m)|\mathbb{D}\mathbf{v}_m|^2\,\mathrm{d}x
+\int_\Gamma\beta(\psi_m) |\mathbf{v}_{m,\bm{\tau}}|^2\,\mathrm{d}S\notag\\
&\qquad+\gamma\int_\Omega|\partial_t\varphi_m|^2\,\mathrm{d}x
+\gamma\int_\Gamma|\partial_t\psi_m|^2\,\mathrm{d}S\notag\\
&\qquad+\int_\Omega|\nabla\mu_m|^2\,\mathrm{d}x
+\int_\Gamma|\nabla_{\!\bm{\tau}\,}\mathcal{L}_m|^2\,\mathrm{d}S
+\frac{1}{L}\int_\Gamma|\mathcal{L}_m-\mu_m|^2\,\mathrm{d}S=0,\label{m-uni-4}
\end{align}
where the approximate total energy functional $E_{\text{tot}}^{\sigma,k}$ is given by
\begin{align*}
E_{\text{tot}}^{\sigma,k}(\mathbf{v}_m,\boldsymbol{\varphi}_m)
\coloneqq E_{\text{kin}}(\mathbf{v}_m,\boldsymbol{\varphi}_m)
+E_{\text{free}}^{\sigma,k}(\boldsymbol{\varphi}_m).
\end{align*}
Integrating \eqref{m-uni-4} from $0$ to $t$ and using \eqref{Pm-stability}, we find that
\begin{align}
	&E_{\text{tot}}^{\sigma,k}(\mathbf{v}_m(t),\boldsymbol{\varphi}_m(t))
    +2\nu_\ast\int_0^t \int_\Omega|\mathbb{D}\mathbf{v}_m(s)|^2\,\mathrm{d}x\,\mathrm{d}s
    +\beta_\ast\int_0^t\int_\Gamma |\mathbf{v}_{m,\bm{\tau}}(s)|^2\,\mathrm{d}S\,\mathrm{d}s
    \notag\\
    &\qquad+\gamma\int_0^t \int_\Omega|\partial_t\varphi_m(s)|^2\,\mathrm{d}x\,\mathrm{d}s
    +\gamma\int_0^t\int_\Gamma |\partial_t\psi_m(s)|^2\,\mathrm{d}S\,\mathrm{d}s\notag\\
	&\qquad+\int_0^t\int_\Omega |\nabla\mu_m(s)|^2\,\mathrm{d}x\,\mathrm{d}s
    +\int_0^t\int_\Gamma |\nabla_{\!\bm{\tau}\,}\mathcal{L}_m(s)|^2\,\mathrm{d}S\,\mathrm{d}s
    +\frac{1}{L}\int_0^t\int_\Gamma|\mathcal{L}_m(s)-\mu_m(s)|^2\,\mathrm{d}S\,\mathrm{d}s
    \notag\\
	&\quad\leq E_{\text{free}}^{\sigma,k}\big(\boldsymbol{\varphi}_{0,\sigma,k}\big)
      +\frac{1}{2}\int_{\Omega}\rho(\varphi_{0,\sigma,k})|\mathbb{P}_m^\Omega(\mathbf{v}_0,\mathbf{v}_{0,\boldsymbol{\tau}})|^2\,\mathrm{d}x
   +\alpha\int_\Omega |\mathbb{D}\mathbb{P}_m^\Omega(\mathbf{v}_0,\mathbf{v}_{0,\boldsymbol{\tau}})|^2\,\mathrm{d}x
   \notag\\
    &\quad\leq E_{\text{free}}^{\sigma,k}\big(\boldsymbol{\varphi}_{0,\sigma,k}\big)
     +\frac{\rho_1}{2}\|(\mathbf{v}_0,\mathbf{v}_{0,\boldsymbol{\tau}})\|_{\boldsymbol{\mathcal{L}}^2}^2+\alpha\|(\mathbf{v}_0,\mathbf{v}_{0,\boldsymbol{\tau}})\|_{\boldsymbol{\mathcal{H}}^1_{0,\mathrm{div}}}^2.
    \label{0328-energy-id}
\end{align}
This implies that \eqref{m-uni-7}, \eqref{m-uni-9} and \eqref{m-uni-10}
hold with a positive constant $C$ that is independent of $m$, $\gamma$, $\sigma$, and $k$.
\end{proof}

Next, we derive a uniform bound on $\boldsymbol{\mu}_m$.

\begin{lemma}
    \label{mu-H1-phi-H2}
There exists a positive constant $C$, {independent of $m$, $\gamma$, $\sigma$ and $k$,} such that
\begin{align*}
        \|\bm{\mu}_m\|_{L^2(0,T;\mathcal{H}^1)}\leq C.
\end{align*}
\end{lemma}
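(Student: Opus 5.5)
Since Lemma~\ref{energy-estimates} already controls $\|\nabla\mu_m\|_{L^2(0,T;\mathbf{L}^2(\Omega))}$ and $\|\nabla_{\!\boldsymbol{\tau}\,}\mathcal{L}_m\|_{L^2(0,T;\mathbf{L}^2(\Gamma))}$ uniformly, the task is to bound the spatial means $\langle\mu_m\rangle_\Omega$ and $\langle\mathcal{L}_m\rangle_\Gamma$, or equivalently $\overline{m}(\boldsymbol{\mu}_m)$, in $L^2(0,T)$. We will control the generalized mean $\overline{m}(\boldsymbol{\mu}_m)$. Lemma~\ref{energy-estimates} also bounds $L^{-1/2}\|\mathcal{L}_m-\mu_m\|_{L^2(0,T;H_\Gamma)}$. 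However, we want bounds independent of $L$, so we rely on the Poincar\'e--Wirtinger inequalities in $\Omega$ and on $\Gamma$ separately, together with the bulk-surface Poincar\'e inequality \eqref{bsP}. Concretely, $\|\mu_m\|_V$ is bounded by $\|\nabla\mu_m\|+|\langle\mu_m\rangle_\Omega|$, and likewise for $\mathcal{L}_m$. It therefore suffices to estimate $\int_\Omega\mu_m\,\mathrm{d}x$ and $\int_\Gamma\mathcal{L}_m\,\mathrm{d}S$.

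The main step is the classical Miranville--Zelik-type argument. We test \eqref{eqd4} with $\varphi_m-\overline{m}_0$ and \eqref{eqd5} with $\psi_m-\overline{m}_0$, where $\overline{m}_0=\overline{m}(\boldsymbol{\varphi}_{0,\sigma,k})$ is conserved by Lemma~\ref{mass-conservation-law}. By \eqref{initial-mean-value} and \eqref{initial-convergence}, $|\overline{m}_0|\le 1-2c_1$ uniformly for $k$ large and $\sigma$ small; this is where the constant $c_1$ in \eqref{assump-0323-1} enters. The singular terms then satisfy the inequality
\[
f_0(r)(r-\overline{m}_0)\ge c_1|f_0(r)|-C,
\]
and $g_0$ and $k^{-1}f_\sigma$ satisfy the analogous inequality. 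For $f_\sigma$ this requires $|\overline{m}_0|\le (1-\sigma)(1-c_1)$, which is what the restriction $\sigma\le c_1/(2(1+c_1))$ ensures. The left-hand side of the tested identity is
\[
(\boldsymbol{\mu}_m,\boldsymbol{\varphi}_m-\overline{m}_0\mathbf{1})_{\mathcal{L}^2}
=(\boldsymbol{\mu}_m-\overline{m}(\boldsymbol{\mu}_m)\mathbf{1},\boldsymbol{\varphi}_m-\overline{m}_0\mathbf{1})_{\mathcal{L}^2},
\]
because $\boldsymbol{\varphi}_m-\overline{m}_0\mathbf{1}$ has zero generalized mean. This term is bounded by $C\|\boldsymbol{\mu}_m-\overline{m}(\boldsymbol{\mu}_m)\mathbf{1}\|_{\mathcal{L}^2}$. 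That quantity is in turn controlled by $\|\nabla\mu_m\|+\|\nabla_{\!\boldsymbol{\tau}\,}\mathcal{L}_m\|+\|\mu_m-\mathcal{L}_m\|_{H_\Gamma}$.

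The Robin coupling term $\|\mu_m-\mathcal{L}_m\|_{H_\Gamma}$ is the delicate point, since for $L$-independence it cannot be paid with $L^{-1/2}$. To avoid it, we will use the trace inequality together with the separate Poincar\'e inequalities. Alternatively, we accept $L$-dependence, since the lemma statement only requires independence of $m,\gamma,\sigma,k$, in which case Lemma~\ref{generalized-Poin} applies directly with $C_\mathrm{P}=C_\mathrm{P}(L)$.

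The remaining terms are handled as follows. The gradient terms give $\|\nabla\varphi_m\|^2+\|\nabla_{\!\boldsymbol{\tau}\,}\psi_m\|^2+\chi(K)\|\varphi_m-\psi_m\|^2_{H_\Gamma}\ge 0$. The Lipschitz parts $f_1,g_1$ are bounded by \eqref{m-uni-7}. The viscous terms $\gamma(\partial_t\boldsymbol{\varphi}_m,\boldsymbol{\varphi}_m-\overline{m}_0)$ are bounded by $\gamma\|\partial_t\boldsymbol{\varphi}_m\|_{\mathcal{L}^2}$, which is square-integrable uniformly by \eqref{m-uni-10} because $\gamma<1$. Combining these, we obtain
\[
\|f_0(\varphi_m)\|_{L^1}+\|g_0(\psi_m)\|_{L^1(\Gamma)}+k^{-1}\|f_\sigma(\boldsymbol{\varphi}_m)\|_{\mathcal{L}^1}
\le C\bigl(1+\|\nabla\mu_m\|+\|\nabla_{\!\boldsymbol{\tau}\,}\mathcal{L}_m\|+\sqrt{\gamma}\|\partial_t\boldsymbol{\varphi}_m\|_{\mathcal{L}^2}\bigr).
\]
Finally, we integrate \eqref{eqd4} over $\Omega$ and \eqref{eqd5} over $\Gamma$. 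Using $\int_\Omega\Delta\varphi_m=\int_\Gamma\partial_\mathbf{n}\varphi_m$, this term cancels against $\int_\Gamma\partial_\mathbf{n}\varphi_m$ from \eqref{eqd5}. The result is that $|\int_\Omega\mu_m|+|\int_\Gamma\mathcal{L}_m|$ is bounded by the $L^1$ norms above plus $\gamma\|\partial_t\boldsymbol{\varphi}_m\|_{\mathcal{L}^1}$. Here we need the separate means, not just their sum, since we integrate each equation separately. Squaring and integrating in time via Lemma~\ref{energy-estimates} yields the $L^2(0,T)$ bound for the means, and hence the claim. The main obstacle is verifying the sign inequality uniformly for the auxiliary $k^{-1}f_\sigma$ and $g_0$ near the mean.
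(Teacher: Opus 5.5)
Your overall plan is the paper's. It uses the Miranville--Zelik inequality at the conserved mean $\overline{m}_{0,\sigma,k}$, tests \eqref{eqd4} and \eqref{eqd5} with $\varphi_m-\overline{m}_{0,\sigma,k}$ and $\psi_m-\overline{m}_{0,\sigma,k}$, replaces $\boldsymbol{\mu}_m$ by $\boldsymbol{\mu}_m-\overline{m}(\boldsymbol{\mu}_m)\mathbf{1}$, and then integrates the equations to control the mean. Your treatment of $k^{-1}f_\sigma$ via the shifted mean $\overline{m}_{0,\sigma,k}/(1-\sigma)$ is a harmless variant of the paper's absorption argument, which uses $c_1(1-\sigma)-\sigma\ge c_1/2$. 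However, two steps as written do not work, and one justification is insufficient.

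First, your announced way of avoiding $\|\mu_m-\mathcal{L}_m\|_{H_\Gamma}$ (trace inequality plus separate Poincar\'e--Wirtinger inequalities) fails. Since $\boldsymbol{\varphi}_m-\overline{m}_{0,\sigma,k}\mathbf{1}$ has zero generalized mean, the tested chemical-potential term equals a gradient-controlled part plus $(\langle\mu_m\rangle_\Omega-\langle\mathcal{L}_m\rangle_\Gamma)\,|\Omega|\,(\langle\varphi_m\rangle_\Omega-\overline{m}_{0,\sigma,k})$. The difference of the two means is exactly what the gradients do not control. Eliminating it with the separately integrated equations merely converts your test into a test with $\varphi_m-\langle\varphi_m\rangle_\Omega$ and $\psi_m-\langle\psi_m\rangle_\Gamma$. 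For that test the Miranville--Zelik inequality needs $\langle\varphi_m\rangle_\Omega$ and $\langle\psi_m\rangle_\Gamma$ to stay away from $\pm1$. For finite $L$ only the total mass is conserved, so such control is available only for large $L$ (this is Lemma~\ref{refine4}). You must therefore take your fallback, Lemma~\ref{generalized-Poin} with $C_{\mathrm{P}}=C_{\mathrm{P}}(L)$, as the paper does. This is legitimate because the statement does not claim independence of $L$. Your displayed $L^1$-bound must then also contain $L^{-1/2}\|\mathcal{L}_m-\mu_m\|_{H_\Gamma}$ on the right, which lies in $L^2(0,T)$ by \eqref{m-uni-9}.

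Second, your final step is inconsistent. The terms $\int_\Gamma\partial_{\mathbf{n}}\varphi_m\,\mathrm{d}S$ cancel only if you \emph{add} the two integrated equations, and then you control only $\overline{m}(\boldsymbol{\mu}_m)$, not the separate means. That is all you need, and it is what the paper does. In the sum, the viscous contributions $\gamma\big(\int_\Omega\partial_t\varphi_m\,\mathrm{d}x+\int_\Gamma\partial_t\psi_m\,\mathrm{d}S\big)$ also vanish by Lemma~\ref{mass-conservation-law}. Then $\overline{m}(\boldsymbol{\mu}_m)\in L^2(0,T)$, together with Lemma~\ref{generalized-Poin} and \eqref{m-uni-9}, gives the claim. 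If you want separate means instead, you must keep $\mp\int_\Gamma\partial_{\mathbf{n}}\varphi_m\,\mathrm{d}S=\mp\chi(K)\int_\Gamma(\psi_m-\varphi_m)\,\mathrm{d}S$. This is bounded by $2\chi(K)|\Gamma|$ because $K\in(0,+\infty]$ and $|\varphi_m|,|\psi_m|<1$.

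Finally, \eqref{initial-convergence} is only qualitative and gives a $k$-dependent smallness threshold for $\sigma$. To place $\overline{m}_{0,\sigma,k}$ in a fixed compact subinterval of $(-1,1)$ uniformly in $(\sigma,k)$, as the lemma requires, you need the quantitative bound \eqref{error}, whose constant is independent of $k$.
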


\begin{proof}
We first recall the well-known Miranville--Zelik inequality (cf. \cite{MZ04}), which yields:
\begin{align}
	c_1|f_0(r)|
    &\leq  f_0(r)(r-\overline{m}_{0,\sigma,k})+c_2,
    \quad\text{for all $r\in(-1,1)$},
    \label{MZ-ieq-1}\\
    c_1|g_0(r)|
    &\leq g_0(r)(r-\overline{m}_{0,\sigma,k})+c_2,
    \quad\text{for all $r\in(-1,1)$},
    \label{MZ-ieq-2}
\end{align}
where $\overline{m}_{0,\sigma,k} \coloneqq \overline{m}(\boldsymbol{\varphi}_{0,\sigma,k})$
and the positive constants $c_1$, $c_2$ depend only on $\overline{m}_{0,\sigma,k}$.
According to \eqref{initial-convergence}, we have
\[\lim_{k\to+\infty}\overline{m}(\boldsymbol{\varphi}_{0,k})=\overline{m}(\boldsymbol{\varphi}_{0})\in(-1,1),\]
which, together with the fact $\overline{m}(\boldsymbol{\varphi}_{0,k})\in(-1,1)$ for all $k\in\mathbb{N}^+$, implies that there exists a subinterval $[a,b]\subset(-1,1)$ such that $\overline{m}(\boldsymbol{\varphi}_{0,k})\in[a,b]$ for all $k\in\mathbb{N}^+$.
Moreover, we infer from \eqref{error} that
\begin{align}
    |\overline{m}(\boldsymbol{\varphi}_{0,\sigma,k})|&\leq |\overline{m}(\boldsymbol{\varphi}_{0,\sigma,k})-\overline{m}(\boldsymbol{\varphi}_{0,k})|+|\overline{m}(\boldsymbol{\varphi}_{0,k})|
    \notag\\
    &\leq C\|\boldsymbol{\varphi}_{0,\sigma,k}-\boldsymbol{\varphi}_{0,k}\|_{\mathcal{H}^1}+|\overline{m}(\boldsymbol{\varphi}_{0,k})|
    \notag\\
    &\leq C\sigma^{\frac{1}{2}}+|\overline{m}(\boldsymbol{\varphi}_{0,k})|
    \notag,
\end{align}
where the constant $C$ is independent of $\sigma$ and $k$, which, together with the fact $\overline{m}_{0,\sigma,k}\in(-1,1)$ for all $\sigma\in(0,1/2)$, $k\in\mathbb{N}^+$, implies that there exists a subinterval $[a',b']\subset(-1,1)$ such that $\overline{m}_{0,\sigma,k}\in[a',b']$.
Therefore, we can select $c_1$ and $c_2$ that are independent of $\sigma$ and $k$.

Testing \eqref{eqd4} by $\varphi_m-\overline{m}_{0,\sigma,k}$
and \eqref{eqd5} by $\psi_m-\overline{m}_{0,\sigma,k}$,
we have
\begin{align}
    &\int_\Omega f_0(\varphi_m)(\varphi_m-\overline{m}_{0,\sigma,k})\,\mathrm{d}x
    +k^{-1}\int_\Omega f_\sigma(\varphi_m)(\varphi_m-\overline{m}_{0,\sigma,k})\,\mathrm{d}x\notag\\
    &\qquad+\int_\Gamma g_0(\psi_m)(\psi_m-\overline{m}_{0,\sigma,k})\,\mathrm{d}S
    +k^{-1}\int_\Gamma f_\sigma(\psi_m)(\psi_m-\overline{m}_{0,\sigma,k})\,\mathrm{d}S\notag\\
    &=\int_\Omega(\mu_m-\gamma\partial_t\varphi_m-f_1(\varphi_m))(\varphi_m-\overline{m}_{0,\sigma,k})\,\mathrm{d}x
    \notag\\
    &\qquad+\int_\Gamma( \mathcal{L}_m-\gamma\partial_t\psi_m-g_1(\psi_m))(\psi_m-\overline{m}_{0,\sigma,k})\,\mathrm{d}S
    \notag\\
    &\qquad-\int_\Omega |\nabla\varphi_m|^2\,\mathrm{d}x-\int_\Gamma|\nabla_{\!\bm{\tau}\,}\psi_m|^2\,\mathrm{d}S
    -\chi(K)\int_\Gamma |\varphi_m-\psi_m|^2\,\mathrm{d}S.
    \label{uni-0323-1}
\end{align}
For the terms on the left-hand side of \eqref{uni-0323-1},
using \eqref{MZ-ieq-1} and \eqref{MZ-ieq-2},
we obtain
\begin{align}
    &\int_\Omega f_0(\varphi_m)(\varphi_m-\overline{m}_{0,\sigma,k})\,\mathrm{d}x
    +\int_\Gamma g_0(\psi_m)(\psi_m-\overline{m}_{0,\sigma,k})\,\mathrm{d}S\notag\\
    &\quad\geq c_1\int_\Omega |f_0(\varphi_m)|\,\mathrm{d}x
    +c_1\int_\Gamma |g_0(\psi_m)|\,\mathrm{d}S-c_2(|\Omega|+|\Gamma|).    \label{uni-0323-2}
\end{align}
Moreover, due to \eqref{assump-0323-1}, it holds
\begin{align}
    &k^{-1}\int_\Omega f_\sigma(\varphi_m)(\varphi_m-\overline{m}_{0,\sigma,k})\,\mathrm{d}x
    +k^{-1}\int_\Gamma f_\sigma(\psi_m)(\psi_m-\overline{m}_{0,\sigma,k})\,\mathrm{d}S
    \notag\\
    &\quad=k^{-1}(1-\sigma)\int_\Omega f_0\Big(\frac{\varphi_m}{1-\sigma}\Big)\Big(\frac{\varphi_m}{1-\sigma}-\overline{m}_{0,\sigma,k}\Big)\,\mathrm{d}x
    \notag\\
    &\qquad +k^{-1}(1-\sigma)\int_\Gamma f_0\Big(\frac{\psi_m}{1-\sigma}\Big)\Big(\frac{\psi_m}{1-\sigma}-\overline{m}_{0,\sigma,k}\Big)\,\mathrm{d}S
    \notag\\
    &\qquad-k^{-1}\sigma\, \overline{m}_{0,\sigma,k}\int_\Omega f_0\Big(\frac{\varphi_m}{1-\sigma}\Big)\,\mathrm{d}x
    -k^{-1}\sigma \overline{m}_{0,\sigma,k}\int_\Gamma f_0\Big(\frac{\psi_m}{1-\sigma}\Big)\,\mathrm{d}S
    \notag\\
    &\quad\geq k^{-1}(1-\sigma)c_1\Big(\int_\Omega\Big|f_0\Big(\frac{\varphi_m}{1-\sigma}\Big)\Big|\,\mathrm{d}x
    +\int_\Gamma\Big|f_0\Big(\frac{\psi_m}{1-\sigma}\Big)\Big|\,\mathrm{d}S\Big) -k^{-1}(1-\sigma)c_2(|\Omega|+|\Gamma|)\notag\\
    &\qquad-k^{-1}\sigma\int_\Omega \Big|f_0\Big(\frac{\varphi_m}{1-\sigma}\Big)\Big|\,\mathrm{d}x
    -k^{-1}\sigma \int_\Gamma \Big|f_0\Big(\frac{\psi_m}{1-\sigma}\Big)\Big|\,\mathrm{d}S
    \notag\\
    &\quad=k^{-1}(c_1(1-\sigma)-\sigma)\Big(\int_\Omega\Big|f_0\Big(\frac{\varphi_m}{1-\sigma}\Big)\Big|\,\mathrm{d}x
    +\int_\Gamma\Big|f_0\Big(\frac{\psi_m}{1-\sigma}\Big)\Big|\,\mathrm{d}S\Big) -k^{-1}(1-\sigma)c_2(|\Omega|+|\Gamma|)
    \notag\\
    &\quad\geq \frac{c_1}{2}\Big(k^{-1}\int_\Omega\Big|f_0\Big(\frac{\varphi_m}{1-\sigma}\Big)\Big|\,\mathrm{d}x
    +k^{-1}\int_\Gamma\Big|f_0\Big(\frac{\psi_m}{1-\sigma}\Big)\Big|\,\mathrm{d}S\Big)-k^{-1}(1-\sigma)c_2(|\Omega|+|\Gamma|)
    \notag\\
    &\quad=\frac{c_1}{2}\Big(\int_\Omega k^{-1}|f_\sigma(\varphi_m)|\,\mathrm{d}x
    +\int_\Gamma k^{-1} |f_\sigma(\psi_m)|\,\mathrm{d}S\Big)-k^{-1}(1-\sigma)c_2(|\Omega|+|\Gamma|).\label{uni-0323-3}
\end{align}
For the terms on the right-hand side of \eqref{uni-0323-1},
we apply  H\"older's inequality to get
\begin{align*}
&\int_\Omega(\mu_m-\gamma\partial_t\varphi_m-f_1(\varphi_m))(\varphi_m-\overline{m}_{0,\sigma,k})\,\mathrm{d}x\notag\\
&\qquad+\int_\Gamma( \mathcal{L}_m-\gamma\partial_t\psi_m-g_1(\psi_m))(\psi_m-\overline{m}_{0,\sigma,k})\,\mathrm{d}S\notag\\
 &\qquad-\int_\Omega |\nabla\varphi_m|^2\,\mathrm{d}x-\int_\Gamma|\nabla_{\!\bm{\tau}\,}\psi_m|^2\,\mathrm{d}S
 {-\chi(K)\int_\Gamma |\varphi_m-\psi_m|^2\,\mathrm{d}S}\\
&\quad\leq  \int_\Omega(\mu_m-\overline{m}(\boldsymbol{\mu}_m))\varphi_m\,\mathrm{d}x
+\int_\Gamma(\mathcal{L}_m-\overline{m}(\boldsymbol{\mu}_m))\psi_m\,\mathrm{d}S\notag\\
&\qquad-\int_\Omega\gamma\partial_t\varphi_m(\varphi_m-\overline{m}_{0,\sigma,k})\,\mathrm{d}x
-\int_\Gamma\gamma\partial_t\psi_m(\psi_m-\overline{m}_{0,\sigma,k})\,\mathrm{d}S\notag\\
&\qquad-\int_\Omega f_1(\varphi_m)(\varphi_m-\overline{m}_{0,\sigma,k})\,\mathrm{d}x
-\int_\Gamma g_1(\psi_m)(\psi_m-\overline{m}_{0,\sigma,k})\,\mathrm{d}S\notag\\
&\quad\leq C\Big(1+\|\boldsymbol{\mu}_m-\overline{m}(\boldsymbol{\mu}_m)\boldsymbol{1}\|_{\mathcal{L}^2}
+\gamma\|\partial_t\boldsymbol{\varphi}_m\|_{\mathcal{L}^2}\Big)\notag\\
&\quad\leq C\Big(1+\|\nabla\mu_m\|_{\mathbf{L}^2(\Omega)}
+\|\nabla_{\!\bm{\tau}\,}\mathcal{L}_m\|_{\mathbf{L}^2(\Gamma)}
+\frac{1}{\sqrt{L}}\|\mathcal{L}_m-\mu_m\|_{H_\Gamma}
+\gamma\|\partial_t\boldsymbol{\varphi}_m\|_{\mathcal{L}^2}\Big).
\end{align*}
Collecting \eqref{uni-0323-1}, \eqref{uni-0323-2} and \eqref{uni-0323-3}, we obtain
\begin{align}
    &\|f_0(\varphi_m)\|_{L^1(\Omega)} +\|g_0(\psi_m)\|_{L^1(\Gamma)}+k^{-1}\|f_\sigma(\varphi_m)\|_{L^1(\Omega)}+k^{-1}\|f_\sigma(\psi_m)\|_{L^1(\Gamma)}
    \notag\\
    &\quad\leq C(1+\|\mathbb{P}\boldsymbol{\mu}_m\|_{\mathcal{H}_{L,0}^1}+\gamma\|\partial_t\boldsymbol{\varphi}_m\|_{\mathcal{L}^2}),
    \label{260807-1}
\end{align}
which together with \eqref{m-uni-9} and \eqref{m-uni-10} implies that
\begin{align}
\|f_0(\varphi_m)\|_{L^2(0,T;L^1(\Omega))}+	\|g_0(\psi_m)\|_{L^2(0,T;L^1(\Gamma))}
&\leq C,
\label{mu-uni-1}\\
k^{-1}\|f_\sigma(\varphi_m)\|_{L^2(0,T;L^1(\Omega))}+ k^{-1}\|f_\sigma(\psi_m)\|_{L^2(0,T;L^1(\Gamma))}
&\leq C.
\label{uni-0323-5}
\end{align}
Integrating \eqref{eqd4} over $\Omega$ and \eqref{eqd5} over $\Gamma$, adding the resultants together, we have
\begin{align}
&\Big|\int_\Omega \mu_m\,\mathrm{d}x+\int_\Gamma \mathcal{L}_m\,\mathrm{d}S\Big|
\notag\\
&\quad\leq \Big|\int_\Omega f_0(\varphi_m)\,\mathrm{d}x
+k^{-1}\int_\Omega f_\sigma(\varphi_m)\,\mathrm{d}x+\int_\Omega f_1(\varphi_m)\,\mathrm{d}x\Big|
\notag\\
&\qquad+\Big|\int_\Gamma g_0(\psi_m)\,\mathrm{d}S
+k^{-1}\int_\Gamma f_\sigma(\psi_m)\,\mathrm{d}S+\int_\Gamma g_1(\psi_m)\,\mathrm{d}S\Big|
\notag\\
&\quad\leq C\Big(1+\|f_0(\varphi_m)\|_{L^1(\Omega)}+\|g_0(\psi_m)\|_{L^1(\Gamma)}
+k^{-1}\|f_\sigma(\varphi_m)\|_{L^1(\Omega)}+k^{-1}\|f_\sigma(\psi_m)\|_{L^1(\Gamma)}\Big).
\label{260807-2}
\end{align}
This combined with \eqref{mu-uni-1} and \eqref{uni-0323-5} implies that
\begin{align}
	\|\overline{m}(\boldsymbol{\mu}_m)\|_{L^2(0,T)}\leq C.\label{mu-uni-2}
\end{align}
Hence, using Lemma~\ref{energy-estimates} and the generalized Poincar\'e inequality, we can conclude
\begin{align}
	\|\boldsymbol{\mu}_m \|_{L^2(0,T;\mathcal{H}^1)}\leq C,\label{mu-uni-3}
\end{align}
for some positive constant $C$ independent of $m$, $\gamma$, $\sigma$ and $k$.
\end{proof}

The following lemma provides uniform bounds on the singular nonlinearities.
\begin{lemma}
\label{f-L2L2}
There exists a positive constant $C$, {independent of $m$, $\gamma$, $\sigma$ and $k$}, such that
\begin{align}
\|f_0(\varphi_m)\|_{L^2(0,T;H)}+\|f_0(\psi_m)\|_{L^2(0,T;H_\Gamma)}
&\leq C,\label{uni-0323-f_0-L2-L2}
\\
{k^{-1}}\|f_\sigma(\varphi_m)\|_{L^2(0,T;H)}+{k^{-1}}\|f_\sigma(\psi_m)\|_{L^2(0,T;H_\Gamma)}
&\leq C.\label{uni-0323-f_lambda-L2}
\end{align}
\end{lemma}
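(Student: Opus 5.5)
Since the singular nonlinearities are applied pointwise to bounded functions, I will use the standard monotonicity testing of the chemical potential equations. The bulk equation \eqref{eqd4} will be tested by $f_0(\varphi_m)$ and the surface equation \eqref{eqd5} by $f_0(\psi_m)$; the latter choice is where assumption \ref{ASS:A3} comes in. These test functions are admissible. Indeed, by \eqref{REG:APPROX} we have $|\varphi_m|,|\psi_m|\le(1-\sigma)(1-\delta)$, so $f_0(\varphi_m)\in L^2(0,T;H^1(\Omega))$ and $f_0(\psi_m)\in L^2(0,T;H^1(\Gamma))$, and their traces are compatible.

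Integrating by parts and using the boundary condition \eqref{eqd3''}, the Laplacian terms produce
\[
\int_\Omega f_0'(\varphi_m)|\nabla\varphi_m|^2\,\mathrm{d}x+\int_\Gamma f_0'(\psi_m)|\nabla_{\!\bm{\tau}\,}\psi_m|^2\,\mathrm{d}S\ \ge 0,
\]
together with the boundary coupling term
\[
\chi(K)\int_\Gamma(\psi_m-\varphi_m)\bigl(f_0(\psi_m)-f_0(\varphi_m)\bigr)\,\mathrm{d}S\ \ge0 .
\]
The coupling term is nonnegative by the monotonicity of $f_0$, exactly as in \cite[Proof of Lemma 3.4]{LvWuAA}. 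The $\gamma$-terms are $\gamma\frac{\mathrm{d}}{\mathrm{d}t}\int_\Omega F_0(\varphi_m)\,\mathrm{d}x$ and its surface analogue. After integrating in time, they are controlled by $\gamma\bigl(\int_\Omega F_0(\varphi_{0,\sigma,k})\,\mathrm{d}x+\int_\Gamma F_0(\psi_{0,\sigma,k})\,\mathrm{d}S\bigr)\le C$, since $F_0$ is bounded on $[-1,1]$ and $F_0\ge 0$ by the convexity assumptions in \ref{ASS:A2}. The $k^{-1}f_\sigma$ terms are nonnegative by \eqref{f_lambda_p3}. The remaining products are the cross terms
\[
\int_\Omega f_0(\varphi_m)^2\,\mathrm{d}x,\qquad \int_\Gamma g_0(\psi_m)f_0(\psi_m)\,\mathrm{d}S .
\]
Since $f_0$ and $g_0$ have the same sign as $r$, assumption \ref{ASS:A3} gives $g_0 f_0\ge \varrho^{-1}(|f_0|-c_0)|f_0|$. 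This yields $\|f_0(\psi_m)\|_{H_\Gamma}^2$ up to lower-order terms. The right-hand sides $\int_\Omega(\mu_m-f_1(\varphi_m))f_0(\varphi_m)\,\mathrm{d}x$ and its surface analogue are handled by Young's inequality, using $\|\boldsymbol{\mu}_m\|_{L^2(0,T;\mathcal{H}^1)}\le C$ from Lemma~\ref{mu-H1-phi-H2} and the boundedness of $f_1$ and $g_1$. Integrating in time then gives \eqref{uni-0323-f_0-L2-L2}.

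To prove \eqref{uni-0323-f_lambda-L2}, I will repeat the argument with the test functions $k^{-1}f_\sigma(\varphi_m)$ and $k^{-1}f_\sigma(\psi_m)$. The gradient terms are again nonnegative, since $f_\sigma'>0$. The $K$-coupling term is nonnegative by the monotonicity of $f_\sigma$. The cross products $f_0f_\sigma$ and $g_0f_\sigma$ are nonnegative by \eqref{f_lambda_p3}. The $\gamma$-terms give $\gamma k^{-1}\frac{\mathrm{d}}{\mathrm{d}t}\int_\Omega F_\sigma(\varphi_m)\,\mathrm{d}x$ and its surface analogue, which are bounded by the initial data in the same way as before. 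What remains is $k^{-2}\|f_\sigma\|^2$ on the left against $(\mu_m-f_1)k^{-1}f_\sigma$ on the right, and Young's inequality concludes.

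I expect the main obstacle to be the surface cross term $\int_\Gamma g_0(\psi_m)f_0(\psi_m)\,\mathrm{d}S$. Only \ref{ASS:A3} links $f_0$ and $g_0$, so the sign and absorption must be checked carefully. The alternative is to test by $g_0(\psi_m)$ on the boundary, but then the bulk-surface coupling term loses its sign unless the bulk test function is also $g_0(\varphi_m)$. For this reason I keep $f_0$ on both parts and invoke \ref{ASS:A3}. The statement bounds $f_0(\psi_m)$, not $g_0(\psi_m)$, which is consistent with this choice.
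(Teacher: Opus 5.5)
Your proposal is correct and follows the paper's proof. You use the same test functions $f_0(\varphi_m)$, $f_0(\psi_m)$ (and then $k^{-1}f_\sigma(\varphi_m)$, $k^{-1}f_\sigma(\psi_m)$), the same use of \ref{ASS:A3}, \eqref{f_lambda_p3} and the sign of the Robin coupling term, and Young's inequality with Lemma~\ref{mu-H1-phi-H2}. The only difference is the viscous term: the paper moves $\gamma\partial_t\boldsymbol{\varphi}_m$ to the right-hand side and uses $\gamma^2\|\partial_t\boldsymbol{\varphi}_m\|_{L^2(0,T;\mathcal{L}^2)}^2\le C\gamma$ from \eqref{m-uni-10}, whereas you write it as the exact derivative $\gamma\frac{\mathrm{d}}{\mathrm{d}t}\int F_0$ (resp.\ $\gamma k^{-1}\frac{\mathrm{d}}{\mathrm{d}t}\int F_\sigma$) and use $0\le F_0\le \max_{[-1,1]}F_0$, which works equally well.
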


\begin{proof}
Multiplying \eqref{eqd4} by $f_0(\varphi_m)$ and integrating over $\Omega$,
multiplying \eqref{eqd5} by $f_0(\psi_m)$ and integrating over $\Gamma$, adding the resultants together,
we find that
\begin{align}
&\int_\Omega|f_0(\varphi_m)|^2\,\mathrm{d}x+\int_\Gamma g_0(\psi_m)f_0(\psi_m)\,\mathrm{d}S
+k^{-1}\int_\Omega f_\sigma(\varphi_m)f_0(\varphi_m)\,\mathrm{d}x
+k^{-1}\int_\Gamma f_\sigma(\psi_m) f_0(\psi_m)\,\mathrm{d}S
\notag\\
&\quad=\int_\Omega(\mu_m-\gamma\partial_t\varphi_m-f_1(\varphi_m))f_0(\varphi_m)\,\mathrm{d}x
+\int_\Gamma(\mathcal{L}_m-\gamma\partial_t\psi_m-g_1(\psi_m))f_0(\psi_m)\,\mathrm{d}S
\notag\\
&\qquad-\int_\Omega f_0'(\varphi_m)|\nabla\varphi_m|^2\,\mathrm{d}x
-\int_\Gamma g_0'(\psi_m)|\nabla_{\!\bm{\tau}\,}\psi_m|^2\,\mathrm{d}S
 {+\int_\Gamma \partial_{\mathbf{n}}\varphi_m (f_0(\varphi_m)-f_0(\psi_m))\,\mathrm{d}S}
 \notag\\
&\quad\leq \int_\Omega(\mu_m-\gamma\partial_t\varphi_m-f_1(\varphi_m))f_0(\varphi_m)\,\mathrm{d}x
+\int_\Gamma(\mathcal{L}_m-\gamma\partial_t\psi_m-g_1(\psi_m))f_0(\psi_m)\,\mathrm{d}S.
\label{0306-1}
\end{align}
Due to \eqref{f_lambda_p3}, it holds
\begin{align}
    k^{-1}\int_\Omega f_\sigma(\varphi_m)f_0(\varphi_m)\,\mathrm{d}x
    +k^{-1}\int_\Gamma f_\sigma(\psi_m) f_0(\psi_m)\,\mathrm{d}S\geq0.\label{uni-0323-sign-1}
\end{align}
Using the compatibility condition \ref{ASS:A3} and noticing that $f_0(r)$, $g_0(r)$ have the same sign for $r\in(-1,1)$,
we get
\begin{align}
	\int_\Gamma f_0(\psi_m) g_0(\psi_m)\,\mathrm{d}S
    \geq\frac{1}{2\varrho}\int_\Gamma|f_0(\psi_m)|^2\,\mathrm{d}S-C.\label{0306-1'}
\end{align}
The right-hand side of \eqref{0306-1} can be estimated as follows:
\begin{align}
	&\int_\Omega(\mu_m-\gamma\partial_t\varphi_m-f_1(\varphi_m))f_0(\varphi_m)\,\mathrm{d}x
    +\int_\Gamma(\mathcal{L}_m-\gamma\partial_t\psi_m-g_1(\psi_m))f_0(\psi_m)\,\mathrm{d}S
    \notag\\
	&\quad \leq\frac{1}{2}\int_\Omega|f_0(\varphi_m)|^2\,\mathrm{d}x
    +\frac{1}{4\varrho}\int_\Gamma|f_0(\psi_m)|^2\,\mathrm{d}S\notag\\
	&\qquad +\frac{3}{2}\Big(\int_\Omega|\mu_m|^2\,\mathrm{d}x
    +\gamma^2\int_\Omega |\partial_t\varphi_m|^2\,\mathrm{d}x
    +\int_\Omega|f_1(\varphi_m)|^2 \,\mathrm{d}x\Big)
    \notag\\
    &\qquad+3\varrho\Big(\int_\Gamma |\mathcal{L}_m|^2\,\mathrm{d}S
    +\gamma^2\int_\Gamma|\partial_t\psi_m|^2 \,\mathrm{d}S
    +\int_\Gamma|g_1(\psi_m)|^2\,\mathrm{d}S\Big). \label{0306-2}
\end{align}
Combining \eqref{m-uni-10}, \eqref{mu-uni-3} and \eqref{0306-1}--\eqref{0306-2},
we obtain \eqref{uni-0323-f_0-L2-L2}.
Next, multiplying \eqref{eqd4} by $k^{-1} f_\sigma(\varphi_m)$ and integrating over $\Omega$,
multiplying \eqref{eqd5} by $k^{-1} f_\sigma(\psi_m)$ and integrating over $\Gamma$,
a similar line of argument leads to \eqref{uni-0323-f_lambda-L2}.
\end{proof}

In the following three lemmas, we derive some uniform higher-order estimates.
\begin{lemma}
\label{H2}
There exists a positive constant $C$, {independent of $m$, $\gamma$, $\sigma$ and $k$}, such that
\begin{align}
\|g_0(\psi_m)\|_{L^2(0,T;H_\Gamma)}+\|\bm{\varphi}_m\|_{L^2(0,T;\mathcal{H}^2)}\leq C.\label{uni-0323-phi-H2}
\end{align}
\end{lemma}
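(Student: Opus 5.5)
We test the surface equation \eqref{eqd5} by $g_0(\psi_m)$ to obtain the $L^2(0,T;H_\Gamma)$ bound for $g_0(\psi_m)$, and then apply bulk-surface elliptic regularity to get $\boldsymbol{\varphi}_m\in L^2(0,T;\mathcal{H}^2)$. The boundary term $\partial_\mathbf{n}\varphi_m\,g_0(\psi_m)$ is the only obstruction.

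\textbf{Step 1: testing \eqref{eqd5} by $g_0(\psi_m)$.} Since $g_0'\ge\kappa>0$, integration by parts on the closed manifold $\Gamma$ gives the nonnegative term $\int_\Gamma g_0'(\psi_m)|\nabla_{\!\boldsymbol{\tau}\,}\psi_m|^2\,\mathrm{d}S$. The term $k^{-1}\int_\Gamma f_\sigma(\psi_m)g_0(\psi_m)\,\mathrm{d}S\ge 0$ by \eqref{f_lambda_p3}. For the boundary coupling $\int_\Gamma \partial_\mathbf{n}\varphi_m\, g_0(\psi_m)\,\mathrm{d}S$ we distinguish the two cases.
\begin{itemize}
\item If $K=+\infty$, then $\partial_\mathbf{n}\varphi_m=0$ and the term vanishes.
\item If $K\in(0,+\infty)$, then $\partial_\mathbf{n}\varphi_m=(\psi_m-\varphi_m)/K$ with $|\varphi_m|,|\psi_m|\le1$. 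Hence $|\partial_\mathbf{n}\varphi_m|\le 2/K$, and the term is bounded by $\frac14\|g_0(\psi_m)\|_{H_\Gamma}^2+C$.
\end{itemize}
The remaining right-hand side is $(\mathcal{L}_m-\gamma\partial_t\psi_m-g_1(\psi_m),g_0(\psi_m))_{H_\Gamma}$. By Young's inequality it is bounded by $\frac14\|g_0(\psi_m)\|_{H_\Gamma}^2+C(\|\mathcal{L}_m\|_{H_\Gamma}^2+\gamma^2\|\partial_t\psi_m\|_{H_\Gamma}^2+1)$. Integrating in time, we use Lemma~\ref{mu-H1-phi-H2} for $\mathcal{L}_m$ and \eqref{m-uni-10} with $\gamma<1$ for $\gamma^2\|\partial_t\psi_m\|^2_{L^2(0,T;H_\Gamma)}\le\gamma C$. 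This gives $\|g_0(\psi_m)\|_{L^2(0,T;H_\Gamma)}\le C$.

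\textbf{Step 2: the elliptic problem.} We view \eqref{eqd4}, \eqref{eqd3''}, \eqref{eqd5} as the bulk-surface elliptic problem
\begin{align*}
-\Delta\varphi_m=\mu_m-\gamma\partial_t\varphi_m-f(\varphi_m)-k^{-1}f_\sigma(\varphi_m)&\quad\text{in }\Omega,\\
-\Delta_{\boldsymbol{\tau}}\psi_m+\partial_\mathbf{n}\varphi_m=\mathcal{L}_m-\gamma\partial_t\psi_m-g(\psi_m)-k^{-1}f_\sigma(\psi_m)&\quad\text{on }\Gamma,
\end{align*}
with the Robin or Neumann coupling \eqref{eqd3''}. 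The right-hand sides lie in $L^2(0,T;\mathcal{L}^2)$ with uniform bounds, using Lemma~\ref{mu-H1-phi-H2}, \eqref{m-uni-10}, Lemma~\ref{f-L2L2}, Step~1 and the Lipschitz property of $f_1,g_1$.

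\textbf{Step 3: $\mathcal{H}^2$ regularity.} We apply the $\mathcal{H}^2$-regularity theory for this bulk-surface problem with $K\in(0,+\infty]$.
\begin{itemize}
\item For $K=+\infty$ the system decouples. Neumann regularity gives $\|\varphi_m\|_{H^2(\Omega)}\le C(\|\Delta\varphi_m\|_H+\|\varphi_m\|_V)$, and surface elliptic regularity on $\Gamma$ gives the bound for $\psi_m$.
\item For $K\in(0,\infty)$ we handle the coupling in two stages. First, $\varphi_m$ solves a Robin problem $K\partial_\mathbf{n}\varphi_m+\varphi_m=\psi_m$ with datum $\psi_m\in H^1(\Gamma)\subset H^{1/2}(\Gamma)$, which yields $\varphi_m\in H^{3/2}(\Omega)$. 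Hence $\partial_\mathbf{n}\varphi_m\in H_\Gamma$, and surface regularity gives $\psi_m\in H^2(\Gamma)$. Second, since now $\psi_m\in H^{1/2}$ is upgraded to $H^{1}(\Gamma)$ or better, Robin regularity gives $\varphi_m\in H^2(\Omega)$.
\end{itemize}
Alternatively, one can invoke directly the regularity results cited via \cite{GKS} or \cite{KL}.

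Combining these with the uniform $L^\infty(0,T;\mathcal{H}^1)$ bound \eqref{m-uni-7} and squaring and integrating in time yields \eqref{uni-0323-phi-H2}. The main technical point is Step~1, namely the boundary term involving $\partial_\mathbf{n}\varphi_m$. This is where $K>0$ and the $L^\infty$ bound on $\boldsymbol{\varphi}_m$ are used; the elliptic step is standard.
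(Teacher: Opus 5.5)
Your proposal is correct and follows essentially the same route as the paper: test \eqref{eqd5} by $g_0(\psi_m)$, control $\partial_\mathbf{n}\varphi_m$ through the boundary condition \eqref{eqd3''}, and then apply $\mathcal{H}^2$-regularity for the bulk-surface elliptic system. Your two-stage bootstrap in Step~3 is more roundabout than needed. Since $\partial_\mathbf{n}\varphi_m=(\psi_m-\varphi_m)/K$ is already uniformly bounded in $L^\infty(0,T;H^{1/2}(\Gamma))$ by \eqref{m-uni-7}, Neumann regularity gives $\varphi_m\in H^2(\Omega)$ directly.
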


\begin{proof}
Multiplying \eqref{eqd5} by $g_0(\psi_m)$ and integrating over $\Gamma$, we get
\begin{align}
&\int_\Omega|g_0(\psi_m)|^2\,\mathrm{d}S
+k^{-1}\int_\Gamma f_\sigma(\psi_m)g_0(\psi_m)\,\mathrm{d}S
\notag\\
&\quad=\int_\Omega \big(\mathcal{L}_m-\gamma\partial_t\psi_m-g_1(\psi_m)-\partial_{\mathbf{n}}\varphi_m \big)g_0(\psi_m)\,\mathrm{d}S
-\int_\Gamma g_0'(\psi_m)|\nabla_{\!\bm{\tau}\,}\psi_m|^2\,\mathrm{d}S
\notag\\
&\quad\leq \frac{1}{2}\int_\Omega|g_0(\psi_m)|^2\,\mathrm{d}S
+2\Big(\int_\Gamma|\mathcal{L}_m|^2\,\mathrm{d}S+\gamma^2\int_\Gamma|\partial_t\psi_m|^2\,\mathrm{d}S
+\int_\Gamma|g_1(\psi_m)|^2\,\mathrm{d}S
+\int_\Gamma|\partial_{\mathbf{n}}\varphi_m|^2\,\mathrm{d}S\Big).
\notag
\end{align}
This together with \eqref{eqd3''}, \eqref{f_lambda_p3}, \eqref{m-uni-10} and \eqref{mu-uni-3} implies that
\begin{align}
	\|g_0(\psi_m)\|_{L^2(0,T;H_\Gamma)}\leq C.\label{0306-4}
\end{align}
Let us rewrite the equations for $\mu_m$ and $\mathcal{L}_m$ as a bulk-surface elliptic system of $(\varphi_m,\psi_m)$:
\begin{align*}
    &-\Delta\varphi_m=\mu_m-\gamma\partial_t\varphi_m-f(\varphi_m)-k^{-1}f_\sigma(\varphi_m)
    &&\text{a.e.~in }Q_T,
    \\
    &
    \begin{cases}
        K\partial_\mathbf{n}\varphi_m=\psi_m-\varphi_m,& K\in(0,+\infty)\\
        \partial_\mathbf{n}\varphi_m=0,&K=+\infty
    \end{cases}
    && {\text{a.e.~on }\Sigma_T,}
    \\
    &-\Delta_{\bm{\tau}}\psi_m + \partial_{\mathbf{n}}\varphi_m
    =\mathcal{L}_m - \gamma\partial_t\psi_m-g(\psi_m)-k^{-1} f_\sigma(\psi_m)
    &&\text{a.e.~on }\Sigma_T.
\end{align*}
Applying the elliptic regularity theory for bulk-surface elliptic systems (see, e.g., \cite[Proposition~A.1]{KSJDE})
together with \eqref{m-uni-10}, \eqref{mu-uni-3}, \eqref{uni-0323-f_0-L2-L2},
\eqref{uni-0323-f_lambda-L2} and \eqref{0306-4}, we deduce that
\begin{align}
	\|\boldsymbol{\varphi}_m\|_{\mathcal{H}^2}
    &\leq C\Big(1+\|\boldsymbol{\mu}_m\|_{\mathcal{L}^2}
    +\gamma\|\partial_t \boldsymbol{\varphi}_m\|_{\mathcal{L}^2}
    +\|f_0(\varphi_m)\|_H+\|g_0(\psi_m)\|_{H_\Gamma}
    \notag\\
    &\qquad\quad+k^{-1}\|f_\sigma(\varphi_m)\|_H
    +k^{-1}\|f_\sigma(\psi_m) \|_{H_\Gamma}\Big).
    \label{260806-1}
\end{align}
This completes the proof of Lemma~\ref{H2}.
\end{proof}

\begin{lemma}    \label{higher}
There exists a positive constant $C$, independent of $m$, $\gamma$, $\sigma$, and $k$, such that
\begin{align}
\sqrt{\gamma}\|\partial_t\bm{\varphi}_m\|_{L^\infty(0,T;\mathcal{L}^2)}
+\|\bm{\mu}_m\|_{L^\infty(0,T;\mathcal{H}^1)}+\|\partial_t\bm{\varphi}_m\|_{L^2(0,T;\mathcal{H}^1)}
+\|\partial_t\mathbf{v}_m\|_{L^2(0,T;\mathbf{H}^1(\Omega))}\leq C.
\label{0227-23}
\end{align}
\end{lemma}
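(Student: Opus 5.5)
The natural route is a combined higher-order energy estimate. We differentiate the Cahn--Hilliard part in time and test with $\partial_t\boldsymbol{\varphi}_m$, and in parallel test the Galerkin velocity equation \eqref{eqd1} with $\mathbf{w}=\partial_t\mathbf{v}_m$. The regularities \eqref{REG:APPROX}, in particular $\boldsymbol{\mu}_m\in H^1(0,T;\mathcal{H}^1)$ and $\partial_t\boldsymbol{\varphi}_m\in L^2(0,T;\mathcal{H}^2)\cap H^1(0,T;\mathcal{L}^2)$, make these manipulations rigorous at the approximate level. The result is a Gronwall inequality whose constants do not depend on $m,\gamma,\sigma,k$.

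\textbf{Step 1: the Cahn--Hilliard part.} We test \eqref{eqd2} with $\partial_t\mu_m$ and \eqref{eqd3} with $\partial_t\mathcal{L}_m$, add them, and use \eqref{eqd3'}. Substituting \eqref{eqd4}, \eqref{eqd5} and \eqref{eqd3''} for $\partial_t\mu_m$ and $\partial_t\mathcal{L}_m$ gives
\begin{align*}
&\frac{1}{2}\frac{\mathrm{d}}{\mathrm{d}t}\Big(\|\nabla\mu_m\|_{\mathbf{L}^2(\Omega)}^2+\|\nabla_{\!\bm{\tau}\,}\mathcal{L}_m\|_{\mathbf{L}^2(\Gamma)}^2+\tfrac1L\|\mathcal{L}_m-\mu_m\|_{H_\Gamma}^2+\gamma\|\partial_t\boldsymbol{\varphi}_m\|_{\mathcal{L}^2}^2\Big)\\
&\qquad+\|\nabla\partial_t\varphi_m\|_{\mathbf{L}^2(\Omega)}^2+\|\nabla_{\!\bm{\tau}\,}\partial_t\psi_m\|_{\mathbf{L}^2(\Gamma)}^2+\chi(K)\|\partial_t(\varphi_m-\psi_m)\|_{H_\Gamma}^2\\
&\quad\le C_{\mathrm{Lip}}\|\partial_t\boldsymbol{\varphi}_m\|_{\mathcal{L}^2}^2-\int_\Omega\mathbf{v}_m\cdot\nabla\varphi_m\,\partial_t\mu_m\,\mathrm{d}x-\int_\Gamma\mathbf{v}_{m,\bm{\tau}}\cdot\nabla_{\!\bm{\tau}\,}\psi_m\,\partial_t\mathcal{L}_m\,\mathrm{d}S.
\end{align*}
Here the monotone parts $f_0'$, $g_0'$ and $f_\sigma'$ are nonnegative and are dropped. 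The convective terms contain $\partial_t\mu_m$, which we do not control, so we shift the time derivative:
\[
-\int_\Omega\mathbf{v}_m\cdot\nabla\varphi_m\,\partial_t\mu_m=-\frac{\mathrm{d}}{\mathrm{d}t}\int_\Omega\mathbf{v}_m\cdot\nabla\varphi_m\,\mu_m+\int_\Omega(\partial_t\mathbf{v}_m\cdot\nabla\varphi_m+\mathbf{v}_m\cdot\nabla\partial_t\varphi_m)\mu_m,
\]
and similarly on $\Gamma$. The term under $\frac{\mathrm{d}}{\mathrm{d}t}$ is absorbed into the energy. Using $\mathrm{div}\,\mathbf{v}_m=0$ and the $L^\infty(\mathcal{H}^1)$ bounds \eqref{m-uni-7}, it is bounded by $\varepsilon\|\nabla\mu_m\|^2+C(1+|\overline m(\boldsymbol\mu_m)|^2)$. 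The mean $\overline m(\boldsymbol\mu_m)$ is controlled pointwise in time by \eqref{260807-1}--\eqref{260807-2}, that is, by $C(1+\|\mathbb P\boldsymbol\mu_m\|_{\mathcal H^1_{L,0}}+\gamma\|\partial_t\boldsymbol\varphi_m\|_{\mathcal L^2})$. The remaining terms involve $\partial_t\mathbf{v}_m$ in $\mathbf{H}^1$ (handled by the trace and Sobolev embeddings), $\mu_m$ in $L^3$, $\nabla\varphi_m$ in $L^6$ or $L^4$, and $\nabla\partial_t\varphi_m$, $\nabla_{\!\bm\tau}\partial_t\psi_m$. We absorb them using Young's inequality, the $L^2(0,T;\mathcal{H}^2)$ bound of Lemma~\ref{H2}, and the Poincaré-type estimate $\|\partial_t\boldsymbol\varphi_m\|_{\mathcal L^2}\le C(\|\partial_t\boldsymbol\varphi_m\|_{(\mathcal H^1_{L,0})'}+\ldots)$. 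The $\mathcal{L}^2$ norm of $\partial_t\boldsymbol\varphi_m$ is itself controlled through $\|\partial_t\boldsymbol{\varphi}_m\|_{(\mathcal H^1)'}\le C(\|\nabla\mu_m\|+\|\nabla_{\!\bm\tau}\mathcal L_m\|+\|\mathbf v_m\|_{\mathbf H^1})$, combined with interpolation against the $\mathcal{H}^1$ dissipation.

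\textbf{Step 2: the velocity part.} We test \eqref{eqd1} with $\partial_t\mathbf{v}_m$, as in \eqref{uni-ti-1}, but now estimate every term without using the inverse inequality \eqref{inverse-So}. The term $\int\rho(\varphi_m)(\mathbf{v}_m\cdot\nabla)\mathbf{v}_m\cdot\partial_t\mathbf v_m$ is bounded by $\|\mathbf v_m\|_{\mathbf L^6}\|\nabla\mathbf v_m\|_{\mathbf L^2}\|\partial_t\mathbf v_m\|_{\mathbf L^3}$, which uses the $\mathbf{H}^1$ bound on $\partial_t\mathbf{v}_m$ supplied by $\alpha>0$. The term $\rho'\int(\nabla\mu_m\cdot\nabla)\mathbf v_m\cdot\partial_t\mathbf v_m$ is bounded by $\|\nabla\mu_m\|_{\mathbf L^2}\|\nabla\mathbf v_m\|_{\mathbf L^2}\|\partial_t\mathbf v_m\|_{\mathbf L^\infty}$. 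Since $\partial_t\mathbf v_m$ is only in $\mathbf{H}^1$, we instead integrate by parts or use $\nabla\mu_m\in\mathbf L^3$: interpolating with $\|\boldsymbol\mu_m\|_{\mathcal H^2}$ and using the elliptic estimate from \eqref{eqd2}, \eqref{eqd3}, we get $\|\boldsymbol\mu_m\|_{\mathcal H^2}\le C(\|\partial_t\boldsymbol\varphi_m\|_{\mathcal L^2}+\|\mathbf v_m\cdot\nabla\varphi_m\|+\|\boldsymbol\mu_m\|_{\mathcal H^1})$. The boundary terms $K_2$, $K_4$, $K_{11}$, i.e.\ the $\beta$, $\psi_m\nabla_{\!\bm\tau}\mathcal L_m$ and $\partial_{\mathbf n}\mu_m=(\mathcal L_m-\mu_m)/L$ terms, are bounded via $\|\partial_t\mathbf v_m\|_{\mathbf L^4(\Gamma)}\le C\|\partial_t\mathbf v_m\|_{\mathbf H^1}$ and Lemma~\ref{Korn-lemma}. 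The $1/L$ term must be kept as $L^{-1/2}\cdot L^{-1/2}\|\mathcal L_m-\mu_m\|$ times the trace of $\mathbf{v}_m$, so that the constants stay uniform. The $\gamma$-terms stay on the left-hand side.

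\textbf{Step 3: Gronwall.} Adding the two inequalities gives
\[
\frac{\mathrm d}{\mathrm dt}Y+\tfrac12\big(\|\partial_t\boldsymbol\varphi_m\|_{\mathcal H^1}^2+\|\partial_t\mathbf v_m\|_{\mathbf H^1}^2\big)\le h(t)\,Y+C,
\]
with $Y\sim\|\mathbb P\boldsymbol\mu_m\|_{\mathcal H^1_{L,0}}^2+\gamma\|\partial_t\boldsymbol\varphi_m\|^2_{\mathcal L^2}$ (plus the harmless correction) and $h\in L^1(0,T)$ by Lemmas~\ref{energy-estimates}--\ref{H2}. We then need $Y(0)$ bounded uniformly. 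This follows from assumption \ref{ASS:A4} and \eqref{0315-1}--\eqref{0315-2}: at $t=0$, $\boldsymbol\mu_m(0)-\gamma\partial_t\boldsymbol\varphi_m(0)$ equals the approximated chemical potentials, whose $\mathcal H^1$ norm is at most $\|\widetilde\mu_0\|_V+\|\widetilde{\mathcal L}_0\|_{V_\Gamma}$ up to the Lipschitz parts. The value $\partial_t\boldsymbol\varphi_m(0)$ is obtained by testing the equations at $t=0$ with $\partial_t\boldsymbol\varphi_m(0)$, which yields $\sqrt\gamma\|\partial_t\boldsymbol\varphi_m(0)\|_{\mathcal L^2}\le C$. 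The full $\mathcal H^1$ bound on $\boldsymbol\mu_m$ then follows from the mean estimate, and the time-integrated dissipation gives the $L^2$ bounds on $\partial_t\boldsymbol\varphi_m$ and $\partial_t\mathbf v_m$.

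The main obstacle is the convective coupling in Step 1, namely the terms with $\partial_t\mu_m$ and $\partial_t\mathcal L_m$ paired with $\mathbf v_m\cdot\nabla\varphi_m$, together with the $\rho'$ transport term in Step 2. Both need $\boldsymbol{\mu}_m$ in $\mathcal{H}^2$ and $\partial_t\mathbf v_m$ in $\mathbf H^1$ simultaneously. The first thing to try is the time-derivative shift combined with the elliptic $\mathcal H^2$ estimate for $\boldsymbol\mu_m$, taken from its own equations with right-hand side $\partial_t\boldsymbol\varphi_m+\mathbf v_m\cdot\nabla\boldsymbol\varphi_m$.
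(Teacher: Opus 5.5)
Your route is the paper's: test \eqref{eqd2} and \eqref{eqd3} with $\partial_t\mu_m$ and $\partial_t\mathcal{L}_m$, and shift the time derivative off $\boldsymbol{\mu}_m$ in the convective terms. You then add the velocity equation tested with $\partial_t\mathbf{v}_m$ without invoking \eqref{inverse-So}, and control $\rho'\int_\Omega(\nabla\mu_m\cdot\nabla)\mathbf{v}_m\cdot\partial_t\mathbf{v}_m\,\mathrm{d}x$ by interpolating $\nabla\mu_m$ between $\mathbf{L}^2$ and the elliptic $\mathcal{H}^2$ bound for $\mathbb{P}\boldsymbol{\mu}_m$. Gronwall closes the argument. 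You handle the boundary term $K_{11}$ directly through $\partial_{\mathbf{n}}\mu_m=(\mathcal{L}_m-\mu_m)/L$, which gives $\varepsilon\|\partial_t\mathbf{v}_m\|_{\mathbf{H}^1(\Omega)}^2+\frac{C}{L}\cdot\frac{1}{L}\|\mathcal{L}_m-\mu_m\|_{H_\Gamma}^2$, i.e.\ a Gronwall term of size $CL^{-1}(1+Y)$. This is a valid and simpler alternative to the paper's trace interpolation plus elliptic estimate, and the factor $1/L$ is harmless since the constant in the lemma may depend on $L$.

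Two steps, however, do not close as you wrote them.

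\emph{Coercivity of the modified energy $Y$.} Suppose you bound the correction $\int_\Omega\mu_m\nabla\varphi_m\cdot\mathbf{v}_m\,\mathrm{d}x+\int_\Gamma\mathcal{L}_m\nabla_{\!\boldsymbol{\tau}\,}\psi_m\cdot\mathbf{v}_{m,\boldsymbol{\tau}}\,\mathrm{d}S$ by $\varepsilon\|\nabla\mu_m\|^2+C(1+|\overline{m}(\boldsymbol{\mu}_m)|^2)$ and then insert \eqref{260807-1}--\eqref{260807-2}. You end up subtracting $C'\|\mathbb{P}\boldsymbol{\mu}_m\|_{\mathcal{H}^1_{L,0}}^2$ with a constant $C'$ that is not small. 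So the correction is not harmless and $Y$ loses its lower bound. In fact the mean never enters here. Since $\mathrm{div}\,\mathbf{v}_m=0$, $\mathbf{v}_m\cdot\mathbf{n}=0$ and $\mathrm{div}_\Gamma\mathbf{v}_{m,\boldsymbol{\tau}}=0$, the correction equals $-\int_\Omega\varphi_m\mathbf{v}_m\cdot\nabla\mu_m\,\mathrm{d}x-\int_\Gamma\psi_m\mathbf{v}_{m,\boldsymbol{\tau}}\cdot\nabla_{\!\boldsymbol{\tau}\,}\mathcal{L}_m\,\mathrm{d}S$. This is at most $\frac{1}{4}(\|\nabla\mu_m\|^2+\|\nabla_{\!\boldsymbol{\tau}\,}\mathcal{L}_m\|^2)+C$, which is \eqref{0227-21}. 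The mean estimate is only needed on the right-hand side and at the very end.

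\emph{The initial value.} Bounding $\boldsymbol{\mu}_m(0)-\gamma\partial_t\boldsymbol{\varphi}_m(0)$ in $\mathcal{H}^1$ and $\sqrt{\gamma}\|\partial_t\boldsymbol{\varphi}_m(0)\|_{\mathcal{L}^2}$ separately does not bound $\mathbb{P}\boldsymbol{\mu}_m(0)$ in $\mathcal{H}^1_{L,0}$, since $\gamma\partial_t\boldsymbol{\varphi}_m(0)$ is not controlled in $\mathcal{H}^1$. Both parts of $Y(0)$ have to come from one identity. Test \eqref{eqd4}, \eqref{eqd5} with $\partial_t\varphi_m$, $\partial_t\psi_m$. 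Then eliminate $\partial_t\boldsymbol{\varphi}_m$ via \eqref{eqd2}, \eqref{eqd3} in both pairings $(\boldsymbol{\mu}_m,\partial_t\boldsymbol{\varphi}_m)_{\mathcal{L}^2}$ and $(\boldsymbol{\mu}_m-\gamma\partial_t\boldsymbol{\varphi}_m,\partial_t\boldsymbol{\varphi}_m)_{\mathcal{L}^2}$. This gives $\gamma\|\partial_t\boldsymbol{\varphi}_m\|_{\mathcal{L}^2}^2+\|\mathbb{P}\boldsymbol{\mu}_m\|_{\mathcal{H}^1_{L,0}}^2=a_L(\boldsymbol{\mu}_m-\gamma\partial_t\boldsymbol{\varphi}_m,\boldsymbol{\mu}_m)$ plus convective terms, which is \eqref{0306-Gm0-4}. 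Let $t\to0$ using $\partial_t\boldsymbol{\varphi}_m,\boldsymbol{\mu}_m\in C([0,T];\mathcal{H}^1)$. Young's inequality together with \eqref{0315-1}--\eqref{0315-2} then bounds both pieces at once. This identity is also what makes your bound on $\sqrt{\gamma}\|\partial_t\boldsymbol{\varphi}_m(0)\|_{\mathcal{L}^2}$ independent of $\gamma$.
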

\begin{proof}
Testing \eqref{eqd2} by $\partial_t\mu_m$ and \eqref{eqd3} by $\partial_t\mathcal{L}_m$,
we see that
\begin{align}
	&\frac{1}{2}\frac{\mathrm{d}}{\mathrm{d}t}\Big(\int_\Omega|\nabla\mu_m|^2\,\mathrm{d}x
    +\int_\Gamma |\nabla_{\!\bm{\tau}\,}\mathcal{L}_m|^2 \,\mathrm{d}S\Big)
    +\int_\Omega\partial_t \mu_m\partial_t\varphi_m\,\mathrm{d}x
    +\int_\Gamma\partial_t \mathcal{L}_m\partial_t\psi_m\,\mathrm{d}S
    \notag\\
	&\quad=\int_\Gamma \partial_{\mathbf{n}}\mu_m(\partial_t\mu_m-\partial_t\mathcal{L}_m)\,\mathrm{d}S
    -\int_\Omega\mathbf{v}_m \cdot\nabla\varphi_m\partial_t\mu_m\,\mathrm{d}x{
    -\int_\Gamma \mathbf{v}_{m,\bm{\tau}}\cdot\nabla_{\!\bm{\tau}\,}\psi_m\,\partial_t\mathcal{L}_m\,\mathrm{d}S}.
    \label{0227-16}
\end{align}
The last two terms on the right-hand side of \eqref{0227-16} can be reformulated as
\begin{align}
	&-\int_\Omega\mathbf{v}_m\cdot\nabla\varphi_m\partial_t\mu_m\,\mathrm{d}x
    \notag\\
	&\quad=-\frac{\mathrm{d}}{\mathrm{d}t}\int_\Omega\mu_m\nabla\varphi_m\cdot\mathbf{v}_m\,\mathrm{d}x
    +\int_\Omega\mu_m \nabla\partial_t\varphi_m\cdot\mathbf{v}_m\,\mathrm{d}x
    +\int_\Omega\mu_m \nabla\varphi_m\cdot\partial_t\mathbf{v}_m\,\mathrm{d}x,
    \label{0227-17}
    \\
	&-\int_\Gamma\mathbf{v}_{m,\bm{\tau}}\cdot\nabla_{\!\bm{\tau}\,}\psi_m \,\partial_t\mathcal{L}_m\,\mathrm{d}S
    \notag\\
	&\quad=-\frac{\mathrm{d}}{\mathrm{d}t}\int_\Gamma\mathcal{L}_m \nabla_{\!\bm{\tau}\,}\psi_m
    \cdot\mathbf{v}_{m,\bm{\tau}}\,\mathrm{d}S
    +\int_\Gamma\mathcal{L}_m \nabla_{\!\bm{\tau}\,}\partial_t\psi_m
    \cdot\mathbf{v}_{m,\bm{\tau}}\,\mathrm{d}S
    +\int_\Gamma\mathcal{L}_m \nabla_{\!\bm{\tau}\,}\psi_m
    \cdot\partial_t \mathbf{v}_{m,\bm{\tau}}\,\mathrm{d}S.
    \label{0227-18}
\end{align}	
Recalling the definition of $\mu_m$ and $\mathcal{L}_m$, we find that
\begin{align}
\int_\Omega\partial_t\mu_m\partial_t\varphi_m\,\mathrm{d}x
&=\frac{\gamma}{2}\frac{\mathrm{d}}{\mathrm{d}t}\|\partial_t\varphi_m\|_{H}^2
+\int_\Omega|\nabla\partial_t\varphi_m|^2\,\mathrm{d}x
+\int_\Omega f_0'(\varphi_m)|\partial_t\varphi_m|^2\,\mathrm{d}x
\notag\\
&\quad+k^{-1}\int_\Omega f_\sigma'(\varphi_m)|\partial_t\varphi_m|^2\,\mathrm{d}x
+\int_\Omega f_1'(\varphi_m)|\partial_t\varphi_m|^2\,\mathrm{d}x
-\int_\Gamma\partial_{\mathbf{n}}\partial_t\varphi_m\,\partial_t\varphi_m\,\mathrm{d}S,
\notag\\
\int_\Gamma\partial_t\mathcal{L}_m\partial_t\psi_m\,\mathrm{d}S
&=\frac{\gamma}{2}\frac{\mathrm{d}}{\mathrm{d}t}\|\partial_t\psi_m\|_{H_\Gamma}^2
+	\int_\Gamma|\nabla_{\!\bm{\tau}\,}\partial_t\psi_m|^2\,\mathrm{d}S
+\int_\Gamma g_0'(\psi_m)|\partial_t\psi_m|^2\,\mathrm{d}S
\notag\\
&\quad+k^{-1}\int_\Gamma f_\sigma'(\psi_m)|\partial_t\psi_m|^2\,\mathrm{d}S
+\int_\Gamma g_1'(\psi_m)|\partial_t\psi_m|^2\,\mathrm{d}S
+\int_\Gamma\partial_{\mathbf{n}}\partial_t\varphi_m\,\partial_t\psi_m\,\mathrm{d}S,
\notag
\end{align}
and
\begin{align}
	\int_\Gamma\partial_{\mathbf{n}} \mu_m(\partial_t\mu_m-\partial_t\mathcal{L}_m)\,\mathrm{d}S
    =-\frac{1}{2L}\frac{\mathrm{d}}{\mathrm{d}t}\int_\Gamma|\mu_m-\mathcal{L}_m|^2\,\mathrm{d}S.
    \notag
\end{align}
Thus, combining \eqref{0227-16}, \eqref{0227-17}, \eqref{0227-18},
and the three identities above, we obtain
\begin{align}
    & \frac{\mathrm{d}}{\mathrm{d}t}\mathcal{G}_m
     +\int_\Omega |\nabla\partial_t\varphi_m|^2\,\mathrm{d}x
    +\int_\Gamma|\nabla_{\!\bm{\tau}\,} \partial_t\psi_m|^2\,\mathrm{d}S +\chi(K)\int_\Gamma|\partial_t\psi_m-\partial_t\varphi_m|^2\,\mathrm{d}S
    \notag\\
    &\quad\leq -\int_\Omega f_1'(\varphi_m)|\partial_t\varphi_m|^2 \,\mathrm{d}x
    -\int_\Gamma g_1'(\psi_m)|\partial_t\psi_m|^2\,\mathrm{d}S
    \notag\\
    &\qquad+\int_\Omega\mu_m \nabla\partial_t\varphi_m\cdot\mathbf{v}_m\,\mathrm{d}x
    +\int_\Omega\mu_m\nabla\varphi_m \cdot\partial_t\mathbf{v}_m\,\mathrm{d}x
    \notag\\
    &\qquad+\int_\Gamma\mathcal{L}_m \nabla_{\!\bm{\tau}\,}\partial_t\psi_m
    \cdot\mathbf{v}_{m,\bm{\tau}}\,\mathrm{d}S
    +\int_\Gamma\mathcal{L}_m \nabla_{\!\bm{\tau}\,}\psi_m
    \cdot\partial_t\mathbf{v}_{m,\bm{\tau}} \,\mathrm{d}S,
    \label{0227-19}
\end{align}
where
\begin{align*}
	\mathcal{G}_m&:=\frac{\gamma}{2}\int_\Omega|\partial_t\varphi_m|^2\,\mathrm{d}x
    +\frac{\gamma}{2}\int_\Gamma|\partial_t\psi_m|^2\,\mathrm{d}S
    +\frac{1}{2}\int_\Omega|\nabla\mu_m|^2\,\mathrm{d}x
    +\frac{1}{2}\int_\Gamma|\nabla_{\!\bm{\tau}\,}\mathcal{L}_m|^2\,\mathrm{d}S
    \\
    &\quad+\frac{1}{2L}\int_\Gamma|\mu_m-\mathcal{L}_m|^2\,\mathrm{d}S
    +\int_\Omega\mu_m\nabla \varphi_m\cdot\mathbf{v}_m\,\mathrm{d}x
    +\int_\Gamma\mathcal{L}_m \nabla_{\!\bm{\tau}\,}\psi_m\cdot\mathbf{v}_{m,\bm{\tau}}\,\mathrm{d}S.
\end{align*}
Combining \eqref{uni-ti-1} (replacing $\mathbf{v}$ therein by $\mathbf{v}_m$) and \eqref{0227-19}, we deduce that
\begin{align}
	&\frac{\mathrm{d}}{\mathrm{d}t}\mathcal{G}_m
    +\int_\Omega|\nabla\partial_t\varphi_m|^2\,\mathrm{d}x
    +\int_\Gamma|\nabla_{\!\bm{\tau}\,} \partial_t\psi_m|^2\,\mathrm{d}S +\chi(K)\int_\Gamma|\partial_t\psi_m-\partial_t\varphi_m|^2\,\mathrm{d}S
    \notag\\
    &\qquad+\int_\Omega \rho(\varphi_m)|\partial_t \mathbf{v}_m|^2\,\mathrm{d}x
    +2\alpha\int_\Omega|\mathbb{D}\partial_t \mathbf{v}_m|^2\,\mathrm{d}x
    \notag\\
	&\quad\leq -2\int_\Omega\varphi_m\nabla\mu_m\cdot\partial_t\mathbf{v}_m\,\mathrm{d}x
    -2\int_\Gamma\psi_m\nabla_{\!\bm{\tau}\,} \mathcal{L}_m\cdot\partial_t\mathbf{v}_{m,\bm{\tau}}\,\mathrm{d}S
    -\int_\Omega2\nu(\varphi_m)\mathbb{D}\mathbf{v}_m:\mathbb{D}\partial_t\mathbf{v}_m\,\mathrm{d}x
    \notag\\
	&\qquad-\int_\Gamma\beta(\psi_m)\mathbf{v}_{m,\bm{\tau}}\cdot\partial_t\mathbf{v}_{m,\bm{\tau}}\,\mathrm{d}S
    -\int_\Omega(\rho(\varphi_m)\mathbf{v}_m\cdot\nabla)\mathbf{v}_m\cdot\partial_t\mathbf{v}_m\,\mathrm{d}x
    \notag\\
	&\qquad+\int_\Omega\mu_m\nabla\partial_t \varphi_m\cdot\mathbf{v}_m\,\mathrm{d}x
  +\int_\Gamma\mathcal{L}_m\nabla_{\!\bm{\tau}\,} \partial_t\psi_m\cdot\mathbf{v}_{m,\bm{\tau}}\,\mathrm{d}S
    \notag\\
    &\qquad-\int_\Omega f_1'(\varphi_m)|\partial_t\varphi_m|^2\,\mathrm{d}x
    -\int_\Gamma g_1'(\psi_m)|\partial_t\psi_m|^2\,\mathrm{d}S\notag\\
    &\qquad+\rho'\int_\Omega (\nabla\mu_m\cdot\nabla)\mathbf{v}_m\cdot\partial_t\mathbf{v}_m\,\mathrm{d}x
    -\frac{\rho'}{2}\int_\Gamma(\nabla\mu_m\cdot\mathbf{n})\mathbf{v}_{m,\bm{\tau}}\cdot\partial_t\mathbf{v}_{m,\bm{\tau}}\,\mathrm{d}S\notag\\
    &\quad {\eqqcolon} \sum_{j=1}^{11}K_j .
    \label{0227-20}
\end{align}
Furthermore, by \eqref{m-uni-7}, we obtain
\begin{align*}
    &\int_\Omega\mu_m\nabla \varphi_m\cdot\mathbf{v}_m\,\mathrm{d}x
    +\int_\Gamma\mathcal{L}_m \nabla_{\!\bm{\tau}\,}\psi_m\cdot\mathbf{v}_{m,\bm{\tau}}\,\mathrm{d}S
    \\
    &\quad=-\int_\Omega\varphi_m\mathbf{v}_m\cdot\nabla\mu_m\,\mathrm{d}x
    -\int_\Gamma \psi_m{\mathbf{v}}_{m,\bm{\tau}}\cdot\nabla_{\!\bm{\tau}\,}\mathcal{L}_m\,\mathrm{d}S
    \\
	&\quad\leq \|\nabla\mu_m\|_{\mathbf{L}^2(\Omega)}\|\varphi_m\|_{L^\infty(\Omega)}\|\mathbf{v}_m\|_{\mathbf{L}^2(\Omega)}
    +\|\nabla_{\!\bm{\tau}\,} \mathcal{L}_m\|_{\mathbf{L}^2(\Gamma)}\|\psi_m\|_{L^\infty(\Gamma)}\|\mathbf{v}_{m,\bm{\tau}}\|_{\mathbf{L}^2(\Gamma)}
    \\[1mm]
	&\quad\leq C(\|\nabla\mu_m\|_{\mathbf{L}^2(\Omega)}
    +\|\nabla_{\!\bm{\tau}\,} \mathcal{L}_m\|_{\mathbf{L}^2(\Gamma)})\|\bm{\varphi}_m\|_{\mathcal{L}^\infty}\|\mathbf{v}_m\|_{\mathbf{H}^1(\Omega)}
    \\
	&\quad\leq \frac{1}{4}\Big(\int_\Omega|\nabla\mu_m|^2\,\mathrm{d}x
    +\int_\Gamma|\nabla_{\!\bm{\tau}\,} \mathcal{L}_m|^2\,\mathrm{d}S
    +\frac{1}{L}\int_\Gamma|\mu_m-\mathcal{L}_m|^2\,\mathrm{d}S\Big)+C.
\end{align*}
Hence, it follows that
\begin{align}
	\mathcal{G}_m&\geq\frac{\gamma}{2}\int_\Omega|\partial_t\varphi_m|^2\,\mathrm{d}x
    +\frac{\gamma}{2} \int_\Gamma |\partial_t\psi_m|^2\,\mathrm{d}S
    \notag\\
    &\quad+\frac{1}{4}\Big(\int_\Omega|\nabla\mu_m|^2\,\mathrm{d}x
    +\int_\Gamma|\nabla_{\!\bm{\tau}\,} \mathcal{L}_m|^2\,\mathrm{d}S
    +\frac{1}{L}\int_\Gamma|\mu_m-\mathcal{L}_m|^2\,\mathrm{d}S\Big)-C.
    \label{0227-21}
\end{align}
Now we proceed to estimate $K_j$ for all $1\leq j\leq 11$.
First, arguing similarly to the estimates for $I_{1}$, $I_2$, $I_6$ and $I_7$ in Section~\ref{SECT:EXAP}, we have
\begin{align*}
    \sum_{j=1}^4 K_j&\leq \frac{\rho_2}{4}\|\partial_t\mathbf{v}_m\|_{\mathbf{L}^2(\Omega)}^2
    +\frac{3\alpha}{4}\|\mathbb{D}\partial_t\mathbf{v}_m\|_{\mathbf{L}^2(\Omega)}^2
    \\
    &\qquad+C\Big(\|\nabla\mu_m \|_{\mathbf{L}^2(\Omega)}^2
    +\|\nabla_{\!\boldsymbol{\tau}\,} \mathcal{L}_m\|_{\mathbf{L}^2(\Gamma)}^2
    +\|\mathbf{v}_{m,\boldsymbol{\tau}} \|_{\mathbf{L}^2(\Gamma)}^2
    +\|\mathbb{D}\mathbf{v}_m \|_{\mathbf{L}^2(\Omega)}^2\Big).
\end{align*}
For $K_j$ with $5\leq j\leq 11$, applying \eqref{Korn-cor}, we can deduce that
\begin{align*}
	K_5&\leq C\|\mathbf{v}_m\|_{\mathbf{L}^4(\Omega)}\|\nabla\mathbf{v}_m\|_{\mathbf{L}^2(\Omega)}\|\partial_t\mathbf{v}_m\|_{\mathbf{L}^4(\Omega)}
    \\
	&\leq C\|\mathbf{v}_m\|_{\mathbf{H}^1(\Omega)}^2\|\partial_t\mathbf{v}_m\|_{\mathbf{H}^1(\Omega)}
    \\
    &\leq C\|\mathbf{v}_m\|_{\mathbf{H}^1(\Omega)}^2(\|\partial_t\mathbf{v}_m\|_{\mathbf{L}^2(\Omega)}+\|\mathbb{D}\partial_t\mathbf{v}_m\|_{\mathbf{L}^2(\Omega)})
    \\
	&\leq \frac{\rho_2}{12}\|\partial_t\mathbf{v}_m\|_{\mathbf{L}^2(\Omega)}^2
    +\frac{\alpha}{4}\|\mathbb{D}\partial_t\mathbf{v}_m\|_{\mathbf{L}^2(\Omega)}^2
    +C\|\mathbf{v}_m\|_{\mathbf{H}^1(\Omega)}^4,
    \\[1ex]
	K_6&=-\int_\Omega\partial_t\varphi_m\nabla\mu_m\cdot\mathbf{v}_m\,\mathrm{d}x
    \\
    &\leq \|\partial_t\varphi_m\|_{L^4(\Omega)}\|\nabla\mu_m\|_{\mathbf{L}^2(\Omega)}\|\mathbf{v}_m\|_{\mathbf{L}^4(\Omega)}
    \\[1mm]
	&\leq C\|\partial_t\varphi_m\|_{V}\|\nabla\mu_m\|_{\mathbf{L}^2(\Omega)}\|\mathbf{v}_m\|_{\mathbf{H}^1(\Omega)}
    \\
	&\leq \frac{1}{4}(\|\nabla\partial_t\varphi_m\|_{\mathbf{L}^2(\Omega)}^2
    +\|\nabla_{\!\bm{\tau}\,} \partial_t\psi_m\|_{\mathbf{L}^2(\Gamma)}^2+\chi(K)\|\partial_t\varphi_m-\partial_t\psi_m\|_{H_\Gamma}^2)
    \\
   &\quad +C\|\nabla\mu_m\|_{\mathbf{L}^2(\Omega)}^2 \|\mathbf{v}_m\|_{\mathbf{H}^1(\Omega)}^2,
   \\[1ex]
	K_7&=-\int_\Gamma\partial_t\psi_m \nabla_{\!\bm{\tau}\,}\mathcal{L}_m\cdot\mathbf{v}_{m,\bm{\tau}}\,\mathrm{d}S
    \\
	&\leq \|\partial_t\psi_m\|_{L^4(\Gamma)}\|\nabla_{\!\bm{\tau}\,}\mathcal{L}_m\|_{\mathbf{L}^2(\Gamma)}\|\mathbf{v}_{m}\|_{\mathbf{L}^4(\Gamma)}
    \\
	&\leq\frac{1}{4}(\|\nabla\partial_t\varphi_m\|_{\mathbf{L}^2(\Omega)}^2
    +\|\nabla_{\!\bm{\tau}\,}
    \partial_t\psi_m \|_{\mathbf{L}^2(\Gamma)}^2 +\chi(K)\|\partial_t\psi_m-\partial_t\varphi_m\|_{H_\Gamma}^2)
    \\
    &\quad+C\|\nabla_{\!\bm{\tau}\,} \mathcal{L}_m\|_{\mathbf{L}^2(\Gamma)}^2
    \|\mathbf{v}_m\|_{\mathbf{H}^1(\Omega)}^2,
    \\[1ex]
    K_{8}+K_{9}&  \leq C\Big(\int_\Omega|\partial_t\varphi_m|^2 \,\mathrm{d}x
    +\int_\Gamma|\partial_t\psi_m|^2 \,\mathrm{d}S\Big)
    \\
    &=C{\langle\partial_t \bm{\varphi}_m,\partial_t\bm{\varphi}_m
    \rangle_{(\mathcal{H}_{L,0}^1)', \mathcal{H}_{L,0}^1}}
    \\
	&\leq C\|\partial_t\bm{\varphi}_m\|_{\mathcal{H}_{L,0}^{-1}}\|\partial_t\bm{\varphi}_m\|_{\mathcal{H}_{L,0}^{1}}
    \\[1mm]
	&= C(\|\bm{\mu}_m-\overline{m}(\boldsymbol{\mu}_m)\boldsymbol{1}\|_{\mathcal{H}_{L,0}^{1}}
    +\|(\mathbf{v}_m,\mathbf{v}_{m,\boldsymbol{\tau}}) \|_{\boldsymbol{\mathcal{L}}^2})
    \|\partial_t\bm{\varphi}_m \|_{\mathcal{H}_{L,0}^{1}}
    \\
	&\leq \frac{1}{4}(\|\nabla\partial_t\varphi_m\|_{\mathbf{L}^2(\Omega)}^2
    +\|\nabla_{\!\bm{\tau}\,} \partial_t\psi_m\|_{\mathbf{L}^2(\Gamma)}^2 {+\chi(K)\|\partial_t\psi_m-\partial_t\varphi_m\|_{H_\Gamma}^2})
    \\
    &\quad+C\Big(1+\|\nabla\mu_m \|_{\mathbf{L}^2(\Omega)}^2
    +\|\nabla_{\!\bm{\tau}\,} \mathcal{L}_m\|_{\mathbf{L}^2(\Gamma)}^2
    +\frac{1}{L}\|\mu_m-\mathcal{L}_m\|_{H_\Gamma}^2\Big),
    \\[1ex]
    K_{10}&\leq C\|\nabla\mu_m\|_{\mathbf{L}^3(\Omega)}\|\nabla\mathbf{v}_m\|_{\mathbf{L}^2(\Omega)}\|\partial_t\mathbf{v}_m\|_{\mathbf{L}^6(\Omega)}
    \\
    &\leq C\|\nabla\mu_m\|_{\mathbf{L}^2(\Omega)}^{\frac{1}{2}}\|\nabla\mu_m\|_{\mathbf{L}^6(\Omega)}^{\frac{1}{2}}\|\partial_t\mathbf{v}_m\|_{\mathbf{H}^1(\Omega)}
    \\
    &\leq C\|\nabla\mu_m\|_{\mathbf{L}^2(\Omega)}^{\frac{1}{2}}\|\mathbb{P}\bm{\mu}_m\|_{\mathcal{H}^2}^{\frac{1}{2}}\|\partial_t\mathbf{v}_m\|_{\mathbf{H}^1(\Omega)}
    \\
    &\leq C\|\nabla\mu_m\|_{\mathbf{L}^2(\Omega)}^{\frac{1}{2}}\Big(\|\partial_t\bm{\varphi}_m\|_{\mathcal{L}^2}
    +\|\mathbf{v}_m\cdot\nabla\varphi_m\|_H
    +\|\mathbf{v}_{m,\bm{\tau}} \cdot\nabla_{\!\bm{\tau}\,}\psi_m\|_{H_\Gamma}\Big)^{\frac{1}{2}}\|\partial_t\mathbf{v}_m\|_{\mathbf{H}^1(\Omega)}
    \\
    &\leq \frac{\rho_2}{12}\|\partial_t\mathbf{v}_m\|_{\mathbf{L}^2(\Omega)}^2
    +\frac{\alpha}{4}\|\mathbb{D}\partial_t\mathbf{v}_m\|_{\mathbf{L}^2(\Omega)}^2\\
    &\quad+\frac{1}{8}\Big(\|\nabla\partial_t\varphi_m\|_{\mathbf{L}^2(\Omega)}^2
    +\|\nabla_{\!\bm{\tau}\,} \partial_t\psi_m\|_{\mathbf{L}^2(\Gamma)}^2 {+\chi(K)\|\partial_t\psi_m-\partial_t\varphi_m\|_{H_\Gamma}^2}\Big)
    \\
    &\quad+C\|\nabla\mu_m\|_{\mathbf{L}^2(\Omega)}^2
    +C(\|\mathbf{v}_m\|_{\mathbf{H}^1(\Omega)}^2 +\|\boldsymbol{\varphi}_m\|_{\mathcal{H}^2}^2),
    \\[1ex]
   K_{11}&\leq C\|\nabla\mu_m\cdot\mathbf{n}\|_{H_\Gamma}\|\mathbf{v}_{m,\bm{\tau}}\|_{\mathbf{L}^4(\Gamma)}
   \|\partial_t\mathbf{v}_{m,\bm{\tau}} \|_{\mathbf{L}^4(\Gamma)}
   \\
   &\leq C\|\nabla\mu_m\|_{\mathbf{L}^2(\Gamma)}\|\mathbf{v}_{m}\|_{\mathbf{H}^1(\Omega)}
   \|\partial_t\mathbf{v}_{m} \|_{\mathbf{H}^1(\Omega)}
   \\
   &\leq C\|\nabla\mu_m\|_{\mathbf{L}^2(\Omega)}^{\frac{1}{2}}\|\nabla\mu_m\|_{\mathbf{H}^1(\Omega)}^{\frac{1}{2}}\|\mathbf{v}_{m}\|_{\mathbf{H}^1(\Omega)}
   \|\partial_t\mathbf{v}_{m} \|_{\mathbf{H}^1(\Omega)}
   \\
  &\leq C\|\nabla\mu_m\|_{\mathbf{L}^2(\Omega)}^{\frac{1}{2}}\|\mathbb{P}\bm{\mu}_m\|_{\mathcal{H}^2}^{\frac{1}{2}}\|\partial_t\mathbf{v}_m \|_{\mathbf{H}^1(\Omega)}
  \\
    &\leq C\|\nabla\mu_m\|_{\mathbf{L}^2(\Omega)}^{\frac{1}{2}}\Big(\|\partial_t\bm{\varphi}_m\|_{\mathcal{L}^2}
    +\|\mathbf{v}_m\cdot\nabla\varphi_m\|_H
    +\|\mathbf{v}_{m,\bm{\tau}} \cdot\nabla_{\!\bm{\tau}\,}\psi_m\|_{H_\Gamma}\Big)^{\frac{1}{2}}\|\partial_t\mathbf{v}_m\|_{\mathbf{H}^1(\Omega)}
    \\
    &\leq \frac{\rho_2}{12}\|\partial_t\mathbf{v}_m\|_{\mathbf{L}^2(\Omega)}^2
    +\frac{\alpha}{4}\|\mathbb{D}\partial_t\mathbf{v}_m\|_{\mathbf{L}^2(\Omega)}^2
    \\
    &\quad+\frac{1}{8}\Big(\|\nabla\partial_t\varphi_m\|_{\mathbf{L}^2(\Omega)}^2
    +\|\nabla_{\!\bm{\tau}\,} \partial_t\psi_m\|_{\mathbf{L}^2(\Gamma)}^2 {+\chi(K)\|\partial_t\psi_m-\partial_t\varphi_m\|_{H_\Gamma}^2}\Big)
    \\
    &\quad+C\|\nabla\mu_m\|_{\mathbf{L}^2(\Omega)}^2
    +C(\|\mathbf{v}_m\|_{\mathbf{H}^1(\Omega)}^2 +\|\boldsymbol{\varphi}_m\|_{\mathcal{H}^2}^2).
\end{align*}
We note that in the estimate for $K_{8}+K_{9}$, we have used the fact that the norms $\|\cdot\|_{\mathcal{H}_{L,0}^{-1}}$ and $\|\cdot\|_{(\mathcal{H}_{L,0}^1)'} = \|\cdot\|_{(\mathcal{H}_{L}^1)'}$ on $\mathcal{H}_{L,0}^{-1}$ are equivalent.

From \eqref{0227-20} and the estimates for $K_j$ for all $1\leq j\leq {11}$, we infer that
\begin{align}	
&\frac{\mathrm{d}}{\mathrm{d}t}\mathcal{G}_m
+\frac{1}{2}\int_\Omega|\nabla\partial_t\varphi_m|^2\,\mathrm{d}x
+\frac{1}{2}\int_\Gamma|\nabla_{\!\bm{\tau}\,}\partial_t\psi_m|^2\,\mathrm{d}S
+\frac{\chi(K)}{2}\int_\Gamma|\partial_t\psi_m-\partial_t\varphi_m|^2\,\mathrm{d}S
\notag\\
&\qquad+\frac{\rho_2}{2}\int_\Omega|\partial_t \mathbf{v}_m|^2\,\mathrm{d}x
+\alpha\int_\Omega|\mathbb{D}\partial_t\mathbf{v}_m|^2\,\mathrm{d}x
\notag\\
&\quad\leq C\mathcal{G}_m+C(1+\mathcal{F}_m),
\notag
\end{align}
where $\mathcal{F}_m \coloneqq \|\mathbf{v}_m\|_{\mathbf{H}^1(\Omega)}^4+\|\boldsymbol{\varphi}_m\|_{\mathcal{H}^2}^2$. Together with \eqref{m-uni-7} and \eqref{uni-0323-phi-H2}, Gronwall's lemma implies that
\begin{align}
&\sup_{t\in[0,T]}\mathcal{G}_m(t)
+C\int_0^T\Big(\|\partial_t\bm{\varphi}_m(t)\|_{\mathcal{H}^1}^2
+\|\partial_t\mathbf{v}_m(t)\|_{\mathbf{H}^1(\Omega)}^2\Big)\,\mathrm{d}t
\notag\\
&\qquad\leq C e^{CT}\mathcal{G}_{m}(0)+Ce^{CT}\int_0^T\mathcal{F}_m(t)\,\mathrm{d}t
\leq Ce^{CT}(1+\mathcal{G}_m(0)).
\label{0227-22}
\end{align}
We point out that the lower bound \eqref{0227-21} is crucial in the derivation of \eqref{0227-22}.

It remains to control $\mathcal{G}_m(0)$.
Multiplying \eqref{eqd4} by $\partial_t \varphi_m$ and integrating over $\Omega$,
multiplying \eqref{eqd3} by $\partial_t \psi_m$ and integrating over $\Gamma$,
adding the resulting equations together, we get
\begin{align}
    &\gamma\|\partial_t\varphi_m \|_H^2+\gamma\|\partial_t\psi_m\|_{H_\Gamma}^2
    +\int_\Omega(-\Delta\varphi_m+f(\varphi_m)+k^{-1} f_\sigma(\varphi_m))\partial_t\varphi_m\,\mathrm{d}x
    \notag\\
    &\qquad+\int_\Gamma(-\Delta_{\bm{\tau}}\psi_m+g(\psi_m)
    +k^{-1}f_\sigma(\psi_m) +\partial_{\mathbf{n}}\varphi_m)\partial_t\psi_m\,\mathrm{d}S
    \notag\\
    &\quad=\int_\Omega\mu_m \partial_t\varphi_m\,\mathrm{d}x
    +\int_\Gamma\mathcal{L}_m \partial_t\psi_m\,\mathrm{d}S.
    \label{0306-Gm0-1}
\end{align}
Recalling \eqref{eqd2} and \eqref{eqd3} , we can deduce that
\begin{align}
    &\int_\Omega\mu_m\partial_t\varphi_m\,\mathrm{d}x
    +\int_\Gamma\mathcal{L}_m \partial_t\psi_m\,\mathrm{d}S
    \notag\\
    &\quad=\int_\Omega\mu_m(\Delta\mu_m-\mathrm{div}(\varphi_m\mathbf{v}_m))\,\mathrm{d}x
    +\int_\Gamma\mathcal{L}_m (\Delta_{\bm{\tau}}\mathcal{L}_m-\mathrm{div}_{\Gamma}(\psi_m\mathbf{v}_{m,\bm{\tau}})
    -\partial_{\mathbf{n}}\mu_m) \,\mathrm{d}S\notag
    \notag\\
    &\quad=-\|\nabla\mu_m\|_{\mathbf{L}^2(\Omega)}^2
    -\|\nabla_{\!\bm{\tau}\,} \mathcal{L}_m\|_{\mathbf{L}^2(\Gamma)}^2
    -\frac{1}{L}\|\mu_m-\mathcal{L}_m\|_{H_\Gamma}^2
    \notag\\
    &\qquad+\int_\Omega\varphi_m \mathbf{v}_m\cdot\nabla\mu_m\,\mathrm{d}x
    +\int_\Gamma\psi_m\mathbf{v}_{m,\bm{\tau}} \cdot\nabla_{\!\bm{\tau}\,}\mathcal{L}_m\,\mathrm{d}S
    \label{0306-Gm0-2}
\end{align}
and
\begin{align}
    &\int_\Omega(-\Delta\varphi_m+f(\varphi_m)+k^{-1} f_\sigma(\varphi_m))\partial_t\varphi_m\,\mathrm{d}x\notag\\
    &\qquad+\int_\Gamma(-\Delta_{\bm{\tau}}\psi_m +g(\psi_m)+k^{-1}f_\sigma(\psi_m)
    +\partial_{\mathbf{n}}\varphi_m) \partial_t\psi_m\,\mathrm{d}S
    \notag\\
    &\quad=\int_\Omega(-\Delta\varphi_m+f(\varphi_m)+k^{-1} f_\sigma(\varphi_m))\,
    \mathrm{div}(\nabla\mu_m-\varphi_m\mathbf{v}_m)\,\mathrm{d}x
    \notag\\
    &\qquad+\int_\Gamma(-\Delta_{\bm{\tau}}\psi_m+g(\psi_m)+k^{-1}f_\sigma(\psi_m)+\partial_{\mathbf{n}}\varphi_m)\,
    \mathrm{div}_{\Gamma}(\nabla_{\!\bm{\tau}\,}\mathcal{L}_m-\psi_m\mathbf{v}_{m,\bm{\tau}})\,\mathrm{d}S
    \notag\\
    &\qquad-\frac{1}{L}\int_\Gamma(-\Delta_{\bm{\tau}}\psi_m+g(\psi_m)+k^{-1} f_\sigma(\psi_m)
    +\partial_{\mathbf{n}}\varphi_m)(\mathcal{L}_m-\mu_m)\,\mathrm{d}S
    \notag\\
    &\quad=-\int_\Omega\nabla(-\Delta\varphi_m+f(\varphi_m)+k^{-1} f_\sigma(\varphi_m))\cdot(\nabla\mu_m-\varphi_m\mathbf{v}_m)\,\mathrm{d}x
    \notag\\
    &\qquad-\int_\Gamma\nabla_{\!\bm{\tau}\,}(-\Delta_{\bm{\tau}}\psi_m+g(\psi_m) +k^{-1}f_\sigma(\psi_m)
    +\partial_{\mathbf{n}}\varphi_m) \cdot(\nabla_{\!\bm{\tau}\,}\mathcal{L}_m-\psi_m\mathbf{v}_{m,\bm{\tau}}) \,\mathrm{d}S
    \notag\\
    &\qquad-\frac{1}{L}\int_\Gamma(-\Delta_{\bm{\tau}}\psi_m +g(\psi_m)+k^{-1} f_\sigma(\psi_m)
    +\partial_{\mathbf{n}}\varphi_m)(\mathcal{L}_m-\mu_m)\,\mathrm{d}S
    \notag\\
    &\qquad+\frac{1}{L}\int_\Gamma(-\Delta\varphi_m+f(\varphi_m)+k^{-1} f_\sigma(\varphi_m))(\mathcal{L}_m-\mu_m)\,\mathrm{d}S.
    \label{0306-Gm0-3}
\end{align}
Combining \eqref{0306-Gm0-1}, \eqref{0306-Gm0-2} and \eqref{0306-Gm0-3}, we obtain
\begin{align}
  &\gamma\|\partial_t\varphi_m \|_H^2+\gamma\|\partial_t\psi_m\|_{H_\Gamma}^2
  +\|\nabla\mu_m\|_{\mathbf{L}^2(\Omega)}^2 +\|\nabla_{\!\bm{\tau}\,}\mathcal{L}_m\|_{\mathbf{L}^2(\Gamma)}^2
  +\frac{1}{L}\|\mu_m-\mathcal{L}_m\|_{H_\Gamma}^2
  \notag\\
  &\quad=\int_\Omega\varphi_m \mathbf{v}_m\cdot\nabla\mu_m\,\mathrm{d}x
  +\int_\Gamma\psi_m\mathbf{v}_{m,\bm{\tau}} \cdot\nabla_{\!\bm{\tau}\,}\mathcal{L}_m\,\mathrm{d}S
  \notag\\
  &\qquad+\int_\Omega\nabla(-\Delta\varphi_m+f(\varphi_m)+k^{-1}f_\sigma(\varphi_m))\cdot
  (\nabla\mu_m-\varphi_m\mathbf{v}_m) \,\mathrm{d}x
  \notag\\
  &\qquad+\int_\Gamma\nabla_{\!\bm{\tau}\,}(-\Delta_{\bm{\tau}}\psi_m+
  g(\psi_m)+k^{-1}f_\sigma(\psi_m) +\partial_{\mathbf{n}}\varphi_m)\cdot
  (\nabla_{\!\bm{\tau}\,}\mathcal{L}_m -\psi_m\mathbf{v}_{m,\bm{\tau}})\,\mathrm{d}S
  \notag\\
  &\qquad+\frac{1}{L}\int_\Gamma(-\Delta_{\bm{\tau}}\psi_m+g(\psi_m)+k^{-1}f_\sigma(\psi_m)+
  \partial_{\mathbf{n}}\varphi_m) (\mathcal{L}_m-\mu_m)\,\mathrm{d}S
  \notag\\
  &\qquad-\frac{1}{L}\int_\Gamma(-\Delta\varphi_m+f(\varphi_m) +k^{-1}f_\sigma(\varphi_m))(\mathcal{L}_m-\mu_m)\,\mathrm{d}S.
  \label{0306-Gm0-4}
\end{align}
In view of \eqref{REG:APPROX}, we find
\begin{align*}
\partial_t\bm{\varphi}_m\in C([0,T];\mathcal{H}^1),\quad
\bm{\mu}_m\in C([0,T];\mathcal{H}^1).
\end{align*}
Then by comparison in \eqref{eqd4} and \eqref{eqd5}, it follows that
\begin{align*}
    -\Delta\varphi_m+f(\varphi_m) +k^{-1}f_\sigma(\varphi_m)
    &\in C([0,T];V),
    \\
    -\Delta_{\boldsymbol{\tau}}\psi_m +\partial_\mathbf{n}\varphi_m+g(\psi_m)+k^{-1}f_\sigma(\psi_m)
    &\in C([0,T];V_\Gamma).
\end{align*}
Due to these continuity properties, we can pass to the limit as $ t\to0$ in \eqref{0306-Gm0-4}
to conclude that
\begin{align}
  &\gamma\|\partial_t\varphi_m(0) \|_H^2+\gamma\|\partial_t\psi_m(0)\|_{H_\Gamma}^2
  +\|\nabla\mu_m(0)\|_{\mathbf{L}^2(\Omega)}^2 +\|\nabla_{\!\bm{\tau}\,}\mathcal{L}_m(0)\|_{\mathbf{L}^2(\Gamma)}^2
  +\frac{1}{L}\|\mu_m(0)-\mathcal{L}_m(0)\|_{H_\Gamma}^2
  \notag\\
  &\quad=\int_\Omega\varphi_{0,\sigma,k} \mathbf{v}_m(0)\cdot\nabla\mu_m(0)\,\mathrm{d}x
  +\int_\Gamma\psi_{0,\sigma,k} \mathbf{v}_{m,\bm{\tau}}(0)\cdot\nabla_{\!\bm{\tau}\,}\mathcal{L}_m(0) \,\mathrm{d}S
  \notag\\
  &\qquad+\int_\Omega\nabla(-\Delta\varphi_{0,\sigma,k}+f(\varphi_{0,\sigma,k})+k^{-1}f_\sigma(\varphi_{0,\sigma,k}))\cdot
  (\nabla\mu_m(0)-\varphi_{0,\sigma,k}\mathbf{v}_m(0)) \,\mathrm{d}x
  \notag\\
    &\qquad+\int_\Gamma\nabla_{\!\bm{\tau}\,}(-\Delta_{\bm{\tau}}\psi_{0,\sigma,k}
+g(\psi_{0,\sigma,k}) +k^{-1}f_\sigma(\psi_{0,\sigma,k})
    +\partial_{\mathbf{n}}\varphi_{0,\sigma,k}) \cdot(\nabla_{\!\bm{\tau}\,}\mathcal{L}_m(0)-\psi_{0,\sigma,k}\mathbf{v}_{m,\bm{\tau}}(0))\,\mathrm{d}S
    \notag\\
    &\qquad+\frac{1}{L}\int_\Gamma(-\Delta_{\bm{\tau}}\psi_{0,\sigma,k}+g(\psi_{0,\sigma,k})+k^{-1}f_\sigma(\psi_{0,\sigma,k})
    +\partial_{\mathbf{n}}\varphi_{0,\sigma,k})(\mathcal{L}_m(0)-\mu_m(0))\,\mathrm{d}S
    \notag\\
    &\qquad-\frac{1}{L}\int_\Gamma(-\Delta\varphi_{0,\sigma,k}+f(\varphi_{0,\sigma,k})+k^{-1}f_\sigma(\varphi_{0,\sigma,k}))
    (\mathcal{L}_m(0)-\mu_m(0))\,\mathrm{d}S
    \notag\\
    &\quad\leq\frac{1}{2}\Big(\|\nabla\mu_m(0)\|_{\mathbf{L}^2(\Omega)}^2+\|\nabla_{\!\bm{\tau}\,}\mathcal{L}_m(0)\|_{\mathbf{L}^2(\Gamma)}^2
    +\frac{1}{L}\|\mu_m(0)-\mathcal{L}_m(0)\|_{H_\Gamma}^2\Big)
    \notag\\[1mm]
    &\qquad+C\|-\Delta\varphi_{0,\sigma,k}+f(\varphi_{0,\sigma,k})+k^{-1}f_\sigma(\varphi_{0,\sigma,k})\|_{V}^2
    \notag\\
    &\qquad+C\|-\Delta_{\bm{\tau}}\psi_{0,\sigma,k}+g(\psi_{0,\sigma,k})
    +\partial_{\mathbf{n}} \varphi_{0,\sigma,k}+k^{-1}f_\sigma(\psi_{0,\sigma,k})\|_{V_\Gamma}^2
    \notag\\
    &\qquad+C\|\mathbf{v}_m(0) \|_{\mathbf{L}^2(\Omega)}^2
    +C\|\mathbf{v}_{m,\bm{\tau}}(0)\|_{\mathbf{L}^2(\Gamma)}^2.
    \label{0306-Gm0-4'}
\end{align}
Thus, applying \eqref{0315-1}, \eqref{0315-2} to the right-hand side of \eqref{0306-Gm0-4'}, we infer that
\begin{align}
\mathcal{G}_m(0)
&\leq C\Big(1+\|\widetilde{\mu}_0\|_{V}^2
+\|\widetilde{\mathcal{L}}_0\|_{V_\Gamma} ^2+\|\boldsymbol{\varphi}_{0,\sigma,k}\|_{\mathcal{H}^1}^2
+\|\mathbf{v}_m(0)\|_{\mathbf{H}^1(\Omega)}^2\Big) \leq C,
\label{0306-Gm0-5}
\end{align}
for some positive constant $C$ independent of $m$, $\gamma$, $\sigma$, and $k$.

Combining \eqref{0227-21}, \eqref{0227-22}, \eqref{0306-Gm0-5}, and recalling the definition of $\mathcal{G}_m$,
we see that \eqref{0227-23} holds.
\end{proof}

\begin{lemma}
    \label{final}
There exists a positive constant $C$, independent of $m$, $\gamma$, $\sigma$, and $k$, such that
\begin{align}
    \|f_0(\varphi_m)\|_{L^\infty(0,T;H)} +\|g_0(\psi_m)\|_{L^\infty(0,T;H_\Gamma)}
    &\leq C,\label{0306-8}
    \\
    k^{-1}\|f_\sigma(\varphi_m) \|_{L^\infty(0,T;H)}+k^{-1}\|f_\sigma(\psi_m)\|_{L^\infty(0,T;H_\Gamma)}
    &\leq C,\label{0306-8'}
    \\
    \|\bm{\varphi}_m\|_{L^\infty(0,T;\mathcal{H}^2)}
    &\leq C,\label{0306-9}
    \\
    \|\bm{\mu}_m\|_{L^2(0,T;\mathcal{H}^2)}
    &\leq C.\label{0306-9'}
\end{align}
Furthermore, the following strict separation property holds
\begin{align}
    &\varphi_m\in L^\infty(Q_T)\;\;\text{such that}\;\;|\varphi_m(x,t)|\leq (1-\sigma)(1-\widetilde{\delta})
    \ \ \text{a.e.~in }Q_T,
    \label{0306-separation-1}
    \\[1mm]
    &\psi_m\in L^\infty(\Sigma_T)\;\;\text{such that}\;\;|\psi_m(x,t)|\leq (1-\sigma)(1-\widetilde{\delta})
    \ \ \text{a.e.~on }\Sigma_T,
    \label{0306-separation-2}
\end{align}
for some $\widetilde{\delta}=\widetilde{\delta}(\sigma,k,\gamma)\in(0,1)$.
\end{lemma}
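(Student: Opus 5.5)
The plan is to redo the tests of Lemma~\ref{f-L2L2} and Lemma~\ref{H2} pointwise in time rather than in $L^2(0,T)$. The improved ingredients now available are the $L^\infty(0,T;\mathcal{H}^1)$ bound on $\boldsymbol{\mu}_m$ and the bound $\sqrt{\gamma}\|\partial_t\boldsymbol{\varphi}_m\|_{L^\infty(0,T;\mathcal{L}^2)}\le C$ from Lemma~\ref{higher}.

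\textbf{Step 1: bounds \eqref{0306-8} and \eqref{0306-8'}.} For a.e.\ $t$, multiply \eqref{eqd4} by $f_0(\varphi_m)$ and \eqref{eqd5} by $f_0(\psi_m)$, exactly as in \eqref{0306-1}. Discard the nonnegative terms using \eqref{f_lambda_p3}, and use \eqref{0306-1'} (which relies on \ref{ASS:A3}). This gives, for a.e.\ $t$,
\[
\|f_0(\varphi_m)\|_H^2+\|f_0(\psi_m)\|_{H_\Gamma}^2\le C\big(1+\|\boldsymbol{\mu}_m\|_{\mathcal{L}^2}^2+\gamma^2\|\partial_t\boldsymbol{\varphi}_m\|_{\mathcal{L}^2}^2\big).
\]
Since $\gamma<1$, we have $\gamma^2\|\partial_t\boldsymbol{\varphi}_m\|^2\le\gamma\|\partial_t\boldsymbol{\varphi}_m\|^2$, so Lemma~\ref{higher} makes the right-hand side bounded uniformly in $t$. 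The same argument with the test functions $k^{-1}f_\sigma(\varphi_m)$ and $k^{-1}f_\sigma(\psi_m)$ gives \eqref{0306-8'}. Testing \eqref{eqd5} by $g_0(\psi_m)$ as in Lemma~\ref{H2} produces the boundary term $\|\partial_\mathbf{n}\varphi_m\|_{H_\Gamma}$. By \eqref{eqd3''} this term is controlled by $\chi(K)\|\psi_m-\varphi_m\|_{H_\Gamma}\le C$, using \eqref{m-uni-7}. This yields the $L^\infty(0,T;H_\Gamma)$ bound on $g_0(\psi_m)$. The bound on $\partial_\mathbf{n}\varphi_m$ is exactly where $K>0$ is needed.

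\textbf{Step 2: bounds \eqref{0306-9} and \eqref{0306-9'}.} The elliptic estimate \eqref{260806-1} holds pointwise in time. Inserting Step~1 and Lemma~\ref{higher} gives \eqref{0306-9}. For \eqref{0306-9'}, view $(\mu_m,\mathcal{L}_m)$ as the solution of the bulk-surface elliptic problem \eqref{2.2} with data $(-\partial_t\varphi_m-\mathbf{v}_m\cdot\nabla\varphi_m,\,-\partial_t\psi_m-\mathbf{v}_{m,\boldsymbol\tau}\cdot\nabla_{\!\boldsymbol\tau}\psi_m)$; this data has zero generalized mean by Lemma~\ref{mass-conservation-law}. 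Apply \eqref{Hk-regularity} with $k=0$ to $\mathbb{P}\boldsymbol{\mu}_m$. This gives
\[
\|\mathbb{P}\boldsymbol{\mu}_m\|_{\mathcal{H}^2}\le C\big(\|\partial_t\boldsymbol{\varphi}_m\|_{\mathcal{L}^2}+\|\mathbf{v}_m\|_{\mathbf{L}^6(\Omega)}\|\nabla\varphi_m\|_{\mathbf{L}^3(\Omega)}+\|\mathbf{v}_{m}\|_{\mathbf{L}^4(\Gamma)}\|\nabla_{\!\boldsymbol\tau}\psi_m\|_{\mathbf{L}^4(\Gamma)}\big).
\]
Here $\mathbf{v}_m$ is bounded in $L^\infty(0,T;\mathbf{H}^1)$ by \eqref{m-uni-7}, and $\boldsymbol{\varphi}_m$ is bounded in $L^\infty(0,T;\mathcal{H}^2)$ by \eqref{0306-9}. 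Moreover $\partial_t\boldsymbol{\varphi}_m\in L^2(0,T;\mathcal{H}^1)$ by Lemma~\ref{higher}. Combining these with $\overline{m}(\boldsymbol{\mu}_m)\in L^\infty(0,T)$ (from $\boldsymbol{\mu}_m\in L^\infty(0,T;\mathcal{H}^1)$) gives \eqref{0306-9'}.

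\textbf{Step 3: separation \eqref{0306-separation-1}--\eqref{0306-separation-2}.} These are not uniform in the parameters. They follow directly from the regularity \eqref{REG:APPROX} of the approximate solution, i.e.\ from Theorem~\ref{A1} applied with $\mathbf{v}=\mathbf{v}_m$ at the fixed point. Since $\widetilde\delta$ may depend on $\sigma,k,\gamma$, we simply take $\widetilde\delta=\delta(\sigma,k,\gamma)$.

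\textbf{Main obstacle.} I expect the main difficulty to be Step~1, specifically checking uniformity in $\gamma$ of the $\gamma\partial_t\boldsymbol{\varphi}_m$ term. This is handled by the $\sqrt\gamma$-weighted $L^\infty$ bound together with $\gamma<1$.
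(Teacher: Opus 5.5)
Your Steps~1 and~2 follow the paper's argument and are fine: you redo Lemmas~\ref{f-L2L2} and~\ref{H2} pointwise in time, then apply bulk-surface elliptic regularity to the $\boldsymbol{\mu}_m$-subsystem. The gap is in Step~3. The real content of the separation claim is that $\widetilde{\delta}$ depends only on $(\sigma,k,\gamma)$ and \emph{not on $m$}. Section~\ref{passage-to-limit} needs exactly this to obtain \eqref{SEP:PHI:SIG}--\eqref{SEP:PSI:SIG} and to pass to the limit in $k^{-1}f_\sigma(\varphi_m)$, $k^{-1}f_\sigma(\psi_m)$ as $m\to+\infty$.

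You cannot read this off \eqref{REG:APPROX}. The $\delta$ there comes from Theorem~\ref{A1}\,\ref{R2}, where the ODE comparison fixes it through the $L^\infty$ bound $H$ on $(\mu-f(\varphi),\mathcal{L}-g(\psi))$. In turn, $H$ is controlled via \eqref{EST:MU:LINFH2} by $\|(\mathbf{v},\mathbf{v}_{\boldsymbol{\tau}})\|_{X_T}$, where $X_T=L^\infty(0,T;\boldsymbol{\mathcal{L}}^3)\cap W^{1,\frac{4}{3}}(0,T;\boldsymbol{\mathcal{L}}^1)$. At the fixed-point stage the velocity is only known to lie in the set $S$. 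The bounds defining $S$ depend on $m$: the constant $K_1$ does, and so does the inverse-inequality constant $C_m$ needed to reach $\boldsymbol{\mathcal{L}}^3$. So the $\delta$ in \eqref{REG:APPROX} and \eqref{strong-regu} is a priori $m$-dependent despite its label. Writing ``simply take $\widetilde{\delta}=\delta(\sigma,k,\gamma)$'' assumes exactly what has to be shown.

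The missing step, which the paper calls ``combining Lemma~\ref{higher} with Theorem~\ref{A1}'', is to rerun the $L^\infty$ argument of \ref{R2} with the $m$-uniform bounds you now have.
\begin{itemize}
\item By Lemmas~\ref{energy-estimates} and~\ref{higher}, $\mathbf{v}_m$ is bounded in $L^\infty(0,T;\mathbf{H}^1(\Omega))$ and $\partial_t\mathbf{v}_m$ in $L^2(0,T;\mathbf{H}^1(\Omega))$, uniformly in $m$. The trace theorem and Sobolev embeddings then give $\|(\mathbf{v}_m,\mathbf{v}_{m,\boldsymbol{\tau}})\|_{X_T}\le C$.
\item More directly, you have $\sqrt{\gamma}\|\partial_t\boldsymbol{\varphi}_m\|_{L^\infty(0,T;\mathcal{L}^2)}\le C$, $\|\boldsymbol{\mu}_m\|_{L^\infty(0,T;\mathcal{H}^1)}\le C$ and \eqref{0306-9}. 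With these, your Step~2 elliptic estimate taken in $L^\infty$ in time gives $\|\boldsymbol{\mu}_m\|_{L^\infty(0,T;\mathcal{H}^2)}\le C\gamma^{-1/2}$, uniformly in $m$.
\item Hence $H\le C(\sigma,\gamma)$. This uses $\mathcal{H}^2\hookrightarrow\mathcal{L}^\infty$ and the fact that $|f|,|g|$ are bounded by their maxima on $[-1+\sigma,1-\sigma]$, which contains the range of $\boldsymbol{\varphi}_m$.
\item The initial separation $\delta(k)$ from \eqref{initial-separation} is also independent of $m$. The comparison with the ODEs in \ref{R2} then gives $\widetilde{\delta}=\widetilde{\delta}(\sigma,k,\gamma)$ independent of $m$.
\end{itemize}
In particular, the delicate point of this lemma is Step~3, not the $\gamma$-weighting in Step~1.
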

\begin{proof}
Based on Lemma~\ref{higher}, the uniform bounds \eqref{0306-8}--\eqref{0306-9}
can be shown in a similar way as in the proof of Lemmas~\ref{f-L2L2} and \ref{H2}.
Furthermore, applying the regularity theory for bulk-surface elliptic problems (see, e.g., \cite[Theorem~3.3]{KL}) to the subsystem \eqref{0321-eq-1-1}, \eqref{0321-eq-1-4}, \eqref{0321-eq-1-1}, and using Lemmas~\ref{energy-estimates}, \ref{mu-H1-phi-H2} and \ref{higher},
we can derive \eqref{0306-9'}.
Finally, combining Lemma~\ref{higher} with Theorem~\ref{A1}, we find that the strict separation property in \eqref{strong-regu} depends on $\sigma$, $\gamma$ and $k$, but is independent of $m$.
This yields \eqref{0306-separation-1} and \eqref{0306-separation-2}. The proof is complete.
\end{proof}


\subsection{Passage to the limit and existence of quasi-strong solutions}
\label{passage-to-limit}

In this subsection, we prove the existence of quasi-strong solutions in the case $(K,L)\in(0,+\infty]\times(0,+\infty)$.

\begin{proof}[Proof of Theorem~\ref{quasi-strong}: $ (K,L) \in (0,+\infty\rbrack \times (0,+\infty) $.]
We will prove the assertions by choosing $\gamma\coloneqq k^{-1}$ and
passing to the limits as $m\to+\infty$, $\sigma\to0$ and $k\to+\infty$ in sequence.

\noindent\textbf{\itshape Passage to the limit as $ m\to+\infty$.}
Based on the uniform estimates established in Lemmas~\ref{energy-estimates}, \ref{higher} and \ref{final},
we apply the Banach--Alaoglu theorem and the Aubin--Lions--Simon lemma to conclude, up to subsequence extraction, the following convergences as $m\to+\infty$:
\begin{subequations}
\label{CONV:M}
\begin{alignat}{2}
    \mathbf{v}_m
    &\to\mathbf{v}_\sigma
    &&\qquad\text{weakly star in }L^\infty(0,T;\mathbf{H}^1_{\mathrm{div}}(\Omega)),
    \\
    \partial_t\mathbf{v}_m
    &\to\partial_t\mathbf{v}_\sigma
    &&\qquad\text{weakly in }L^2(0,T;\mathbf{H}^1_{\mathrm{div}}(\Omega)),
    \\
    (\mathbf{v}_m,\mathbf{v}_{m,\boldsymbol{\tau}})
    &\to (\mathbf{v}_\sigma,\mathbf{v}_{\sigma,\boldsymbol{\tau}})
    &&\quad \ \ \ \text{ strongly in }C([0,T];\boldsymbol{\mathcal{L}}^2_{\mathrm{div}}),
    \\
    \bm{\varphi}_m
    &\to\bm{\varphi}_\sigma
    &&\qquad\text{weakly star in }L^\infty(0,T;\mathcal{H}^2)
    \text{ and strongly in }C([0,T];\mathcal{W}^{1,p}),
    \\
    \partial_t \bm{\varphi}_m
    &\to\partial_t\bm{\varphi}_\sigma
    &&\qquad\text{weakly in }L^2(0,T;\mathcal{H}^1),
    \\
    \bm{\mu}_m
    &\to\bm{\mu}_\sigma
    &&\qquad\text{weakly star in }L^\infty(0,T;\mathcal{H}^1)
    \ \text{and weakly in }L^2(0,T;\mathcal{H}^2),
\end{alignat}
\end{subequations}
for all $p\in[2,6)$. Like before, here we use the notation $\bm{\varphi}_\sigma = (\varphi_\sigma,\psi_\sigma)$ and $\bm{\mu}_\sigma = (\mu_\sigma,\mathcal{L}_\sigma)$.
As a consequence of the above convergences, we infer that
\begin{align*}
    \rho(\varphi_m)\to \rho(\varphi_\sigma)&\qquad\text{strongly in }C([0,T];{W}^{1,p}(\Omega)),\\
     \nu(\varphi_m)\to \nu(\varphi_\sigma)&\qquad\text{strongly in }C([0,T];{W}^{1,p}(\Omega)),\\
      \beta(\psi_m)\to \beta(\psi_\sigma)&\qquad\text{strongly in }C([0,T];{W}^{1,p}(\Gamma)),
\end{align*}
for all $p\in[2,6)$. Moreover, due to  Lemma~\ref{final}, we have
\begin{subequations}
\begin{align}
    \label{SEP:PHI:SIG}
    &\varphi_\sigma\in L^\infty(Q_T)\ \ \text{such that }|\varphi_\sigma(x,t)|\leq (1-\sigma)(1-\widetilde{\delta})
    \ \ \text{a.e.~in }Q_T,
    \\[1mm]
    \label{SEP:PSI:SIG}
    &\psi_\sigma\in L^\infty(\Sigma_T)\ \ \text{such that }|\psi_\sigma(x,t)|\leq (1-\sigma)(1-\widetilde{\delta})
    \ \ \text{a.e.~on }\Sigma_T,
\end{align}
\end{subequations}
with $\widetilde{\delta}=\widetilde{\delta}(\sigma,k,\gamma)\in(0,1)$.

The above properties guarantee the convergence of the nonlinear terms in \eqref{eqd1},
and of the singular nonlinearities $k^{-1} f_\sigma(\varphi_m)$ in \eqref{eqd4} and $k^{-1} f_\sigma(\psi_m)$ in \eqref{eqd5}.
This allows us to pass to the limit as $ m\to+\infty$ in \eqref{eqd1}--\eqref{approximating-solution}, which entails that the limit functions $(\mathbf{v}_\sigma,\mathbf{v}_{\sigma,\boldsymbol{\tau}},\bm{\varphi}_\sigma,\bm{\mu}_\sigma)$ satisfy the weak formulation
\begin{align}
	& \int_\Omega \rho(\varphi_\sigma)\partial_{t}\mathbf{v}_{\sigma}\cdot\widetilde{\mathbf{v}}_j\,\mathrm{d}x
    +\int_\Omega 2\alpha \mathbb{D}\partial_t\mathbf{v}_\sigma:\mathbb{D}\widetilde{\mathbf{v}}_j \,\mathrm{d}x
    +\int_\Omega 2\nu(\varphi_\sigma)\mathbb{D}\mathbf{v}_\sigma:\mathbb{D}\widetilde{\mathbf{v}}_j\,\mathrm{d}x
	+\int_\Gamma \beta(\psi_\sigma )\mathbf{v}_{\sigma,\bm{\tau}}\cdot\widetilde{\mathbf{w}}_j\,\mathrm{d}S
	\notag \\[1mm]
	& \quad =\int_\Omega  \mu_\sigma \nabla \varphi_\sigma\cdot\widetilde{\mathbf{v}}_j\,\mathrm{d}x
	-\int_\Gamma  \psi_\sigma\nabla_{\!\bm{\tau}\,}\mathcal{L}_\sigma\cdot\widetilde{\mathbf{w}}_j \,\mathrm{d}S
    -\int_\Omega \rho(\varphi_\sigma)(\mathbf{v}_\sigma\cdot\nabla)\mathbf{v}_\sigma\cdot\widetilde{\mathbf{v}}_j\,\mathrm{d}x
    \notag\\[1mm]
    &\qquad+\rho'\int_\Omega (\nabla\mu_\sigma\cdot\nabla)\mathbf{v}_\sigma\cdot\widetilde{\mathbf{v}}_j \,\mathrm{d}x
    -\frac{\rho'}{2}\int_\Gamma (\nabla\mu_\sigma\cdot\mathbf{n})\mathbf{v}_{\sigma,\bm{\tau}}\cdot\widetilde{\mathbf{w}}_j \,\mathrm{d}S
    \quad\text{for all }j\in\mathbb{N},
	\label{gamma-eqd1}
\end{align}
almost everywhere in $[0,T]$, and the equations
\begin{align}
    \left\{\,
    \begin{aligned}
	&\partial_{t}\varphi_\sigma +\mathbf{v}_\sigma\cdot\nabla\varphi_\sigma=\Delta\mu_\sigma & & \mbox{a.e.~in } Q_{T},
	\\
	& \mu_\sigma =\gamma\partial_t \varphi_\sigma-\Delta \varphi_\sigma+f(\varphi _\sigma) +k^{-1} f_\sigma(\varphi_\sigma) & & \mbox{a.e.~in } Q_{T},
	\\
    &
    \begin{cases}
        K\partial_\mathbf{n}\varphi_\sigma =\psi_\sigma-\varphi_\sigma,&K\in(0,+\infty)
        \\
        \partial_\mathbf{n}\varphi_\sigma=0, &K=+\infty
    \end{cases}
    &&\mbox{a.e.~on }
	\Sigma _{T},
    \label{sigma-eq}
    \\
	&
	L\partial_{\mathbf{n}}\mu_\sigma =\mathcal{L}_\sigma-\mu_\sigma,\quad L\in(0,+\infty)&&\mbox{a.e.~on }
	\Sigma _{T},
	\\
	&\partial_t\psi_\sigma +\mathbf{v}_{\sigma,\bm{\tau}} \cdot\nabla_{\!\bm{\tau}\,}\psi_\sigma
    =\Delta_{\bm{\tau}}\mathcal{L}_\sigma-\partial_{\mathbf{n}}\mu_\sigma & & \mbox{a.e.~on }
	\Sigma _{T},
	\\
	& \mathcal{L} _\sigma = \gamma\partial_t \psi_\sigma- \Delta _{\bm{\tau}}\psi_\sigma +\partial _{%
		\mathbf{n}}\varphi_\sigma+ g(\psi_\sigma)+k^{-1} f_\sigma(\psi_\sigma) & & \mbox{a.e.~on }
	\Sigma _{T}.
    \end{aligned}
    \right.
\end{align}
Moreover, we have
\begin{align}
\mathbf{v}_\sigma|_{t=0}=\mathbf{v}_0\quad \text{a.e. in }\Omega,\quad \ (\varphi_\sigma,\psi_\sigma)|_{t=0}=(\varphi_{0,\sigma,k},\psi_{0,\sigma,k})\;\;\text{in }\Omega\times\Gamma.\notag
\end{align}
Since $\mathrm{span}\{(\widetilde{\mathbf{v}}_j,\widetilde{\mathbf{w}}_j):j\in\mathbb{N}\}$ is dense in $\boldsymbol{\mathcal{H}}^2_{0,\mathrm{div}}$, we infer from \eqref{gamma-eqd1} that  $(\mathbf{v}_\sigma,\mathbf{v}_{\sigma,\boldsymbol{\tau}},\boldsymbol{\varphi}_\sigma,\boldsymbol{\mu}_\sigma)$ satisfy
\begin{align}
	& \int_\Omega \rho(\varphi_\sigma)\partial_{t}\mathbf{v}_{\sigma}\cdot\mathbf{w} \,\mathrm{d}x
    +\int_\Omega 2\alpha \mathbb{D}\partial_t\mathbf{v}_\sigma:\mathbb{D}\mathbf{w} \,\mathrm{d}x
    +\int_\Omega 2\nu(\varphi_\sigma)\mathbb{D}\mathbf{v}_\sigma:\mathbb{D}\mathbf{w} \,\mathrm{d}x
	+\int_\Gamma \beta(\psi_\sigma )\mathbf{v}_{\sigma,\bm{\tau}}\cdot\mathbf{w}_{\bm{\tau}} \,\mathrm{d}S
	\notag \\[1mm]
	& \quad =\int_\Omega  \mu_\sigma\nabla \varphi_\sigma\cdot\mathbf{w} \,\mathrm{d}x
	-\int_\Gamma \psi_\sigma\nabla_{\!\bm{\tau}\,}\mathcal{L}_\sigma\cdot\mathbf{w}_{\bm{\tau}} \,\mathrm{d}S
    -\int_\Omega \rho(\varphi_\sigma)(\mathbf{v}_\sigma\cdot\nabla)\mathbf{v}_\sigma\cdot\mathbf{w} \,\mathrm{d}x
    \notag\\[1mm]
    &\qquad+\rho'\int_\Omega (\nabla\mu_\sigma\cdot\nabla)\mathbf{v}_\sigma\cdot\mathbf{w} \,\mathrm{d}x
    -\frac{\rho'}{2}\int_\Gamma (\nabla\mu_\sigma\cdot\mathbf{n})\mathbf{v}_{\sigma,\bm{\tau}}\cdot\mathbf{w}_{\bm{\tau}} \,\mathrm{d}S,
	\label{'gamma-eqd1}
\end{align}
almost everywhere in $[0,T]$ for all $(\mathbf{w},\mathbf{w}_{\boldsymbol{\tau}})\in \boldsymbol{\mathcal{H}}^2_{0,\mathrm{div}}$.

\noindent\textbf{\itshape Passage to the limit as $ \sigma\to0$.}
As norms on Banach spaces are weakly (star) lower semicontinuous,
we infer from the convergences \eqref{CONV:M} and the uniform bounds established in
Lemmas~\ref{energy-estimates}, \ref{higher} and \ref{final} that
\begin{subequations}
\label{sigma-uni}
\begin{align}
    \|\mathbf{v}_\sigma\|_{L^\infty(0,T;{\mathbf{H}}^1_{\mathrm{div}}(\Omega))}
    +\|\bm{\mu}_\sigma \|_{L^\infty(0,T;\mathcal{H}^1)}
    +\|\bm{\varphi}_\sigma \|_{L^\infty(0,T;\mathcal{H}^2)}
    +\sqrt{\gamma}\|\partial_t\bm{\varphi}_\sigma \|_{L^\infty(0,T;\mathcal{L}^2)}
    &\leq C,
    \label{sigma-uni-1} \\
    \|f_0(\varphi_\sigma)\|_{L^\infty(0,T;H)}
    +\|g_0(\psi_\sigma)\|_{L^\infty(0,T;H_\Gamma)}
    &\leq C,
    \label{sigma-uni-2} \\
    k^{-1}\|f_\sigma(\varphi_\sigma) \|_{L^\infty(0,T;H)}
    +k^{-1}\|f_\sigma (\psi_\sigma)\|_{L^\infty(0,T;H_\Gamma)}
    &\leq C,
    \label{sigma-uni-2'} \\
    \|\partial_t\bm{\varphi}_\sigma \|_{L^2(0,T;\mathcal{H}^1)}
    +\|\partial_t\mathbf{v}_\sigma \|_{L^2(0,T;\mathbf{H}_{\mathrm{div}}^1(\Omega))}
    +\|\bm{\mu}_\sigma\|_{L^2(0,T;\mathcal{H}^2)}
    &\leq C,
    \label{sigma-uni-3}
\end{align}
\end{subequations}
for some positive constant $C$ independent of $\gamma$, $\sigma$ and $k$.
Therefore, arguing as above, we use the Banach--Alaoglu theorem and the Aubin--Lions--Simon lemma to deduce that the convergences
\begin{subequations}
\label{CONV:SIGMA}
\begin{alignat}{2}
    \mathbf{v}_\sigma
    &\to\mathbf{v}_k
    &&\qquad\text{weakly star in }L^\infty(0,T;\mathbf{H}^1_{\mathrm{div}}(\Omega)),
    \\
    \partial_t\mathbf{v}_\sigma
    &\to\partial_t\mathbf{v}_k
    &&\qquad\text{weakly in }L^2(0,T;\mathbf{H}^1_{\mathrm{div}}(\Omega)),
    \\
    (\mathbf{v}_\sigma,\mathbf{v}_{\sigma,\boldsymbol{\tau}})
    &\to (\mathbf{v}_k,\mathbf{v}_{k,\boldsymbol{\tau}})
    &&\quad \ \ \ \text{ strongly in }C([0,T];\boldsymbol{\mathcal{L}}^2_{\mathrm{div}}),
    \\
    \bm{\varphi}_\sigma
    &\to\bm{\varphi}_k
    &&\qquad\text{weakly star in }L^\infty(0,T;\mathcal{H}^2)
    \text{ and strongly in }C([0,T];\mathcal{W}^{1,p}),
    \\
    \partial_t \bm{\varphi}_\sigma
    &\to\partial_t\bm{\varphi}_k
    &&\qquad\text{weakly in }L^2(0,T;\mathcal{H}^1),
    \\
    \bm{\mu}_\sigma
    &\to\bm{\mu}_k
    &&\qquad\text{weakly star in }L^\infty(0,T;\mathcal{H}^1)
    \text{ and weakly in }L^2(0,T;\mathcal{H}^2),
    \\
    \rho(\varphi_\sigma)
    &\to \rho(\varphi_k)
    &&\qquad\text{strongly in }C([0,T];{W}^{1,p}(\Omega)),
    \\
    \nu(\varphi_\sigma)
    &\to \nu(\varphi_k)
    &&\qquad\text{strongly in }C([0,T];{W}^{1,p}(\Omega)),
    \\
    \beta(\psi_\sigma)
    &\to \beta(\psi_k)
    &&\qquad\text{strongly in }C([0,T];{W}^{1,p}(\Gamma)),
\end{alignat}
\end{subequations}
as $\sigma\to0$, hold for all $p\in[2,6)$ up to subsequence extraction.
Here, we use the notation $\bm{\varphi}_k = (\varphi_k,\psi_k)$ and $\bm{\mu}_k = (\mu_k,\mathcal{L}_k)$.
Thus, passing to the limit as $ \sigma\to0$ in \eqref{'gamma-eqd1}, we deduce that the functions $(\mathbf{v}_k,\mathbf{v}_{k,\boldsymbol{\tau}},\boldsymbol{\varphi}_k,\boldsymbol{\mu}_k)$ satisfy
\begin{align}
	& \int_\Omega \rho(\varphi_k)\partial_{t}\mathbf{v}_{k}\cdot\mathbf{w} \,\mathrm{d}x
    +\int_\Omega 2\alpha \mathbb{D}\partial_t\mathbf{v}_k:\mathbb{D}\mathbf{w} \,\mathrm{d}x
    +\int_\Omega 2\nu(\varphi_k)\mathbb{D}\mathbf{v}_k:\mathbb{D}\mathbf{w} \,\mathrm{d}x
	+\int_\Gamma \beta(\psi_k )\mathbf{v}_{k,\bm{\tau}}\cdot\mathbf{w}_{\bm{\tau}} \,\mathrm{d}S
	\notag \\[1mm]
	& \quad =\int_\Omega  \mu_k\nabla \varphi_k\cdot\mathbf{w} \,\mathrm{d}x
	-\int_\Gamma \psi_k\nabla_{\!\bm{\tau}\,}\mathcal{L}_k\cdot\mathbf{w}_{\bm{\tau}} \,\mathrm{d}S
    -\int_\Omega \rho(\varphi_k)(\mathbf{v}_k\cdot\nabla)\mathbf{v}_k\cdot\mathbf{w} \,\mathrm{d}x
    \notag\\[1mm]
    &\qquad+\rho'\int_\Omega(\nabla\mu_k\cdot\nabla)\mathbf{v}_k\cdot\mathbf{w} \,\mathrm{d}x
     -\frac{\rho'}{2}\int_\Gamma (\nabla\mu_k\cdot\mathbf{n})\mathbf{v}_{k,\bm{\tau}}\cdot\mathbf{w}_{\bm{\tau}} \,\mathrm{d}S
	\label{sigma-eqd1}
\end{align}
almost everywhere in $[0,T]$ for all $(\mathbf{w},\mathbf{w}_{\boldsymbol{\tau}})\in \boldsymbol{\mathcal{H}}^2_{0,\mathrm{div}}$.

Since the quantity $\widetilde{\delta}=\widetilde{\delta}(\sigma,k,\gamma)$ in the separation properties \eqref{SEP:PHI:SIG} and \eqref{SEP:PSI:SIG} is not uniform with respect to the parameter $\sigma$, we cannot directly pass to the limit as $\sigma\to0$ in nonlinear terms $f_0(\varphi_\sigma)$, $g_0(\psi_\sigma)$, $f_\sigma(\varphi_\sigma)$ and $f_\sigma(\psi_\sigma)$. Instead, recalling the maximal monotonicity of $f_0$ and $g_0$, we can apply \cite[Proposition 2.1]{Barbu} to conclude that
\begin{subequations}
\label{0813-convergence-1}
\begin{alignat}{2}
    f_0(\varphi_\sigma)
    &\to f_0(\varphi_k)
    &&\quad\text{ weakly star in }L^\infty(0,T;H),
    \\
    g_0(\psi_\sigma)
    &\to g_0(\psi_k)
    &&\quad\text{ weakly star in }L^\infty(0,T;H_\Gamma),
    \\
    f_\sigma(\varphi_\sigma) =f_0\Big(\frac{\varphi_\sigma}{1-\sigma}\Big)&\to f_0(\varphi_k)
    &&\quad\text{ weakly star in }L^\infty(0,T;H),\quad
    \\
    f_\sigma(\psi_\sigma)=f_0\Big(\frac{\psi_\sigma}{1-\sigma}\Big)&\to f_0(\psi_k)
    &&\quad\text{ weakly star in }L^\infty(0,T;H_\Gamma).
\end{alignat}
\end{subequations}
Combining \eqref{sigma-eq}, \eqref{CONV:SIGMA} and \eqref{0813-convergence-1}, we obtain
\begin{align}
    &\int_\Omega(\partial_t\varphi_k +\mathbf{v}_k\cdot\nabla\varphi_k)z\,\mathrm{d}x
    +\int_\Gamma(\partial_t\psi_k +\mathbf{v}_{k,\boldsymbol{\tau}}\cdot\nabla_{\!\boldsymbol{\tau}\,}\psi_k)z_\Gamma\,\mathrm{d}S
    \notag\\
    &\quad=-\int_\Omega\nabla\mu_k\cdot\nabla z\,\mathrm{d}x
    -\int_\Gamma \nabla_{\!\boldsymbol{\tau}\,}\mathcal{L}_k\cdot\nabla_{\!\boldsymbol{\tau}\,} z_\Gamma\,\mathrm{d}S
    -\frac{1}{L}\int_\Gamma(\mu_k-\mathcal{L}_k)(z-z_\Gamma)\,\mathrm{d}S,
    \label{weak-0813-1}
\end{align}
and
\begin{align}
&\int_\Omega(\mu_k-\gamma\partial_t\varphi_k+\Delta\varphi_k-f_1(\varphi_k))z\,\mathrm{d}x
+\int_\Gamma(\mathcal{L}_k-\gamma\partial_t\psi_k+\Delta_{\boldsymbol{\tau}}\psi_k-\partial_{\mathbf{n}}\varphi_k-g_1(\psi_k))z_\Gamma\,\mathrm{d}S
\notag\\
&\quad=\int_\Omega (f_0(\varphi_k)+k^{-1}f_0(\varphi_k))z\,\mathrm{d}x
+\int_\Gamma (g_0(\psi_k)+k^{-1}f_0(\psi_k))z_\Gamma\,\mathrm{d}S,\label{weak-0813-2}
\end{align}
for all $(z,z_\Gamma)\in \mathcal{H}^1$.
Since the test functions $(z,z_\Gamma)\in \mathcal{H}^1$ in \eqref{weak-0813-1} and \eqref{weak-0813-2} are arbitrary, this implies that the functions $(\mathbf{v}_k,\mathbf{v}_{k,\boldsymbol{\tau}},\boldsymbol{\varphi}_k,\boldsymbol{\mu}_k)$ also satisfy
\begin{align}
    \left\{
    \begin{aligned}
	&\partial_{t}\varphi_k+\mathbf{v}_k \cdot\nabla\varphi_k=\Delta\mu_k
    & & \mbox{a.e.~in } Q_{T},
	\\
	& \mu_k =\gamma\partial_t\varphi_k-\Delta \varphi _k+f(\varphi _k)+k^{-1} f_0(\varphi_k)
    & & \mbox{a.e.~in } Q_{T},
	\\
     &
    \begin{cases}
        K\partial_\mathbf{n}\varphi_k=\psi_k-\varphi_k,&K\in(0,+\infty)\\
        \partial_\mathbf{n}\varphi_k=0,&K=+\infty
    \end{cases}
    &&\mbox{a.e.~on } 	\Sigma _{T},
    \label{lambda-eq}\\
	&
	L\partial_{\mathbf{n}}\mu_k=\mathcal{L}_k-\mu_k,\quad L\in(0,+\infty)
    &&\mbox{a.e.~on }	\Sigma _{T},
	\\
	&\partial_t\psi_k+{\mathbf{v}_{k,\bm{\tau}}\cdot\nabla_{\!\bm{\tau}\,}\psi_k}
    =\Delta_{\bm{\tau}}\mathcal{L}_k-\partial_{\mathbf{n}}\mu_k
    & & \mbox{a.e.~on } \Sigma _{T},
	\\
	& \mathcal{L} _k = \gamma\partial_t\psi_k- \Delta _{\bm{\tau}}\psi_k +\partial _{%
		\mathbf{n}}\varphi_k
        + g(\psi_k)+k^{-1}f_0(\psi_k)
        & & \mbox{a.e.~on } \Sigma _{T}.
    \end{aligned}
    \right.
\end{align}
Moreover, using \eqref{initial-convergence}, we further get
\begin{align}
\mathbf{v}_k|_{t=0}=\mathbf{v}_0\quad\text{a.e. in }\Omega,\quad
(\varphi_k,\psi_k)|_{t=0}=(\varphi_{0,k},\psi_{0,k})\quad\text{in }\Omega\times\Gamma.
\end{align}


\noindent\textbf{\itshape Passage to the limit as $k\to +\infty$.}
As mentioned above, in order to pass to the limits $\gamma\to 0$ and $k\to +\infty$ simultaneously, we  choose $\gamma \coloneqq k^{-1}$ and send $k\to +\infty$. As norms on Banach spaces are weakly (star) lower semicontinuous,
we infer from the convergences \eqref{CONV:M} and the uniform bounds \eqref{sigma-uni} that
\begin{subequations}
\begin{align}
    \|\mathbf{v}_k\|_{L^\infty(0,T;{\mathbf{H}}^1_{\mathrm{div}}(\Omega))}
    +\|\bm{\mu}_k\|_{L^\infty(0,T;\mathcal{H}^1)} +\|\bm{\varphi}_k\|_{L^\infty(0,T;\mathcal{H}^2)}
    +k^{-\frac{1}{2}}\|\partial_t\bm{\varphi}_k\|_{L^\infty(0,T;\mathcal{L}^2)}
    &\leq C,\label{lambda-uni-1}
    \\
    \|f_0(\varphi_k)\|_{L^\infty(0,T;H)} +\|g_0(\psi_k)\|_{L^\infty(0,T;H_\Gamma)}
    &\leq C,\label{lambda-uni-2}
    \\
    \|\partial_t\bm{\varphi}_k \|_{L^2(0,T;\mathcal{H}^1)}
    +\|\partial_t\mathbf{v}_k \|_{L^2(0,T;\mathbf{H}_{\mathrm{div}}^1(\Omega))}
    +\|\bm{\mu}_k\|_{L^2(0,T;\mathcal{H}^2)}&\leq C,
    \label{lambda-uni-3}
\end{align}
\end{subequations}
for some positive constant $C$ independent of $k$.
Therefore, using the Banach--Alaoglu theorem and the Aubin--Lions--Simon lemma, we infer from \eqref{lambda-uni-1} and \eqref{lambda-uni-3} that
\begin{subequations}
\label{CONV:K}
\begin{alignat}{2}
    \mathbf{v}_k
    &\to\mathbf{v}
    &&\qquad\text{weakly star in }L^\infty(0,T;\mathbf{H}^1_{\mathrm{div}}(\Omega)),
    \\
    \partial_t\mathbf{v}_k
    &\to\partial_t\mathbf{v}
    &&\qquad\text{weakly in }L^2(0,T;\mathbf{H}^1_{\mathrm{div}}(\Omega)),
    \\
    (\mathbf{v}_k,\mathbf{v}_{k,\boldsymbol{\tau}})
    &\to (\mathbf{v},\mathbf{v}_{\boldsymbol{\tau}})
    &&\quad \ \ \ \text{ strongly in }C([0,T];\boldsymbol{\mathcal{L}}^2_{\mathrm{div}}),
    \\
    \bm{\varphi}_k
    &\to\bm{\varphi}
    &&\qquad\text{weakly star in }L^\infty(0,T;\mathcal{H}^2)
    \text{ and strongly in }C([0,T];\mathcal{W}^{1,p}),
    \\
    \partial_t \bm{\varphi}_k
    &\to\partial_t\bm{\varphi}
    &&\qquad\text{weakly in }L^2(0,T;\mathcal{H}^1),
    \\
    \bm{\mu}_k
    &\to\bm{\mu}
    &&\qquad\text{weakly star in }L^\infty(0,T;\mathcal{H}^1)
    \text{ and weakly in }L^2(0,T;\mathcal{H}^2),
    \\
    \rho(\varphi_k)
    &\to \rho(\varphi)
    &&\qquad\text{strongly in }C([0,T];{W}^{1,p}(\Omega)),
    \\
    \nu(\varphi_k)
    &\to \nu(\varphi)
    &&\qquad\text{strongly in }C([0,T];{W}^{1,p}(\Omega)),
    \\
    \beta(\psi_k)
    &\to \beta(\psi)
    &&\qquad\text{strongly in }C([0,T];{W}^{1,p}(\Gamma)),
\end{alignat}
\end{subequations}
for all $p\in[2,6)$ up to subsequence extraction.
Here, we use the notation $\bm{\varphi} = (\varphi,\psi)$ and $\bm{\mu} = (\mu,\mathcal{L})$.
Thus, passing to the limit as $ k\to+\infty$ in \eqref{sigma-eqd1}, we deduce that the functions $(\mathbf{v},\mathbf{v}_{\boldsymbol{\tau}},\boldsymbol{\varphi},\boldsymbol{\mu})$ satisfy \eqref{quasi-1'}.
To pass to the limit as $ k\to+\infty$ in nonlinear terms $f_0(\varphi_k)$ and $g_0(\psi_k)$, arguing as \eqref{0813-convergence-1}, we obtain
\begin{subequations}
\label{0323-convergence-2}
\begin{alignat}{2}
    f_0(\varphi_k)
    &\to f_0(\varphi)
    &&\quad\text{ weakly star in }L^\infty(0,T;H),\quad
    \\
    g_0(\psi_k)
    &\to g_0(\psi)
    &&\quad\text{ weakly star in }L^\infty(0,T;H_\Gamma).
\end{alignat}
\end{subequations}
Furthermore, we infer from \eqref{lambda-uni-2} that
\begin{subequations}
\label{CONV:F0:K}
\begin{alignat}{2}
    k^{-1} f_0(\varphi_k)
    &\to 0
    &&\quad\text{ strongly in }L^\infty(0,T;H),\quad
    \\
    k^{-1} f_0(\psi_k)
    &\to 0
    &&\quad\text{ strongly in }L^\infty(0,T;H_\Gamma).
    \label{0325-convergence-6}
\end{alignat}
\end{subequations}
Combining \eqref{lambda-eq}--\eqref{CONV:F0:K}, and arguing as \eqref{weak-0813-1} and \eqref{weak-0813-2}, we can conclude that  $(\mathbf{v}, \mathbf{v}_{\boldsymbol{\tau}}, \boldsymbol{\varphi}, \boldsymbol{\mu})$
satisfy \eqref{quasi-2'}.
Moreover, using \eqref{initial-convergence}, we find that
\begin{align}
\mathbf{v}|_{t=0}=\mathbf{v}_0\quad\text{a.e. in }\Omega,\quad
(\varphi,\psi)|_{t=0}=(\varphi_0,\psi_0)\quad\text{in }\Omega\times\Gamma.
\end{align}

\noindent\textbf{\itshape Energy equality.}
It remains to verify the energy identity \eqref{energy-equality}. Since the weak formulation \eqref{quasi-1'} holds for all test functions $(\mathbf{w},\mathbf{w}_{\boldsymbol{\tau}})\in\boldsymbol{\mathcal{H}}_{0,\mathrm{div}}^2$, we first
take $(\mathbf{w},\mathbf{w}_{\boldsymbol{\tau}})=(\mathbf{v}_m,\mathbf{v}_{m,\boldsymbol{\tau}})$ in \eqref{quasi-1'}, where $\{(\mathbf{v}_m,\mathbf{v}_{m,\boldsymbol{\tau}})\}_{m\in\mathbb{N}}$ is the sequence of approximate solutions obtained in Section \ref{SECT:EXAP}. This gives
\begin{align}
        	& \int_\Omega \rho(\varphi)\partial_{t}\mathbf{v}\cdot\mathbf{v}_m\,\mathrm{d}x
            +\int_\Omega2\alpha\, \mathbb{D}\partial_t\mathbf{v}:\mathbb{D}\mathbf{v}_m\,\mathrm{d}x
            +\int_\Omega 2\nu(\varphi )\mathbb{D}\mathbf{v}:\mathbb{D}\mathbf{v}_m\,\mathrm{d}x
        	+\int_\Gamma \beta(\psi)\mathbf{v}_{\bm{\tau}}\cdot\mathbf{v}_{m,\bm{\tau}}\,\mathrm{d}S
            \notag\\[1mm]
        	& \quad =\int_\Omega \mu \nabla \varphi\cdot\mathbf{v}_m\,\mathrm{d}x
        	-\int_\Gamma \psi\nabla_{\!\bm{\tau}\,}\mathcal{L}\cdot \mathbf{v}_{m,\bm{\tau}}\,\mathrm{d}S
            -\int_\Omega \rho(\varphi)(\mathbf{v}\cdot\nabla)\mathbf{v}\cdot\mathbf{v}_m\,\mathrm{d}x
           \notag  \\[1mm]
            &\qquad+\rho'\int_\Omega (\nabla\mu\cdot\nabla)\mathbf{v}\cdot\mathbf{v}_m\,\mathrm{d}x
            -\frac{\rho'}{2}\int_\Gamma  (\nabla \mu \cdot\mathbf{n})\mathbf{v}_{\bm{\tau}}\cdot\mathbf{v}_{m,\bm{\tau}}\,\mathrm{d}S \quad \text{for a.a.}\ t\in(0,T).
        	\label{260808-1}
\end{align}
According to the convergence results (up to a subsequence) of $\mathbf{v}_m$ (resp. $\mathbf{v}_\sigma$ and $\mathbf{v}_k$) in \eqref{CONV:M} (resp. \eqref{CONV:SIGMA} and \eqref{CONV:K}), we can pass to the limits as $m\to+\infty$, $\sigma\to0$ and $k\to+\infty$ subsequently. Proceeding similarly to the derivation of \eqref{uni-ga-1}, we obtain
\begin{align}
&\frac{1}{2}\frac{\mathrm{d}}{\mathrm{d}t}\Big(\int_\Omega \rho(\varphi)|\mathbf{v}|^2\,\mathrm{d}x
+2\alpha\int_\Omega|\mathbb{D}\mathbf{v}|^2\,\mathrm{d}x\Big)
+\int_\Omega 2\nu(\varphi)|\mathbb{D}\mathbf{v}|^2\,\mathrm{d}x
+\int_\Gamma\beta(\psi)|\mathbf{v}_{\bm{\tau}}|^2\,\mathrm{d}S
\notag\\
&\quad=-\int_\Omega\varphi\nabla\mu\cdot\mathbf{v}\,\mathrm{d}x
-\int_\Gamma \psi\nabla_{\!\bm{\tau}\,}\mathcal{L}\cdot\mathbf{v}_{\bm{\tau}}\,\mathrm{d}S.
\label{0320-energy-1}
\end{align}
Next, multiplying \eqref{quasi-1} by $\mu$ and integrating over $\Omega$,
by integration by parts, and using \eqref{quasi-2} and the chain rule for subdifferentials
(see, e.g., \cite[Chapter IV, Lemma 4.3]{R.E.S}), we obtain
\begin{align}
    &\frac{\mathrm{d}}{\mathrm{d}t}\Big(\frac{1}{2}\int_\Omega|\nabla\varphi|^2\,\mathrm{d}x
    +\int_\Omega F(\varphi)\mathrm{d}x\Big)
    +\int_\Omega|\nabla\mu|^2\,\mathrm{d}x
    -\int_\Gamma\mu\partial_{\mathbf{n}}\mu\,\mathrm{d}S\notag\\
    &\quad=\int_\Omega \varphi\mathbf{v}\cdot\nabla\mu\,\mathrm{d}x
    +\int_\Gamma \partial_t\varphi\partial_{\mathbf{n}}\varphi\,\mathrm{d}S.\label{0320-energy-2}
\end{align}
Similarly, by \eqref{quasi-4} and \eqref{quasi-5}, we have
\begin{align}
    &\frac{\mathrm{d}}{\mathrm{d}t}\Big(\frac{1}{2}\int_\Gamma|\nabla_{\!\boldsymbol{\tau}\,}\psi|^2\,\mathrm{d}S
    +\int_\Gamma G(\psi)\mathrm{d}S\Big)+\int_\Gamma|\nabla_{\!\boldsymbol{\tau}\,}\mathcal{L}|^2\,\mathrm{d}S
    +\int_\Gamma\mathcal{L}\partial_{\mathbf{n}}\mu\,\mathrm{d}S\notag\\
    &\quad=\int_\Gamma \psi\mathbf{v}_{\boldsymbol{\tau}}\cdot\nabla_{\!\boldsymbol{\tau}\,}\mathcal{L}\,\mathrm{d}S
    -\int_\Gamma \partial_t\psi\partial_{\mathbf{n}}\varphi\,\mathrm{d}S.\label{0320-energy-3}
\end{align}
Combining \eqref{0320-energy-1}, \eqref{0320-energy-2} and \eqref{0320-energy-3},
and using the boundary condition \eqref{quasi-3},
we conclude that the energy identity \eqref{energy-identity} holds. Integrating \eqref{energy-identity} with respect to time from zero to any $t\ge 0$,
we arrive at the energy equality \eqref{energy-equality}. This completes the proof of Theorem~\ref{quasi-strong} in the case $ (K,L) \in (0,+\infty\rbrack \times (0,+\infty) $.
\end{proof}

\section{Existence of Quasi-strong Solutions to the Limit Models}
\label{asy}
\setcounter{equation}{0}

To establish Theorem~\ref{quasi-strong} in the limiting cases
$(K,L) \in (0,\infty]\times\{0,+\infty\}$,
we investigate the asymptotic limits as $L\to0$ and $L\to+\infty$, respectively, when $K\in(0,+\infty]$.

\subsection{The case \texorpdfstring{$(K,L)\in(0,+\infty]\times\{0\}$}
{(K,L) ∈ (0,+∞] × \{0\} } }
\label{asymptotic-1}
In this subsection, we restrict ourselves to the matched-density case, i.e., $\rho_1=\rho_2>0$. By examining the proof of Theorem~\ref{quasi-strong}
in the case $ (K,L) \in (0,+\infty\rbrack \times (0,+\infty) $ more closely,
we observe that the estimates \eqref{260807-1}, \eqref{0306-Gm0-4'} may depend on $L\in(0,1]$.
The following two lemmas provide refined versions of the uniform estimates derived in Section~\ref{existence of quasi}, where the estimates are now independent of $L\in(0,1]$.
\begin{lemma} \label{refine1}
There exists a positive constant $C$, independent of $m$, $\gamma$, $\sigma$, $k$ and $L\in(0,1]$, such that
\begin{align*}
        \|\bm{\mu}_m\|_{L^2(0,T;\mathcal{H}^1)} +\|\bm{\varphi}_m\|_{L^2(0,T;\mathcal{H}^2)}\leq C.
\end{align*}
\end{lemma}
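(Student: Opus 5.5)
We need an $L$-independent version of Lemma~\ref{mu-H1-phi-H2} and Lemma~\ref{H2}. The energy estimates of Lemma~\ref{energy-estimates} are already uniform in $L$, and in particular give uniform bounds on $\|\nabla\mu_m\|_{L^2(0,T;\mathbf{L}^2(\Omega))}$, $\|\nabla_{\!\bm{\tau}\,}\mathcal{L}_m\|_{L^2(0,T;\mathbf{L}^2(\Gamma))}$, $L^{-1/2}\|\mathcal{L}_m-\mu_m\|_{L^2(0,T;H_\Gamma)}$ and $\sqrt\gamma\|\partial_t\bm\varphi_m\|_{L^2(0,T;\mathcal L^2)}$. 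The only $L$-dependent step is the bound \eqref{260807-1}. There the term $\|\boldsymbol{\mu}_m-\overline{m}(\boldsymbol{\mu}_m)\mathbf 1\|_{\mathcal L^2}$ was controlled through Lemma~\ref{generalized-Poin}, whose constant $C_\mathrm P$ depends on $L$.

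The plan is to replace that Poincar\'e step by an $L$-uniform one. For $L\in(0,1]$ we have $L^{-1}\ge 1$, so
\[
\|\mathbb P\boldsymbol\mu_m\|_{\mathcal H^1_{L,0}}^2 \ \ge\ \|\nabla\mu_m\|^2_{\mathbf L^2(\Omega)}+\|\nabla_{\!\bm{\tau}\,}\mathcal L_m\|^2_{\mathbf L^2(\Gamma)}+\|\mu_m-\mathcal L_m\|_{H_\Gamma}^2 .
\]
The right-hand side is exactly the $\mathcal H^1_{1,0}$-norm squared. Hence Lemma~\ref{generalized-Poin} with the fixed value $L=1$ yields
\[
\|\mathbb P\boldsymbol\mu_m\|_{\mathcal L^2}\le C_\mathrm P(1)\,\|\mathbb P\boldsymbol\mu_m\|_{\mathcal H^1_{L,0}} .
\]
This bound holds uniformly for $L\in(0,1]$. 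Alternatively, one can combine \eqref{bsP} on $\Omega$, the Poincar\'e--Wirtinger inequality on $\Gamma$, and the coupling term $\|\mu_m-\mathcal L_m\|_{H_\Gamma}$.

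I would then redo the Miranville--Zelik argument of Lemma~\ref{mu-H1-phi-H2} verbatim. Test \eqref{eqd4} by $\varphi_m-\overline m_{0,\sigma,k}$ and \eqref{eqd5} by $\psi_m-\overline m_{0,\sigma,k}$. Use \eqref{MZ-ieq-1}--\eqref{uni-0323-3}, noting that $c_1,c_2$ do not depend on $L$ since the initial data do not. Bound the right-hand side by
\[
C\bigl(1+\|\mathbb P\boldsymbol\mu_m\|_{\mathcal L^2}+\gamma\|\partial_t\bm\varphi_m\|_{\mathcal L^2}\bigr).
\]
Here we use that the terms $-\int_\Omega|\nabla\varphi_m|^2\,\mathrm{d}x$, $-\int_\Gamma|\nabla_{\!\bm\tau}\psi_m|^2\,\mathrm{d}S$ and $-\chi(K)\int_\Gamma|\varphi_m-\psi_m|^2\,\mathrm{d}S$ are nonpositive, and that $\overline m(\mathbb P\boldsymbol\mu_m)=0$ makes the mean of $\boldsymbol\mu_m$ drop out after subtracting $\overline m_{0,\sigma,k}$, by mass conservation (Lemma~\ref{mass-conservation-law}). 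With the $L$-uniform Poincar\'e bound above, this gives an $L$-independent version of \eqref{260807-1}. Then \eqref{260807-2} bounds $\overline m(\boldsymbol\mu_m)$ in $L^2(0,T)$ independently of $L$, and hence $\|\boldsymbol\mu_m\|_{L^2(0,T;\mathcal L^2)}\le C$. Together with the gradient bounds from \eqref{m-uni-9}, this gives $\|\boldsymbol\mu_m\|_{L^2(0,T;\mathcal H^1)}\le C$ uniformly in $L\in(0,1]$.

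For the $\mathcal H^2$ bound I would follow Lemmas~\ref{f-L2L2} and \ref{H2}. The testing of \eqref{eqd4}, \eqref{eqd5} by $f_0(\varphi_m),f_0(\psi_m)$, by $k^{-1}f_\sigma(\cdot)$, and by $g_0(\psi_m)$ involves $L$ only through $\|\boldsymbol\mu_m\|_{\mathcal L^2}$. It uses the $K$-Robin condition rather than the $L$-condition, so it yields uniform $L^2(0,T;\mathcal L^2)$ bounds on all singular terms. Then apply the bulk-surface elliptic estimate \eqref{260806-1}. This estimate concerns the $\bm\varphi$-problem with parameter $K$, so its constant is independent of $L$, and it gives $\|\bm\varphi_m\|_{L^2(0,T;\mathcal H^2)}\le C$.

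The main obstacle is making sure no hidden $L$-dependence remains. Two points need checking: the Poincar\'e constant, handled above via monotonicity in $\chi(L)$, and the equivalence of the $\mathcal H^{-1}_{L,0}$ and dual norms, which is not used in this lemma. The $\gamma\|\partial_t\bm\varphi_m\|$ term is fine since $\gamma<1$ and \eqref{m-uni-10} is $L$-uniform.
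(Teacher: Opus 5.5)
Your proposal is correct and follows essentially the same route as the paper: rerun the Miranville--Zelik estimate with an $L$-uniform replacement for the Poincar\'e step, then repeat the $f_0$, $g_0$ and elliptic $\mathcal{H}^2$ estimates, which involve only the $K$-condition and not $L$. The only difference is in how the $L$-uniform bound is obtained. The paper pairs $\boldsymbol{\varphi}_m-\overline{m}_{0,\sigma,k}\mathbf{1}$ against $\mathbb{P}\boldsymbol{\mu}_m$ in $(\mathcal{H}^1_{L,0})'\times\mathcal{H}^1_{L,0}$ and quotes the bound $\|\cdot\|_{\mathcal{H}^{-1}_{L,0}}\le\widehat{C}\|\cdot\|_{\mathcal{L}^2}$ for $L\in(0,1]$. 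That bound is just the dual form of your elementary observation that $a_L(z,z)\ge a_1(z,z)$ for $L\le 1$, so the constant $C_{\mathrm{P}}(1)$ works uniformly.
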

\begin{proof}
Proceeding as in the derivation of \eqref{mu-uni-1}, we obtain
\begin{align}
    &c_1\int_\Omega |f_0(\varphi_m)|\,\mathrm{d}x+c_1\int_\Gamma|g_0(\psi_m)|\,\mathrm{d}S
    +\frac{c_1 }{2k}\int_\Omega |f_\sigma(\varphi_m)|\,\mathrm{d}x+\frac{c_1 }{2k}\int_\Gamma |f_\sigma(\psi_m)|\,\mathrm{d}S\notag\\
    &\quad\leq \int_\Omega(\mu_m-\overline{m}(\boldsymbol{\mu}_m))(\varphi_m-\overline{m}_{0,\sigma,k})\,\mathrm{d}x
    +\int_\Gamma(\mathcal{L}_m-\overline{m}(\boldsymbol{\mu}_m))(\psi_m-\overline{m}_{0,\sigma,k})\,\mathrm{d}S\notag\\
    &\qquad-\int_\Omega(\gamma\partial_t\varphi_m+f_1(\varphi_m))(\varphi_m-\overline{m}_{0,\sigma,k})\,\mathrm{d}x
    -\int_\Gamma(\gamma\partial_t\psi_m+g_1(\psi_m))(\psi_m-\overline{m}_{0,\sigma,k})\,\mathrm{d}S\notag\\
    &\qquad+2k^{-1}(1-\sigma)c_2(|\Omega|+|\Gamma|)+2c_2(|\Omega|+|\Gamma|)\notag\\[1mm]
    &\quad\leq {\langle\boldsymbol{\varphi}_m-\overline{m}_{0,\sigma,k}\boldsymbol{1},
    \boldsymbol{\mu}_m-\overline{m}(\boldsymbol{\mu}_m)\boldsymbol{1}\rangle_{(\mathcal{H}_{L,0}^1)',\mathcal{H}_{L,0}^1}}
    +C(1+\gamma\|\partial_t\boldsymbol{\varphi}_m\|_{\mathcal{L}^2})\notag\\
    &\quad\leq \|\boldsymbol{\varphi}_m-\overline{m}_{0,\sigma,k}\boldsymbol{1}\|_{\mathcal{H}_{L,0}^{-1}}
    \|\boldsymbol{\mu}_m-\overline{m}(\boldsymbol{\mu}_m)\boldsymbol{1}\|_{\mathcal{H}_{L,0}^{1}}
    +C(1+\gamma\|\partial_t\boldsymbol{\varphi}_m\|_{\mathcal{L}^2}).\label{refine1-1}
\end{align}
Here, we have used the fact that the norms $\|\cdot\|_{\mathcal{H}_{L,0}^{-1}}$ and $\|\cdot\|_{(\mathcal{H}_{L,0}^1)'} = \|\cdot\|_{(\mathcal{H}_{L}^1)'}$ on $\mathcal{H}_{L,0}^{-1}$ are equivalent.
Recalling \cite[Lemma 5.1]{LvWuIFB}, it holds
\begin{align}
        \|\boldsymbol{\varphi}_m-\overline{m}_{0,\sigma,k}\boldsymbol{1}\|_{\mathcal{H}_{L,0}^{-1}}
        \leq \widehat{C}\|\boldsymbol{\varphi}_m-\overline{m}_{0,\sigma,k}\boldsymbol{1}\|_{\mathcal{L}^2}
        \quad\text{for all $L\in (0,1]$},
        \label{refine1-2}
\end{align}
where the positive constant $\widehat{C}$ is independent of $m$, $\gamma$, $\sigma$, $k$ and $L\in(0,1]$.
Using \eqref{refine1-1} and \eqref{refine1-2}, we obtain
\begin{align}
         &c_1\int_\Omega |f_0(\varphi_m)|\,\mathrm{d}x+c_1\int_\Gamma|g_0(\psi_m)|\,\mathrm{d}S
        +\frac{c_1 }{2k}\int_\Omega |f_\sigma(\varphi_m)|\,\mathrm{d}x+\frac{c_1 }{2k}\int_\Gamma |f_\sigma(\psi_m)|\,\mathrm{d}S
        \notag\\
         &\quad\leq \widehat{C}\|\boldsymbol{\varphi}_m-\overline{m}_{0,\sigma,k}\boldsymbol{1}\|_{\mathcal{L}^2}
         \|\boldsymbol{\mu}_m-\overline{m}(\boldsymbol{\mu}_m)\boldsymbol{1}\|_{\mathcal{H}_{L,0}^{1}}
         +C(1+\gamma\|\partial_t\boldsymbol{\varphi}_m \|_{\mathcal{L}^2}),\notag
\end{align}
which, together with Lemma \ref{energy-estimates}, implies that the constant in \eqref{mu-uni-1} is independent of $L$ as long as $L\in(0,1]$.
Based on this result, we can repeat the process of \cite[Lemma 5.9]{LvWuIFB} to complete the proof of Lemma~\ref{refine1}.
\end{proof}

\begin{lemma}
\label{refine3}
There exists a positive constant $C$, independent of $m$, $\gamma$, $\sigma$, $k$ and $L\in(0,1]$,
such that
\begin{alignat}{2}
&&\sqrt{\gamma}\|\partial_t\bm{\varphi}_m\|_{L^\infty(0,T;\mathcal{L}^2)}
+\|\bm{\mu}_m\|_{L^\infty(0,T;\mathcal{H}^1)}
\qquad
&{}
\notag\\
&&\qquad+\|\partial_t\bm{\varphi}_m\|_{L^2(0,T;\mathcal{H}^1)}
+\|\partial_t\mathbf{v}_m\|_{L^2(0,T;\mathbf{H}^1(\Omega))}
&\leq C\Big(1+\frac{1}{\sqrt{L}} \mathcal{U}_{\sigma,k}\Big),
\label{uniform-0318-3}
\\
&&\|f_0(\varphi_m)\|_{L^\infty(0,T;H)}+\|g_0(\psi_m)\|_{L^\infty(0,T;H_\Gamma)}
+\|\boldsymbol{\varphi}_m\|_{L^\infty(0,T;\mathcal{H}^2)}
&\leq C\Big(1+\frac{1}{\sqrt{L}}\mathcal{U}_{\sigma,k}\Big),
\label{uniform-0318-4}
\\
&&k^{-1}\|f_\sigma(\varphi_m)\|_{L^\infty(0,T;H)}
+k^{-1}\|f_\sigma(\psi_m)\|_{L^\infty(0,T;H_\Gamma)}
&\leq C\Big(1+\frac{1}{\sqrt{L}}\mathcal{U}_{\sigma,k}\Big),
\label{uniform-0318-4'}
\end{alignat}
where $\mathcal{U}_{\sigma,k}:=\|\boldsymbol{\varphi}_{0,\sigma,k}-\boldsymbol{\varphi}_0\|_{\mathcal{H}^1}+\|\widetilde{\mathcal{L}}_{0,k}+g_1(\psi_0)
    -\widetilde{\mu}_{0,k}-f_1(\varphi_0)\|_{H_\Gamma}$.
\end{lemma}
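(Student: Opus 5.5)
The argument repeats the proof of Lemma~\ref{higher} and Lemma~\ref{final}, tracking the dependence on $L$. The only $L$-sensitive ingredient there is the initial value $\mathcal{G}_m(0)$, since the differential inequality for $\mathcal{G}_m$ in \eqref{0227-20} can be closed without $L$-dependent constants. Throughout, $\rho_1=\rho_2$, so $\rho'=0$ and $K_{10}=K_{11}=0$.

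\textbf{Step 1: the differential inequality.} We check that every estimate of $K_1,\dots,K_9$ involves $L$ only through the combination $\|\mathbb{P}\boldsymbol{\mu}_m\|_{\mathcal{H}^1_{L,0}}^2$, that is, through $\frac1L\|\mu_m-\mathcal{L}_m\|_{H_\Gamma}^2$. This combination is controlled in $L^1(0,T)$ by the energy identity \eqref{0328-energy-id}, which is $L$-independent. For $K_8+K_9$ we use the $\mathcal{H}^{-1}_{L,0}$-duality, with norm-equivalence constants uniform for $L\in(0,1]$ (cf.~\eqref{refine1-2} and \cite{LvWuIFB}). The lower bound \eqref{0227-21} on $\mathcal{G}_m$ is likewise uniform in $L$. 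Replacing Lemma~\ref{H2} by Lemma~\ref{refine1} for the $L^2(0,T;\mathcal{H}^2)$-bound of $\boldsymbol{\varphi}_m$ in $\mathcal{F}_m$, Gronwall's lemma gives \eqref{0227-22} with $C$ independent of $L\in(0,1]$. Hence
\[
\text{LHS of }\eqref{uniform-0318-3}\le C\bigl(1+\mathcal{G}_m(0)\bigr)^{1/2}.
\]

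\textbf{Step 2: the initial value (main obstacle).} We must show
\[
\mathcal{G}_m(0)\le C\Bigl(1+\tfrac1L\,\mathcal{U}_{\sigma,k}^2\Bigr).
\]
In \eqref{0306-Gm0-4'}, the two $\frac1L$ boundary terms were absorbed using only $\frac1L\|\mu_m(0)-\mathcal{L}_m(0)\|^2$. We instead combine them exactly into
\[
\frac1L\int_\Gamma(\widehat{\mathcal{L}}_0-\widehat{\mu}_0)\bigl(\mathcal{L}_m(0)-\mu_m(0)\bigr)\,\mathrm{d}S,
\]
where $\widehat\mu_0=-\Delta\varphi_{0,\sigma,k}+f(\varphi_{0,\sigma,k})+k^{-1}f_\sigma(\varphi_{0,\sigma,k})=\widetilde\mu_{0,k}+f_1(\varphi_{0,\sigma,k})$ by \eqref{initial-elliptic-1}, and analogously $\widehat{\mathcal{L}}_0=\widetilde{\mathcal{L}}_{0,k}+g_1(\psi_{0,\sigma,k})$ by \eqref{initial-elliptic-3}. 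Young's inequality bounds this term by
\[
\frac{1}{4L}\|\mu_m(0)-\mathcal{L}_m(0)\|_{H_\Gamma}^2+\frac1L\|\widehat{\mathcal{L}}_0-\widehat\mu_0\|_{H_\Gamma}^2 .
\]
We then write
\[
\widehat{\mathcal{L}}_0-\widehat\mu_0=\bigl(\widetilde{\mathcal{L}}_{0,k}+g_1(\psi_0)-\widetilde\mu_{0,k}-f_1(\varphi_0)\bigr)+\bigl(g_1(\psi_{0,\sigma,k})-g_1(\psi_0)\bigr)-\bigl(f_1(\varphi_{0,\sigma,k})-f_1(\varphi_0)\bigr).
\]
By Lipschitz continuity of $f_1,g_1$ and the trace theorem, the last two differences are bounded in $H_\Gamma$ by $C\|\boldsymbol{\varphi}_{0,\sigma,k}-\boldsymbol{\varphi}_0\|_{\mathcal{H}^1}$. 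This gives $\frac1L\|\widehat{\mathcal{L}}_0-\widehat\mu_0\|_{H_\Gamma}^2\le \frac{C}{L}\mathcal{U}_{\sigma,k}^2$. The remaining terms are treated as before using \eqref{0315-1}, \eqref{0315-2}, \eqref{Pm-stability} and $\|\boldsymbol{\varphi}_{0,\sigma,k}\|_{\mathcal{L}^\infty}\le1$, with no $L$-dependence. Inserting this into Step~1 yields \eqref{uniform-0318-3}.

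\textbf{Step 3: the remaining bounds.} For \eqref{uniform-0318-4} and \eqref{uniform-0318-4'}, we repeat Lemmas~\ref{f-L2L2} and~\ref{H2} pointwise in time. We test \eqref{eqd4} and \eqref{eqd5} with $f_0(\varphi_m),f_0(\psi_m)$, then $g_0(\psi_m)$, then $k^{-1}f_\sigma$. The right-hand sides involve $\|\boldsymbol{\mu}_m\|_{\mathcal{L}^2}$, $\gamma\|\partial_t\boldsymbol{\varphi}_m\|_{\mathcal{L}^2}\le\sqrt\gamma\cdot\sqrt\gamma\|\partial_t\boldsymbol{\varphi}_m\|_{\mathcal{L}^2}$, and $\|\partial_{\mathbf n}\varphi_m\|_{H_\Gamma}$, the last controlled through \eqref{eqd3''} since $K>0$. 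The first two are bounded in $L^\infty(0,T)$ by Step~2. The term $\|\boldsymbol{\mu}_m\|_{L^\infty(0,T;\mathcal{L}^2)}$ requires $\overline m(\boldsymbol\mu_m)\in L^\infty(0,T)$; this follows from \eqref{260807-2} and the $L$-uniform $L^1$-bounds in the proof of Lemma~\ref{refine1}, now taken in $L^\infty$ in time. Finally, the bulk-surface elliptic estimate \eqref{260806-1} gives the $L^\infty(0,T;\mathcal{H}^2)$-bound on $\boldsymbol{\varphi}_m$, whose constant does not involve $L$.
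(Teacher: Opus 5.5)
Your Steps~2 and~3 follow the paper's argument. The paper's proof repeats Lemmas~\ref{higher} and~\ref{final} on top of Lemma~\ref{refine1}, with the single refinement of \eqref{0306-Gm0-4'} that you carry out in Step~2: the same splitting off of $\mathcal{U}_{\sigma,k}$ and the same Young inequality. Your observation that $\rho'=0$ removes $K_{10},K_{11}$ is genuinely needed, and the paper leaves it implicit. Those two estimates go through the $\mathcal{H}^2$-bound for $\mathbb{P}\boldsymbol{\mu}_m$, which carries a factor $1/L$ (Remark~\ref{no-bound}).

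The gap is in Step~1, at $K_8+K_9$. Lemma~\ref{higher} uses the bound $C\|\partial_t\boldsymbol{\varphi}_m\|_{\mathcal{L}^2}^2\le C\|\partial_t\boldsymbol{\varphi}_m\|_{\mathcal{H}^{-1}_{L,0}}\|\partial_t\boldsymbol{\varphi}_m\|_{\mathcal{H}^{1}_{L,0}}$. Its first factor is harmless, and that factor is all \eqref{refine1-2} controls. The second factor, however, satisfies
\[
\|\partial_t\boldsymbol{\varphi}_m\|_{\mathcal{H}^{1}_{L,0}}^2=\|\nabla\partial_t\varphi_m\|_{\mathbf{L}^2(\Omega)}^2+\|\nabla_{\!\boldsymbol{\tau}\,}\partial_t\psi_m\|_{\mathbf{L}^2(\Gamma)}^2+\frac{1}{L}\|\partial_t\varphi_m-\partial_t\psi_m\|_{H_\Gamma}^2,
\]
while the dissipation in \eqref{0227-20} contains only $\chi(K)\|\partial_t\psi_m-\partial_t\varphi_m\|_{H_\Gamma}^2$.

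Absorbing this term therefore costs a factor $\max\{1,K/L\}$, and for $K=+\infty$ it cannot be absorbed at all. This is not an artefact of the estimate. With $K>0$, nothing forces $\partial_t\varphi_m=\partial_t\psi_m$ on $\Gamma$ as $L\to0$, so this norm really blows up. Your claim that $L$ enters $K_1,\dots,K_9$ only through $\frac1L\|\mu_m-\mathcal{L}_m\|_{H_\Gamma}^2$ is therefore false. The paper's sketch (``the only notable difference'') passes over the same point.

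Here is a fix that leaves the rest of your proof intact. Test \eqref{eqd2} and \eqref{eqd3} only with pairs $(\zeta,\zeta|_\Gamma)\in\mathcal{V}^1$. Then the two $\partial_{\mathbf{n}}\mu_m$ terms cancel, and by \eqref{m-uni-7}, uniformly in $L$,
\[
\|\partial_t\boldsymbol{\varphi}_m\|_{(\mathcal{V}^1)'}\le C\big(1+\|\nabla\mu_m\|_{\mathbf{L}^2(\Omega)}+\|\nabla_{\!\boldsymbol{\tau}\,}\mathcal{L}_m\|_{\mathbf{L}^2(\Gamma)}\big).
\]
The embedding $\mathcal{H}^1\hookrightarrow\mathcal{L}^2$ is compact and $\mathcal{L}^2\hookrightarrow(\mathcal{V}^1)'$ is injective, so Ehrling's lemma gives
\[
C_{\mathrm{Lip}}\|\partial_t\boldsymbol{\varphi}_m\|_{\mathcal{L}^2}^2\le\frac18\big(\|\nabla\partial_t\varphi_m\|_{\mathbf{L}^2(\Omega)}^2+\|\nabla_{\!\boldsymbol{\tau}\,}\partial_t\psi_m\|_{\mathbf{L}^2(\Gamma)}^2\big)+C\big(1+\|\nabla\mu_m\|_{\mathbf{L}^2(\Omega)}^2+\|\nabla_{\!\boldsymbol{\tau}\,}\mathcal{L}_m\|_{\mathbf{L}^2(\Gamma)}^2\big).
\]
This closes the Gronwall argument with $L$-independent constants.

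The same inequality also supplies the $\mathcal{L}^2$-part of $\|\partial_t\boldsymbol{\varphi}_m\|_{\mathcal{H}^1}$ needed in $K_6,K_7$ when $K=+\infty$. For $K<+\infty$, the $K$-Poincar\'e inequality on mean-free pairs already does this independently of $L$.
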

\begin{proof}
Based on Lemma~\ref{refine1}, we can repeat the proof of Lemmas~\ref{higher} and~\ref{final}
to verify \eqref{uniform-0318-3}, \eqref{uniform-0318-4} and \eqref{uniform-0318-4'}.
The only notable difference is that we need to refine the estimate \eqref{0306-Gm0-4'},
that is,
\begin{align*}
        &\frac{1}{L}\int_\Gamma\big(-\Delta_{\bm{\tau}}\psi_{0,\sigma,k}+g(\psi_{0,\sigma,k})
        +k^{-1}f_\sigma(\psi_{0,\sigma,k}) +\partial_{\mathbf{n}}\varphi_{0,\sigma,k}\big)(\mathcal{L}_m(0)-\mu_m(0))\,\mathrm{d}S
        \\
    &\qquad-\frac{1}{L}\int_\Gamma\big(-\Delta\varphi_{0,\sigma,k}+f(\varphi_{0,\sigma,k})
    +k^{-1}f_\sigma(\varphi_{0,\sigma,k})\big)(\mathcal{L}_m(0)-\mu_m(0))\,\mathrm{d}S
    \\
    &\quad=\frac{1}{L}\int_\Gamma\big(\widetilde{\mathcal{L}}_{0,k} +g_1(\psi_0)
    -\widetilde{\mu}_{0,k}-f_1(\varphi_0)\big) (\mathcal{L}_m(0)-\mu_m(0))\,\mathrm{d}S
    \\
    &\qquad+\frac{1}{L}\int_\Gamma \big(-f_1(\varphi_{0,\sigma,k})+f_1(\varphi_0)
    +g_1(\psi_{0,\sigma,k})-g_1(\psi_0)\big) (\mathcal{L}_m(0)-\mu_m(0))\,\mathrm{d}S
    \\
    &\quad \leq \frac{1}{4L}\int_\Gamma |\mathcal{L}_m(0)-\mu_m(0)|^2\,\mathrm{d}S
    + \frac{C}{L}\big(\|\boldsymbol{\varphi}_{0,\sigma,k}-\boldsymbol{\varphi}_0\|_{\mathcal{H}^1}^2+\|\widetilde{\mathcal{L}}_{0,k}+g_1(\psi_0)
    -\widetilde{\mu}_{0,k}-f_1(\varphi_0)\|_{H_\Gamma}^2\big).
\end{align*}
This completes the proof of Lemma~\ref{refine3}.
\end{proof}

\begin{remark}\rm
\label{no-bound}
    For the chemical potentials $\boldsymbol{\mu}_m$, the available estimates fail to provide a bound on $\|\boldsymbol{\mu}_m\|_{L^2(0,T;\mathcal{H}^2)}$ that is uniform with respect to $L\in(0,1]$. Indeed, to establish the $\mathcal{H}^2$-regularity of $\boldsymbol{\mu}_m$, we shall consider the following bulk-surface elliptic problem:
    \begin{align}
        \left\{
        \begin{aligned}
            -\Delta\mu_m
            &=-\partial_t\varphi_m-\mathbf{v}_m\cdot\nabla\varphi_m
            &\quad\text{in }\Omega,\\
            L\partial_{\mathbf{n}}\mu_m
            &=\mathcal{L}_m-\mu_m
            &\quad\text{on }\Gamma,\\
            -\Delta_{\boldsymbol{\tau}} \mathcal{L}_m+\partial_{\mathbf{n}}\mu_m
            &=-\partial_t\psi_m-\mathbf{v}_{m,\boldsymbol{\tau}}\cdot\nabla_{\!\boldsymbol{\tau}\,}\psi_m
            &\quad\text{on }\Gamma.
        \end{aligned}\notag
        \right.
    \end{align}
    By \cite[Theorem 3.3]{KL}, we have
    \begin{align*}
        \|\mathcal{L}_m\|_{H^2(\Gamma)}&\leq C\Big(\|\partial_t\psi_m\|_{H_\Gamma}+\|\mathbf{v}_{m,\boldsymbol{\tau}}\cdot\nabla_{\!\boldsymbol{\tau}\,}\psi_m\|_{H_\Gamma}+\frac{1}{L}\|\mathcal{L}_m-\mu_m\|_{H_\Gamma}\Big),\\
        \|\mu_m\|_{H^2(\Omega)}&\leq C\Big(\|\partial_t\varphi_m\|_H+\|\mathbf{v}_m\cdot\nabla\varphi_m\|_H+\|\mu_m\|_V+\frac{1}{L}\|\mathcal{L}_m-\mu_m\|_{H^{\frac{1}{2}}(\Gamma)}\Big)\\
        &\leq C\Big(\|\partial_t\varphi_m\|_H+\|\mathbf{v}_m\cdot\nabla\varphi_m\|_H+\frac{1}{L}(\|\mathcal{L}_m\|_{V_\Gamma}+\|\mu_m\|_{V})\Big).
    \end{align*}
    Here, the constant $C>0$ is independent of $m$ and $L$. The factor $1/L$ in these estimates prevents us from deriving a uniform bound for $\|\boldsymbol{\mu}_m\|_{L^2(0,T;\mathcal{H}^2)}$ with respect to $L\in(0,1]$.
    Nevertheless, in the matched-density case, i.e., $\rho_1=\rho_2>0$, the estimate of $\boldsymbol{\mu}_m$ provided in \eqref{uniform-0318-3} is sufficient to rigorously justify the limiting process as $L\to 0$.
\end{remark}

\begin{proof}[Proof of Theorem~\ref{quasi-strong}: $(K,L)\in(0,+\infty\rbrack\times\{0\}$.]
Based on Lemmas~\ref{refine1} and \ref{refine3},
we can repeat the line of argument used in the proof of Theorem~\ref{quasi-strong} in the case $(K,L)\in(0,+\infty\rbrack\times(0,+\infty)$.
In this way, we conclude that there exist functions  $(\mathbf{v}^L,\mathbf{v}_{\boldsymbol{\tau}}^L,\boldsymbol{\varphi}^L,\boldsymbol{\mu}^L)$ that satisfy the following weak formulation
\begin{align}
	& \int_\Omega  \rho(\varphi^L)\partial_{t}\mathbf{v}^L\cdot\mathbf{w} \,\mathrm{d}x
    +\int_\Omega 2\alpha \mathbb{D}\partial_t\mathbf{v}^L:\mathbb{D}\mathbf{w} \,\mathrm{d}x
    +\int_\Omega 2\nu(\varphi ^L)\mathbb{D}\mathbf{v}^L:\mathbb{D}\mathbf{w} \,\mathrm{d}x
	+\int_\Gamma \beta(\psi^L)\mathbf{v}^L_{\bm{\tau}}\cdot\mathbf{w}_{\bm{\tau}} \,\mathrm{d}S
	\notag \\[1mm]
	& \quad =\int_\Omega  \mu^L \nabla \varphi^L\cdot\mathbf{w} \,\mathrm{d}x
	-\int_\Gamma  \psi^L\nabla_{\!\bm{\tau}\,}\mathcal{L}^L\cdot\mathbf{w}_{\bm{\tau}} \,\mathrm{d}S
    -\int_\Omega \rho(\varphi^L)(\mathbf{v}^L\cdot\nabla)\mathbf{v}^L\cdot \mathbf{w} \,\mathrm{d}x
	\label{formula-0318-2}
\end{align}
almost everywhere in $[0,T]$ for all  $(\mathbf{w},\mathbf{w}_{\boldsymbol{\tau}})\in\boldsymbol{\mathcal{H}}_{0,\mathrm{div}}^2$, and the equations
\begin{align}
\left\{\,
\begin{aligned}
    &\partial_{t}\varphi^L+\mathbf{v}^L\cdot\nabla\varphi^L
    =\Delta\mu^L
    &&\quad \mbox{a.e.~in } Q_{T},
    \\[1mm]
    &\mu^L =-\Delta \varphi^L +f(\varphi^L )
    &&\quad \mbox{a.e.~in } Q_{T},
    \\[1mm]
    &\begin{cases}
    K\partial_\mathbf{n}\varphi^L=\psi^L-\varphi^L,&\text{if }K\in(0,+\infty)\\
    \partial_\mathbf{n}\varphi^L=0,&\text{if }K=+\infty
    \end{cases}
    &&\quad {\mbox{a.e.~on }\Sigma _{T},}
    \\[1mm]
    &L\partial_{\mathbf{n}}\mu^L=\mathcal{L}^L-\mu^L,\quad L\in(0,+\infty)
    &&\quad\mbox{a.e.~on }
    \Sigma _{T},
    \\[1mm]
    &\partial_t\psi^L+{\mathbf{v}^L_{\bm{\tau}}\cdot\nabla_{\!\bm{\tau}\,}\psi^L}
    =\Delta_{\bm{\tau}}\mathcal{L}^L-\partial_{\mathbf{n}}\mu^L
    &&\quad  \mbox{a.e.~on }
    \Sigma _{T},
    \\[1mm]
    &\mathcal{L} ^L = - \Delta _{\bm{\tau}}\psi ^L+\partial _{%
    \mathbf{n}}\varphi^L+ g(\psi^L)
    &&\quad \mbox{a.e.~on }
    \Sigma _{T}.
\end{aligned}
\right.\label{formula-0318-1}
\end{align}
Furthermore, using \eqref{initial-convergence}, Lebesgue's dominated convergence theorem and the compatibility condition \ref{ASS:C}, we find
\begin{align}
    \lim_{k\to\infty}\lim_{\sigma\to0}\mathcal{U}_{\sigma,k} &=\lim_{k\to\infty}\lim_{\sigma\to0}\|\boldsymbol{\varphi}_{0,\sigma,k}
    -\boldsymbol{\varphi}_0\|_{\mathcal{H}^1} +\lim_{k\to\infty}\lim_{\sigma\to0}\|\widetilde{\mathcal{L}}_{0,k}+g_1(\psi_0)
    -\widetilde{\mu}_{0,k}-f_1(\varphi_0)\|_{H_\Gamma}
    \notag\\
    &=\lim_{k\to\infty}\|\widetilde{\mathcal{L}}_{0,k} +g_1(\psi_0)
    -\widetilde{\mu}_{0,k}-f_1(\varphi_0)\|_{H_\Gamma}
    \notag\\
    &=\|\widetilde{\mathcal{L}}_{0}+g_1(\psi_0)
    -\widetilde{\mu}_{0}-f_1(\varphi_0)\|_{H_\Gamma}
    \notag\\
    &=0.\label{U}
\end{align}
Therefore, applying Lemma \ref{energy-estimates}, \eqref{uniform-0318-3}, \eqref{uniform-0318-4}, \eqref{U} and
the weak (star) lower semicontinuity of norms,
we conclude that there exists a positive constant $C$, independent of $L\in(0,1]$, such that
\begin{align}
    &\|\mathbf{v}^L\|_{L^\infty(0,T; \mathbf{H}_{\mathrm{div}}^1(\Omega))}
    +\|\boldsymbol{\varphi}^L\|_{L^\infty(0,T;\mathcal{H}^2)}
    +\|\boldsymbol{\mu}^L\|_{L^\infty(0,T;\mathcal{H}^1)}
    +\frac{1}{\sqrt{L}}\|\mu^L-\mathcal{L}^L\|_{L^2(0,T;H_\Gamma)}\notag\\
    &\quad+\|f_0(\varphi^L)\|_{L^\infty(0,T;H)} +\|g_0(\psi^L)\|_{L^\infty(0,T;H_\Gamma)}
   +\|\partial_t\mathbf{v}^L \|_{L^2(0,T;\mathbf{H}^1(\Omega))}
    +\|\partial_t\boldsymbol{\varphi}^L \|_{L^2(0,T;\mathcal{H}^1)}\leq C.
    \label{uniform-0318-5}
\end{align}
Based on \eqref{uniform-0318-5}, we employ the Banach--Alaoglu theorem and the Aubin--Lions--Simon lemma to infer that there exist limit functions $(\mathbf{v}^0,\mathbf{v}^0_{\boldsymbol{\tau}},\boldsymbol{\varphi}^0,\boldsymbol{\mu}^0)$,
such that
\begin{align*}
    \mathbf{v}^L\to\mathbf{v}^0&\qquad\text{weakly star in }L^\infty(0,T;\mathbf{H}_{\mathrm{div}}^1(\Omega)),
    \\
    \partial_t\mathbf{v}^L\to\partial_t \mathbf{v}^0&\qquad\text{weakly in }L^2(0,T;\mathbf{H}_{\mathrm{div}}^1(\Omega)),
    \\
   ( \mathbf{v}^L,\mathbf{v}^L_{\boldsymbol{\tau}}) \to (\mathbf{v}^0,\mathbf{v}^0_{\boldsymbol{\tau}})
   &\qquad\text{strongly in } C([0,T];\boldsymbol{\mathcal{L}}_{\mathrm{div}}^2),
    \\
    \boldsymbol{\varphi}^L\to\boldsymbol{\varphi}^0 &\qquad\text{weakly star in }L^\infty(0,T;\mathcal{H}^2)
    \text{ and strongly in } C([0,T];\mathcal{W}^{1,p}),
    \\
    \partial_t\boldsymbol{\varphi}^L\to \partial_t\boldsymbol{\varphi}^0 &\qquad\text{weakly in }L^2(0,T;\mathcal{H}^1),
    \\
    \boldsymbol{\mu}^L\to\boldsymbol{\mu}^0 &\qquad\text{weakly star in }L^\infty(0,T;\mathcal{H}^1),
    \\
    \mu^L-\mathcal{L}^L\to0&\qquad\text{strongly in }L^2(0,T;H_\Gamma),
\end{align*}
for all $p\in[2,6)$ up to subsequence extraction.
Here, we use the notation $\bm{\varphi}^0 = (\varphi^0,\psi^0)$ and $\bm{\mu}^0 = (\mu^0,\mathcal{L}^0)$.
As weak limits are unique, it follows that
\begin{equation}
    \label{TRREL}
    \mu^0 = \mathcal{L}^0 \quad\text{a.e.~on $\Sigma_T$.}
\end{equation}
In the case of matched densities, based on the above convergence results,
we can pass to the limit as $ L\to0$ in \eqref{formula-0318-2} to conclude that the functions $(\mathbf{v}^0,\mathbf{v}^0_{\boldsymbol{\tau}},\boldsymbol{\varphi}^0,\boldsymbol{\mu}^0)$ satisfy the weak formulation \eqref{quasi-1'}.
Proceeding similarly to the limiting process as $k\to+\infty$ in Section~\ref{existence of quasi}, we show that
\begin{align*}
    f_0(\varphi^L)\to f_0(\varphi^0)\text{ weakly star in }L^\infty(0,T;H),\quad
    g_0(\psi^L)\to g_0(\psi^0)\text{ weakly star in }L^\infty(0,T;H_\Gamma).
\end{align*}
This entails that
\begin{align*}
   &\varphi^0\in L^\infty(Q_T)\;\;\text{such that }|\varphi^0(x,t)|<1
   \;\;\text{a.e.~in }Q_T,\\[1mm]
   &\psi^0\in L^\infty(\Sigma_T)\;\;\text{such that }|\psi^0(x,t)|<1
   \;\;\text{a.e.~on }\Sigma_T.
\end{align*}
Using the above convergence results to pass to the limit in \eqref{formula-0318-1}, we deduce that
\begin{align}
\begin{aligned}
    &\mu^0 =-\Delta \varphi^0 +f(\varphi^0 )
    &&\quad \mbox{a.e.~in } Q_{T},
    \\[1mm]
    &\begin{cases}
    K\partial_\mathbf{n}\varphi^0=\psi^0-\varphi^0,&\text{if }K\in(0,+\infty)\\
    \partial_\mathbf{n}\varphi^0=0,&\text{if }K=+\infty
    \end{cases}
    &&\quad {\mbox{a.e.~on }\Sigma _{T},}
    \\[1mm]
    &\mathcal{L} ^0 = - \Delta _{\bm{\tau}}\psi ^0+\partial _{%
    \mathbf{n}}\varphi^0+ g(\psi^0)
    &&\quad \mbox{a.e.~on }
    \Sigma _{T},
\end{aligned}
\notag
\end{align}
and the weak formulation
\begin{align*}
    &\int_\Omega \nabla\mu^0\cdot\nabla z\,\mathrm{d}x+\int_\Gamma \nabla_{\boldsymbol{\tau}}\mathcal{L}^0\cdot\nabla_{\boldsymbol{\tau}}z_\Gamma\,\mathrm{d}S\\
    &\quad=-\int_\Omega(\partial_t\varphi^0+\mathbf{v}^0\cdot\nabla\varphi^0)z\,\mathrm{d}x-\int_\Gamma (\partial_t\psi^0+\mathbf{v}_{\boldsymbol{\tau}}^0\cdot\nabla_{\boldsymbol{\tau}}\psi^0)z_\Gamma\,\mathrm{d}S
\end{align*}
holds almost everywhere in $(0,T)$ for all $(z,z_\Gamma)\in\mathcal{V}^1$.
Together with \eqref{TRREL}, the latter equation implies that for almost all $t\in(0,T)$, the pair $(\mu^0(t),\mathcal{L}^0(t))$ is a weak solution to the system
\begin{align}
    \begin{cases}
        -\Delta\mu^0=-\partial_t\varphi^0-\mathbf{v}^0\cdot\nabla\varphi^0&\text{in }\Omega,\\
        \mu^0=\mathcal{L}^0&\text{on }\Gamma,
        \\
        -\Delta_{\boldsymbol{\tau}} \mathcal{L}^0+\partial_{\mathbf{n}}\mu^0=-\partial_t\psi^0-\mathbf{v}_{\boldsymbol{\tau}}^0 \cdot\nabla_{\boldsymbol{\tau}}\psi^0&\text{on }\Gamma.
    \end{cases}
    \label{p-0821-2}
\end{align}
Applying the regularity theory for bulk-surface elliptic problems (see, e.g., \cite[Theorem~3.3]{KL}), we conclude that $(\mu^0(t),\mathcal{L}^0(t))\in \mathcal{V}^2$ for almost all $t\in [0,T]$ and the equations \eqref{p-0821-2} are satisfied almost everywhere in $Q_T$. In addition, we obtain the estimate
\begin{align*}
    \|\boldsymbol{\mu}^0\|_{L^2(0,T;\mathcal{V}^2)}
    &\leq C\big(\|\boldsymbol{\mu}^0\|_{L^2(0,T;\mathcal{L}^2)}+\|\partial_t\varphi^0+\mathbf{v}^0 \cdot\nabla\varphi^0\|_{L^2(0,T;H)}
    \\
    &\qquad\quad +\|\partial_t\psi^0+\mathbf{v}_{\boldsymbol{\tau}}^0\cdot\nabla_{\boldsymbol{\tau}}\psi^0\|_{L^2(0,T;H_\Gamma)}\big)
    \\
    &\leq C.
\end{align*}
In this way, the existence of a quasi-strong solution in the case $(K,L)\in(0,+\infty\rbrack\times\{0\}$ is established. Moreover, the energy equality \eqref{energy-equality} can be verified in a similar way as in Section~\ref{existence of quasi}.
Thus, the proof for the case $(K,L)\in(0,+\infty\rbrack\times\{0\}$ is complete.
\end{proof}

\subsection{The case \texorpdfstring{$(K,L)\in(0,+\infty]\times\{+\infty\}$}
{(K,L) ∈ (0,+∞] × \{+∞\} } }

In the following lemmas, we return to the approximate solutions constructed in Section~\ref{existence of quasi} and derive uniform estimates for sufficiently large $L\ge 1$.

\begin{lemma}
\label{refine4}
There exists $L_0\geq 1$ and a positive constant $C$, independent of $m$, $\gamma$, $\sigma$, $k$ and $L\geq L_0$, such that
\begin{align}
    \|f_0(\varphi_m)\|_{L^2(0,T;L^1(\Omega))} +\|g_0(\psi_m)\|_{L^2(0,T;L^1(\Gamma))}
    &\leq C,\label{refine2-1}
    \\
    k^{-1}\|f_\sigma(\varphi_m)\|_{L^2(0,T;L^1(\Omega))}
    +k^{-1}\|f_\sigma(\psi_m)\|_{L^2(0,T;L^1(\Gamma))}
    &\leq C.\label{refine2-1'}
\end{align}
\end{lemma}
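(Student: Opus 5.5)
The argument repeats the proof of Lemma~\ref{mu-H1-phi-H2} up to \eqref{260807-1}. The only issue is that the right-hand side there contains $\|\mathbb{P}\boldsymbol{\mu}_m\|_{\mathcal{H}^1_{L,0}}$, and the Poincaré constant $C_\mathrm{P}$ of Lemma~\ref{generalized-Poin} depends on $L$. As $L\to+\infty$ the coupling term $\frac1L\int_\Gamma|\mathcal{L}_m-\mu_m|^2$ in $a_L$ degenerates, so this constant is not uniform. I would therefore avoid the $\mathcal{H}^1_{L,0}$-norm and control the relevant pairing by quantities that are bounded uniformly in $L$ by Lemma~\ref{energy-estimates}: namely $\|\nabla\mu_m\|_{\mathbf{L}^2(\Omega)}$, $\|\nabla_{\!\boldsymbol{\tau}}\mathcal{L}_m\|_{\mathbf{L}^2(\Gamma)}$, and $L^{-1/2}\|\mathcal{L}_m-\mu_m\|_{H_\Gamma}$, whose $L^2(0,T)$-norms are bounded independently of $L$.

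First, test \eqref{eqd4} with $\varphi_m-\overline{m}_{0,\sigma,k}$ and \eqref{eqd5} with $\psi_m-\overline{m}_{0,\sigma,k}$. Using \eqref{MZ-ieq-1}, \eqref{MZ-ieq-2} and \eqref{uni-0323-3} exactly as before, the left side controls the $L^1$-norms of $f_0(\varphi_m)$, $g_0(\psi_m)$, $k^{-1}f_\sigma(\varphi_m)$ and $k^{-1}f_\sigma(\psi_m)$. On the right side the main term is
\[
\int_\Omega \mu_m(\varphi_m-\overline{m}_{0,\sigma,k})\,\mathrm{d}x+\int_\Gamma\mathcal{L}_m(\psi_m-\overline{m}_{0,\sigma,k})\,\mathrm{d}S .
\]
By mass conservation (Lemma~\ref{mass-conservation-law}) the pair $(\varphi_m-\overline{m}_{0,\sigma,k},\psi_m-\overline{m}_{0,\sigma,k})$ has zero generalized mean, so any constant can be subtracted from $\boldsymbol{\mu}_m$. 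I would subtract separate means instead of the common one:
\[
\int_\Omega(\mu_m-\langle\mu_m\rangle_\Omega)\varphi_m\,\mathrm{d}x+\int_\Gamma(\mathcal{L}_m-\langle\mathcal{L}_m\rangle_\Gamma)\psi_m\,\mathrm{d}S+\big(\langle\mu_m\rangle_\Omega-\langle\mathcal{L}_m\rangle_\Gamma\big)\int_\Omega(\varphi_m-\overline{m}_{0,\sigma,k})\,\mathrm{d}x .
\]
The first two terms are bounded by $C(\|\nabla\mu_m\|_{\mathbf{L}^2(\Omega)}+\|\nabla_{\!\boldsymbol{\tau}}\mathcal{L}_m\|_{\mathbf{L}^2(\Gamma)})$ using the standard Poincaré--Wirtinger inequalities in $\Omega$ and on $\Gamma$, together with $|\boldsymbol{\varphi}_m|\le1$.

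The third term is the main obstacle. To handle it, estimate $|\langle\mu_m\rangle_\Omega-\langle\mathcal{L}_m\rangle_\Gamma|$ by writing $\langle\mu_m\rangle_\Omega-\langle\mathcal{L}_m\rangle_\Gamma=(\langle\mu_m\rangle_\Omega-\langle\mu_m\rangle_\Gamma)+\langle\mu_m-\mathcal{L}_m\rangle_\Gamma$. The first difference is bounded by $C\|\nabla\mu_m\|_{\mathbf{L}^2(\Omega)}$ via the trace theorem and Poincaré--Wirtinger. The second is bounded by $C\|\mu_m-\mathcal{L}_m\|_{H_\Gamma}\le C\sqrt{L}\cdot L^{-1/2}\|\mu_m-\mathcal{L}_m\|_{H_\Gamma}$, which grows like $\sqrt L$. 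To remove this growth I would use the factor $|\int_\Omega(\varphi_m-\overline{m}_{0,\sigma,k})\,\mathrm{d}x|$. Indeed, by mass conservation, $\int_\Omega\varphi_m\,\mathrm{d}x$ evolves only through the flux $-\int_\Gamma\partial_{\mathbf n}\mu_m\,\mathrm{d}S=\frac1L\int_\Gamma(\mu_m-\mathcal{L}_m)\,\mathrm{d}S$. Hence
\[
\Big|\int_\Omega\varphi_m(t)\,\mathrm{d}x-\int_\Omega\varphi_{0,\sigma,k}\,\mathrm{d}x\Big|\le \frac{C}{\sqrt L}\big\|L^{-1/2}(\mu_m-\mathcal{L}_m)\big\|_{L^2(0,T;H_\Gamma)}\sqrt T\le \frac{C}{\sqrt L}.
\]
This shows that it is better to subtract the bulk mean $\langle\varphi_{0,\sigma,k}\rangle_\Omega$, not $\overline{m}_{0,\sigma,k}$, which is the right choice in the $L=+\infty$ regime. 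So I would test instead with $\varphi_m-\langle\varphi_{0,\sigma,k}\rangle_\Omega$ in $\Omega$ and $\psi_m-\langle\psi_{0,\sigma,k}\rangle_\Gamma$ on $\Gamma$. The Miranville--Zelik inequalities then apply separately, with constants $c_1,c_2$ uniform because $\langle\varphi_0\rangle_\Omega$ and $\langle\psi_0\rangle_\Gamma$ lie in $(-1,1)$ (cf. \eqref{initial-mean-value}), combined with the convergence \eqref{initial-convergence} as in the proof of Lemma~\ref{mu-H1-phi-H2}. The cross term then becomes $(\langle\mu_m\rangle_\Omega-\langle\mathcal{L}_m\rangle_\Gamma)$ multiplied by the drift of the bulk mass, which is $O(L^{-1/2})$. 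The product is therefore bounded by $C(\|\nabla\mu_m\|_{\mathbf{L}^2(\Omega)}+L^{-1/2}\|\mu_m-\mathcal{L}_m\|_{H_\Gamma})$.

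One point needs care: the shifted means $\langle\varphi_m(t)\rangle_\Omega$ and $\langle\psi_m(t)\rangle_\Gamma$ must stay inside a fixed compact subinterval of $(-1,1)$. This is where $L\ge L_0$ is used, since the drift is at most $C/\sqrt{L_0}$, and $L_0$ is chosen so that this drift is small. Finally, square the resulting pointwise-in-time bound and integrate over $(0,T)$. Using \eqref{m-uni-9} and \eqref{m-uni-10}, this yields \eqref{refine2-1} and \eqref{refine2-1'} with constants independent of $m$, $\gamma$, $\sigma$, $k$ and $L\ge L_0$.
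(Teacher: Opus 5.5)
Your argument is correct, but it takes a different route from the paper's.

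\textbf{The paper's route.} The paper tests the transport equations \eqref{eqd2} and \eqref{eqd3} with $\mathcal{N}_\Omega(\varphi_m-\langle\varphi_m\rangle_\Omega)$ and $\mathcal{N}_\Gamma(\psi_m-\langle\psi_m\rangle_\Gamma)$. This pairs $\mu_m$ and $\mathcal{L}_m$ with separately mean-free functions from the start. The price is a set of extra terms: the time derivatives in $V'$ and $V_\Gamma'$, the convective terms, and a flux term of order $L^{-1/2}$. In addition, the Miranville--Zelik inequality must be applied around the time-dependent means $\langle\varphi_m(t)\rangle_\Omega$ and $\langle\psi_m(t)\rangle_\Gamma$. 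The resulting errors, such as $|\langle\varphi_{0,\sigma,k}\rangle_\Omega-\langle\varphi_m(t)\rangle_\Omega|\int_\Omega|f_0(\varphi_m)|\,\mathrm{d}x$, are absorbed into $\widetilde{c}_1\int_\Omega|f_0(\varphi_m)|\,\mathrm{d}x$. That absorption is the only reason $L_0$ has to be large.

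\textbf{Your route.} You test the chemical-potential equations \eqref{eqd4} and \eqref{eqd5} with the time-independent shifts $\varphi_m-\langle\varphi_{0,\sigma,k}\rangle_\Omega$ and $\psi_m-\langle\psi_{0,\sigma,k}\rangle_\Gamma$. Mass conservation then collapses all mean contributions of $\boldsymbol{\mu}_m$ into the single product $(\langle\mu_m\rangle_\Omega-\langle\mathcal{L}_m\rangle_\Gamma)\int_\Omega(\varphi_m-\varphi_{0,\sigma,k})\,\mathrm{d}x$. In the relevant norms this is of size $O(\sqrt{L})\cdot O(L^{-1/2})$. The approach is more elementary: it needs no Neumann solution operators, no dual-norm bounds on $\partial_t\boldsymbol{\varphi}_m$, and no absorption step.

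\textbf{Your closing remark is unnecessary.} In your argument the Miranville--Zelik inequality is used only with fixed reference values. So you never need $\langle\varphi_m(t)\rangle_\Omega$ and $\langle\psi_m(t)\rangle_\Gamma$ to stay in a compact subinterval. Your estimate in fact holds for every $L\geq 1$, i.e.\ with $L_0=1$. You should, however, adjust the smallness condition on $\sigma$ to the Miranville--Zelik constant for the new reference values, or shift by $\langle\varphi_{0,\sigma,k}\rangle_\Omega/(1-\sigma)$ in the $f_\sigma$-term as the paper does.

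\textbf{One term you must add.} Your bulk and surface test constants now differ. The boundary contributions therefore no longer reduce to the nonnegative $\chi(K)\|\varphi_m-\psi_m\|_{H_\Gamma}^2$, as they did with the common constant $\overline{m}_{0,\sigma,k}$ in Lemma~\ref{mu-H1-phi-H2}. An extra term $(\langle\varphi_{0,\sigma,k}\rangle_\Omega-\langle\psi_{0,\sigma,k}\rangle_\Gamma)\int_\Gamma\partial_{\mathbf{n}}\varphi_m\,\mathrm{d}S$ appears. It is harmless: it is bounded by $C\|\partial_{\mathbf{n}}\varphi_m\|_{H_\Gamma}=CK^{-1}\|\psi_m-\varphi_m\|_{H_\Gamma}\le C$ by \eqref{m-uni-7}, and it vanishes for $K=+\infty$. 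The paper's proof carries the analogous term.
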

\begin{proof}
Multiplying \eqref{eqd2} by $\mathcal{N}_\Omega(\varphi_m-\langle\varphi_m\rangle_\Omega)$
and integrating over $\Omega$, and multiplying \eqref{eqd3} by $\mathcal{N}_\Gamma(\psi_m-\langle\psi_m\rangle_\Gamma)$
and integrating over $\Gamma$, adding the resultants together, we obtain
\begin{align}
    0&=\langle\partial_t\varphi_m, \mathcal{N}_\Omega(\varphi_m-\langle\varphi_m\rangle_\Omega)\rangle_{V',V}
    +\langle\partial_t\psi_m,\mathcal{N}_\Gamma(\psi_m-\langle\psi_m\rangle_\Gamma) \rangle_{V_\Gamma',V_\Gamma}
    \notag\\
    &\quad+\int_\Omega\mathbf{v}_m \cdot\nabla\varphi_m\,\mathcal{N}_\Omega(\varphi_m-\langle\varphi_m\rangle_\Omega)\,\mathrm{d}x
    +\int_\Gamma\mathbf{v}_{m,\bm{\tau}} \cdot\nabla_{\!\bm{\tau}\,}\psi_m\,\mathcal{N}_\Gamma(\psi_m-\langle\psi_m\rangle_\Gamma)\,\mathrm{d}S
    \notag\\
    &\quad+\int_\Omega \mu_m(\varphi_m-\langle\varphi_m\rangle_\Omega)\,\mathrm{d}x
    +\int_\Gamma\mathcal{L}_m(\psi_m-\langle\psi_m\rangle_\Gamma)\,\mathrm{d}S
    \notag\\
   &\quad+\frac{1}{L}\int_\Gamma(\mu_m-\mathcal{L}_m)\big[\mathcal{N}_\Omega(\varphi_m-\langle\varphi_m\rangle_\Omega)
    -\mathcal{N}_\Gamma(\psi_m-\langle\psi_m\rangle_\Gamma)\big]\,\mathrm{d}S
    \notag\\
    & \eqqcolon \sum_{j=1}^7 R_j.
    \label{refine2-2}
\end{align}
By the same calculation as in \cite[Lemma 5.8]{LvWuIFB}, we obtain
\begin{align}
        |R_1|+|R_2|&\leq C\Big(\|\partial_t\varphi_m\|_{V'}\|\nabla\varphi_m\|_{\mathbf{L}^2(\Omega)}
        +\|\partial_t\psi_m\|_{V_\Gamma'} \|\nabla_{\!\bm{\tau}\,}\psi_m\|_{\mathbf{L}^2(\Gamma)}\Big),
        \notag\\
        |R_7|&\leq \frac{C}{\sqrt{L}}\Big(\frac{1}{\sqrt{L}}\|\mu_m-\mathcal{L}_m\|_{H_\Gamma}\Big)
        (\|\varphi_m\|_H+\|\psi_m\|_{H_\Gamma}).
        \notag
\end{align}
For the terms $R_3$ and $R_4$, we have
\begin{align}
        |R_3|&=\Big|\int_\Omega\varphi_m\mathbf{v}_m \cdot\nabla\mathcal{N}_\Omega(\varphi_m-\langle\varphi_m\rangle_\Omega)\,\mathrm{d}x\Big|
        \notag\\
        &\leq \|\varphi_m\|_{L^\infty(\Omega)}\|\mathbf{v}_m\|_{\mathbf{L}^2(\Omega)}
        \|\nabla\mathcal{N}_\Omega(\varphi_m-\langle\varphi_m\rangle_\Omega)\|_{\mathbf{L}^2(\Omega)}
        \notag\\
        &\leq C\|\mathbf{v}_m\|_{\mathbf{L}^2(\Omega)}\|\varphi_m-\langle\varphi_m\rangle_\Omega\|_H,
        \notag
\end{align}
and similarly,
\begin{align}
        |R_4|&\leq C\|\mathbf{v}_{m,\bm{\tau}}\|_{\mathbf{L}^2(\Gamma)}
        \|\psi_m-\langle\psi_m\rangle_\Gamma\|_{H_\Gamma}.
        \notag
\end{align}
The estimates for $R_5$ and $R_6$ are more involved.
By \eqref{eqd4} and \eqref{eqd5}, we see that
\begin{align}
        R_5+R_6&=\int_\Omega|\nabla\varphi_m|^2\,\mathrm{d}x +\int_\Gamma|\nabla_{\!\bm{\tau}\,}\psi_m|^2\,\mathrm{d}S  {+\chi(K)\int_\Gamma|\varphi_m-\psi_m|^2\,\mathrm{d}S}
        \notag\\
        &\quad+\int_\Omega f_0(\varphi_m)(\varphi_m-\langle\varphi_m\rangle_\Omega)\,\mathrm{d}x
        +\int_\Gamma g_0(\psi_m)(\psi_m-\langle\psi_m\rangle_\Gamma)\,\mathrm{d}S
        \notag\\
        &\quad+k^{-1}\int_\Omega f_0\Big(\frac{\varphi_m}{1-\sigma}\Big)(\varphi_m-\langle\varphi_m\rangle_\Omega)\,\mathrm{d}x
        +k^{-1}\int_\Gamma f_0\Big(\frac{\psi_m}{1-\sigma}\Big)(\psi_m-\langle\psi_m\rangle_\Gamma)\,\mathrm{d}S
        \notag\\
        &\quad+\int_\Omega(\gamma\partial_t\varphi_m +f_1(\varphi_m))(\varphi_m-\langle\varphi_m\rangle_\Omega)\,\mathrm{d}x
        \notag\\
        &\quad+\int_\Gamma(\gamma\partial_t\psi_m +g_1(\psi_m))(\psi_m-\langle\psi_m\rangle_\Gamma)\,\mathrm{d}S
        +(\langle\varphi_m\rangle_\Omega-\langle\psi_m\rangle_\Gamma)\int_\Gamma\partial_{\mathbf{n}}\varphi_m\,\mathrm{d}S.
        \label{refine2-7}
\end{align}
In view of \eqref{initial-mean-value}, we can apply the method in \cite{MZ04} and an argument similar to the derivation of \eqref{uni-0323-2} and \eqref{uni-0323-3} to get
\begin{align}
        &\int_\Omega f_0(\varphi_m)(\varphi_m-\langle\varphi_m\rangle_\Omega)\,\mathrm{d}x
        +\int_\Gamma g_0(\psi_m)(\psi_m-\langle\psi_m\rangle_\Gamma)\,\mathrm{d}S
        \notag\\
        &\quad=\int_\Omega f_0(\varphi_m)(\varphi_m-\langle\varphi_{0,\sigma,k}\rangle_\Omega)\,\mathrm{d}x
        +\int_\Gamma g_0(\psi_m)(\psi_m-\langle\psi_{0,\sigma,k}\rangle_\Gamma)\,\mathrm{d}S
        \notag\\
        &\qquad+(\langle\varphi_{0,\sigma,k}\rangle_\Omega-\langle\varphi_m\rangle_\Omega)\int_\Omega f_0(\varphi_m)\,\mathrm{d}x
        +(\langle\psi_{0,\sigma,k}\rangle_\Gamma-\langle\psi_m\rangle_\Gamma)\int_\Gamma g_0(\psi_m)\,\mathrm{d}S
        \notag\\
        &\quad\geq \widetilde{c}_1\int_\Omega |f_0(\varphi_m)|\,\mathrm{d}x+\widetilde{c}_1\int_\Gamma|g_0(\psi_m)|\,\mathrm{d}S-\widetilde{c}_2(|\Omega|+|\Gamma|)
        \notag\\
        &\qquad -|\langle\varphi_{0,\sigma,k}\rangle_\Omega-\langle\varphi_m\rangle_\Omega|\int_\Omega |f_0(\varphi_m)|\,\mathrm{d}x
        -|\langle\psi_{0,\sigma,k}\rangle_\Gamma-\langle\psi_m\rangle_\Gamma|\int_\Gamma |g_0(\psi_m)|\,\mathrm{d}S
        \notag
\end{align}
and
\begin{align}
        &k^{-1}\int_\Omega f_0\Big(\frac{\varphi_m}{1-\sigma}\Big)(\varphi_m-\langle\varphi_m\rangle_\Omega)\,\mathrm{d}x
        +k^{-1}\int_\Gamma f_0\Big(\frac{\psi_m}{1-\sigma}\Big)(\psi_m-\langle\psi_m\rangle_\Gamma)\,\mathrm{d}S
        \notag\\
        &\quad=k^{-1}(1-\sigma)\int_\Omega f_0\Big(\frac{\varphi_m}{1-\sigma}\Big)\Big(\frac{\varphi_m}{1-\sigma}-\langle\varphi_m\rangle_\Omega\Big)\,\mathrm{d}x
        \notag\\
        &\qquad+k^{-1}(1-\sigma)\int_\Gamma f_0\Big(\frac{\psi_m}{1-\sigma}\Big)\Big(\frac{\psi_m}{1-\sigma}-\langle\psi_m\rangle_\Gamma\Big)\,\mathrm{d}S
        \notag\\
        &\qquad-k^{-1}\sigma\langle\varphi_m\rangle_\Omega\int_\Omega f_0\Big(\frac{\varphi_m}{1-\sigma}\Big)\,\mathrm{d}x
        -k^{-1}\sigma\langle\psi_m\rangle_\Gamma\int_\Gamma f_0\Big(\frac{\psi_m}{1-\sigma}\Big)\,\mathrm{d}S
        \notag\\
        &\quad=k^{-1}(1-\sigma)\int_\Omega f_0\Big(\frac{\varphi_m}{1-\sigma}\Big)\Big(\frac{\varphi_m}{1-\sigma}-\frac{\langle\varphi_{0,\sigma,k}\rangle_\Omega}{1-\sigma}\Big)\,\mathrm{d}x
        \notag\\
        &\qquad+k^{-1}(1-\sigma)\int_\Gamma f_0\Big(\frac{\psi_m}{1-\sigma}\Big)\Big(\frac{\psi_m}{1-\sigma}-\frac{\langle\psi_{0,\sigma,k}\rangle_\Gamma}{1-\sigma}\Big)\,\mathrm{d}S
        \notag\\
        &\qquad-k^{-1}(\langle\varphi_{m}\rangle_\Omega-\langle\varphi_{0,\sigma,k}\rangle_\Omega)\int_\Omega f_0\Big(\frac{\varphi_m}{1-\sigma}\Big)\,\mathrm{d}x
        \notag\\
        &\qquad-k^{-1}(\langle\psi_m\rangle_\Gamma-\langle\psi_{0,\sigma,k}\rangle_\Gamma)\int_\Gamma f_0\Big(\frac{\psi_m}{1-\sigma}\Big)\,\mathrm{d}S
        \notag\\
        &\quad\geq k^{-1}(1-\sigma)\widetilde{c}_1\int_\Omega \Big|f_0\Big(\frac{\varphi_m}{1-\sigma}\Big)\Big|\,\mathrm{d}x+k^{-1}(1-\sigma)\widetilde{c}_1\int_\Gamma \Big|f_0\Big(\frac{\psi_m}{1-\sigma}\Big)\Big|\,\mathrm{d}S
        \notag\\
         &\qquad -k^{-1}(1-\sigma)\widetilde{c}_2(|\Omega|+|\Gamma|)-k^{-1}|\langle\varphi_{m}\rangle_\Omega-\langle\varphi_{0,\sigma,k}\rangle_\Omega|\int_\Omega \Big|f_0\Big(\frac{\varphi_m}{1-\sigma}\Big)\Big|\,\mathrm{d}x
         \notag\\
         &\qquad -k^{-1}|\langle\psi_m\rangle_\Gamma-\langle\psi_{0,\sigma,k}\rangle_\Gamma|\int_\Gamma \Big|f_0\Big(\frac{\psi_m}{1-\sigma}\Big)\Big|\,\mathrm{d}S,
         \notag
\end{align}
where the positive constants $\widetilde{c}_1$ and $\widetilde{c}_2$ are independent of $k$ and $\sigma$.
Using Lemma~\ref{energy-estimates}, we can derive the following estimate
\begin{align}
        |\langle\varphi_{0,\sigma,k}\rangle_\Omega-\langle\varphi_m(t)\rangle_\Omega|
        &=\frac{1}{|\Omega|}\Big|\int_0^t \langle\partial_t\varphi_m(s),1\rangle_{V',V}\,\mathrm{d}s\Big|
        \notag\\
        &=\frac{1}{|\Omega|}\Big|\int_0^t\Big(- \int_\Omega \mathbf{v}_m(s)\cdot\nabla\varphi_m(s) \,\mathrm{d}x
        +\int_\Omega\Delta\mu_m(s)\,\mathrm{d}x\Big) \,\mathrm{d}s\Big|
        \notag \\
        &=\frac{1}{|\Omega|}\Big|\int_0^t\int_\Gamma\partial_{\mathbf{n}}\mu_m(s)\,\mathrm{d}S\,\mathrm{d}s\Big|
        \leq \frac{1}{|\Omega|}\frac{1}{L} \int_0^t \|\mu_m(s)-\mathcal{L}_m(s)\|_{L^1(\Gamma)}\,\mathrm{d}s
        \notag \\
        &\leq \frac{|\Gamma|^{\frac{1}{2}}|T|^{\frac{1}{2}}}{|\Omega|\sqrt{L}}
        \Big(\frac{1}{\sqrt{L}}\|\mu_m(s)-\mathcal{L}_m(s)\|_{L^2(0,T;H_\Gamma)}\Big)
        \leq  \frac{C'|\Gamma|^{\frac{1}{2}}|T|^{\frac{1}{2}}}{|\Omega|\sqrt{L}}
        \notag
\end{align}
for almost all $t\in[0,T]$, and in a similar manner, we obtain
\begin{align}
        |\langle\psi_{0,\sigma,k}\rangle_\Gamma-\langle\psi_m(t)\rangle_\Gamma|
        \leq \frac{C''|T|^{\frac{1}{2}}}{|\Gamma|^{\frac{1}{2}}\sqrt{L}}\notag
\end{align}
for almost all $t\in[0,T]$.
Let us fix $L_0\geq1$ sufficiently large such that
\begin{align*}
    \frac{C'|\Gamma|^{\frac{1}{2}}|T|^{\frac{1}{2}}}{|\Omega|\sqrt{L}}
    \leq \frac{\widetilde{c}_1}{4} \quad \text{and}
    \quad
    \frac{C''|T|^{\frac{1}{2}}}{|\Gamma|^{\frac{1}{2}}\sqrt{L}}\leq \frac{\widetilde{c}_1}{4}
    \quad\text{for all $L\geq L_0$.}
\end{align*}
Now for the last three lines on the right-hand side of \eqref{refine2-7},
by \eqref{m-uni-7} and H\"older's inequality, we get
\begin{align}
    &\int_\Omega(\gamma\partial_t\varphi_m+f_1(\varphi_m))(\varphi_m-\langle\varphi_m\rangle_\Omega)\,\mathrm{d}x +\int_\Gamma(\gamma\partial_t\psi_m+g_1(\psi_m))(\psi_m-\langle\psi_m\rangle_\Gamma)\,\mathrm{d}S
    \notag\\
    &\qquad+(\langle\varphi_m\rangle_\Omega-\langle\psi_m\rangle_\Gamma)\int_\Gamma\partial_{\mathbf{n}}\varphi_m\,\mathrm{d}S
    \notag\\
    &\quad\leq C\Big(1+\gamma\|\partial_t\boldsymbol{\varphi}_m\|_{\mathcal{L}^2}+\|\partial_{\mathbf{n}}\varphi_m\|_{H_\Gamma}\Big).
    \notag
\end{align}
Collecting the above estimates, we can conclude that
\begin{align}
    R_5+R_6 &\geq \frac{\widetilde{c}_1}{2}\int_\Omega |f_0(\varphi_m)|\,\mathrm{d}x+\frac{\widetilde{c}_1}{2}\int_\Gamma|g_0(\psi_m)|\,\mathrm{d}S-\widetilde{c}_2(|\Omega|+|\Gamma|)
    \notag\\
    &\quad +\frac{k^{-1}\widetilde{c}_1}{4}\int_\Omega \Big|f_0\Big(\frac{\varphi_m}{1-\sigma}\Big)\Big|\,\mathrm{d}x+\frac{k^{-1}\widetilde{c}_1}{4}\int_\Gamma \Big|f_0\Big(\frac{\psi_m}{1-\sigma}\Big)\Big|\,\mathrm{d}S
    \notag\\
    &\quad -k^{-1}(1-\sigma)\widetilde{c}_2(|\Omega|+|\Gamma|)
    -C\Big(1+\gamma\|\partial_t \boldsymbol{\varphi}_m\|_{\mathcal{L}^2} +\|\partial_{\mathbf{n}}\varphi_m\|_{H_\Gamma}\Big).
    \notag
\end{align}
As $K\partial_{\mathbf{n}}\varphi_m=\psi_m-\varphi_m$ a.e. on $\Sigma_T$, the conclusions \eqref{refine2-1} and \eqref{refine2-1'} follow from \eqref{refine2-2}
and the estimates above for $R_j$ with $1\leq j\leq7$.
This completes the proof of Lemma~\ref{refine4}.
\end{proof}

\begin{lemma}
    \label{refine5}
    There exists a positive constant $C$, independent of $m$, $\gamma$, $\sigma$, $k$ and $L\geq L_0$, such that
    \begin{align}
        \|\boldsymbol{\mu}_m\|_{L^2(0,T;\mathcal{H}^1)}\leq C.\label{refine5-1}
    \end{align}
\end{lemma}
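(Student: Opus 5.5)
Our plan is to control the gradients of $\boldsymbol{\mu}_m$ by the energy bound, and to control the spatial means of $\mu_m$ in $\Omega$ and of $\mathcal{L}_m$ on $\Gamma$ separately by the $L^1$-bounds of Lemma~\ref{refine4}. This splitting is needed because for $L\to+\infty$ the generalized Poincar\'e inequality of Lemma~\ref{generalized-Poin} degenerates: its constant depends on $L$, and the coupling term $\frac1L\|\mu_m-\mathcal{L}_m\|_{H_\Gamma}^2$ no longer ties the two means together. So we avoid $\overline{m}(\boldsymbol{\mu}_m)$ and work instead with the classical Poincar\'e--Wirtinger inequalities in $\Omega$ and on $\Gamma$, whose constants are independent of $L$.

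\textbf{Step 1 (gradients).} By \eqref{m-uni-9}, whose constant is independent of $L$, we have
\[
\|\nabla\mu_m\|_{L^2(0,T;\mathbf{L}^2(\Omega))}+\|\nabla_{\!\boldsymbol{\tau}\,}\mathcal{L}_m\|_{L^2(0,T;\mathbf{L}^2(\Gamma))}\le C .
\]
The Poincar\'e--Wirtinger inequalities then give
\[
\|\mu_m-\langle\mu_m\rangle_\Omega\|_{L^2(0,T;H)}+\|\mathcal{L}_m-\langle\mathcal{L}_m\rangle_\Gamma\|_{L^2(0,T;H_\Gamma)}\le C .
\]

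\textbf{Step 2 (bulk mean).} We integrate \eqref{eqd4} over $\Omega$. Integration by parts gives $\int_\Omega-\Delta\varphi_m\,\mathrm{d}x=-\int_\Gamma\partial_\mathbf{n}\varphi_m\,\mathrm{d}S$, so
\[
|\Omega|\,|\langle\mu_m\rangle_\Omega|\le \gamma\|\partial_t\varphi_m\|_{L^1(\Omega)}+\|\partial_\mathbf{n}\varphi_m\|_{L^1(\Gamma)}+\|f_0(\varphi_m)\|_{L^1(\Omega)}+\|f_1(\varphi_m)\|_{L^1(\Omega)}+k^{-1}\|f_\sigma(\varphi_m)\|_{L^1(\Omega)} .
\]
The terms on the right are handled as follows:
\begin{itemize}
\item The $f_0$ and $f_\sigma$ terms are bounded in $L^2(0,T)$ by \eqref{refine2-1} and \eqref{refine2-1'}.
\item The $f_1$ term is bounded because $f_1$ is Lipschitz and $|\varphi_m|\le1$.
\item The $\gamma\partial_t\varphi_m$ term is bounded in $L^2(0,T)$ by \eqref{m-uni-10}, using $\gamma<1$, since $\gamma\|\partial_t\varphi_m\|_{L^1}\le C\sqrt\gamma\,\sqrt\gamma\|\partial_t\varphi_m\|_{H}$.
\item The boundary term equals $\chi(K)\int_\Gamma(\psi_m-\varphi_m)\,\mathrm{d}S$ because $K\in(0,+\infty]$, and it is bounded by \eqref{m-uni-7}.
\end{itemize}
Hence $\|\langle\mu_m\rangle_\Omega\|_{L^2(0,T)}\le C$.

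\textbf{Step 3 (surface mean).} In the same way we integrate \eqref{eqd5} over $\Gamma$. The Laplace--Beltrami term integrates to zero. The term $\int_\Gamma\partial_\mathbf{n}\varphi_m\,\mathrm{d}S$ is handled as in Step 2, and $g_0(\psi_m)$ and $f_\sigma(\psi_m)$ are controlled by Lemma~\ref{refine4}. This gives $\|\langle\mathcal{L}_m\rangle_\Gamma\|_{L^2(0,T)}\le C$.

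\textbf{Conclusion.} Combining Steps 1--3 yields \eqref{refine5-1}, with all constants independent of $m,\gamma,\sigma,k$ and $L\ge L_0$.

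The main obstacle was already overcome in Lemma~\ref{refine4}: obtaining $L^1$-control of $f_0(\varphi_m)$ and $g_0(\psi_m)$ separately, uniformly in $L\geq L_0$. Given that lemma, the only point that needs care here is that the bulk and surface means are treated separately rather than through $\overline{m}(\boldsymbol{\mu}_m)$. This is also where the restriction $K>0$ enters, since it turns $\partial_\mathbf{n}\varphi_m$ into the bounded quantity $\chi(K)(\psi_m-\varphi_m)$.
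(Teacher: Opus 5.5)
Your proposal is correct and follows essentially the same route as the paper. The paper's one-line proof combines the $L$-independent gradient and $\sqrt{\gamma}\,\partial_t\boldsymbol{\varphi}_m$ bounds of Lemma~\ref{energy-estimates} with the uniform $L^1$-bounds \eqref{refine2-1} to control $\langle\mu_m\rangle_\Omega$ and $\langle\mathcal{L}_m\rangle_\Gamma$ separately, and it delegates the details to an external lemma of Lv--Wu. You write those details out explicitly, including the use of $K>0$ to bound $\int_\Gamma\partial_\mathbf{n}\varphi_m\,\mathrm{d}S$, and you correctly identify why the $L$-dependent generalized Poincar\'e inequality must be avoided.
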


\begin{proof}
   By Lemma \ref{energy-estimates} and \eqref{refine2-1}, we obtain \eqref{refine5-1} by repeating the process of \cite[Lemma 5.9]{LvWuIFB}.
\end{proof}

\begin{lemma}
    \label{refine6}
There exists a positive constant $C$, independent of $m$, $\gamma$, $\sigma$, $k$ and $L\geq L_0$, such that
\begin{align}
\sqrt{\gamma}\|\partial_t\bm{\varphi}_m\|_{L^\infty(0,T;\mathcal{L}^2)}
+\|\bm{\mu}_m\|_{L^\infty(0,T;\mathcal{H}^1)}+\|\partial_t\bm{\varphi}_m\|_{L^2(0,T;\mathcal{H}^1)}
+\|\partial_t\mathbf{v}_m\|_{L^2(0,T;\mathbf{H}^1(\Omega))}
&\leq C,\label{uniform-0318-6}
\\
\|f_0(\varphi_m)\|_{L^\infty(0,T;H)}+\|g_0(\psi_m)\|_{L^\infty(0,T;H_\Gamma)}
+\|\boldsymbol{\varphi}_m\|_{L^\infty(0,T;\mathcal{H}^2)}
+\|\boldsymbol{\mu}_m\|_{L^2(0,T;\mathcal{H}^2)}
&\leq C,\label{uniform-0318-7}
\\
k^{-1}\|f_\sigma(\varphi_m)\|_{L^\infty(0,T;H)}+k^{-1}\|f_\sigma(\psi_m)\|_{L^\infty(0,T;H_\Gamma)}
&\leq C.\label{uniform-0318-7'}
\end{align}
\end{lemma}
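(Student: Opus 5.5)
The plan is to rerun the proofs of Lemmas~\ref{higher} and \ref{final}, now using Lemma~\ref{refine5} in place of Lemma~\ref{mu-H1-phi-H2}, and to check at each step that no constant degenerates as $L\to+\infty$. Since $L\geq L_0\geq1$, every factor $1/L$ or $1/\sqrt{L}$ is bounded by $1$. Moreover, Lemma~\ref{energy-estimates} already gives bounds uniform in $L$, including the bound on $L^{-1/2}\|\mathcal{L}_m-\mu_m\|_{L^2(0,T;H_\Gamma)}$.

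\textbf{Step 1: $L^2$-bounds on the nonlinearities.} First I repeat Lemmas~\ref{f-L2L2} and \ref{H2}, with the $L^2(0,T;\mathcal{H}^1)$-bound on $\boldsymbol{\mu}_m$ now taken from \eqref{refine5-1}. This yields $L^2(0,T;H)$-bounds on $f_0(\varphi_m)$, $g_0(\psi_m)$ and $k^{-1}f_\sigma$, together with $\|\boldsymbol{\varphi}_m\|_{L^2(0,T;\mathcal{H}^2)}\le C$ uniformly in $L\ge L_0$. This works because the elliptic estimate \eqref{260806-1} involves $K$ but not $L$.

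\textbf{Step 2: the differential inequality for $\mathcal{G}_m$.} Next I derive the inequality for $\mathcal{G}_m$ from \eqref{0227-20}.
\begin{itemize}
\item The lower bound \eqref{0227-21} is uniform, because its proof only uses \eqref{m-uni-7}.
\item Among $K_1,\dots,K_{11}$, the term $K_8+K_9$ uses the norm equivalence between $\mathcal{H}_{L,0}^{-1}$ and $(\mathcal{H}^1_{L,0})'$, whose constant may depend on $L$. I would avoid this by bounding $\|\partial_t\boldsymbol{\varphi}_m\|_{\mathcal{L}^2}^2$ directly. Testing \eqref{eqd2}--\eqref{eqd3} gives $\partial_t\boldsymbol{\varphi}_m$ in $(\mathcal{H}^1)'$ with norm at most $C(\|\nabla\mu_m\|+\|\nabla_{\boldsymbol{\tau}}\mathcal{L}_m\|+L^{-1}\|\mu_m-\mathcal{L}_m\|_{H_\Gamma}+\|\mathbf{v}_m\|_{\mathbf{H}^1})$. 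Interpolating between $(\mathcal{H}^1)'$ and $\mathcal{H}^1$ then gives an $L$-independent bound.
\item $K_{10}$ and $K_{11}$ need an $\mathcal{H}^2$-bound on $\mathbb{P}\boldsymbol{\mu}_m$ from the elliptic problem \eqref{2.2} with parameter $L$. Here I invoke \cite[Theorem 3.3]{KL}; for $L\ge1$ the Robin coefficient $1/L$ is bounded, so the constant should be uniform for large $L$. Alternatively one can argue as in Remark~\ref{no-bound}, where the factor $1/L$ is now harmless.
\end{itemize}

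\textbf{Step 3: the initial value $\mathcal{G}_m(0)$.} In \eqref{0306-Gm0-4'} the boundary terms carry the factor $1/L\le1$ and are absorbed exactly as written there. With \eqref{0315-1}--\eqref{0315-2}, this gives $\mathcal{G}_m(0)\le C$ uniformly. Gronwall's lemma then yields \eqref{uniform-0318-6}.

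\textbf{Step 4: the $L^\infty$-in-time bounds.} Finally, repeating the proof of Lemma~\ref{final} with the $L^\infty(0,T;\mathcal{H}^1)$-bound on $\boldsymbol{\mu}_m$ gives \eqref{uniform-0318-7}--\eqref{uniform-0318-7'}. For the $L^2(0,T;\mathcal{H}^2)$-bound on $\boldsymbol{\mu}_m$, I apply the elliptic estimate once more. Its right-hand side contains $\|\partial_t\boldsymbol{\varphi}_m\|_{\mathcal{L}^2}$, the convective terms (controlled by $\mathbf{v}_m\in L^\infty\mathbf{H}^1$ and $\boldsymbol{\varphi}_m\in L^\infty\mathcal{H}^2$) and $\|\boldsymbol{\mu}_m\|_{\mathcal{H}^1}$.

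\textbf{Main obstacle.} I expect the hardest point to be confirming that the bulk-surface elliptic regularity constants for \eqref{2.2}, and the Poincar\'e constant of Lemma~\ref{generalized-Poin}, remain bounded as $L\to+\infty$. If they do not, I would instead split $\boldsymbol{\mu}_m$ into its bulk and surface parts and treat each separately. The bulk part is a Neumann--Robin problem for $\mu_m$ with boundary data $L^{-1}(\mathcal{L}_m-\mu_m)$, which is bounded in $H^{1/2}(\Gamma)$. The surface part is a Laplace--Beltrami problem for $\mathcal{L}_m$ with source $-\partial_t\psi_m-\mathbf{v}_{m,\boldsymbol{\tau}}\cdot\nabla_{\boldsymbol{\tau}}\psi_m-\partial_{\mathbf{n}}\mu_m$. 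For each of these the constants are clearly uniform in $L\geq1$.
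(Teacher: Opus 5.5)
Your plan is the paper's proof. The paper reruns Lemmas~\ref{higher} and~\ref{final} on top of the $L$-independent bounds of Lemmas~\ref{refine4}--\ref{refine5}, and notes that the $1/L$-weighted terms, in particular in \eqref{0306-Gm0-4'}, are harmless for $L\ge L_0\ge1$. Your caution about the constants is justified, and it is your fallback, not the primary claim, that must actually be used. Constant pairs $(a,b)$ with $a|\Omega|+b|\Gamma|=0$ have $\mathcal{H}^1_{L,0}$-norm $|a-b|\sqrt{|\Gamma|/L}$, so both the Poincar\'e constant of Lemma~\ref{generalized-Poin} and the full $\mathcal{H}^2$-constant of $\mathfrak{S}^L$ grow with $L$. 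You should therefore use the separate bulk and surface elliptic estimates (as in Remark~\ref{no-bound}, now with $1/L\le1$). You should likewise obtain $\|\boldsymbol{\mu}_m\|_{L^\infty(0,T;\mathcal{H}^1)}$ from the gradient bounds in $\mathcal{G}_m$, combined with the separate Poincar\'e--Wirtinger inequalities in $\Omega$ and on $\Gamma$ and an $L^\infty$-in-time version of the mean control in Lemma~\ref{refine4}.
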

\begin{proof}
    Based on Lemmas~\ref{refine4} and \ref{refine5}, we can repeat the proof of Lemmas~\ref{higher} and \ref{final}
    to verify the claimed estimates.
    In particular, the constant in the estimate \eqref{0306-Gm0-4'} can now be chosen independently of  $L\in[L_0,+\infty)$. The proof is complete.
\end{proof}

In view of Lemmas~\ref{refine4}, \ref{refine5} and \ref{refine6}, we are now ready to handle the case $(K,L)\in(0,+\infty\rbrack\times\{+\infty\}$.

\begin{proof}[Proof of Theorem~\ref{quasi-strong}: $(K,L)\in(0,+\infty\rbrack\times\{+\infty\}$.]
Based on the refined estimates in Lemmas~\ref{refine4}, \ref{refine5} and \ref{refine6}, we can repeat the argument in Section~\ref{passage-to-limit} to conclude that there exist functions $(\mathbf{v}^L,\mathbf{v}^L_{\boldsymbol{\tau}},\boldsymbol{\varphi}^L,\boldsymbol{\mu}^L)$ satisfying the weak formulation \eqref{formula-0318-2} and the equations in \eqref{formula-0318-1}. As norms on Banach spaces are weakly (star) lower semicontinuous, and the constants $C$ in Lemmas~\ref{refine4}, \ref{refine5} and \ref{refine6} are independent of the parameters $m$, $\sigma$, $\gamma=k^{-1}$, and $L\in[L_0,+\infty)$, we can repeat the argument in Section~\ref{passage-to-limit} by passing to the limit as $m\to+\infty$, $\sigma\to 0$, $\gamma=k^{-1}\to 0$ subsequently, and conclude that the uniform estimate \eqref{uniform-0318-5} holds for all $L\in[L_0,+\infty)$.
Then, similar to the process for the case $L=0$, we can prove Theorem~\ref{quasi-strong} in the current case $(K,L)\in(0,+\infty\rbrack\times\{+\infty\}$ by passing to the limit as $L\to+\infty$ in \eqref{formula-0318-2} and \eqref{formula-0318-1}. The details are omitted for brevity.
\end{proof}


\appendix
\section{Analysis of a Bulk-Surface Convective Viscous Cahn--Hilliard System}
\label{appendix}
\setcounter{equation}{0}
Let $\gamma>0$, $\sigma\in (0,1/2)$, $k\in \mathbb{N}^+$. We consider the following bulk-surface convective viscous Cahn--Hilliard system
\begin{subequations}
\label{convective-viscous}
\begin{alignat}{2}
    \label{convective-viscous:1}
    &\partial_{t}\varphi+\mathbf{v} \cdot\nabla\varphi=\Delta\mu
    &&\quad  \mbox{in }\ Q_{T},
    \\[1mm]
    \label{convective-viscous:2}
    &\mu =\gamma\partial_t\varphi-\Delta \varphi +f(\varphi )+k^{-1} f_\sigma(\varphi)
    &&\quad \mbox{in }\ Q_{T},
    \\[1mm]
    \label{convective-viscous:3}
    &\begin{cases}
    K \partial_{\mathbf{n}}\varphi =\psi-\varphi, & \text{if}\ K\in(0,+\infty)\\
    \partial_{\mathbf{n}}\varphi=0,& \text{if}\ K=+\infty
    \end{cases}
    &&\quad {\mbox{on }\Sigma _{T },}
    \\[1mm]
    \label{convective-viscous:4}
    &L\partial_{\mathbf{n}}\mu=\mathcal{L}-\mu, \quad  L\in(0,+\infty)
    &&\quad\mbox{on }\ \Sigma _{T},
    \\[1mm]
    \label{convective-viscous:5}
    &\partial_t\psi+\mathbf{v}_{\bm{\tau}} \cdot\nabla_{\!\bm{\tau}\,}\psi
    =\Delta_{\bm{\tau}}\mathcal{L}
    -\partial_{\mathbf{n}}\mu
    &&\quad \mbox{on }\ \Sigma _{T},
    \\[1mm]
    \label{convective-viscous:6}
    &\mathcal{L}  =\gamma\partial_t\psi - \Delta _{\bm{\tau}}\psi +\partial _{\mathbf{n}}\varphi+g(\psi) +k^{-1} f_\sigma(\psi)
    &&\quad \mbox{on }\ \Sigma _{T},
\end{alignat}
\end{subequations}
subject to the initial conditions
\begin{align}
    \label{convective-viscous-initial}
	\varphi(0)=\varphi_0\;\;\text{ in }\Omega,\qquad
    \psi(0)=\psi_0\;\;\text{ on }\Gamma.
\end{align}
Here, $\mathbf{v}$ is a prescribed velocity field and $f_\sigma$ is the nonlinear function introduced in \eqref{f_sigma}.

In the following, we establish the well-posedness and regularity results for problem \eqref{convective-viscous}--\eqref{convective-viscous-initial}.
The analysis extends the previous work \cite{Gior22} to the viscous approximation of a bulk-surface convective Cahn--Hilliard system.

\begin{theorem}
\label{A1}
Let $\Omega\subset\mathbb R^d$ with $d\in\{2,3\}$ be a bounded domain with smooth boundary $\Gamma$. Assume that \ref{ASS:A2} and \ref{ASS:A3} hold, $\gamma>0$, $\sigma\in (0,1/2)$, $k\in \mathbb{N}^+$. Moreover, we assume that $\mathbf{v}$ is a velocity field satisfying $(\mathbf{v},\mathbf{v}_{\boldsymbol{\tau}})\in L^\infty(0,T;\bm{\mathcal{L}}_{\mathrm{div}}^2\cap \bm{\mathcal{L}}^3)$,
and the initial data $\boldsymbol{\varphi}_0 = (\varphi_0,\psi_0)\in \mathcal{H}^1\cap \mathcal{L}^\infty$
satisfies  $\|\boldsymbol{\varphi}_0\|_{\mathcal{L}^\infty}\leq 1-\sigma$ and $|\overline{m}(\boldsymbol{\varphi}_0)|<1-\sigma$.
Then, problem \eqref{convective-viscous}--\eqref{convective-viscous-initial} admits a unique weak solution $(\boldsymbol{\varphi},\boldsymbol{\mu})$ on $[0,T]$ such that
\begin{align}
    \label{REG:A1}
    \left\{
    \begin{aligned}
	&\boldsymbol{\varphi} = (\varphi,\psi)\in {L^\infty(0,T;\mathcal{H}^1)}\cap L^2(0,T;\mathcal{H}^2),\quad
    (\partial_t\varphi,\partial_t\psi)\in L^2(0,T;\mathcal{L}^2),\\
	&|\varphi(x,t)|<1-\sigma\;\; \text{a.e.~in }Q_T,\quad
    |\psi(x,t)|<1-\sigma\;\; \text{a.e.~on }\Sigma_T,\\
	&\boldsymbol{\mu}=(\mu,\mathcal{L})\in L^2(0,T;\mathcal{H}^2),\quad
    \big(f_0(\varphi),g_0(\psi)\big)\in L^2(0,T;\mathcal{L}^2),
    \end{aligned}
    \right.
\end{align}
which satisfies the equations in \eqref{convective-viscous} almost everywhere in $Q_T$ (resp. $\Sigma_T$) and the initial conditions \eqref{convective-viscous-initial} almost everywhere in $\Omega$ (resp. $\Gamma$).
In addition, the weak solution has the following regularity properties:
\begin{enumerate}[label=\textnormal{\bfseries (R\arabic*)}, leftmargin=*, topsep=0.5ex]
	\item \label{R1}
    Suppose that $(\partial_t\mathbf{v},\partial_t\mathbf{v}_{\boldsymbol{\tau}})\in L^\frac{4}{3}(0,T;\bm{\mathcal{L}}^1)$ and $(\mu_0^\ast,\mathcal{L}_0^\ast)\in\mathcal{L}^2$,
    where
    \begin{align*}
    \mu_0^\ast \coloneqq -\Delta\varphi_0+f_0(\varphi_0)+k^{-1} f_\sigma(\varphi_0),\quad\mathcal{L}_0^\ast \coloneqq
    -\Delta_{\bm{\tau}}\psi_0+\partial_{\mathbf{n}}\varphi_0+g_0(\psi_0)+k^{-1} f_\sigma(\psi_0).
    \end{align*}
    Furthermore, the following compatibility condition holds
    \begin{align}
    \begin{cases}
    K \partial_{\mathbf{n}}\varphi_0 =\psi_0 -\varphi_0 , & \text{if}\ K\in(0,+\infty)\\
    \partial_{\mathbf{n}}\varphi_0 =0,& \text{if}\ K=+\infty
    \end{cases}
    \qquad {\mbox{a.e.~on } \Sigma _{T }}.
    \label{appendix-compatibility}
    \end{align}
    Then, we have
	\begin{align*}
	&(\varphi,\psi)\in L^\infty(0,T;\mathcal{H}^2),\quad
    (\mu,\mathcal{L})\in L^\infty(0,T;\mathcal{H}^2),\\
	&(\partial_t\varphi,\partial_t\psi)\in L^\infty(0,T;\mathcal{L}^2)
    \cap {L^2(0,T;\mathcal{H}^1)}.
	\end{align*}
	\item \label{R2}
    Suppose that the assumptions of \ref{R1} hold.
    Moreover, assume that
    \begin{align}
         \|\varphi_0\|_{L^\infty(\Omega)}\leq (1-\sigma)(1-\delta_0),\quad
    \|\psi_0\|_{L^\infty(\Gamma)}\leq (1-\sigma)(1-\delta_0)
    \label{A-separation}
    \end{align}
	 for some $\delta_0\in(0,1)$. Then, there exists $\delta\in(0,1)$ such that
	\begin{align}
		\max_{(x,t)\in\Omega\times[0,T]} |\varphi(x,t)|\leq (1-\sigma)(1-\delta),\quad
        \max_{(x,t)\in\Gamma\times[0,T]} |\psi(x,t)|\leq (1-\sigma)(1-\delta),
        \label{viscous-separation}
	\end{align}
	and $(\varphi,\psi)\in L^2(0,T;\mathcal{H}^3)$.
	\item \label{R3}
    Suppose that the assumptions of \ref{R2} hold.
    Moreover, assume that $(\partial_t\mathbf{v},\partial_t\mathbf{v}_{\boldsymbol{\tau}})\in L^2(0,T;\boldsymbol{\mathcal{L}}^2)$ and $(\mu_0^\ast,\mathcal{L}_0^\ast)\in\mathcal{H}^1$. Then, we have
	\begin{alignat*}{2}
		&(\varphi,\psi) \in L^\infty(0,T;\mathcal{H}^3),
        &\quad (\partial_t\varphi,\partial_t\psi) &\in L^\infty(0,T;\mathcal{H}^1)\cap L^2(0,T;\mathcal{H}^2),
        \\
		&(\partial_{t}^2\varphi,\partial_{t}^2\psi) \in L^2(0,T;\mathcal{L}^2),
        &\quad
        (\partial_t\mu,\partial_t\mathcal{L}) &\in L^2(0,T;\mathcal{H}^1).
	\end{alignat*}
\end{enumerate}
\end{theorem}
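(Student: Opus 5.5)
We plan to follow the strategy of \cite{Gior22}, adapted to the bulk-surface setting. The first step is to build a weak solution by regularizing the singular parts. We replace $f_0$, $g_0$ and $f_\sigma$ by their Yosida approximations (or by globally Lipschitz truncated versions) with parameter $\epsilon$. We then solve the regularized problem with a Faedo--Galerkin scheme based on eigenfunctions of the bulk-surface operator associated with $a_L$ (with the Robin/Neumann coupling dictated by $K$), and derive estimates uniform in $\epsilon$.

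The basic energy estimate comes from testing \eqref{convective-viscous:1}, \eqref{convective-viscous:5} by $(\mu,\mathcal{L})$ and \eqref{convective-viscous:2}, \eqref{convective-viscous:6} by $(\partial_t\varphi,\partial_t\psi)$. The convective terms $\int_\Omega \varphi\mathbf{v}\cdot\nabla\mu$ and its boundary counterpart are absorbed using the boundedness of $\varphi$ at the approximate level (or the $\mathcal{L}^3$-integrability of $\mathbf{v}$ together with $\mathcal{H}^1\hookrightarrow\mathcal{L}^6$ and Gronwall). This yields $\boldsymbol{\varphi}\in L^\infty(0,T;\mathcal{H}^1)$, $\sqrt\gamma\,\partial_t\boldsymbol{\varphi}\in L^2(0,T;\mathcal{L}^2)$ and $\nabla\mu,\nabla_{\!\boldsymbol{\tau}}\mathcal{L}\in L^2L^2$. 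The mean $\overline{m}(\boldsymbol{\mu})$ is controlled via the Miranville--Zelik inequality, exactly as in Lemma~\ref{mu-H1-phi-H2}, using $|\overline m(\boldsymbol\varphi_0)|<1-\sigma$ and the fact that the singularity of $f_\sigma$ sits at $\pm(1-\sigma)$. The $L^2L^2$ bounds on $f_0,g_0,f_\sigma$ follow as in Lemma~\ref{f-L2L2}, using \ref{ASS:A3} and the sign property \eqref{f_lambda_p3}, and elliptic bulk-surface regularity then gives $\boldsymbol{\varphi}\in L^2\mathcal{H}^2$ and $\boldsymbol{\mu}\in L^2\mathcal{H}^2$ (via \cite[Theorem 3.3]{KL} applied to \eqref{convective-viscous:1}, \eqref{convective-viscous:4}, \eqref{convective-viscous:5}). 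Passing $\epsilon\to0$ by monotonicity (\cite{Barbu}) gives the weak solution, and $|\varphi|<1-\sigma$ follows from $f_\sigma(\varphi)\in L^2$. Uniqueness is proved in the $\mathcal{H}^{-1}_{L,0}\times\gamma\mathcal{L}^2$ norm, as in the continuity step of Section~\ref{SECT:EXAP}: the monotone parts are discarded, $f_1,g_1$ are Lipschitz, and the convective difference terms are bounded with $\|\mathbf{v}\|_{\mathcal{L}^3}$.

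For \ref{R1} we differentiate formally in time, testing the time-differentiated system by $\mathfrak{S}^L\partial_t\boldsymbol{\varphi}$, or equivalently testing the $\mu$-equation by $\partial_t\mu$ as in Lemma~\ref{higher}. The viscosity $\gamma$ gives $\frac{\gamma}{2}\frac{d}{dt}\|\partial_t\boldsymbol{\varphi}\|^2_{\mathcal{L}^2}$ and monotonicity of $f_0,g_0,f_\sigma$ makes their contributions nonnegative. The term involving $\partial_t\mathbf{v}$ is $\int\varphi\,\partial_t\mathbf{v}\cdot\nabla\mathfrak{S}\partial_t\boldsymbol\varphi$, bounded by $\|\partial_t\mathbf{v}\|_{\mathcal{L}^1}\|\nabla\mathfrak S\partial_t\boldsymbol\varphi\|_{\mathcal{L}^\infty}$. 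This is where $L^{4/3}(0,T;\boldsymbol{\mathcal L}^1)$ enters: combined with Agmon/elliptic interpolation, the bound has the form $\|\partial_t\mathbf v\|_{\mathcal L^1}\|\partial_t\boldsymbol\varphi\|^{1/2}\|\partial_t\boldsymbol\varphi\|_{\mathcal{H}^1}^{1/2}$, and Young produces $\|\partial_t\mathbf v\|^{4/3}$. The initial value $\partial_t\boldsymbol{\varphi}(0)$ in $\mathcal{L}^2$ is obtained from $(\mu_0^\ast,\mathcal L_0^\ast)\in\mathcal L^2$ and the compatibility condition \eqref{appendix-compatibility}, by solving $\gamma\partial_t\varphi(0)-\Delta\mu(0)=\dots$. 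Elliptic regularity then gives the $L^\infty\mathcal H^2$ bounds.

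For \ref{R2}, the main obstacle is the uniform separation, which we prove by comparison. We rewrite \eqref{convective-viscous:2} and \eqref{convective-viscous:6} as $\gamma\partial_t\varphi+k^{-1}f_\sigma(\varphi)=H$ with $H:=\mu+\Delta\varphi-f(\varphi)\in L^\infty(Q_T)$ (from \ref{R1}, $\mathcal{H}^2\hookrightarrow L^\infty$), and similarly on $\Gamma$ with a bounded $H_\Gamma$. We compare with the ODEs $\gamma U'+k^{-1}f_\sigma(U)=\|H\|_\infty$, $U(0)=(1-\sigma)(1-\delta_0)$, and the symmetric one for $V$. Testing the differences by $(\varphi-U)^+$ and $(\psi-U)^+$, the terms $f_0$, $g_0$ cause no difficulty since they are bounded on the relevant set, being absorbed into $H$, while the $f_\sigma$ terms are ordered by monotonicity. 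This is exactly why $f_\sigma$ was introduced, and the boundary coupling term $\chi(K)$ has a sign as in \cite{LvWuAA}. The ODE solutions stay uniformly away from $\pm(1-\sigma)$, which gives \eqref{viscous-separation}. Once $f,g,f_\sigma$ are smooth on the range, $H^3$ bulk-surface regularity yields $\boldsymbol\varphi\in L^2\mathcal H^3$.

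For \ref{R3} we differentiate the full system in time and test by $\partial_t^2\boldsymbol\varphi$ and $\partial_t\boldsymbol\mu$. Using separation, all nonlinearities are Lipschitz, and with $\partial_t\mathbf v\in L^2\boldsymbol{\mathcal L}^2$ and $(\mu_0^\ast,\mathcal L_0^\ast)\in\mathcal H^1$ (giving $\partial_t\boldsymbol\varphi(0)\in\mathcal H^1$), Gronwall and elliptic regularity yield the stated bounds.
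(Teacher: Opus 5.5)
\textbf{Gap in (R2).} Your existence, uniqueness and (R1) steps follow the paper in substance: uniqueness in $\|\cdot\|_{\mathcal{H}^{-1}_{L,0}}^2+\gamma\|\cdot\|_{\mathcal{L}^2}^2$, and (R1) via testing with $\mathfrak{S}^L\partial_t\boldsymbol{\varphi}$, where Agmon and Young produce $\|(\partial_t\mathbf{v},\partial_t\mathbf{v}_{\boldsymbol{\tau}})\|_{\boldsymbol{\mathcal{L}}^1}^{4/3}$. The step that fails as written is the comparison setup in (R2).

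You put the diffusion into the forcing, $H=\mu+\Delta\varphi-f(\varphi)$, and claim $H\in L^\infty(Q_T)$ by (R1) and $\mathcal{H}^2\hookrightarrow L^\infty$. That embedding controls $\mu$ and $\varphi$, not $\Delta\varphi$. (R1) gives only $\Delta\varphi\in L^\infty(0,T;L^2(\Omega))$, and likewise only $\Delta_{\boldsymbol{\tau}}\psi\in L^\infty(0,T;L^2(\Gamma))$ in your boundary forcing. Nothing available bounds second derivatives in $L^\infty$; even $L^2(0,T;\mathcal{H}^3)$ requires the separation first. So the pointwise ODE comparison you describe has no bounded right-hand side. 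It also contradicts your own remark on the boundary coupling term: if the Laplacians sit in $H$, no boundary term appears at all.

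\textbf{The fix (the paper's argument).} Use a \emph{parabolic} comparison and keep the elliptic operators on the left:
$\gamma\partial_t\varphi-\Delta\varphi+k^{-1}f_\sigma(\varphi)=h$, $K\partial_{\mathbf{n}}\varphi=\psi-\varphi$, $\gamma\partial_t\psi-\Delta_{\boldsymbol{\tau}}\psi+\partial_{\mathbf{n}}\varphi+k^{-1}f_\sigma(\psi)=h_\Gamma$.
The data are only $h=\mu-f(\varphi)$ and $h_\Gamma=\mathcal{L}-g(\psi)$. These are bounded because $\boldsymbol{\mu}\in L^\infty(0,T;\mathcal{H}^2)\hookrightarrow L^\infty$ and $|\varphi|,|\psi|<1-\sigma$ keeps $f_0,g_0$ away from their singularities.

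Compare both components with one ODE, $\gamma U'+k^{-1}f_\sigma(U)=H$ with $H=\max\{\|h\|_{L^\infty(\Omega)},\|h_\Gamma\|_{L^\infty(\Gamma)}\}$. A single $U$ for bulk and surface is exactly why the common $f_\sigma$ is needed. Test with $w^+=(\varphi-U)^+$ and $w_\Gamma^+=(\psi-U)^+$. The diffusion yields $\|\nabla w^+\|^2+\|\nabla_{\boldsymbol{\tau}}w_\Gamma^+\|^2+\chi(K)\int_\Gamma(w_\Gamma-w)(w_\Gamma^+-w^+)\,\mathrm{d}S$, which is nonnegative by monotonicity of $r\mapsto r^+$; this is the sign you alluded to. The $f_\sigma$ differences are nonnegative, and $h-H\le0$, $h_\Gamma-H\le0$.

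\textbf{Two smaller points.} First, in (R1), testing by $\partial_t\mu$ ``as in Lemma~\ref{higher}'' is not equivalent to your main test. It needs $\nabla\boldsymbol{\mu}(0)\in\mathcal{L}^2$, i.e.\ $(\mu_0^\ast,\mathcal{L}_0^\ast)\in\mathcal{H}^1$, which is assumed only in (R3).

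Second, obtaining $\partial_t\boldsymbol{\varphi}(0)\in\mathcal{L}^2$ by solving an elliptic problem at $t=0$ is only formal for a weak solution. It must be done at an approximation level where $\partial_t\boldsymbol{\varphi}(0)$ exists. The paper instead uses forward difference quotients $\partial_t^h$ throughout. It bounds $\partial_t^h\boldsymbol{\varphi}(0)=h^{-1}(\boldsymbol{\varphi}(h)-\boldsymbol{\varphi}_0)$ through a differential inequality for $\|\boldsymbol{\varphi}(t)-\boldsymbol{\varphi}_0\|_{\mathcal{H}^{-1}_{L,0}}^2+\gamma\|\boldsymbol{\varphi}(t)-\boldsymbol{\varphi}_0\|_{\mathcal{L}^2}^2$ with a square-root right-hand side, handled by Lemma~\ref{LEM:SGW}. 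This is where the compatibility condition \eqref{appendix-compatibility} is used, and the same device controls the initial value $\mathcal{F}(0)$ in (R3).
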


\begin{proof}
The proof is divided into several steps.
When $K=+\infty$, the boundary condition of $\varphi$ reduces to the homogeneous Neumann boundary condition
$\partial_\mathbf{n}\varphi=0$ on $\Sigma_T$.
This case can be treated by a limiting procedure as $K\to +\infty$, so we focus on the case $K\in(0,+\infty)$ below.
\medskip

\noindent\textbf{\itshape Existence.}
The existence of a global weak solution on $[0,T]$ with the regularity \eqref{REG:A1} can be shown in a standard way:
first for regular potentials through a Faedo--Galerkin method (cf.~\cite{KS2024}),
and afterwards for singular potentials by means of a Yosida approximation (cf.~\cite{KSJDE}).

\medskip

\noindent\textbf{\itshape Uniqueness.}
{For $i=1,2$, let $\boldsymbol{\varphi}_{0,i} = (\varphi_{0,i}, \psi_{0,i})$ be two initial data, which are admissible in the sense of Theorem~\ref{A1} and satisfy $\overline{m}(\boldsymbol{\varphi}_{0,1})=\overline{m}(\boldsymbol{\varphi}_{0,2})$. Furthermore, let
$(\boldsymbol{\varphi}_i,\boldsymbol{\mu}_i)$ with $\boldsymbol{\varphi}_i=(\varphi_i,\psi_i)$ and $\boldsymbol{\mu}_i=(\mu_i,\mathcal{L}_i)$ be the  corresponding weak solutions of problem
\eqref{convective-viscous}--\eqref{convective-viscous-initial}.
For brevity, we use the notation
\begin{align*}
\widetilde{\boldsymbol{\varphi}}
= (\widetilde \varphi,\widetilde\psi)
\coloneqq \boldsymbol{\varphi}_1-\boldsymbol{\varphi}_2,
\quad
\widetilde{\boldsymbol{\mu}}
= (\widetilde \mu,\widetilde{\mathcal{L}})
\coloneqq \boldsymbol{\mu}_1-\boldsymbol{\mu}_2,
\quad
\widetilde{\boldsymbol{\varphi}}_0
= (\widetilde \varphi_0,\widetilde\psi_0)
\coloneqq \boldsymbol{\varphi}_{0,1}-\boldsymbol{\varphi}_{0,2}.
\end{align*}
Then the pairs $\widetilde{\boldsymbol{\varphi}}$ and $\widetilde{\boldsymbol{\mu}}$ solve}
\begin{align}
        \begin{cases}
            \partial_t\widetilde{\varphi}+\mathbf{v}\cdot\nabla\widetilde{\varphi}=\Delta\widetilde{\mu}&\text{a.e.~in }Q_T,\\
           L\partial_{\mathbf{n}}\widetilde{\mu}=\widetilde{\mathcal{L}}-\widetilde{\mu}&\text{a.e.~on }\Sigma_T,\\
            \partial_t\widetilde{\psi}+\mathbf{v}_{\bm{\tau}}\cdot\nabla_{\!\bm{\tau}\,}\widetilde{\psi}
            =\Delta_{\bm{\tau}}\widetilde{\mathcal{L}}-\partial_{\mathbf{n}}\widetilde{\mu}&\text{a.e.~on }\Sigma_T,
        \end{cases}\label{0305-7}
\end{align}
and
\begin{align}
        \begin{cases}
            \widetilde{\mu}=\gamma\partial_t\widetilde{\varphi}-\Delta\widetilde{\varphi}
            +f(\varphi_1)-f(\varphi_2)+k^{-1}f_\sigma(\varphi_1)-k^{-1} f_\sigma(\varphi_2)&\text{a.e.~in }Q_T,\\
		{K\partial_{\mathbf{n}}\widetilde{\varphi}=\widetilde{\psi}-\widetilde{\varphi},\quad K\in(0,+\infty)}
	&\mbox{a.e.~on } \Sigma _{T},
	 \\
	 \widetilde{\mathcal{L}  }=\gamma\partial_t\widetilde{\psi }- \Delta _{\bm{\tau}}\widetilde{\psi}
     +\partial _{\mathbf{n}}\widetilde{\varphi}+g(\psi_1)-g(\psi_2)
     +k^{-1} f_\sigma(\psi_1)-k^{-1} f_\sigma(\psi_2)  & \mbox{a.e.~on }
	\Sigma _{T}.
        \end{cases}\notag
\end{align}
Since $L\in(0,+\infty)$, we use the definition of $\mathfrak{S}^L$
to rewrite \eqref{0305-7} as
\begin{align}
        \mathbb{P}\big(\widetilde{\mu}, \widetilde{\mathcal{L}}\big)
        =-\mathfrak{S}^{L}\big(\partial_t\widetilde{\varphi},\partial_t\widetilde{\psi}\big)
        -\mathfrak{S}^L(\mathbf{v} \cdot\nabla\widetilde{\varphi},\mathbf{v}_{\bm{\tau}}\cdot\nabla_{\!\bm{\tau}\,}\widetilde{\psi})
        \qquad\text{in }\mathcal{H}_{L,0}^1,
        \label{0305-9}
\end{align}
where $\mathbb P$ is the projection introduced in \eqref{DEF:PRO:L0}.
It follows directly from \eqref{0305-7} that $\overline{m}(\widetilde{\boldsymbol{\varphi}}(t))=0$ for all $t\in[0,T]$. Hence, multiplying
\eqref{0305-9} by $\mathfrak{S}^L(\widetilde{\varphi},\widetilde{\psi})$ in $\mathcal{H}_{L,0}^1$,
and recalling \ref{ASS:A2}, \eqref{f_lambda_p1}, we find that
\begin{align}
        \frac{1}{2}\frac{\mathrm{d}}{\mathrm{d}t}\|\bm{\widetilde{\varphi}}\|_{\mathcal{H}_{L,0}^{-1}}^2
        &=-\int_\Omega \mathbf{v}\cdot\nabla\widetilde{\varphi}\,\mathfrak{S}^L_\Omega (\bm{\widetilde{\varphi}})
        \,\mathrm{d}x
        -\int_\Gamma\mathbf{v}_{\bm{\tau}} \cdot\nabla_{\!\bm{\tau}\,}\widetilde{\psi}
        \,\mathfrak{S}^L_\Gamma (\bm{\widetilde{\varphi}})\,\mathrm{d}S
        -\int_\Omega\widetilde{\mu} \widetilde{\varphi}\,\mathrm{d}x
        -\int_\Gamma\widetilde{\mathcal{L}} \widetilde{\psi}\,\mathrm{d}S
        \notag\\
        &=-\int_\Omega \mathbf{v}\cdot\nabla\widetilde{\varphi}\,\mathfrak{S}^L_\Omega (\bm{\widetilde{\varphi}})
        \,\mathrm{d}x
        -\int_\Gamma\mathbf{v}_{\bm{\tau}} \cdot\nabla_{\!\bm{\tau}\,}\widetilde{\psi}\,\mathfrak{S}^L_\Gamma(\bm{\widetilde{\varphi}})\,\mathrm{d}S
        \notag\\
        &\quad- \frac{\gamma}{2}\frac{\mathrm{d}}{\mathrm{d}t}\|\bm{\widetilde{\varphi}}\|_{\mathcal{L}^2}^2
        -\|\nabla\widetilde{\varphi} \|_{\mathbf{L}^2(\Omega)}^2
        -\|\nabla_{\!\bm{\tau}\,} \widetilde{\psi}\|_{\mathbf{L}^2(\Gamma)}^2 {-\frac{1}{K}\int_\Gamma|\widetilde{\psi}-\widetilde{\varphi}|^2\,\mathrm{d}S}
        \notag\\
        &\quad-k^{-1}\int_\Omega(f_\sigma(\varphi_1)-f_\sigma(\varphi_2)) \widetilde{\varphi}\,\mathrm{d}x
        -\int_\Omega(f_0(\varphi_1)-f_0(\varphi_2))\widetilde{\varphi} \,\mathrm{d}x
        \notag\\
        &\quad-k^{-1}\int_\Gamma(f_\sigma(\psi_1)-f_\sigma(\psi_2))\widetilde{\psi} \,\mathrm{d}S
        -\int_\Gamma(g_0(\psi_1)-g_0(\psi_2))\widetilde{\psi}\,\mathrm{d}S
        \notag\\
        &\quad-\int_\Omega(f_1(\varphi_1)-f_1(\varphi_2))\widetilde{\varphi} \,\mathrm{d}x
        -\int_\Gamma(g_1(\psi_1)-g_1(\psi_2))\widetilde{\psi} \,\mathrm{d}S
        \notag\\
        &\leq -\int_\Omega \mathbf{v}\cdot\nabla\widetilde{\varphi}\,\mathfrak{S}^L_\Omega (\bm{\widetilde{\varphi}})
        \,\mathrm{d}x
        -\int_\Gamma\mathbf{v}_{\bm{\tau}} \cdot\nabla_{\!\bm{\tau}\,}\widetilde{\psi}
        \,\mathfrak{S}^L_\Gamma (\bm{\widetilde{\varphi}})\,\mathrm{d}S
        \notag\\
        &\quad- \frac{\gamma}{2}\frac{\mathrm{d}}{\mathrm{d}t}\|\bm{\widetilde{\varphi}} \|_{\mathcal{L}^2}^2
        -\|\nabla\widetilde{\varphi} \|_{\mathbf{L}^2(\Omega)}^2
        -\|\nabla_{\!\bm{\tau}\,} \widetilde{\psi}\|_{\mathbf{L}^2(\Gamma)}^2 {-\frac{1}{K}\int_\Gamma|\widetilde{\psi}-\widetilde{\varphi}|^2\,\mathrm{d}S}
        +C_{\text{Lip}}\|\bm{\widetilde{\varphi}} \|_{\mathcal{L}^2}^2.
        \notag
\end{align}
Using the Sobolev embedding theorem and the regularity estimate \eqref{Hk-regularity} with $k=0$ to bound $\mathfrak{S}^L(\bm{\widetilde{\varphi}})$, we get
\begin{align}
    &\frac{1}{2}\frac{\mathrm{d}}{\mathrm{d}t}\Big(\|\bm{\widetilde{\varphi}}\|_{\mathcal{H}_{L,0}^{-1}}^2
    +\gamma\|\bm{\widetilde{\varphi}} \|_{\mathcal{L}^2}^2\Big)
    +\|\nabla\widetilde{\varphi} \|_{\mathbf{L}^2(\Omega)}^2
    +\|\nabla_{\!\bm{\tau}\,}\widetilde{\psi} \|_{\mathbf{L}^2(\Gamma)}^2 {+\frac{1}{K}\int_\Gamma|\widetilde{\psi}-\widetilde{\varphi}|^2\,\mathrm{d}S}
    \notag\\
    &\quad\leq    -\int_\Omega \mathbf{v}\cdot\nabla\widetilde{\varphi} \,\mathfrak{S}^L_\Omega(\bm{\widetilde{\varphi}})\,\mathrm{d}x
    -\int_\Gamma\mathbf{v}_{\bm{\tau}} \cdot\nabla_{\!\bm{\tau}\,}\widetilde{\psi}\,\mathfrak{S}^L_\Gamma(\bm{\widetilde{\varphi}})\,\mathrm{d}S
    +C_{\text{Lip}}\|\bm{\widetilde{\varphi}} \|_{\mathcal{L}^2}^2
    \notag\\
    &\quad= \int_\Omega \widetilde{\varphi}\mathbf{v}\cdot\nabla\mathfrak{S}^L_\Omega(\bm{\widetilde{\varphi}})\,\mathrm{d}x
    +\int_\Gamma\widetilde{\psi} \mathbf{v}_{\bm{\tau}}\cdot\nabla_{\!\bm{\tau}\,}\mathfrak{S}^L_\Gamma(\bm{\widetilde{\varphi}})\,\mathrm{d}S
    +C_{\text{Lip}}\|\bm{\widetilde{\varphi}} \|_{\mathcal{L}^2}^2
    \notag\\
    &\quad\leq \|\mathbf{v}\|_{\mathbf{L}^3(\Omega)}\|\widetilde{\varphi}\|_{H}\|\nabla\mathfrak{S}^L_\Omega(\bm{\widetilde{\varphi}})\|_{\mathbf{L}^6(\Omega)}
    +\|\mathbf{v}_{\bm{\tau}}\|_{\mathbf{L}^3 (\Gamma)}\|\widetilde{\psi}\|_{H_\Gamma}\|\nabla_{\!\bm{\tau}\,}\mathfrak{S}^L_\Gamma(\bm{\widetilde{\varphi}})\|_{\mathbf{L}^6(\Gamma)}
    +C_{\text{Lip}}\|\bm{\widetilde{\varphi}} \|_{\mathcal{L}^2}^2
    \notag\\[1mm]
    &\quad \leq C(1+\|\mathbf{v}\|_{\mathbf{L}^3(\Omega)}
    +\|\mathbf{v}_{\bm{\tau}} \|_{\mathbf{L}^3(\Gamma)})
    \|\bm{\widetilde{\varphi}}\|_{\mathcal{L}^2}^2.
    \notag
\end{align}
Finally, an application of Gronwall's lemma yields
\begin{align*}
&\|\bm{\widetilde{\varphi}}(t)\|_{\mathcal{H}_{L,0}^{-1}}^2
+\gamma\|\bm{\widetilde{\varphi}}(t)\|_{\mathcal{L}^2}^2
\\
&\quad\leq
\big( \|\bm{\widetilde{\varphi}}_0\|_{\mathcal{H}_{L,0}^{-1}}^2
    +\gamma\|\bm{\widetilde{\varphi}}_0 \|_{\mathcal{L}^2}^2\big)
\exp\left( C\int_0^t(1+\|\mathbf{v}(s)\|_{\mathbf{L}^3(\Omega)}+\|\mathbf{v}_{\bm{\tau}}(s)\|_{\mathbf{L}^3(\Gamma)})\,\mathrm{d}s \right)\notag
\end{align*}
for all $t\in[0,T]$.
This continuous dependence estimate easily implies the uniqueness of weak solutions.

\medskip
\noindent\textbf{\itshape Regularity \ref{R1}.} We proceed to derive further regularity properties of the weak solution. The subsequent estimates may depend on the parameters $\gamma, \sigma, k$.

Let $h\in \big(0, T/2 \big)$. For any Banach space $X$, any $t\in (0,T-h)$ and any function ${f:(0,T-h)\to X}$, we denote by $\partial_t^h f(t) = \frac 1h \big[ f(t+h) - f(t) \big] \in X$ the forward difference quotient of $f$.
In the following, the generic constants denoted by $C$ will always be independent of $h$.
Applying the difference quotient on the equations of \eqref{convective-viscous}, we obtain
\begin{subequations}
\label{difference-quotient}
\begin{alignat}{2}
    \label{difference-quotient:1}
    &\partial_{t}\partial_t^h \varphi+\partial_t^h(\mathbf{v}\cdot\nabla\varphi)
    =\Delta\partial_t^h\mu
    &&\quad  \mbox{a.e.~in } Q_{T-h},
    \\
    \label{difference-quotient:2}
    &\partial_t^h\mu =\gamma\partial_t\partial_t^h\varphi-\Delta\partial_t^h \varphi
    +\partial_t^hf(\varphi )+k^{-1}\partial_t^h f_\sigma(\varphi)
    &&\quad \mbox{a.e.~in }  Q_{T-h},
    \\
    \label{difference-quotient:3}
    & K\partial_\mathbf{n}\partial_t^h\varphi=\partial_t^h\psi-\partial_t^h\varphi,\quad K\in(0,+\infty)
    &&\quad\mbox{a.e.~on } \Sigma_{T-h},
    \\
    \label{difference-quotient:4}
    &L\partial_{\mathbf{n}}\partial_t^h\mu =\partial_t^h\mathcal{L}-\partial_t^h\mu,\quad L\in(0,+\infty)
    &&\quad\mbox{a.e.~on }
    \Sigma_{T-h},
    \\
    \label{difference-quotient:5}
    &\partial_t\partial_t^h\psi +\partial_t^h(\mathbf{v}_{\bm{\tau}}\cdot\nabla_{\!\bm{\tau}\,}\psi)
    =\Delta_{\bm{\tau}}\partial_t^h\mathcal{L}-\partial_{\mathbf{n}}\partial_t^h\mu
    &&\quad \mbox{a.e.~on }
    \Sigma_{T-h},
    \\
    \label{difference-quotient:6}
    &\partial_t^h\mathcal{L}  =\gamma\partial_t\partial_t^h\psi - \Delta _{\bm{\tau}}\partial_t^h\psi
    +\partial _{\mathbf{n}}\partial_t^h\varphi+\partial_t^hg(\psi)+k^{-1}\partial_t^h f_\sigma(\psi)
    &&\quad \mbox{a.e.~on }
    \Sigma_{T-h}.
\end{alignat}
\end{subequations}
It follows from \eqref{difference-quotient:1} and \eqref{difference-quotient:5} that
$\overline{m}(\partial_t^h\boldsymbol{\varphi})=0$. Thus, we obtain
\begin{align*}
\mathbb{P}(\partial_t^h\mu,\partial_t^h\mathcal{L})
=-\mathfrak{S}^L(\partial_{t}\partial_t^h \varphi+\partial_t^h(\mathbf{v}\cdot\nabla\varphi),
\partial_t\partial_t^h\psi+\partial_t^h(\mathbf{v}_{\bm{\tau}}\cdot\nabla_{\!\bm{\tau}\,}\psi))
\quad\text{in }\mathcal{H}_{L,0}^1,
\end{align*}
where $\mathbb P$ is the projection introduced in \eqref{DEF:PRO:L0}.
Arguing as in the proof of uniqueness, we have
\begin{align}
    &\frac{1}{2}\frac{\mathrm{d}}{\mathrm{d}t}\Big(\|\partial_t^h\bm{\varphi}\|_{\mathcal{H}_{L,0}^{-1}}^2
    +\gamma\|\partial_t^h\bm{\varphi} \|_{\mathcal{L}^2}^2\Big)+\|\nabla\partial_t^h\varphi\|_{\mathbf{L}^2(\Omega)}^2
    +\|\nabla_{\!\bm{\tau}\,}\partial_t^h\psi \|_{\mathbf{L}^2(\Gamma)}^2 {+\frac{1}{K}\int_\Gamma|\partial_t^h\psi-\partial_t^h\varphi|^2\,\mathrm{d}S}
    \notag\\
    &\quad=-\int_\Omega \partial_t^h(\mathbf{v}\cdot\nabla\varphi)\mathfrak{S}^L_\Omega(\partial_t^h\boldsymbol{\varphi})\,\mathrm{d}x
    -\int_\Gamma\partial_t^h (\mathbf{v}_{\boldsymbol{\tau}}\cdot\nabla_{\!\boldsymbol{\tau}\,}\psi)\mathfrak{S}_\Gamma^L(\partial_t^h\boldsymbol{\varphi})\,\mathrm{d}S
    \notag\\
     &\qquad-k^{-1}\int_\Omega \partial_t^h f_\sigma(\varphi)\partial_t^h \varphi\,\mathrm{d}x
     -k^{-1}\int_\Gamma\partial_t^h f_\sigma(\psi)\partial_t^h\psi\,\mathrm{d}S
     \notag\\
     &\qquad-\int_\Omega \partial_t^h f_0(\varphi)\partial_t^h\varphi\,\mathrm{d}x
     -\int_\Gamma \partial_t^h g_0(\psi)\partial_t^h\psi\,\mathrm{d}S
     \notag\\
    &\qquad-\int_\Omega \partial_t^h f_1(\varphi)\partial_t^h\varphi\,\mathrm{d}x
    -\int_\Gamma \partial_t^h g_1(\psi)\partial_t^h\psi\,\mathrm{d}S
    \notag\\
    &\quad\leq  - \int_\Omega\partial_t^h(\mathbf{v}\cdot\nabla\varphi)\mathfrak{S}^L_\Omega(\partial_t^h\boldsymbol{\varphi})\,\mathrm{d}x
     - \int_\Gamma\partial_t^h (\mathbf{v}_{\boldsymbol{\tau}}\cdot\nabla_{\!\boldsymbol{\tau}\,}\psi)\mathfrak{S}_\Gamma^L(\partial_t^h\boldsymbol{\varphi})\,\mathrm{d}S
    \notag\\
    &\qquad-\int_\Omega \partial_t^h f_1(\varphi)\partial_t^h\varphi\,\mathrm{d}x
    -\int_\Gamma \partial_t^h g_1(\psi)\partial_t^h\psi\,\mathrm{d}S.
    \label{0307-A-1}
\end{align}
Using the Sobolev embedding theorem, Agmon's inequality, the regularity estimate \eqref{Hk-regularity} for $k=0,1$, and Young's inequality, we estimate the first two integrals on the right-hand side of \eqref{0307-A-1} as follows:
\begin{align}
    &\int_\Omega\partial_t^h(\mathbf{v} \cdot\nabla\varphi)
    \mathfrak{S}^L_\Omega(\partial_t^h \boldsymbol{\varphi})\,\mathrm{d}x
   +\int_\Gamma\partial_t^h (\mathbf{v}_{\boldsymbol{\tau}}\cdot\nabla_{\!\boldsymbol{\tau}\,}\psi)
   \mathfrak{S}_\Gamma^L (\partial_t^h\boldsymbol{\varphi})\,\mathrm{d}S
   \notag\\
   &\quad=\int_\Omega \mathrm{div}(\partial_t^h(\varphi\mathbf{v})) \,\mathfrak{S}^L_\Omega(\partial_t^h\bm{\varphi})\,\mathrm{d}x
   +\int_\Gamma\mathrm{div}_{\Gamma}(\partial_t^h(\psi\mathbf{v}_{\bm{\tau}}))
   \,\mathfrak{S}^L_\Gamma (\partial_t^h\bm{\varphi})\,\mathrm{d}S
   \notag\\
   &\quad=-\int_\Omega\partial_t^h(\varphi\mathbf{v})
   \cdot\nabla\mathfrak{S}^L_\Omega (\partial_t^h\bm{\varphi})\,\mathrm{d}x
   -\int_\Gamma\partial_t^h (\psi\mathbf{v}_{\bm{\tau}})
   \cdot\nabla_{\!\bm{\tau}\,} \mathfrak{S}^L_\Gamma(\partial_t^h\bm{\varphi})\,\mathrm{d}S
   \notag\\
   &\quad=-\int_\Omega\varphi(\cdot+h) \partial_t^h\mathbf{v}
   \cdot\nabla\mathfrak{S}^L_\Omega (\partial_t^h\bm{\varphi})\,\mathrm{d}x
   -\int_\Omega\partial_t^h\varphi\,\mathbf{v}
   \cdot\nabla\mathfrak{S}^L_\Omega (\partial_t^h\bm{\varphi})\,\mathrm{d}x
   \notag\\
   &\qquad-\int_\Gamma\psi(\cdot+h)\partial_t^h \mathbf{v}_{\bm{\tau}}\cdot\nabla_{\!\bm{\tau}\,}
   \mathfrak{S}^L_\Gamma (\partial_t^h\bm{\varphi})\,\mathrm{d}S
   -\int_\Gamma\partial_t^h\psi\,\mathbf{v}_{\bm{\tau}}\cdot\nabla_{\!\bm{\tau}\,}
   \mathfrak{S}^L_\Gamma (\partial_t^h\bm{\varphi})\,\mathrm{d}S\notag\\
   &\quad\leq \|\varphi(\cdot+h)\|_{L^\infty(\Omega)} \|\partial_t^h\mathbf{v}\|_{\mathbf{L}^1(\Omega)}
   \|\nabla\mathfrak{S}^L_\Omega (\partial_t^h\bm{\varphi})\|_{\mathbf{L}^\infty(\Omega)}
   \notag\\
   &\qquad+\|\psi(\cdot+h)\|_{L^\infty(\Gamma)}
   \|\partial_t^h\mathbf{v}_{\bm{\tau}} \|_{\mathbf{L}^1(\Gamma)}
   \|\nabla_{\!\bm{\tau}\,} \mathfrak{S}^L_\Gamma(\partial_t^h\bm{\varphi})
   \|_{\mathbf{L}^\infty(\Gamma)}
   \notag\\
   &\qquad +\|\partial_t^h\varphi\|_H \|\mathbf{v}\|_{\mathbf{L}^3(\Omega)}
   \|\nabla\mathfrak{S}^L_\Omega (\partial_t^h\bm{\varphi})
   \|_{\mathbf{L}^6(\Omega)} +\|\partial_t^h\psi\|_{H_\Gamma}
   \|\mathbf{v}_{\bm{\tau}}\|_{\mathbf{L}^3(\Gamma)}
   \|\nabla_{\!\bm{\tau}\,} \mathfrak{S}^L_\Gamma(\partial_t^h\bm{\varphi})
   \|_{\mathbf{L}^6(\Gamma)}
   \notag\\
   &\quad\leq \|\partial_t^h\mathbf{v}\|_{\mathbf{L}^1(\Omega)}
   \|\nabla\mathfrak{S}^L_\Omega (\partial_t^h\bm{\varphi})\|_{\mathbf{H}^1(\Omega)}^{\frac{1}{2}}
   \|\nabla\mathfrak{S}^L_\Omega (\partial_t^h\bm{\varphi})\|_{\mathbf{H}^2(\Omega)}^{\frac{1}{2}}
   \notag\\
   &\qquad+\|\partial_t^h \mathbf{v}_{\bm{\tau}}\|_{\mathbf{L}^1(\Gamma)}
   \|\nabla_{\!\bm{\tau}\,} \mathfrak{S}^L_\Gamma(\partial_t^h\bm{\varphi})
   \|_{\mathbf{H}^1(\Gamma)}^\frac{1}{2}
   \|\nabla_{\!\bm{\tau}\,} \mathfrak{S}^L_\Gamma(\partial_t^h\bm{\varphi})
   \|_{\mathbf{H}^2(\Gamma)}^\frac{1}{2}
   \notag\\
   &\qquad+\|\partial_t^h\varphi\|_H \|\mathbf{v}\|_{\mathbf{L}^3(\Omega)}
   \|\nabla\mathfrak{S}^L_\Omega (\partial_t^h\bm{\varphi})\|_{\mathbf{L}^6(\Omega)}
   +\|\partial_t^h\psi\|_{H_\Gamma} \|\mathbf{v}_{\bm{\tau}}\|_{\mathbf{L}^3(\Gamma)}
   \|\nabla_{\!\bm{\tau}\,} \mathfrak{S}^L_\Gamma(\partial_t^h\bm{\varphi})
   \|_{\mathbf{L}^6(\Gamma)}
   \notag\\
   &\quad\leq C \|(\partial_t^h\mathbf{v},\partial_t^h \mathbf{v}_{\boldsymbol{\tau}})\|_{\boldsymbol{\mathcal{L}}^1}
   \|\partial_t^h\boldsymbol{\varphi} \|_{\mathcal{L}^2}^\frac{1}{2}
   \|\partial_t^h\boldsymbol{\varphi} \|_{\mathcal{H}^1}^\frac{1}{2}
   +C \|(\mathbf{v},\mathbf{v}_{\boldsymbol{\tau}})\|_{\boldsymbol{\mathcal{L}}^3}
   \|\partial_t^h\boldsymbol{\varphi} \|_{\mathcal{L}^2}^2
   \notag\\
   &\quad\leq \frac{1}{2}\Big(\|\nabla\partial_t^h\varphi\|_{\mathbf{L}^2(\Omega)}^2
   +\|\nabla_{\!\bm{\tau}\,} \partial_t^h\psi\|_{\mathbf{L}^2(\Gamma)}^2 +\frac{1}{K}\|\partial_t^h\psi-\partial_t^h\varphi\|_{H_\Gamma}^2\Big)
   \notag\\
   &\qquad+C \|(\partial_t^h\mathbf{v},\partial_t^h\mathbf{v}_{\boldsymbol{\tau}})\|_{\boldsymbol{\mathcal{L}}^1}^\frac{4}{3}\Big(1+\|\partial_t^h\bm{\varphi} \|_{\mathcal{L}^2}^2\Big)
   +C \|(\mathbf{v},\mathbf{v}_{\boldsymbol{\tau}})\|_{\boldsymbol{\mathcal{{L}}}^3}
   \|\partial_t^h\bm{\varphi}\|_{\mathcal{L}^2}^2.
   \notag
\end{align}
Besides, the last two integrals on the right-hand side of \eqref{0307-A-1} can simply be estimated by
\begin{align*}
  -\int_\Omega \partial_t^h f_1(\varphi)\partial_t^h\varphi\,\mathrm{d}x
    -\int_\Gamma \partial_t^h g_1(\psi)\partial_t^h\psi\,\mathrm{d}S
  \leq C_{\text{Lip}} \|\partial_t^h\bm{\varphi}\|_{\mathcal{L}^2}^2.
\end{align*}
With the aid of the above estimates, we apply Gronwall's lemma to \eqref{0307-A-1} and obtain
\begin{align}
    &\|\partial_t^h\bm{\varphi}(t)\|_{\mathcal{H}_{L,0}^{-1}}^2
    +\gamma\|\partial_t^h\bm{\varphi}(t)\|_{\mathcal{L}^2}^2
    \notag\\
    &\qquad+\int_0^t\Big(\|\nabla\partial_t^h \varphi(s)\|_{\mathbf{L}^2(\Omega)}^2
    +\|\nabla_{\!\bm{\tau}\,}\partial_t^h \psi(s)\|_{\mathbf{L}^2(\Gamma)}^2
    {+\frac{1}{K}\int_\Gamma|\partial_t^h\psi(s)-\partial_t^h\varphi(s)|^2\,\mathrm{d}S}\Big) \,\mathrm{d}s
    \notag\\
    &\quad\leq \left(\|\partial_t^h\bm{\varphi}(0)\|_{\mathcal{H}_{L,0}^{-1}}^2
    +\gamma\|\partial_t^h\bm{\varphi}(0)\|_{\mathcal{L}^2}^2
    +C\int_0^t\|(\partial_t^h\mathbf{v}(s),\partial_t^h\mathbf{v}_{\boldsymbol{\tau}}(s))\|_{\boldsymbol{\mathcal{L}}^1}^\frac{4}{3}\,\mathrm{d}s\right)
    \notag\\
    &\qquad \times
    \exp\left( C_\gamma\int_0^t\Big(\|(\partial_t^h\mathbf{v}(s),\partial_t^h\mathbf{v}_{\boldsymbol{\tau}}(s))\|_{\boldsymbol{\mathcal{L}}^1}^\frac{4}{3}
    +\|(\mathbf{v}(s),\mathbf{v}_{\boldsymbol{\tau}}(s))\|_{\boldsymbol{\mathcal{{L}}}^3}\Big)\,\mathrm{d}s \right)
    \label{0307-Gronwall}
\end{align}
for all $t\in[0,T]$ and some constant $C_\gamma>0$. It remains to control $\boldsymbol{\varphi}$-dependent terms on the right-hand side of \eqref{0307-Gronwall}. To this end, we use the compatibility condition \eqref{appendix-compatibility}, the properties of $\mathfrak{S}^L$, and the regularity estimate \eqref{Hk-regularity} for $k=0$ to get
\begin{align*}
    &\frac{1}{2}\frac{\mathrm{d}}{\mathrm{d}t}\Big(\|\bm{\varphi}-\bm{\varphi}_0\|_{\mathcal{H}_{L,0}^{-1}}^2
    +\gamma\|\bm{\varphi}-\bm{\varphi}_0\|_{\mathcal{L}^2}^2\Big)
    \\[1ex]
    &\quad=(\partial_t\bm{\varphi},\bm{\varphi}-\bm{\varphi}_0)_{\mathcal{H}_{L,0}^{-1}}
    +\gamma(\partial_t\bm{\varphi},\bm{\varphi}-\bm{\varphi}_0)_{\mathcal{L}^2}
    \\
    &\quad=\int_\Omega(\gamma\partial_t\varphi-\mu)(\varphi-\varphi_0)\,\mathrm{d}x
    +\int_\Gamma(\gamma\partial_t\psi-\mathcal{L})(\psi-\psi_0)\,\mathrm{d}S
    \\
    &\qquad-\int_\Omega(\nabla\varphi\cdot\mathbf{v})\mathfrak{S}_\Omega^L(\bm{\varphi}-\bm{\varphi}_0)\,\mathrm{d}x
    -\int_\Gamma(\nabla_{\!\bm{\tau}\,}\psi\cdot\mathbf{v}_{\bm{\tau}})\mathfrak{S}_\Gamma^L(\bm{\varphi}-\bm{\varphi}_0)\,\mathrm{d}S
    \\
    &\quad=\int_\Omega\big(\Delta\varphi-f_0(\varphi)-k^{-1}f_\sigma(\varphi)\big)(\varphi-\varphi_0)\,\mathrm{d}x
    \notag\\
    &\qquad+\int_\Gamma\big(\Delta_{\bm{\tau}}\psi-\partial_{\mathbf{n}}\varphi-g_0(\psi)-k^{-1}f_\sigma(\psi)\big)(\psi-\psi_0)\,\mathrm{d}S\\
    &\qquad-\int_\Omega f_1(\varphi)(\varphi-\varphi_0)\,\mathrm{d}x-\int_\Gamma g_1(\psi)(\psi-\psi_0)\,\mathrm{d}S
    \\
    &\qquad-\int_\Omega\mathrm{div}(\varphi\mathbf{v})\mathfrak{S}_\Omega^L(\bm{\varphi}-\bm{\varphi}_0)\,\mathrm{d}x
    -\int_\Gamma\mathrm{div}_{\Gamma}(\psi\mathbf{v}_{\bm{\tau}})\mathfrak{S}_\Gamma^L(\bm{\varphi}-\bm{\varphi}_0)\,\mathrm{d}S
    \\
    &\quad=\int_\Omega\big(\Delta(\varphi-\varphi_0)-(f_0(\varphi)-f_0(\varphi_0)\big)
    (\varphi-\varphi_0)\,\mathrm{d}x
    \\
    &\qquad+\int_\Gamma\big(\Delta_{\bm{\tau}}(\psi-\psi_0)-\partial_{\mathbf{n}}(\varphi-\varphi_0)
    -(g_0(\psi)-g_0(\psi_0))\big)(\psi-\psi_0)\,\mathrm{d}S
    \\
     &\qquad-k^{-1}\int_\Omega(f_\sigma(\varphi)-f_\sigma(\varphi_0))(\varphi-\varphi_0)\,\mathrm{d}x
     -k^{-1}\int_\Gamma (f_\sigma(\psi)-f_\sigma(\psi_0))(\psi-\psi_0)\,\mathrm{d}S
    \notag\\
    &\qquad+\int_\Omega \big(\Delta\varphi_0-f_0(\varphi_0)-k^{-1}f_\sigma(\varphi_0)\big)
    (\varphi-\varphi_0)\,\mathrm{d}x
    \notag\\
    &\qquad+\int_\Gamma \big(\Delta_{\bm{\tau}}\psi_0-\partial_{\mathbf{n}}\varphi_0
    -g_0(\psi_0)-k^{-1}f_\sigma(\psi_0)\big)
    (\psi-\psi_0)\,\mathrm{d}S
    \\
    &\qquad-\int_\Omega f_1(\varphi)(\varphi-\varphi_0)\,\mathrm{d}x
    -\int_\Gamma g_1(\psi)(\psi-\psi_0)\,\mathrm{d}S
    \\
    &\qquad+\int_\Omega\varphi\mathbf{v} \cdot\nabla\mathfrak{S}_\Omega^L(\bm{\varphi}-\bm{\varphi}_0)\,\mathrm{d}x
    +\int_\Gamma\psi\mathbf{v}_{\bm{\tau}}
    \cdot\nabla_{\!\bm{\tau}\,} \mathfrak{S}_\Gamma^L(\bm{\varphi}-\bm{\varphi}_0)\,\mathrm{d}S
    \\[1ex]
    &\quad\le - \| \nabla (\varphi - \varphi_0) \|_{\mathbf{L}^2(\Omega)}^2
        - \| \nabla_{\!\boldsymbol{\tau}\,}(\psi - \psi_0) \|_{\mathbf{L}^2(\Gamma)}^2
        -\frac{1}{K}\|(\varphi-\varphi_0)
        -(\psi-\psi_0)\|_{H_\Gamma}^2
    \\
    &\qquad - \int_\Omega \mu_0^*\,(\varphi-\varphi_0)\,\mathrm{d}x
    -\int_\Gamma \mathcal{L}_0^*\, (\psi-\psi_0)\,\mathrm{d}S
    \\
    &\qquad + C \|\bm{\varphi}-\bm{\varphi}_0\|_{\mathcal{L}^2}
        + \|(\mathbf{v},\mathbf{v}_{\boldsymbol{\tau}}) \|_{\boldsymbol{\mathcal{L}}^2}
            \|\mathfrak{S}^L(\bm{\varphi}-\bm{\varphi}_0)\|_{{\mathcal{H}}^1}
    \\[1ex]
    &\quad\leq C\Big(1+\|\mu_0^\ast\|_{H}+\|\mathcal{L}_0^\ast\|_{H_\Gamma}
    + \|(\mathbf{v},\mathbf{v}_{\boldsymbol{\tau}}) \|_{\boldsymbol{\mathcal{L}}^2}\Big)\|\bm{\varphi}-\bm{\varphi}_0\|_{\mathcal{L}^2}.
\end{align*}
Applying Lemma~\ref{LEM:SGW}, we find that
\begin{align*}
    &\|\bm{\varphi}{(t)}-\bm{\varphi}_0\|_{\mathcal{H}_{L,0}^{-1}}^2
    +\gamma\|\bm{\varphi}{(t)}-\bm{\varphi}_0\|_{\mathcal{L}^2}^2
    \\
    &\quad\leq C_\gamma(1+\|\mu_0^\ast\|_{H}+\|\mathcal{L}_0^\ast\|_{H_\Gamma}) \, t +C_\gamma\int_0^t\|(\mathbf{v}(s),\mathbf{v}_{\boldsymbol{\tau}}(s))\|_{\boldsymbol{\mathcal{L}}^2}\mathrm{d}s
\end{align*}
for all $t\in[0,T]$. Choosing $t=h$, we obtain
\begin{align}
    &\|\partial_t^h\bm{\varphi}(0)\|_{\mathcal{H}_{L,0}^{-1}}^2+\gamma\|\partial_t^h\bm{\varphi}(0)\|_{\mathcal{L}^2}^2
    \leq  C_\gamma\Big(1+\|\mu_0^\ast\|_{H}+\|\mathcal{L}_0^\ast\|_{H_\Gamma}
    +\|(\mathbf{v}, \mathbf{v}_{\boldsymbol{\tau}})\|_{L^\infty(0,T;\boldsymbol{\mathcal{L}}^2)}^2\Big).
    \label{0307-d-1}
\end{align}
Combining \eqref{0307-Gronwall} and \eqref{0307-d-1}, we can deduce that
\begin{align}
    &
    \gamma\|\partial_t^h\bm{\varphi}(t)\|_{\mathcal{L}^2}^2
    +\int_0^t\Big(\|\nabla\partial_t^h \varphi(s)\|_{\mathbf{L}^2(\Omega)}^2
    +\|\nabla_{\!\bm{\tau}\,} \partial_t^h\psi(s)\|_{\mathbf{L}^2(\Gamma)}^2+ \frac{1}{K}\|\partial_t^h\psi(s)-\partial_t^h\varphi(s)\|_{H_\Gamma}^2
    \Big)\,\mathrm{d}s
    \notag\\
    &\quad\leq \left( C_\gamma\big(1+\|\mu_0^\ast\|_{H}+\|\mathcal{L}_0^\ast\|_{H_\Gamma}
    +\| (\mathbf{v},\mathbf{v}_{\boldsymbol{\tau}})\|_{L^\infty(0,T;\boldsymbol{\mathcal{L}}^2)}^2\big)
    + C \int_0^t\| (\partial_t^h\mathbf{v}(s),\partial_t^h\mathbf{v}_{\boldsymbol{\tau}}(s))\|_{\boldsymbol{\mathcal{L}}^1}^\frac{4}{3}\,\mathrm{d}s
    \right)
    \notag\\
    &\qquad \times
    \exp\left( C_\gamma\int_0^t\Big(\| (\partial_t\mathbf{v}(s),\partial_t\mathbf{v}_{\boldsymbol{\tau}}(s))\|_{\boldsymbol{\mathcal{L}}^1}^\frac{4}{3}
    +\|(\mathbf{v}(s),\mathbf{v}_{\boldsymbol{\tau}}(s))\|_{\boldsymbol{\mathcal{{L}}}^3}\Big)\,\mathrm{d}s \right).
    \label{0307-Gronwall*}
\end{align}
Passing to the limit $h\to0$, we conclude that
\begin{align}
    \|\partial_t\bm{\varphi} \|_{L^\infty(0,T;\mathcal{L}^2)}
    +\|\partial_t\bm{\varphi} \|_{L^2(0,T;\mathcal{H}^1)}
    \leq C(\gamma,T,\|\mu_0^\ast\|_{H},\|\mathcal{L}_0^\ast\|_{H_\Gamma},\|(\mathbf{v},\mathbf{v}_{\boldsymbol{\tau}})\|_{X_T}),
    \label{0307-d-2}
\end{align}
where $X_T \coloneqq L^\infty(0,T;\boldsymbol{\mathcal{L}}^3)\cap W^{1,\frac{4}{3}}(0,T;\boldsymbol{\mathcal{L}}^1)$.

Next, we derive regularity properties on $\boldsymbol{\varphi}$ and $\boldsymbol{\mu}$.
By \eqref{convective-viscous}, the incompressibility properties of $\mathbf{v}$ and $\mathbf{v}_{\boldsymbol{\tau}}$, and Lemma \ref{generalized-Poin}, we obtain
\begin{align*}
    \|\mathbb{P}\boldsymbol{\mu} \|_{\mathcal{H}_{L,0}^1}^2&=\|\nabla\mu\|_{\mathbf{L}^2(\Omega)}^2+\|\nabla_{\!\boldsymbol{\tau}\,}\mathcal{L}\|_{\mathbf{L}^2(\Gamma)}^2+\chi(L)\|\mathcal{L}-\mu\|_{H_\Gamma}^2
    \\
    &=-\int_\Omega\partial_t\varphi \mu\,\mathrm{d}x-\int_\Omega(\mathbf{v}\cdot\nabla\varphi)\mu\,\mathrm{d}x-\int_\Gamma\partial_t\psi \mathcal{L}\,\mathrm{d}S-\int_\Gamma (\mathbf{v}_{\boldsymbol{\tau}}\cdot\nabla_{\!\boldsymbol{\tau}\,}\psi)\mathcal{L}\,\mathrm{d}S
    \\
    &=-(\partial_t\boldsymbol{\varphi},\boldsymbol{\mu}-\overline{m}(\boldsymbol{\mu})\boldsymbol{1})_{\mathcal{L}^2}+\int_\Omega (\mathbf{v}\cdot\nabla\mu)\varphi\,\mathrm{d}x+\int_\Gamma (\mathbf{v}_{\boldsymbol{\tau}}\cdot\nabla_{\!\boldsymbol{\tau}\,}\mathcal{L})\psi\,\mathrm{d}S
    \\
    &\leq \|\partial_t\boldsymbol{\varphi}\|_{\mathcal{L}^2}\|\boldsymbol{\mu}-\overline{m}(\boldsymbol{\mu})\boldsymbol{1}\|_{\mathcal{L}^2}+\|\mathbf{v}\|_{\mathbf{L}^2(\Omega)}\|\nabla\mu\|_{\mathbf{L}^2(\Omega)}\|\varphi\|_{L^\infty(\Omega)}+\|\mathbf{v}_{\boldsymbol{\tau}}\|_{\mathbf{L}^2(\Gamma)}\|\nabla_{\!\boldsymbol{\tau}\,}\mathcal{L}\|_{\mathbf{L}^2(\Gamma)}\|\psi\|_{L^\infty(\Gamma)}
    \\
    &\leq C\|\partial_t\boldsymbol{\varphi}\|_{\mathcal{L}^2}\|\mathbb{P}\boldsymbol{\mu}\|_{\mathcal{H}_{L,0}^1}+\|\mathbf{v}\|_{\mathbf{L}^2(\Omega)}\|\nabla\mu\|_{\mathbf{L}^2(\Omega)}+\|\mathbf{v}_{\boldsymbol{\tau}}\|_{\mathbf{L}^2(\Gamma)}\|\nabla_{\!\boldsymbol{\tau}\,}\mathcal{L}\|_{\mathbf{L}^2(\Gamma)}
    \\
    &\leq \frac{1}{2}\|\mathbb{P}\boldsymbol{\mu}\|_{\mathcal{H}_{L,0}^1}^2+C\big(\|\partial_t\boldsymbol{\varphi}\|_{\mathcal{L}^2}^2+\|(\mathbf{v},\mathbf{v}_{\boldsymbol{\tau}})\|_{\boldsymbol{\mathcal{L}}^2}^2\big),
\end{align*}
which implies that
\begin{align}
    \|\mathbb{P}(\mu,\mathcal{L})\|_{\mathcal{H}_{L,0}^1}
    \leq C(\|\partial_t\boldsymbol{\varphi}\|_{\mathcal{L}^2}+\|(\mathbf{v},\mathbf{v}_{\boldsymbol{\tau}})\|_{\boldsymbol{\mathcal{L}}^2}).
    \label{A-Pmu}
\end{align}
Similar to \eqref{260807-1} and \eqref{260807-2}, it holds
\begin{align}
    |\overline{m}(\mu,\mathcal{L})|\leq C(1+\|\mathbb{P}(\mu,\mathcal{L})\|_{\mathcal{H}_{L,0}^1}
    +\|\partial_t\boldsymbol{\varphi} \|_{\mathcal{L}^2}).
    \label{A-mu-mean}
\end{align}
Then, recalling the definition of the projection $\mathbb P$ (see \eqref{DEF:PRO:L0}), we infer from \eqref{0307-d-2}, \eqref{A-Pmu} and \eqref{A-mu-mean} that
\begin{align}
    \label{EST:MU:LINF}
    \|\boldsymbol{\mu}\|_{L^\infty(0,T; \mathcal{H}^1)}
    \leq C(\gamma,T,\|\mu_0^\ast\|_{H},\|\mathcal{L}_0^\ast\|_{H_\Gamma},\|(\mathbf{v},\mathbf{v}_{\boldsymbol{\tau}})\|_{X_T}).
\end{align}
For $K\in (0,+\infty)$, we reformulate the subsystem \eqref{convective-viscous:2}, \eqref{convective-viscous:3}, \eqref{convective-viscous:6} as
\begin{subequations}
\begin{alignat}{2}
    \label{ellip:1}
    -\Delta \varphi +k^{-1} f_\sigma(\varphi)
    &=\mu - \gamma\partial_t\varphi - f(\varphi )
    &&\quad \mbox{in }\ Q_{T},
    \\[1mm]
    \label{ellip:2}
    - \Delta _{\bm{\tau}}\psi +\partial_{\mathbf{n}}\varphi +k^{-1} f_\sigma(\psi)
    &=\mathcal{L} - \gamma\partial_t\psi - g(\psi)
    &&\quad \mbox{on }\ \Sigma _{T},
    \\[1mm]
    \label{ellip:3}
    K \partial_{\mathbf{n}}\varphi &=\psi-\varphi
    &&\quad {\mbox{on }\Sigma _{T }.}
\end{alignat}
\end{subequations}
This allows us to apply a regularity result for bulk-surface elliptic systems with singular nonlinearities (see \cite[Propositions~5.1 and 5.2]{GKS}) to infer that $\boldsymbol{\varphi}\in L^\infty(0,T;\mathcal{H}^2)$ with
\begin{align}
    \label{EST:PHI:LINFH2*}
    \|\boldsymbol{\varphi} \|_{L^\infty(0,T;\mathcal{H}^2)}
    &\le C\big( 1
        + \|\boldsymbol{\mu} \|_{L^\infty(0,T;\mathcal{L}^2)}
        + \gamma\, \| \partial_t \boldsymbol{\varphi}\|_{L^\infty(0,T;\mathcal{L}^2)}
        + \|(f(\varphi),g(\psi)) \|_{L^\infty(0,T;\mathcal{L}^2)}
    \big).
\end{align}
Recalling that $|\varphi|<1-\sigma$ a.e.~in $Q_T$ and $|\psi|<1-\sigma$ a.e.~on $\Sigma_T$, we have
\begin{align}
    \label{EST:FG:LINF}
    \big| \big( f(\varphi) , g(\psi) \big) \big|
    \le \underset{r\in [-1+\sigma,1-\sigma]}{\max}\,\big| \big( f(r) , g(r) \big) \big|
    \qquad\text{a.e.~on $\Omega\times\Gamma$.}
\end{align}
In combination with \eqref{0307-d-2}, \eqref{EST:MU:LINF} and \eqref{EST:PHI:LINFH2*}, we conclude that
\begin{align*}
    \|\boldsymbol{\varphi} \|_{L^\infty(0,T;\mathcal{H}^2)}
    &\le C(\sigma,\gamma,T,\|\mu_0^\ast\|_{H},\|\mathcal{L}_0^\ast\|_{H_\Gamma},\|(\mathbf{v},\mathbf{v}_{\boldsymbol{\tau}})\|_{X_T}).
\end{align*}
Next, we reformulate the subsystem \eqref{convective-viscous:1}, \eqref{convective-viscous:4}, \eqref{convective-viscous:5} as
\begin{subequations}
\label{ellip*}
\begin{alignat}{2}
    \label{ellip*:1}
    - \Delta\mu &= - \partial_{t}\varphi - \mathbf{v}\cdot\nabla\varphi
    &&\quad  \mbox{in }\ Q_{T},
    \\[1mm]
    \label{ellip*:2}
    - \Delta_{\bm{\tau}}\mathcal{L} + \partial_{\mathbf{n}}\mu
    &= - \partial_t\psi - \mathbf{v}_{\bm{\tau}}\cdot\nabla_{\!\bm{\tau}\,}\psi
    &&\quad \mbox{on }\ \Sigma _{T},
    \\[1mm]
    \label{ellip*:3}
    L\partial_{\mathbf{n}}\mu
    &=\mathcal{L}-\mu 
    &&\quad\mbox{on }\ \Sigma _{T}.
\end{alignat}
\end{subequations}
Applying the elliptic regularity theory for bulk-surface elliptic systems (see, e.g., \cite[Theorem~3.3]{KL}), we deduce that $\boldsymbol{\mu} \in L^\infty(0,T;\mathcal{H}^2)$ with
\begin{align}
    \label{EST:MU:LINFH2*}
    \|\boldsymbol{\mu} \|_{L^\infty(0,T;\mathcal{H}^2)}
    &\le C\big( 1
        + \|\partial_t\boldsymbol{\varphi} \|_{L^\infty(0,T;\mathcal{L}^2)}
        + \|(\mathbf{v},\mathbf{v}_{\boldsymbol{\tau}})\|_{L^\infty(0,T;\boldsymbol{\mathcal{L}}^3)}
        \|\boldsymbol{\varphi} \|_{L^\infty(0,T;\mathcal{L}^6)}
    \big).
\end{align}
Using \eqref{0307-d-2}, we can conclude that
\begin{align}
    \label{EST:MU:LINFH2}
    \|\boldsymbol{\mu}\|_{L^\infty(0,T;\mathcal{H}^2)}
    &\leq C(\sigma,\gamma,T,\|\mu_0^\ast\|_{H},\|\mathcal{L}_0^\ast\|_{H_\Gamma},\|(\mathbf{v},\mathbf{v}_{\boldsymbol{\tau}})\|_{X_T}).
\end{align}

\smallskip
\noindent\textbf{\itshape Regularity \ref{R2}.}
We employ a strategy that has been applied to the viscous (bulk) Cahn--Hilliard equation with classical Neumann boundary conditions in \cite[Proposition A.3]{MZ04} and \cite[Theorem A.1]{Gior22}.
To this end, we rewrite the subsystem \eqref{convective-viscous:2}, \eqref{convective-viscous:3}, \eqref{convective-viscous:6} as
\begin{subequations}
\label{re-1}
\begin{alignat}{2}
    \gamma\partial_t\varphi-\Delta\varphi+k^{-1} f_\sigma(\varphi)
    &=h
    &&\quad\text{a.e.~in }Q_T,
    \\
    K\partial_\mathbf{n}\varphi
    &=\psi-\varphi
    &&\quad\text{a.e.~on }\Sigma_T,
    \\
    \gamma\partial_t \psi-\Delta_{\bm{\tau}}\psi+\partial_{\mathbf{n}}\varphi+k^{-1} f_\sigma(\psi)
    &=h_\Gamma
    &&\quad\text{a.e.~on }\Sigma_T,
\end{alignat}
\end{subequations}
where
\begin{align*}
h\coloneqq \mu-f(\varphi),\quad h_\Gamma\coloneqq \mathcal{L}-g(\psi).
\end{align*}
{Due to \eqref{EST:FG:LINF}, \eqref{EST:MU:LINFH2} and the continuous embedding $\mathcal{H}^2\hookrightarrow \mathcal{L}^\infty$, we know} that $(h,h_\Gamma)\in L^\infty(0,T;\mathcal{L}^\infty)$.
Next, consider the ODE problems
\begin{align}
    \begin{cases}
        \gamma\partial_t U+k^{-1} f_\sigma(U)= H ,\\
        U(0)=(1-\sigma)(1-\delta_0),
    \end{cases}\quad\quad	
    \begin{cases}
    \gamma\partial_t V+k^{-1}f_\sigma(V)= -H,\\
    V(0)=(1-\sigma)(-1+\delta_0),
    \end{cases}\label{ODE}
\end{align}
where
\begin{align*}
    H\coloneqq \max\{\|h\|_{L^\infty(\Omega)},\|h_\Gamma\|_{L^\infty(\Gamma)}\}.
\end{align*}
We note that due to \eqref{EST:FG:LINF} and \eqref{EST:MU:LINFH2}, $H$ may depend on $\gamma$ and $\sigma$. It is straightforward to check that there exist two unique solutions $U,V\in C([0,T])$
with $\partial_t U$, $\partial_t V\in L^\infty(0,T)$ to the two ODE systems in \eqref{ODE}, respectively.
Define
\begin{align*}
    \delta &\coloneqq 1 - \max\big\{ 1-\delta_0,f_0^{-1}(k\|H\|_{L^\infty(0,T)}),-f_0^{-1}(-k\|H\|_{L^\infty(0,T)})\big\},
    \\
    M &\coloneqq (1-\sigma) (1-\delta).
\end{align*}
Since $\delta_0>0$ and $f_0^{-1}(\mathbb R) = (-1,1)$, it follows that $\delta>0$.
It is easy to check that $-M$ is a subsolution for the ODE involving $V$, and $M$ is a supersolution for the ODE involving $U$. Moreover, since $0<\delta \le \delta_0$, we further have
$-M \le V(0) \le U(0) \le M$.
Hence, a simple comparison argument entails that
\begin{align*}
-M \leq  V(t)\leq U(t)\leq M
\quad\text{for all $t\in[0,T]$}.
\end{align*}
Next, we show that
\begin{align*}
    V(t)\leq \varphi(x,t)\leq U(t)\quad \text{for all $(x,t)\in Q_T$},
    \qquad
    V(t)\leq \psi(x,t)\leq U(t)\quad \text{for all $(x,t)\in \Sigma_T$}.
\end{align*}
Let us define the pair $(w,w_\Gamma) \coloneqq (\varphi-U,\psi-U)$ that satisfies
\begin{subequations}
\label{0303-12}
\begin{alignat}{2}
    \label{0303-12:1}
    &\gamma\partial_t w-\Delta\varphi+k^{-1} f_\sigma(\varphi)-k^{-1} f_\sigma(U)
    =h-H
    &&\quad \text{a.e.~in }Q_T,
    \\
    \label{0303-12:2}
    &K\partial_\mathbf{n}w
    =w_\Gamma-w,\quad K\in(0,+\infty)
    &&\quad \text{a.e.~on }\Sigma_T,
    \\
    \label{0303-12:3}
    &\gamma\partial_t w_\Gamma-\Delta_{\bm{\tau}}\psi+\partial_{\mathbf{n}}\varphi+k^{-1} f_\sigma(\psi)-k^{-1} f_\sigma(U)
    =h_\Gamma-H
    &&\quad \text{a.e.~on }\Sigma_T,
    \\
    \label{0303-12:4}
    &w(0)=\varphi_0-(1-\sigma)(1-\delta_0)
    &&\quad \text{a.e.~in }\Omega,
    \\
     & w_\Gamma(0)
    =\psi_0-(1-\sigma)(1-\delta_0)
    &&\quad \text{a.e.~on }\Gamma.
\end{alignat}
\end{subequations}
Multiplying \eqref{0303-12:1} by $w^+=\max\{\varphi-U,0\}$ and integrating over $\Omega$,
multiplying \eqref{0303-12:3} by $w_\Gamma^+=\max\{\psi-U,0\}$ and integrating over $\Gamma$,
using the following facts
\begin{alignat*}{2}
\nabla\varphi
&=\nabla w^+
&&\quad\text{a.e.~in }\{x\in\Omega:\, \varphi\geq U\} \times (0,T),
\\
\nabla_{\!\bm{\tau}\,}\psi
&=\nabla_{\!\bm{\tau}\,}w_\Gamma^+
&&\quad\text{a.e.~on }\{x\in\Gamma:\,\psi\geq U\} \times (0,T),
\\
{\partial_\mathbf{n} \varphi}
&{= \tfrac 1K (\psi - \varphi) = \tfrac 1K (w_\Gamma - w) }
&&\quad\text{a.e.~on }\Sigma_T,
\end{alignat*}
we find that
\begin{align}
    &\frac{\gamma}{2}\frac{\mathrm{d}}{\mathrm{d}t}\Big(\|w^+\|_{H}^2+\|w_\Gamma^+\|_{H_\Gamma}^2\Big)
    +\|\nabla w^+\|_{\mathbf{L}^2(\Omega)}^2+\|\nabla_{\!\bm{\tau}\,}w_\Gamma^+\|_{\mathbf{L}^2(\Gamma)}^2
    + \underbrace{\int_\Gamma \partial_{\mathbf{n}}\varphi(w^+_\Gamma - w^+)\,\mathrm{d}S}_{\geq 0}
    \notag\\
    &\qquad+k^{-1}\int_\Omega(f_\sigma(\varphi)-f_\sigma(U))w^+\,\mathrm{d}x
    +k^{-1}\int_\Gamma (f_\sigma(\psi)-f_\sigma(U))w_\Gamma^+\,\mathrm{d}S
    \notag\\[1ex]
    &\quad \le \int_\Omega(h-H)w^+\,\mathrm{d}x
    +\int_\Gamma(h_\Gamma-H)w_\Gamma^+\,\mathrm{d}S.\label{0303-13}
\end{align}
By the definition of $H$, we have $h-H \le 0$ a.e.~in $Q_T$ and $h_\Gamma - H \le 0$ a.e.~on $\Sigma_T$.
Thus, due to the monotonicity of $f_\sigma$, we get
\begin{align*}
\frac{\mathrm{d}}{\mathrm{d}t}\Big(\|w^+\|_{H}^2+\|w_\Gamma^+\|_{H_\Gamma}^2\Big)\leq 0 .
\end{align*}
This implies that
\begin{align*}
	\|w^+(t)\|_{H}^2+\|w_\Gamma^+(t)\|_{H_\Gamma}^2
    \leq \|w^+(0)\|_{H}^2+\|w_\Gamma^+(0)\|_{H_\Gamma}^2=0
    \quad\text{for all $t\in[0,T]$}.
\end{align*}
Hence, we obtain
\begin{align*}
\varphi(x,t)\leq U(t) \leq M\;\; \text{in }Q_T,\quad
\psi(x,t)\leq U(t) \leq M\;\; \text{on }\Sigma_T.
\end{align*}
Similarly, we can conclude
\begin{align*}
\varphi(x,t)\geq V(t) \geq -M \;\; \text{in }Q_T,\quad
\psi(x,t)\geq V(t) \geq -M \;\; \text{on }\Sigma_T.
\end{align*}
This verifies the strict separation property \eqref{viscous-separation}.
As a consequence, we have
\begin{align*}
(k^{-1}f_\sigma(\varphi),k^{-1} f_\sigma(\psi))\in L^\infty(0,T;\mathcal{H}^1).
\end{align*}
Applying the elliptic regularity theory for bulk-surface elliptic systems (see, e.g., \cite[Theorem~3.3]{KL}) to system \eqref{ellip*} (or \eqref{re-1}, respectively), we finally arrive at $(\varphi,\psi)\in L^2(0,T;\mathcal{H}^3)$.

\smallskip
	
\noindent\textbf{\itshape Regularity \ref{R3}.}

We use the time difference method again. Testing \eqref{difference-quotient:1} by $\partial_t^h\mu$
and \eqref{difference-quotient:5} by $\partial_t^h\mathcal{L}$, we get
\begin{align}
	&\|\nabla\partial_t^h\mu \|_{\mathbf{L}^2(\Omega)}^2
    +\|\nabla_{\!\bm{\tau}\,} \partial_t^h\mathcal{L}\|_{\mathbf{L}^2(\Gamma)}^2
    +\frac{1}{L}\|\partial_t^h\mu-\partial_t^h\mathcal{L} \|_{H_\Gamma}^2
    \notag\\
	&\quad=-\int_\Omega \partial_t\partial_t^h\varphi\, \partial_t^h\mu\,\mathrm{d}x
    -\int_\Gamma \partial_t\partial_t^h\psi\,\partial_t^h\mathcal{L}\,\mathrm{d}S
    -\int_\Omega\partial_t^h (\mathbf{v}\cdot\nabla\varphi)\,\partial_t^h\mu\,\mathrm{d}x
    -\int_\Gamma \partial_t^h(\mathbf{v}_{\bm{\tau}}\cdot\nabla_{\!\bm{\tau}\,}\psi)\,\partial_t^h\mathcal{L} \,\mathrm{d}S
    \notag\\
	&\quad=-\gamma\|\partial_t\partial_t^h \varphi\|_{H}^2
    -\gamma\|\partial_t\partial_t^h \psi\|_{H_\Gamma}^2
    -\frac{1}{2}\frac{\mathrm{d}}{\mathrm{d}t}\Big(\|\nabla\partial_t^h \varphi\|_{\mathbf{L}^2(\Omega)}^2
    +\|\nabla_{\!\bm{\tau}\,} \partial_t^h\psi\|_{\mathbf{L}^2(\Gamma)}^2
    +\frac{1}{K}\int_\Gamma|\partial_t^h\psi-\partial_t^h\varphi|^2\,\mathrm{d}S\Big)
    \notag\\
    &\qquad-k^{-1}\int_\Omega \partial_t^h f_\sigma(\varphi)\partial_t\partial_t^h \varphi\,\mathrm{d}x
    -k^{-1}\int_\Gamma \partial_t^h f_\sigma(\psi)\partial_t\partial_t^h \psi\,\mathrm{d}S
    \notag\\
	&\qquad
    -\int_\Omega\partial_t^h f(\varphi)\partial_t\partial_t^h \varphi\,\mathrm{d}x
    -\int_\Gamma\partial_t^h g(\psi)\partial_t\partial_t^h\psi \,\mathrm{d}S
    \notag\\
	&\qquad-\int_\Omega\partial_t^h (\mathbf{v}\cdot\nabla\varphi)\,\partial_t^h\mu\,\mathrm{d}x
    -\int_\Gamma \partial_t^h(\mathbf{v}_{\bm{\tau}} \cdot\nabla_{\!\bm{\tau}\,}\psi)\,\partial_t^h\mathcal{L}\,\mathrm{d}S.
    \notag
\end{align}
This implies that
\begin{align}
		&\frac{1}{2}\frac{\mathrm{d}}{\mathrm{d}t}\Big(\|\nabla\partial_t^h\varphi\|_{\mathbf{L}^2(\Omega)}^2
        +\|\nabla_{\!\bm{\tau}\,} \partial_t^h\psi\|_{\mathbf{L}^2(\Gamma)}^2
        {+\frac{1}{K}\int_\Gamma|\partial_t^h\psi-\partial_t^h\varphi|^2\,\mathrm{d}S}\Big)
        \notag\\ &\qquad+\gamma\|\partial_t\partial_t^h \varphi\|_{H}^2+\gamma\|\partial_t\partial_t^h\psi\|_{H_\Gamma}^2
        +\|\nabla\partial_t^h\mu \|_{\mathbf{L}^2(\Omega)}^2+\|\nabla_{\!\bm{\tau}\,}\partial_t^h\mathcal{L}\|_{\mathbf{L}^2(\Gamma)}^2
        +\frac{1}{L}\|\partial_t^h\mu-\partial_t^h\mathcal{L} \|_{H_\Gamma}^2
        \notag\\
		&\quad=-k^{-1}\int_\Omega \partial_t^h f_\sigma(\varphi)\partial_t\partial_t^h\varphi\,\mathrm{d}x
        -k^{-1}\int_\Gamma \partial_t^h f_\sigma(\psi)\partial_t\partial_t^h\psi\,\mathrm{d}S
        \notag\\
        &\qquad-\int_\Omega\partial_t^h f(\varphi)\partial_t\partial_t^h\varphi\,\mathrm{d}x
        -\int_\Gamma\partial_t^h g(\psi)\partial_t\partial_t^h\psi\,\mathrm{d}S
        \notag\\
		&\qquad-\int_\Omega\partial_t^h(\mathbf{v}\cdot\nabla\varphi)\,\partial_t^h\mu\,\mathrm{d}x
        -\int_\Gamma \partial_t^h(\mathbf{v}_{\bm{\tau}}\cdot\nabla_{\!\bm{\tau}\,}\psi)\,\partial_t^h\mathcal{L}\,\mathrm{d}S
        \notag\\
        &\quad \eqqcolon \sum_{j=1}^{6}Q_j.
        \label{0303-2}
\end{align}
Testing \eqref{difference-quotient:2} by $\Delta\partial_t^h\varphi$
and \eqref{difference-quotient:6} by $\Delta_{\bm{\tau}}\partial_t^h\psi$, we obtain
\begin{align}
		&\frac{\gamma}{2}\frac{\mathrm{d}}{\mathrm{d}t}\Big(\|\nabla\partial_t^h\varphi\|_{\mathbf{L}^2(\Omega)}^2
        +\|\nabla_{\!\bm{\tau}\,} \partial_t^h\psi\|_{\mathbf{L}^2(\Gamma)}^2+\frac{1}{K}\|\partial_t^h\psi-\partial_t^h\varphi\|_{H_\Gamma}^2\Big)
        +\|\Delta\partial_t^h\varphi\|_{H}^2 +\|\Delta_{\bm{\tau}}\partial_t^h\psi\|_{H_\Gamma}^2
        \notag\\
		&\quad=\int_\Omega \nabla\partial_t^h\mu\cdot\nabla\partial_t^h\varphi\,\mathrm{d}x
        +\int_\Gamma \nabla_{\!\bm{\tau}\,}\partial_t^h\mathcal{L}\cdot\nabla_{\!\bm{\tau}\,}\partial_t^h\psi\,\mathrm{d}S
        +k^{-1}\int_\Omega \partial_t^h f_\sigma(\varphi)\Delta\partial_t^h\varphi\,\mathrm{d}x
        \notag\\
        &\qquad+k^{-1}\int_\Gamma \partial_t^h f_\sigma(\psi)\Delta_{\boldsymbol{\tau}}\partial_t^h\psi\,\mathrm{d}S
        +\int_\Omega \partial_t^h f(\varphi)\Delta\partial_t^h \varphi\,\mathrm{d}x
        +\int_\Gamma\partial_t^h g(\psi)\Delta_{\bm{\tau}}\partial_t^h\psi\,\mathrm{d}S
        \notag\\
        &\qquad-\int_\Gamma \partial_{\mathbf{n}}\partial_t^h\varphi\,\partial_t^h\mu\,\mathrm{d}S
        +\int_\Gamma \partial_{\mathbf{n}}\partial_t^h \varphi\Delta_{\boldsymbol{\tau}}\partial_t^h\psi\,\mathrm{d}S
        +\gamma\int_\Gamma\partial_\mathbf{n} \partial_t^h\varphi\partial_t\partial_t^h\psi\,\mathrm{d}S
        \notag\\
        &\quad {\eqqcolon} \sum_{j=7}^{15}Q_j.
        \label{0303-3}
\end{align}
Combining \eqref{0303-2} and \eqref{0303-3}, we deduce that
\begin{align}
    &\frac{1+\gamma}{2}\frac{\mathrm{d}}{\mathrm{d}t}\Big(\|\nabla\partial_t^h\varphi\|_{\mathbf{L}^2(\Omega)}^2
    +\|\nabla_{\!\bm{\tau}\,}\partial_t^h\psi\|_{\mathbf{L}^2(\Gamma)}^2
    {+\frac{1}{K}\int_\Gamma|\partial_t^h\psi -\partial_t^h\varphi|^2\,\mathrm{d}S}\Big)
    \notag\\[1ex]
    &\quad+\gamma\|\partial_t\partial_t^h \varphi\|_{H}^2
    +\gamma\|\partial_t\partial_t^h \psi\|_{H_\Gamma}^2
    +\|\Delta\partial_t^h\varphi\|_{H}^2
    +\|\Delta_{\bm{\tau}}\partial_t^h \psi\|_{H_\Gamma}^2
    \notag\\
    &\quad+\|\nabla\partial_t^h\mu \|_{\mathbf{L}^2(\Omega)}^2
    +\|\nabla_{\!\bm{\tau}\,} \partial_t^h\mathcal{L}\|_{\mathbf{L}^2(\Gamma)}^2
    +\frac{1}{L}\|\partial_t^h\mu-\partial_t^h\mathcal{L}\|_{H_\Gamma}^2
    =\sum_{j=1}^{15}Q_j.
    \label{0303-4}
\end{align}
It remains to bound the terms $Q_j$ for $1\leq j\leq 15$.
For the first twelve terms we can use H\"older's inequality to derive the following estimates
\begin{align*}
        \sum_{j=1}^4 Q_j&\leq \frac{\gamma}{4}\|\partial_t\partial_t^h\boldsymbol{\varphi}\|_{\mathcal{L}^2}^2
        +\frac{4}{\gamma}\Big(\|\partial_t^h f(\varphi)\|_H^2+\|\partial_t^h g(\psi)\|_{H_\Gamma}^2
        +k^{-2}\|\partial_t^h f_\sigma(\varphi)\|_H^2+k^{-2}\|\partial_t^h f_\sigma(\psi)\|_{H_\Gamma}^2\Big),
        \\
        \sum_{j=7}^8 Q_j&\leq \frac{1}{8}\|\nabla\partial_t^h\mu\|_{\mathbf{L}^2(\Omega)}^2
        +\frac{1}{8}\|\nabla_{\!\bm{\tau}\,}\partial_t^h\mathcal{L}\|_{\mathbf{L}^2(\Gamma)}^2
        +2\|\nabla\partial_t^h\varphi \|_{\mathbf{L}^2(\Omega)}^2
        +2\|\nabla_{\!\bm{\tau}\,} \partial_t^h\psi\|_{\mathbf{L}^2(\Gamma)}^2,\\
        \sum_{j=9}^{12}Q_j&\leq \frac{1}{4}\|\Delta\partial_t^h\varphi\|_{H}^2
        +\frac{1}{4}\|\Delta_{\boldsymbol{\tau}}\partial_t\psi\|_{H_\Gamma}^2
        \\
        &\quad+2\Big(\|\partial_t ^hf(\varphi)\|_{H}^2
        +\|\partial_t^h g(\psi)\|_{H_\Gamma}^2
        +k^{-2}\|\partial_t^h f_\sigma(\varphi)\|_{H}^2
        +k^{-2}\|\partial_t^h f_\sigma(\psi)\|_{H_\Gamma}^2\Big).
\end{align*}
For the remaining terms, we apply H\"older's inequality and the Sobolev's embedding theorem to obtain
\begin{align*}
		Q_5&=\int_\Omega \partial_t^h(\mathbf{v}\varphi)\cdot\nabla\partial_t^h\mu\,\mathrm{d}x
        \\
		&\leq \frac{1}{8}\|\nabla\partial_t^h\mu\|_{\mathbf{L}^2(\Omega)}^2
        +2\|\partial_t^h(\mathbf{v}\varphi) \|_{\mathbf{L}^2(\Omega)}^2
        \\
		&\leq\frac{1}{8}\|\nabla\partial_t^h\mu\|_{\mathbf{L}^2(\Omega)}^2
        +4\|\mathbf{v}(\cdot+h)\partial_t^h \varphi\|_{\mathbf{L}^2(\Omega)}^2
        +4\|\partial_t^h\mathbf{v}\, \varphi\|_{\mathbf{L}^2(\Omega)}^2
        \\
		&\leq \frac{1}{8}\|\nabla\partial_t^h\mu\|_{\mathbf{L}^2(\Omega)}^2
        +4\|\mathbf{v}(\cdot+h)\|_{\mathbf{L}^3(\Omega)}^2 \|\partial_t^h\varphi\|_{L^6(\Omega)}^2
        +4\|\partial_t^h\mathbf{v} \|_{\mathbf{L}^2(\Omega)}^2\|\varphi\|_{L^\infty(\Omega)}^2,
        \\[1ex]
		Q_6&=\int_\Gamma \partial_t^h(\mathbf{v}_{\bm{\tau}}\psi)\cdot\nabla_{\!\bm{\tau}\,}\partial_t^h\mathcal{L}\,\mathrm{d}S
        \\
		&\leq \frac{1}{8}\|\nabla\partial_t^h\mathcal{L}\|_{\mathbf{L}^2(\Gamma)}^2
        +2\|\partial_t^h(\mathbf{v}_{\bm{\tau}} \psi)\|_{\mathbf{L}^2(\Gamma)}^2
        \\
		&\leq\frac{1}{8}\|\nabla\partial_t^h\mathcal{L}\|_{\mathbf{L}^2(\Gamma)}^2
        +4\|\mathbf{v}_{\bm{\tau}}(\cdot+h)\partial_t^h\psi\|_{\mathbf{L}^2(\Gamma)}^2
        +4\|\partial_t^h\mathbf{v}_{\bm{\tau}} \,\psi\|_{\mathbf{L}^2(\Gamma)}^2
        \\
		&\leq \frac{1}{8}\|\nabla\partial_t^h\mathcal{L}\|_{\mathbf{L}^2(\Gamma)}^2
        +4\|\mathbf{v}_{\bm{\tau}}(\cdot+h)\|_{\mathbf{L}^3(\Gamma)}^2 \|\partial_t^h\psi\|_{L^6(\Gamma)}^2
        +4\|\partial_t^h\mathbf{v}_{\bm{\tau}} \|_{\mathbf{L}^2(\Gamma)}^2\|\psi\|_{L^\infty(\Gamma)}^2,
        \\[1ex]
		Q_{13}&\leq\|\partial_{\mathbf{n}} \partial_t^h\varphi\|_{H_\Gamma}\|\partial_t^h\mu\|_{H_\Gamma}
        \leq C\|\partial_t^h\varphi\|_{H^\frac{7}{4}(\Omega)}\|\partial_t^h\mu\|_{V}\\
		&\leq \frac{1}{8}\Big(|\overline{m}(\partial_t^h\boldsymbol{\mu})|^2 +\|\nabla\partial_t^h\mu\|_{\mathbf{L}^2(\Omega)}^2
        +\|\nabla_{\!\bm{\tau}\,} \partial_t^h\mathcal{L}\|_{\mathbf{L}^2(\Gamma)}^2
        +\frac{1}{L}\|\partial_t^h \mu-\partial_t^h\mathcal{L}\|_{H_\Gamma}^2\Big) +
		C\|\partial_t^h\varphi\|_{H^\frac{7}{4}(\Omega)}^2
        \\
		&\leq \frac{1}{8}\Big(|\overline{m}(\partial_t^h\boldsymbol{\mu})|^2+\|\nabla\partial_t^h\mu \|_{\mathbf{L}^2(\Omega)}^2
        +\|\nabla_{\!\bm{\tau}\,}\partial_t^h \mathcal{L}\|_{\mathbf{L}^2(\Gamma)}^2
        +\frac{1}{L}\|\partial_t^h\mu-\partial_t^h\mathcal{L}\|_{H_\Gamma}^2\Big)
        \\
		&\quad+\frac{1}{16}(\|\Delta\partial_t^h\varphi\|_{H}^2+\|\Delta_{\bm{\tau}}\partial_t^h\psi \|_{H_\Gamma}^2)+C(\|\partial_t^h\varphi\|_{H}^2+\|\partial_t^h\psi\|_{H_\Gamma}^2),
        \\[1ex]
		Q_{14}&\leq \frac{1}{16}\|\Delta_{\bm{\tau}}\partial_t^h\psi\|_{H_\Gamma}^2
        +\|\partial_{\mathbf{n}}\partial_t^h \varphi\|_{H_\Gamma}^2
        \\
		&\leq \frac{1}{8}(\|\Delta\partial_t^h\varphi\|_{H}^2+\|\Delta_{\bm{\tau}}\partial_t^h\psi \|_{H_\Gamma}^2)
        +C(\|\partial_t^h\varphi\|_{H}^2
        +\|\partial_t^h\psi\|_{H_\Gamma}^2),
        \\[1ex]
        Q_{15}&=\gamma\int_\Gamma \partial_\mathbf{n}\partial_t^h\varphi\partial_t\partial_t^h\psi\,\mathrm{d}S
        \leq \frac{\gamma}{8}\|\partial_t\partial_t^h \psi\|_{H_\Gamma}^2
        +C\|\partial_t^h \varphi\|_{H^\frac{7}{4}(\Omega)}^2
        \\
        &\leq  \frac{\gamma}{8}\|\partial_t\partial_t^h\psi\|_{H_\Gamma}^2
        +\frac{1}{16}(\|\Delta\partial_t^h\varphi\|_{H}^2+\|\Delta_{\bm{\tau}}\partial_t^h\psi \|_{H_\Gamma}^2)+C(\|\partial_t^h\varphi\|_{H}^2+\|\partial_t^h\psi\|_{H_\Gamma}^2).
\end{align*}
Combining the above estimates, from \eqref{0303-4}, we infer that
\begin{align}
&\frac{\mathrm{d}}{\mathrm{d}t}\Big((1+\gamma)\|\nabla\partial_t^h\varphi\|_{\mathbf{L}^2(\Omega)}^2
        +(1+\gamma)\|\nabla_{\!\bm{\tau}\,} \partial_t^h\psi\|_{\mathbf{L}^2(\Gamma)}^2
        {+\frac{1+\gamma}{K}\int_\Gamma|\partial_t^h\psi-\partial_t^h\varphi|^2\,\mathrm{d}S}\Big)
        \notag\\
		&\qquad+\gamma\|\partial_t\partial_t^h \varphi\|_{H}^2+\gamma\|\partial_t\partial_t^h\psi\|_{H_\Gamma}^2
        +\|\Delta\partial_t^h\varphi\|_{H}^2 +\|\Delta_{\bm{\tau}}\partial_t^h\psi\|_{H_\Gamma}^2
        \notag\\[1mm]
		&\qquad+\|\nabla\partial_t^h \mu\|_{\mathbf{L}^2(\Omega)}^2
        +\|\nabla_{\!\bm{\tau}\,}\partial_t^h \mathcal{L}\|_{\mathbf{L}^2(\Gamma)}^2
        +\frac{1}{L}\|\partial_t^h\mu-\partial_t^h\mathcal{L}\|_{H_\Gamma}^2
        \notag\\[1mm]
		&\quad\leq C\Big(1+\|(\mathbf{v}(\cdot+h),\mathbf{v}_{\boldsymbol{\tau}}(\cdot+h))\|_{\boldsymbol{\mathcal{L}}^3}^2
       \Big)
        \Big(\|\nabla\partial_t^h \varphi\|_{\mathbf{L}^2(\Omega)}^2
        +\|\nabla_{\!\bm{\tau}\,}\partial_t^h \psi\|_{\mathbf{L}^2(\Gamma)}^2+K^{-1}\|\partial_t^h\psi-\partial_t^h\varphi\|_{H_\Gamma}^2\Big)
        \notag\\
		&\qquad+C\Big(1+\|\partial_t^h f(\varphi)\|_H^2+\|\partial_t^h g(\psi)\|_{H_\Gamma}^2
        +k^{-2}\|\partial_t^h f_\sigma(\varphi)\|_H^2 +k^{-2}\|\partial_t^h f_\sigma (\psi)\|_{H_\Gamma}^2\Big)
        \notag\\
        &\qquad+C\Big(\|\partial_t^h \varphi\|_{H}^2+\|\partial_t^h\psi\|_{H_\Gamma}^2
        +\|\partial_t^h\mathbf{v} \|_{\mathbf{L}^2(\Omega)}^2
        +\|\partial_t^h\mathbf{v}_{\bm{\tau}} \|_{\mathbf{L}^2(\Gamma)}^2\Big).
        \label{0303-5}
\end{align}
Define
\begin{align*}
    \mathcal{F}& \coloneqq (1+\gamma)\|\nabla\partial_t^h\varphi\|_{\mathbf{L}^2(\Omega)}^2
    +(1+\gamma)\|\nabla_{\!\bm{\tau}\,} \partial_t^h\psi\|_{\mathbf{L}^2(\Gamma)}^2
    {+\frac{1+\gamma}{K}\int_\Gamma|\partial_t^h\psi-\partial_t^h\varphi|^2\,\mathrm{d}S},
    \\
    \mathcal{G}& \coloneqq 1+\|(\mathbf{v}(\cdot+h),\mathbf{v}_{\boldsymbol{\tau}}(\cdot+h))\|_{\boldsymbol{\mathcal{L}}^3}^2,
    \\[1ex]
    \mathcal{K}& \coloneqq 1+\|\partial_t^h f(\varphi)\|_H^2+\|\partial_t^h g(\psi)\|_{H_\Gamma}^2
    +k^{-2}\|\partial_t^h f_\sigma(\varphi)\|_H^2+k^{-2}\|\partial_t^h f_\sigma (\psi)\|_{H_\Gamma}^2
    \\
    &\qquad+\|\partial_t^h\varphi\|_{H}^2 +\|\partial_t^h\psi\|_{H_\Gamma}^2
    +\|\partial_t^h\mathbf{v} \|_{\mathbf{L}^2(\Omega)}^2
    +\|\partial_t^h\mathbf{v}_{\bm{\tau}} \|_{\mathbf{L}^2(\Gamma)}^2.
\end{align*}
It follows from \eqref{0303-5} that
\begin{align}
    \frac{\mathrm{d}}{\mathrm{d}t}\mathcal{F}(t)\leq C\mathcal{G}(t)\mathcal{F}(t)+C\mathcal{K}(t),
    \label{0303-6}
\end{align}
where the positive constant $C$ is independent of $h$.
Due to the assumed regularity of $(\mathbf{v},\mathbf{v}_{\boldsymbol{\tau}})$ (in particular, $(\partial_t\mathbf{v},\partial_t\mathbf{v}_{\boldsymbol{\tau}})\in L^2(0,T;\boldsymbol{\mathcal{L}}^2)$) and the regularity properties obtained in \ref{R1} and \ref{R2}, we can check that
\begin{align}
    \label{EST:GK}
    0\le \int_0^{t-h} \mathcal{G}(s) \,\mathrm ds \le C
    \quad\text{and} \quad
    0\le \int_0^{t-h} \mathcal{K}(s) \,\mathrm ds \le C
    \quad\text{for all $t\in (0,T-h)$.}
\end{align}
An application of Gronwall's lemma to \eqref{0303-6} yields that
\begin{align}
    \mathcal{F}(t)\leq e^{C\int_0^t \mathcal{G}(s)\,\mathrm{d}s}\mathcal{F}(0)
    +C\int_0^t e^{C\int_s^t \mathcal{G}(\tau)\,\mathrm{d}\tau}\mathcal{K}(s)\,\mathrm{d}s
    \leq C(1+\mathcal{F}(0))
    \label{0303-7}
\end{align}
for all $t\in(0,T-h)$.
Therefore, it remains to control the initial value $\mathcal{F}(0)$. Since $(\mu_0^\ast,\mathcal{L}_0^\ast)\in\mathcal{H}^1$
and the initial data are strictly separated from pure states $\pm 1$ (see~\eqref{A-separation}), it is easy to check that $\boldsymbol{\varphi}_0\in \mathcal{H}^3$. Using \eqref{convective-viscous:2} and \eqref{convective-viscous:6},
we can derive the following estimate
\begin{align}
		&\frac{1}{2}\frac{\mathrm{d}}{\mathrm{d}t}\Big(\gamma\|\nabla(\varphi-\varphi_0)\|_{\mathbf{L}^2(\Omega)}^2
        +\gamma\|\nabla_{\!\bm{\tau}\,}(\psi-\psi_0)\|_{\mathbf{L}^2(\Gamma)}^2
        {+\frac{\gamma}{K}\|(\psi-\psi_0)-(\varphi-\varphi_0)\|_{H_\Gamma}^2}\Big)
        \notag\\
		&\quad=\gamma\int_\Omega\nabla \partial_t\varphi\cdot\nabla(\varphi-\varphi_0)\,\mathrm{d}x
        +\gamma\int_\Gamma\nabla_{\!\bm{\tau}\,} \partial_t\psi\cdot\nabla_{\!\bm{\tau}\,}(\psi-\psi_0)\,\mathrm{d}S
        \notag\\
		&\qquad {+\frac{\gamma}{K}\int_\Gamma (\partial_t\psi-\partial_t\varphi)(\psi-\psi_0-\varphi+\varphi_0)\,\mathrm{d}S}
        \notag\\
        &\quad=-\gamma\int_\Omega\partial_t\varphi\Delta(\varphi-\varphi_0)\,\mathrm{d}x
        +\gamma\int_\Gamma \partial_t\varphi\partial_\mathbf{n}(\varphi-\varphi_0)\,\mathrm{d}S
        -\gamma\int_\Gamma\partial_t \psi\Delta_{\bm{\tau}}(\psi-\psi_0)\,\mathrm{d}S
        \notag\\
        &\qquad {+\gamma\int_\Gamma (\partial_t\psi-\partial_t\varphi)\partial_\mathbf{n}(\varphi-\varphi_0)\,\mathrm{d}S}
        \notag\\
        &\quad=-\gamma\int_\Omega\partial_t\varphi\Delta(\varphi-\varphi_0)\,\mathrm{d}x
        -\gamma\int_\Gamma \partial_t\psi(\Delta_{\boldsymbol{\tau}}(\psi-\psi_0)-\partial_\mathbf{n}(\varphi-\varphi_0))\,\mathrm{d}S
        \notag\\
        &\quad=\int_\Omega \Delta(\varphi-\varphi_0)(-\Delta\varphi+f(\varphi) +k^{-1}f_\sigma(\varphi)-\mu)\,\mathrm{d}x \notag\\
        &\qquad+\int_\Gamma (\Delta_{\boldsymbol{\tau}}(\psi-\psi_0)-\partial_\mathbf{n}(\varphi-\varphi_0))
        (-\Delta_{\boldsymbol{\tau}}\psi +\partial_\mathbf{n}\varphi+g(\psi)+k^{-1}f_\sigma(\psi)-\mathcal{L})\,\mathrm{d}S
        \notag\\
        &\quad=-\int_\Omega |\Delta(\varphi-\varphi_0)|^2\,\mathrm{d}x
        -\int_\Omega \Delta(\varphi-\varphi_0)\Delta\varphi_0\,\mathrm{d}x
        -\int_\Omega \nabla(\varphi-\varphi_0)\cdot\nabla(f(\varphi)+k^{-1}f_\sigma(\varphi)-\mu)\,\mathrm{d}x
        \notag\\
        &\qquad+\int_\Gamma \partial_\mathbf{n}(\varphi-\varphi_0)\big(f(\varphi)+k^{-1}f_\sigma(\varphi)-\mu
        +\Delta_{\boldsymbol{\tau}}\psi-\partial_\mathbf{n}\varphi-g(\psi)-k^{-1}f_\sigma(\psi) +\mathcal{L}\big) \,\mathrm{d}S
        \notag\\
        &\qquad-\int_\Gamma |\Delta_{\boldsymbol{\tau}}(\psi-\psi_0)|^2\,\mathrm{d}S
        -\int_\Gamma \Delta_{\boldsymbol{\tau}}(\psi-\psi_0)\Delta_{\boldsymbol{\tau}} \psi_0\,\mathrm{d}S
        \notag\\
        &\qquad-\int_\Gamma \nabla_{\!\boldsymbol{\tau}\,}(\psi-\psi_0)\cdot\nabla_{\!\boldsymbol{\tau}\,}
        (\partial_\mathbf{n}\varphi +g(\psi)+k^{-1}f_\sigma(\psi)-\mathcal{L})\,\mathrm{d}S
        \notag\\
        &\quad\leq \int_\Omega \nabla(\varphi-\varphi_0)\cdot\nabla\Delta\varphi_0\,\mathrm{d}x
        -\int_\Gamma \partial_\mathbf{n} (\varphi-\varphi_0)\Delta\varphi_0\,\mathrm{d}S
        +\int_\Gamma\nabla_{\!\boldsymbol{\tau}\,}(\psi-\psi_0)\cdot\nabla_{\!\boldsymbol{\tau}\,}\Delta_{\boldsymbol{\tau}\,}\psi_0 \,\mathrm{d}S
        \notag\\
        &\qquad+\|\nabla(\varphi-\varphi_0)\|_{\mathbf{L}^2(\Omega)}\|\nabla(f(\varphi)+k^{-1}f_\sigma(\varphi)-\mu)\|_{\mathbf{L}^2(\Omega)}
        \notag\\
        &\qquad+\|\nabla_{\!\boldsymbol{\tau}\,}(\psi-\psi_0)\|_{\mathbf{L}^2(\Gamma)}
        \|\nabla_{\!\boldsymbol{\tau}\,}(\partial_\mathbf{n}\varphi+g(\psi)+k^{-1}f_\sigma(\psi)-\mathcal{L})\|_{\mathbf{L}^2(\Gamma)}
        \notag\\
        &\qquad+\|\partial_\mathbf{n}(\varphi-\varphi_0)\|_{H_\Gamma}\|f(\varphi)+k^{-1}f_\sigma(\varphi)-\mu
        +\Delta_{\boldsymbol{\tau}}\psi-\partial_\mathbf{n}\varphi-g(\psi)-k^{-1}f_\sigma(\psi)
        +\mathcal{L}\|_{H_\Gamma}
        \notag\\
		&\quad\leq C\Big(\gamma\|\nabla(\varphi-\varphi_0)\|_{\mathbf{L}^2(\Omega)}^2
        +\gamma\|\nabla_{\!\bm{\tau}\,} (\psi-\psi_0)\|_{\mathbf{L}^2(\Gamma)}^2
     +\frac{\gamma}{K}\|(\psi-\psi_0) -(\varphi-\varphi_0)\|_{H_\Gamma}^2\Big)^{\frac{1}{2}}.
     \label{0303-8}
\end{align}
Now, applying Lemma~\ref{LEM:SGW} and choosing $t=h$ in the resulting estimate, we infer that
\begin{align}
	&\gamma\|\nabla\partial_t^h\varphi(0)\|_{\mathbf{L}^2(\Omega)}^2
    +\gamma\|\nabla_{\!\boldsymbol{\tau}\,} \partial_t^h\psi(0)\|_{\mathbf{L}^2(\Gamma)}^2
    {+\frac{\gamma}{K}\|\partial_t^h\psi(0)-\partial_t^h\varphi(0)\|_{H_\Gamma}^2}\leq C_\gamma.\notag
\end{align}
This entails that $\mathcal{F}(0)$ is uniformly bounded with respect to {$h\in \big(0,T/2\big)$}. 
In particular, in view of \eqref{EST:GK}, we can deduce from \eqref{0303-7} that
\begin{align}
    \label{EST:F}
    \mathcal{F}(t) \le C \quad\text{for all $t\in (0,T-h)$.}
\end{align}
Integrating \eqref{0303-5} with respect to time over $(0,T-h)$ and using the estimates \eqref{EST:GK} and \eqref{EST:F} to bound the right-hand side of the resulting estimate, after passing to the limit as $h\to 0$, we obtain
\begin{align}
    & (\partial_t\varphi,\partial_t\psi)\in L^\infty(0,T;\mathcal{H}^1)\cap L^2(0,T;\mathcal{H}^2),
    \notag \\
    &(\partial_{t}^2\varphi,\partial_{t}^2\psi)\in L^2(0,T;\mathcal{L}^2),\quad
    (\partial_t\mu,\partial_t\mathcal{L})\in L^2(0,T;\mathcal{H}^1).
    \notag
\end{align}
Finally, applying the elliptic regularity theory for bulk-surface elliptic systems (see, e.g., \cite[Theorem~3.3]{KL}) to the system \eqref{ellip*} (resp. \eqref{re-1}), we find
$ (\varphi,\psi)\in L^\infty(0,T;\mathcal{H}^3)$.
Thus, the proof of Theorem~\ref{A1} is complete.
\end{proof}

\section*{Declarations}
\noindent \textbf{Acknowledgments.}
HW is a member of the Key Laboratory of Mathematics
for Nonlinear Sciences (Fudan University), Ministry of Education of China.
\smallskip

\noindent \textbf{Funding.}
PK was supported by the Deutsche Forschungsgemeinschaft (DFG, German Research Foundation): on the one hand by the DFG-project 524694286, and on the other hand by the RTG 2339 ``Interfaces, Complex Structures, and Singular Limits''. HW was partially supported by the Natural Science Foundation of Shanghai 25ZR1401023.
The support is gratefully acknowledged.
\smallskip

\noindent \textbf{Competing interests.}
The authors declare that they have no known competing financial interests or personal relationships that could have appeared to influence the work reported in this paper.
\smallskip

\noindent \textbf{Data Availability Statement.}
Data sharing not applicable to this article as no datasets were generated or analyzed during the current study.

\smallskip


%
\end{document}